\definecolor{myred}{HTML}{c20014}
\definecolor{mygreen}{HTML}{008000}
\setlist{noitemsep}
\crefname{equation}{}{}
\Crefname{equation}{Equation}{Equations}
\numberwithin{equation}{section}
\newtheorem{theo}{Theorem}[section]
\newtheorem{prop}[theo]{Proposition}
\newtheorem{coro}[theo]{Corollary}
\newtheorem{lemma}[theo]{Lemma}
\theoremstyle{definition}
\newtheorem{defi}[theo]{Definition}
\theoremstyle{remark}
\newtheorem{rem}[theo]{Remark}
\crefname{theo}{Theorem}{Theorems}
\crefname{claim}{Claim}{Claims}
\crefname{defi}{Definition}{Definitions}
\crefname{enumi}{}{}
\Crefname{enumi}{Item}{Items}
\crefname{rem}{Remark}{Remarks}
\crefname{prop}{Proposition}{Propositions}
\newcommand{\dt}[1]{\frac{\mathrm{d}#1}{\mathrm{d}t}}
\newcommand{\dl}[2]{\frac{\partial#1}{\partial#2}}
\newcommand{\D}{\mathcal{D}}
\renewcommand{\d}{\mathrm{d}}
\renewcommand{\L}{\mathcal{L}}
\renewcommand{\S}{\mathcal{S}}
\newcommand{\R}{\mathbb{R}}
\newcommand{\N}{\mathbb{N}}
\newcommand{\C}{\mathcal{C}}
\newcommand{\Z}{\mathbb{Z}}
\newcommand{\Op}{\mathcal{O}}
\renewcommand{\i}{\mathrm{i}}
\DeclareMathOperator{\id}{id}
\DeclareMathOperator{\re}{Re}
\DeclareMathOperator{\im}{Im}
\newcommand{\Co}{\mathbb{C}}
\newcommand{\F}{\mathcal{F}}
\newcommand{\inte}{\mathrm{in}}
\newcommand{\fra}{\mathrm{fr}}
\newcommand{\dom}{\operatorname{dom}}
\newcommand{\ran}{\operatorname{ran}}
\newcommand{\sg}[1]{\{#1_t\}_{t \geq 0}}
\newcommand{\hl}[1]{(0, +\infty; #1)}
\newcommand{\para}[1]{\medskip \noindent {\bfseries #1.}}
\newcommand{\U}{\mathcal{U}}
\newcommand{\Chi}{\mathrm{X}}
\newcommand{\Rho}{\mathrm{P}}
\newcommand{\vnabla}{\vec{\nabla}}
\newcommand{\vn}{{\vec{n}}}
\providecommand{\keywords}[1]
{
  \small	
  \textbf{{Keywords.}} #1
}
\providecommand{\AMS}[1]
{
  \small	
  \textbf{{AMS subject classifications.}} #1
}
\renewcommand{\leq}{\leqslant}
\renewcommand{\geq}{\geqslant}
\renewcommand{\epsilon}{\varepsilon}
\newcommand{\eps}{\varepsilon}
\title{Admissibility theory in abstract Sobolev scales and transfer function growth at high frequencies\thanks{This work was supported by the Research Council of Finland Grant number 349002.}}
\author{Lassi Paunonen\thanks{Mathematics Research Centre, Faculty of Information Technology and Communication Sciences, Tampere University, P.O. Box 692, 33101 Tampere, Finland. Emails: \nolinkurl{name.surname@tuni.fi}.}%
\and David Seifert\thanks{School of Mathematics, Statistics and Physics, Newcastle University, Herschel Building, Newcastle upon Tyne, NE1 7RU, United Kingdom. Email: \nolinkurl{david.seifert@ncl.ac.uk}.}%
\and Nicolas Vanspranghe\footnote{Corresponding author.}
\footnotemark[2]
}
\begin{document}

\maketitle

\abstract{

For strongly continous semigroups on  Hilbert spaces, we investigate admissibility properties of control and observation operators shifted along continuous scales of spaces built by means of either interpolation and extrapolation or functional calculus. Our results show equivalence of 
 admissibility in, on the one hand, a fractional domain of the generator and, on the other hand, a (different, in general) quadratic  interpolation space of the same ``Sobolev order''. Furthermore, such properties imply quantified resolvent bounds in the original state space topology. When the semigroup is a group, the resulting frequency-domain estimates are in fact equivalent to the aforementioned time-domain properties. In the case of systems with both control and observation, we are able to translate input-output regularity properties into high-frequency growth rates of operator-valued transfer functions. As an application, based on results by Lasiecka, Triggiani and Tataru on interior and boundary regularity of the wave equation under Neumann control, we derive optimal asymptotics for the Neumann-to-Dirichlet wave transfer function. With that in hand, we establish non-uniform energy decay rates for the wave equation posed in a rectangle and subject to Neumann  damping on an arbitrary open subset of the boundary.

}

\keywords{Strongly continuous semigroups, admissibility, quadratic interpolation, non-uniform stability, wave equation.}

\AMS{47D06, 34G10, 93B28 (35L90, 35L05, 35B40).}

\tableofcontents

\section{Introduction}

\subsection{Orientation and sample results}
In the control theory of linear semigroups on Hilbert spaces,
\emph{admissible} control  operators are those through which $L^2$-inputs produce well-defined solutions in the original state space. On the other hand, admissible observation operators give outputs that are well-defined in the $L^2$-sense for \emph{all} semigroup solutions. The question of admissibility is relevant for controls and observations that cannot be modelled as bounded linear maps into or from the space on which the semigroup operates. Most notably this applies to boundary control problems for partial differential equations. For more insight into classical admissibility theory, the reader is referred to  \cite{Wei89, Wei89b,TucWei09book} or the survey article \cite{JacPar04survey}; also, proper definitions are recalled in \cref{sec:main-res} below.
Despite their ubiquitous character in semigroup- and system-theoretic approaches (see, e.g., \cite{RamTak05,Mil05,CurWei06,TucWei14survey}), admissible operators exclude a number of classical  models in partial differential equations. A prime example 
is the multidimensional wave equation with Neumann boundary control in the space of finite-energy data \cite{LasTri90,Tat98}, which we return to in greater detail in \cref{sec:intro-edp}.

The goal of the present paper is to investigate, in the semigroup setting, relaxed admissibility-type conditions formulated in terms of abstract Sobolev scales, and derive quantitative connections with frequency-domain properties.
We now, slightly informally, present some of our main results;
 precise definitions and statements will follow in \cref{sec:preliminary,sec:main-res}.

Let $A$ be the infinitesimal generator of a strongly continuous semigroup $\sg{S}$ on a Hilbert space $X$. 
Filling the gaps
 in the standard two-sided discrete Sobolev tower $\{X_n\}_{n \in \Z}$ based on $A$, we
define two continuous Hilbert scales $\{X^\fra_s\}_{s \in \R}$ and $\{X^\inte_s\}_{s \in \R}$ composed of fractional domains of $-A$ and quadratic interpolation spaces, respectively.
Consider an $A$-bounded control operator $B$, that is, a bounded linear map $B$ from some other Hilbert space $U$ into the extrapolation space $X_{-1}$.
\begin{theo}
\label{th:B-sample}
Let $0 \leq \eta \leq 1$. Consider the inhomogeneous Cauchy problem
\begin{equation}
\label{eq:sample-cauchy}
\dot{x} = Ax + Bu, \quad x(0) = 0,
\end{equation}
and let $T >0$.
The following are equivalent: 
\begin{enumerate}[label=(\roman*)]
  \item  For all controls $u \in L^2(0, T; U)$, the final state $x(T)$ lies in $X_{-\eta}^\fra$;
  \item  For all controls $u \in L^2(0, T; U)$, the final state $x(T)$ lies in $X_{-\eta}^\inte$;
  \item  For all controls $u \in H^\eta(0, T; U)$, the final state $x(T)$ lies in $X$.
\end{enumerate}
\end{theo}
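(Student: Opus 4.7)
The plan is to reformulate every condition as the classical $L^2$-admissibility of a common auxiliary operator, which I denote $B_\eta := (-A)^{-\eta} B$.

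First, by the closed graph theorem each of (i), (ii), (iii) is equivalent to a uniform quantitative bound on $x(T)$ in the appropriate norm. For (i), the functional-calculus characterization of $X^\fra_{-\eta}$ as the image of $X$ under the extended operator $(-A)^{-\eta}$, together with the commutation of $(-A)^{-\eta}$ with the semigroup on the extrapolated scale, yields $(-A)^{-\eta} x(T) = \int_0^T S_{T-s} B_\eta u(s)\, \d s$. Hence (i) is equivalent to $B_\eta$ being a classically admissible control operator for $\sg{S}$ on $X$.

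Next, I would prove (i) $\Leftrightarrow$ (iii) by transferring the $\eta$-regularity from the state to the input. After extending $u$ by zero, one writes $u = J^\eta v$ with $J^\eta$ a suitable fractional integration of order $\eta$ and $v \in L^2$ (requiring vanishing traces for $\eta > 1/2$), so that (iii) becomes a boundedness property on $v$. On the Laplace-transform side, the relevant symbols are $\lambda^{-\eta} R(\lambda, A) B$ for (iii) and $R(\lambda, A) (-A)^{-\eta} B$ for (i); their difference reduces via a functional-calculus identity to $[\lambda^{-\eta} - (-A)^{-\eta}] R(\lambda, A) B$, which is of strictly lower order and harmless for admissibility. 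Translating back to the time domain gives the equivalence up to a bounded perturbation that can be absorbed.

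The equivalence (i) $\Leftrightarrow$ (ii) is the deepest step, since the scales $\{X^\fra_s\}$ and $\{X^\inte_s\}$ are genuinely distinct in general (this is the paper's main novel point). I would dualise with $X$ as pivot: admissibility of $B_\eta$ is equivalent to the observation inequality $\int_0^T \|B^* S_s^* z\|^2\,\d s \leq C \|z\|^2_{X^\fra_\eta}$, whereas (ii) yields the same inequality with $\|z\|_{X^\inte_\eta}$ on the right-hand side. The bridge between the two should be a general Hilbert-scale lemma (to be established in the preliminary section) showing that the particular quadratic form $z \mapsto \int_0^T \|B^* S_s^* z\|^2\,\d s$ can be bounded on one scale precisely when it is bounded on the other, by combining a $K$- or $J$-functional representation of the interpolation norm with the commutation of the semigroup and the functional calculus of $-A$. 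I expect this last equivalence to be the principal obstacle: it cannot follow from any continuous embedding between the two scales, and instead must genuinely exploit the quadratic $L^2$-structure of the admissibility inequality.
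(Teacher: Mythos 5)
Your opening reduction is fine and matches the paper in spirit: condition (i) is indeed equivalent to classical admissibility of $B_\eta=(-A)^{-\eta}B$ (the paper works with the mirror object $C_\eta=C(-A)^{-\eta}$ after dualising). The first genuine gap is in your (i)$\Leftrightarrow$(iii) step. The target is a bound on the \emph{point value} $x(T)$ against $\|u\|_{L^2(0,T;U)}$, and your argument for it is that the symbol difference $[\lambda^{-\eta}-(-A)^{-\eta}](\lambda-A)^{-1}B$ is ``of strictly lower order and harmless'' so that, translating back to the time domain, it can be ``absorbed''. But boundedness (even decay) of an operator-valued symbol on a vertical line does not by itself yield a finite-horizon, pointwise-in-time estimate -- this is precisely why the purely frequency-domain condition in \cref{theo:input-group} implies the time-domain ones only under right-invertibility of $\sg{S}$. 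If you instead estimate the error directly in the time domain, the contour representation of $(-A)^{-\eta}$ produces a convolution kernel of size $t^{\eta-1}e^{-ct}$ in $\L(U,X)$, and evaluating that convolution at the single time $T$ against $L^2$ inputs via Cauchy--Schwarz only closes when $\eta>1/2$; for $\eta\leq 1/2$ the step fails as stated. Note also that the contour quantity $I_{1+\i\omega}$ is only \emph{comparable in modulus} to $|\omega|^{-\eta}$ (cf.\ \cref{eq:est-I}), not equal to $(1+\i\omega)^{-\eta}$, so the paper never uses the cancellation identity as an exact multiplier identity: it dualises to the observation side, where all time-domain quantities are $L^2$- or $H^{-\eta}$-in-time (no point evaluation), applies Plancherel/Paley--Wiener for fixed data, and passes between $(0,T)$, the half-line and $\R$ with the windowing and extension lemmas (\cref{lem:X-to-Cx}, \cref{lem:localize-shift}, \cref{coro:ext-zero}).

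The larger gap is that (i)$\Leftrightarrow$(ii) is not proved at all: you reduce it to ``a general Hilbert-scale lemma (to be established)'' and yourself flag it as the principal obstacle -- but that lemma \emph{is} the theorem. In the paper there is no direct transfer of the quadratic form between the two scales; the bridge is the third, time-regularity condition. On the interpolation side, (ii)$\Leftrightarrow$(iii) is obtained from Lions' semigroup characterisation of interpolation spaces (\cref{theo:interpolation-sg}), applied both to the translation semigroup on $L^2\hl{Y}$ (Besov norm of $H^{1-\eta}$) and to the shifted semigroup on $X^\inte_\eta$, using crucially that $\{X^\inte_s\}_{s\in\R}$ is an exact interpolation scale (so that $X_1=[X^\inte_{1+\eta},X^\inte_\eta]_\eta$) -- an identity that fails for $\{X^\fra_s\}_{s\in\R}$, which is why the fractional side requires the entirely different contour-integral/Plancherel argument of \cref{prop:frac}. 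Your instinct that a $K$-functional/semigroup representation of the interpolation norm must enter is correct, but as written the proposal records where the difficulty lies rather than resolving it. Two smaller points: when you dualise, the antidual of $X^\fra_{-\eta}$ (resp.\ $X^\inte_{-\eta}$) with pivot $X$ is the corresponding scale of $A^\ast$, not of $A$ (cf.\ \cref{lem:embedding}), so your observation inequalities should carry the adjoint scales; and for $\eta>1/2$ the fractional-integration substitution only reaches $H^\eta_0(0,T;U)$, so recovering all of $H^\eta(0,T;U)$ needs the affine-correction argument used at the end of the proof of \cref{theo:input-group}.
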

It is worth noting that the two first conditions of \cref{th:B-sample} are equivalent even though the spaces $X_{-\eta}^\inte$ and $X_{-\eta}^\fra$ need not coincide in general; see \cref{rem:int-fra} below. Both have their own benefits: the scaling operators associated with $\{X^\fra_s\}_{s \in \R}$ lie in the functional calculus of the generator $A$ and, as such, commute with the semigroup; on the other hand, $\{X_s^\inte\}_{s\in\R}$ is an (exact) interpolation scale and, in practice, identification with standard function spaces might be easier.
 \Cref{th:B-sample} also means that one may return to the original state space $X$ at the cost of $\eta$-smoother controls in the usual Sobolev sense. 
\begin{theo}
\label{theo:B-sample-FQ}
Let $\sigma$ be any real number larger than the growth bound of $\sg{S}$. Given $0 \leq \eta \leq 1$, if one of the equivalent properties of \cref{th:B-sample} holds, then
\begin{equation}
\label{eq:B-sample-FQ}
\|(\sigma + \i \omega - A)^{-1}B\|_{\L(U, X)} = O(|\omega|^{\eta}), \quad \omega \in \R, \quad |\omega| \to + \infty.
\end{equation}
If $\sg{S}$ is right-invertible, then \cref{eq:B-sample-FQ} is equivalent to the properties of \cref{th:B-sample}.
\end{theo}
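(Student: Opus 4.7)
\textbf{Forward direction.} I would start from condition~(iii) of \cref{th:B-sample}, which by the closed graph theorem supplies $C_T > 0$ such that $\|x(T)\|_X \leq C_T \|u\|_{H^\eta(0, T; U)}$ for all admissible $u$. For $\lambda = \sigma + \i\omega$ and $v \in U$, the idea is to feed in the explicit test control $u(t) = e^{-\lambda(T - t)} v$: a direct computation of the $L^2$- and $H^1$-norms, interpolated, gives $\|u\|_{H^\eta(0, T; U)} \leq c_{\sigma, T}(1 + |\omega|)^\eta \|v\|_U$, so after the change of variable $\tau = T - s$ the corresponding state $x(T) = \int_0^T e^{-\lambda \tau} S_\tau Bv \, d\tau$ satisfies $\|x(T)\|_X = O((1 + |\omega|)^\eta) \|v\|_U$. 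Splitting the Laplace representation $R(\lambda, A) Bv = \int_0^\infty e^{-\lambda \tau} S_\tau Bv \, d\tau$ (which \emph{a priori} converges only in $X_{-1}$) at $\tau = T$ yields the self-referential identity $R(\lambda, A) Bv = x(T) + e^{-\lambda T} S_T R(\lambda, A) Bv$ in $X_{-1}$. Iterating produces the series $R(\lambda, A) Bv = \sum_{n \geq 0} e^{-n\lambda T} S_{nT} x(T)$, whose summands are bounded in $X$ by $M e^{-n(\sigma - \omega_0) T} \|x(T)\|_X$ and hence sum absolutely. This simultaneously upgrades $R(\lambda, A) Bv \in X_{-1}$ to $R(\lambda, A) Bv \in X$ and delivers $\|R(\lambda, A) Bv\|_X \lesssim \|x(T)\|_X \lesssim (1 + |\omega|)^\eta \|v\|_U$, which is \cref{eq:B-sample-FQ}.

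\textbf{Converse under the group hypothesis.} Assume now $\{S_t\}_{t \geq 0}$ extends to a $C_0$-group and that \cref{eq:B-sample-FQ} holds; the aim is to verify condition~(iii) of \cref{th:B-sample}. For $\eta > 1/2$, I would split a given $u \in H^\eta(0, T; U)$ as $u = u_0 + p$, where $p$ is the linear interpolant matching the traces $u(0)$ and $u(T)$: a single integration by parts in $\int_0^T S_{T - s} B p(s) \, ds$ (using $A^{-1} B \in \mathcal{L}(U, X)$) places this contribution in $X$ with norm $\lesssim \|u\|_{H^\eta(0, T; U)}$. The remaining part $u_0 \in H^\eta_0(0, T; U)$ extends by zero to $\bar u_0 \in H^\eta(\R_+; U)$ supported in $[0, T]$; for $\eta \leq 1/2$ one simply takes $p = 0$ and $u_0 = u$. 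Setting $y(t) = \int_0^t S_{t - s} B \bar u_0(s) \, ds$, the Laplace identity $\hat y(\sigma + \i\omega) = R(\sigma + \i\omega, A) B \widehat{\bar u_0}(\sigma + \i\omega)$ combined with the Paley--Wiener theorem for Hilbert-valued functions gives
\[
\int_0^\infty e^{-2\sigma t} \|y(t)\|_X^2 \, dt \leq \frac{C^2}{2\pi} \int_\R (1 + |\omega|)^{2\eta} \|\widehat{\bar u_0}(\sigma + \i\omega)\|_U^2 \, d\omega \lesssim \|u\|_{H^\eta(0, T; U)}^2.
\]
Because $\bar u_0$ vanishes past $T$, one has $y(t) = S_{t - T} y(T)$ for $t \geq T$. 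The group-invertibility bound $\|S_\tau y(T)\|_X \geq M^{-1} e^{-\omega \tau} \|y(T)\|_X$ (with $\omega$ a growth bound of the group) yields, for $\sigma$ sufficiently large, $\|y(T)\|_X^2 \lesssim \int_T^\infty e^{-2\sigma t} \|y(t)\|_X^2 \, dt \lesssim \|u\|_{H^\eta(0, T; U)}^2$. Combined with the linear-part estimate, this establishes condition~(iii).

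The principal obstacle is the converse: extracting a \emph{pointwise-in-time} bound on $y(T)$ from the time-averaged Plancherel identity uses group invertibility in an essential way (via $\|S_\tau\| \geq \|S_{-\tau}\|^{-1}$) and would fail for a mere semigroup. A secondary technical annoyance in the regime $\eta > 1/2$ is the trace-compatible decomposition of $u$, since naive zero-extension destroys the $H^\eta$-regularity at the endpoints.
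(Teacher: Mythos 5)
Your forward direction is correct and is a genuinely different route from the paper's: the paper obtains \cref{theo:B-sample-FQ} as part of \cref{theo:input-group}, by dualising to the observation result \cref{theo:inter-group} applied to $A^\ast$ and $B^\ast$ and then running Laplace--Plancherel estimates there, whereas your argument is purely primal — feed in the test control $u(t)=e^{-\lambda(T-t)}v$, estimate its $H^\eta$-norm by interpolation, and upgrade $\int_0^T e^{-\lambda\tau}S_\tau Bv\,\d\tau$ to the full resolvent via the geometric-series splitting $R(\lambda,A)Bv=\sum_{n\geq 0}e^{-n\lambda T}S_{nT}x(T)$, which converges absolutely in $X$ for $\sigma$ above the growth bound. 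This is clean, elementary, and buys a direct proof of the implication without any of the adjoint machinery.

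The converse, however, has a genuine gap: the theorem assumes only that $\sg{S}$ is \emph{right}-invertible ($S_t R_t=\id$ for some $R_t\in\L(X)$), and this does not give the lower bound $\|S_\tau y(T)\|_X\geq M^{-1}e^{-\omega\tau}\|y(T)\|_X$ on which your argument pivots — that bound is exactly what \emph{left}-invertibility (or a group) provides, via $\|x\|=\|L_tS_tx\|\leq\|L_t\|\,\|S_tx\|$. A right-invertible semigroup need not satisfy it: the left-shift semigroup on $L^2(0,+\infty)$ is right-invertible but annihilates functions supported near the origin, so no estimate of the form $\|y(T)\|_X^2\lesssim\int_T^\infty e^{-2\sigma t}\|y(t)\|_X^2\,\d t$ is available in general. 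Your proof therefore establishes the converse only for groups (more generally, left-invertible semigroups), which is strictly weaker than the statement. The paper's fix is duality: right-invertibility of $\sg{S}$ is equivalent to left-invertibility of $\sg{S^\ast}$, the converse is proved on the adjoint observation side (the Fourier-multiplier estimate applied to $B^\ast(S^\ast\ast x)$, multiplication by $1/t$ on $(1,+\infty)$, then the left inverse of $S_1^\ast$, as in \cref{prop:interpolation}), and the result is transported back through \cref{lem:dual-input-output}. A secondary defect of your converse is the endpoint $\eta=1/2$: extension by zero is not bounded from $H^{1/2}(0,T;U)$ into $H^{1/2}(\R,U)$ (cf.\ the remark after \cref{coro:ext-zero}), so the Plancherel bound for $\widehat{\bar u_0}$ with weight $(1+\omega^2)^{\eta}$ does not follow at that exponent; the paper's argument never meets this issue because in its converse the fractional weight sits on the output in $H^{-\eta}$, where only zero-extension of $L^2$ functions is needed.
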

In the right-invertible case (typically, when the semigroup is a group), \cref{theo:B-sample-FQ} establishes an exact correspondence between the lack of admissibility as measured by the equivalent criteria of \cref{th:B-sample} and the growth rate at high frequencies of the operator-valued function $p \mapsto (p - A)^{-1}B$. Note here that the estimate \cref{eq:B-sample-FQ} is given in the  uniform operator topology from $U$ to $X$: with \cref{theo:B-sample-FQ}, one can trade boundedness properties in ``weaker'' spaces for resolvent growth at the original $X$-level. The precise statements of \cref{th:B-sample,theo:B-sample-FQ} are contained in \cref{theo:input-group} below. Their analogues for $A$-bounded observation operators $C$, involving positive order spaces for ``smoother'' initial data and negative order spaces for distributional outputs, are given in \cref{theo:inter-group}.

When simultaneously dealing with $A$-bounded control and observation operators $B$ and $C$, we are moreover able to translate input-output regularity properties into high-frequency growth rates of the transfer function $p \mapsto C(p - A)^{-1}B$.  The next result is valid under a (mild) technical assumption guaranteeing that $C(p-A)^{-1}B$ is a  bounded linear map from the input space $U$ into the output space $Y$; see \cref{eq:hyp-Z} below.
\begin{theo}
\label{theo:sample-IO}
Let $0 \leq \eta_1, \eta_2 \leq 1$. Assume that $B$ and $C$ satisfy the conditions of \cref{th:B-sample} and their observation operator analogues with parameters $\eta_1$ and $\eta_2$, respectively.
Choose $\sigma$ as in \cref{theo:B-sample-FQ} and let $T > 0$. The following are equivalent:
\begin{enumerate}[label=(\roman*)]
  \item For all $u \in L^2(0, T; U)$, the solution $x$ to \cref{eq:sample-cauchy} satisfies $Cx \in H^{-\eta_1 - \eta_2}(0, T; Y)$;
  \item We have
\begin{equation}
  \label{eq:TF-sample-FQ}
  \|C(\sigma + \i \omega - A)^{-1}B\|_{\L(U, Y)} = O(|\omega|^{\eta_1 + \eta_2}), \quad \omega \in \R, \quad |\omega| \to + \infty.
  \end{equation}
\end{enumerate}
\end{theo}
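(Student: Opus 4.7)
Both implications will proceed by Fourier analysis after an exponential rescaling. I first pass to the shifted generator $\tilde A := A - \sigma$, whose semigroup $\tilde S_t := e^{-\sigma t} S_t$ is exponentially stable; since the fractional-domain and interpolation scales associated with $\tilde A$ coincide with those of $A$ up to equivalent norms, the admissibility hypotheses on $B$ and $C$ transfer verbatim, and it is enough to work with $\tilde A$. For $u \in L^2(0,T;U)$, extended by zero to $\R_+$, I set $\tilde u(t) := e^{-\sigma t} u(t)$, $\tilde x := e^{-\sigma t} x$, and $\tilde y := e^{-\sigma t} C x$. The hypotheses on $B$ and $C$, together with the technical condition ensuring $C(\sigma + \i \omega - A)^{-1} B \in \L(U,Y)$, will allow me to realise the input-output map $\tilde u \mapsto \tilde y$ as a causal, shift-invariant operator between suitable Sobolev-type scales on the half-line, whose Fourier symbol is precisely the transfer function $\omega \mapsto C(\sigma + \i \omega - A)^{-1} B$.

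For the direction (ii) $\Rightarrow$ (i), I extend $\tilde u$ by zero to all of $\R$ and take the Fourier transform on the real line, so that
\[
\widehat{\tilde y}(\omega) \;=\; C(\sigma + \i \omega - A)^{-1} B \, \widehat{\tilde u}(\omega).
\]
Since $\widehat{\tilde u} \in L^2(\R; U)$, the symbol bound (ii) yields $\widehat{\tilde y}(\omega)/(1+|\omega|)^{\eta_1+\eta_2} \in L^2(\R; Y)$, that is, $\tilde y \in H^{-\eta_1-\eta_2}(\R; Y)$. Multiplication by $e^{\sigma t}$, which is a smooth pointwise multiplier on compactly supported distributions, and restriction to $(0,T)$ then give (i).

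For (i) $\Rightarrow$ (ii), the closed graph theorem first promotes (i) to a uniform bound $L^2(0,T;U) \to H^{-\eta_1-\eta_2}(0,T;Y)$ for the input-output map. A concatenation and time-translation argument, using the exponential stability of $\tilde S$, extends this to a bounded causal operator from $L^2(\R_+; U)$ into $H^{-\eta_1-\eta_2}_\loc(\R_+; Y)$ with a time-uniform constant; because this operator is a convolution, Plancherel turns its boundedness into the pointwise frequency estimate (ii) after inspection of the symbol.

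The main obstacle is the rigorous definition of the input-output map under the weak admissibility hypotheses: with only $A$-boundedness of $C$, the value $Cx(t)$ is not defined pointwise when $x(t)$ lives merely in $X^\fra_{-\eta_1}$. The workaround combines the admissibility of $B$ (which places states in the negative scale) with the dual of the admissibility of $C$ (which lets $C$ act from $X^\fra_{-\eta_1}$ into a negatively-indexed Sobolev-type space in time), to extend the map to the correct function-space framework; one must then verify, via a direct resolvent calculation on smooth inputs, that the symbol of the extended operator indeed agrees with $p \mapsto C(p-A)^{-1}B$ on the line $\re p = \sigma$.
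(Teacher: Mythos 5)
Your road map is the same as the paper's (for (ii)$\Rightarrow$(i): Fourier multiplier argument plus restriction, exactly as in the last step of the paper's proof of \cref{theo:transfer-function}; for (i)$\Rightarrow$(ii): closed graph, concatenation over time windows, then a frequency-side argument), but in the direction (i)$\Rightarrow$(ii) there is a genuine gap at the step you dismiss with ``Plancherel turns its boundedness into the pointwise frequency estimate''. The concatenation/translation argument only yields an estimate of the form $\|C\tilde{x}\|_{H^{-\eta_1-\eta_2}(0,+\infty;Y)} \leq K\|\tilde{u}\|_{L^2(0,+\infty;U)}$ (this is \cref{coro:L2-H-eta}, and already its proof needs the windowed estimate \cref{prop:L2-H-eta-cut}, which uses the regularity shift \cref{lem:reg-shift} and the admissibility of $B$, not just exponential stability: the output on each window mixes the current input with the semigroup evolution of earlier states in $X^\inte_{-\eta_1}$). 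The space $H^{-\eta}(0,+\infty;Y)$ is a dual space, not a Fourier-defined one, and you cannot pass to the Bessel potential space $H^{-\eta}(\R;Y)$ by extending by zero: extension by zero is bounded in these negative-order norms only in the restricted ranges of \cref{coro:ext-zero} (it already fails at order $1/2$), whereas $\eta_1+\eta_2$ ranges over $[0,2]$. Bridging this half-line-to-line passage is precisely the hard part of the paper's proof (\cref{prop:h-eta-est-io}), which requires a case analysis in $\eta_1+\eta_2$, integrated variables, an ad hoc elliptic-regularity argument for $L=-(\d^2/\d t^2)-2(\d/\d t)+4$, and the interpolation/duality identities of \cref{lem:int-H10-L2,lem:yet-another-interpolation-lemma}. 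Your proposal offers no substitute for this step, so as written the time-domain hypothesis cannot be converted into a usable frequency-side bound.

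Two further points. First, even with an $H^{-\eta}(\R;Y)$-estimate in hand, the pointwise operator-norm bound on the symbol does not follow from Plancherel alone, since the estimate only controls the symbol applied to Laplace/Fourier transforms of causal inputs; the paper finishes via the Paley--Wiener theorem and the representation theorem for shift-invariant operators on $L^2(0,+\infty)$ from \cite{Wei94} (equivalently, one can test the multiplier on inputs of the form $t\mapsto e^{-pt}v_0$), and your argument needs an analogous step. Second, your proposed construction of the input-output map (``dual of the admissibility of $C$'') corresponds to what the paper actually proves as \cref{lem:reg-shift}, combined with the hypothesis \cref{eq:hyp-Z} and \cref{lem:u-cut-Z} to make $Cx$ classically defined for smooth compactly supported inputs; that part of your sketch is recoverable, but it is an argument by interpolation and differentiation rather than pure duality, and it should be spelled out before the symbol identification on the line $\re p=\sigma$ can be justified.
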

 \Cref{theo:sample-IO}, whose detailed version is \cref{theo:transfer-function} below, is designed with applications to partial differential equations in mind. It allows us to turn sharp time-domain regularity available in the literature into frequency-domain estimates that, in many cases of interest, improve on those obtained solely by means of functional calculus and trace theory.

\subsection{Motivation and applications to partial differential equations}
\label{sec:intro-edp}

Having reviewed our contributions in the abstract setting, we now 
turn to partial differential equations. 
Admissible control and observation operators often translate ``hidden regularity'' properties; see, e.g., \cite[Remark 2.10] {Cor07book}.
 For instance, given a domain $\Omega$ in $\R^d$, $d \geq 2$, consider the wave equation $(\partial_t^2 - \Delta)w = 0$ posed in $\Omega \times (0, T)$  with homogeneous Dirichlet boundary condition $w = 0$ on $\partial \Omega \times (0, T)$. A relatively elementary differential multiplier argument reveals that all finite-energy solutions $w$ have 
a Neumann boundary trace
 $\partial_{\vec{n}} w $ that is well-defined in $L^2(\partial \Omega \times (0, T))$;
see \cite[Theorem 2.2]{Kom94book}. In other words, the Neumann trace defines an admissible observation operator for the wave group on $H^1_0(\Omega) \times L^2(\Omega)$. It follows from a duality argument that Dirichlet boundary data $u \in L^2(\partial \Omega \times (0, T)$) generate solutions in the (weaker but natural) state space $L^2(\Omega) \times H^{-1}(\Omega)$. For a full account of the Dirichlet problem, the reader is referred to \cite{LasLio86}.

\para{Boundary and interior regularity of waves under Neumann control} The case of inhomogeneous Neumann boundary conditions is more delicate.
It was first shown by Lasiecka and Triggiani that, in general,  Neumann boundary data $u \in L^2(\partial \Omega \times (0, T))$ fail to produce finite-energy solutions; see in particular  \cite[Theorem 2.3]{LasTri90} for a counterexample. In a series of papers \cite{LasTri81,LasTri90,LasTri91}, they investigated the regularity of solutions to the wave equation and their boundary traces under Neumann input in the case of a sphere or a rectangle (using direct eigenfunction expansions) and a general smooth domain (using pseudodifferential calculus). In the smooth case, these results were further refined by Tataru in \cite{Tat98}.
Define the ``loss of derivative'' parameter $\eta$ as follows:
\begin{equation}
\label{eq:cases-eta}
\eta \triangleq 1/3 ~\mbox{for general (smooth)}~\Omega, \quad \eta \triangleq 1/4 ~\mbox{if $\partial\Omega$ is flat}, \quad
\eta \triangleq 1/6 ~\mbox{if $\partial \Omega$ is concave}.
\end{equation}
For the precise meaning of ``flat'' and ``concave'', we refer the reader to \cite{Tat98}. What follows is a slightly modified\footnote{See \cref{rem:reg-wave} below.} version of \cite[Theorem 9]{Tat98}.
\begin{theo}[\!\!\cite{Tat98}]
\label{theo:wave-neumann} Assume that $\Omega$ is smooth. For all $u \in L^2(\partial \Omega \times (0, T))$, the corresponding solution $w$ to the initial boundary value problem
\begin{equation}
\label{eq:wave-neumann}
(\partial_t^2 - \Delta)w = 0 \quad \mbox{in}~ \Omega \times (0, T), \quad \partial_{\vec{n}}w = u \quad \mbox{on}~\partial \Omega \times (0, T), \quad (w, \partial_t w)|_{t = 0} = 0,
\end{equation}
satisfies
\begin{equation}
\label{eq:wave-neumann-regularity}
(w, \partial_t w)|_{t = T} \in H^{1-\eta}(\Omega) \times H^{-\eta}(\Omega), \quad w|_{\partial \Omega} \in H^{1-2\eta}(\partial \Omega \times (0, T)).
\end{equation}
\end{theo}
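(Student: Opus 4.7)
The plan is to deduce this statement from Tataru's original microlocal regularity theorem for the Neumann wave problem, namely \cite[Theorem 9]{Tat98}, rather than reprove it from scratch. Tataru's result is formulated with arbitrary finite-energy initial data and $L^2$ boundary input on the full lateral boundary, and gives precisely the regularity \cref{eq:wave-neumann-regularity} for the interior values and the Dirichlet trace. To reduce to the homogeneous initial condition $(w,\partial_t w)|_{t=0}=0$, I would split any solution into a free part, driven by the initial data and propagated by the wave group, and a controlled part that carries the boundary input. The free part is continuous in time with values in the finite-energy space $H^1_0(\Omega)\times L^2(\Omega)$ and has a well-defined $H^{1/2}$ Dirichlet trace on $\partial\Omega\times(0,T)$ by the standard hidden regularity for the Dirichlet wave group. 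Subtracting this contribution from Tataru's bound on the full solution yields exactly the claimed estimates for the purely boundary-driven solution.

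To motivate the exponents in \cref{eq:cases-eta}, I would recall in one paragraph the geometric microlocal picture underlying Tataru's analysis. Near the lateral boundary, the wavefront set of $w$ decomposes into an elliptic region (where the solution is smooth), a hyperbolic region (where bicharacteristics are transverse to $\partial\Omega\times(0,T)$ and propagation of singularities gives no loss), and a glancing region (where rays are tangent to the boundary). The loss of derivatives is entirely concentrated in the glancing region, where Airy-type symbol estimates quantify the cost of solving the boundary value problem. The worst case is the gliding regime on a generic smooth boundary, producing the $1/3$ loss; under a sign condition on the second fundamental form that forbids gliding (concavity), glancing rays touch the boundary only transversally, yielding $1/6$; on flat portions one recovers the intermediate $1/4$ via standard oscillatory-integral bounds for the half-space problem.

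The main obstacle, and the heart of the cited theorem, is the fact that the Dirichlet trace sits in $H^{1-2\eta}$ rather than in the $H^{1/2-\eta}$ one might naively expect from combining interior regularity with the trace theorem. This reflects a microlocal cancellation between the incoming and outgoing contributions at the boundary and is obtained by Tataru through a careful symbolic analysis of the Neumann-to-Dirichlet operator near glancing points. In my write-up I would therefore restrict myself to citing \cite[Theorem 9]{Tat98} for this boundary-trace bound and only verify the elementary reduction to zero initial data described above, noting in the footnoted remark the precise minor differences (essentially the presentation of the functional-analytic framework) between Tataru's original statement and the version used here.
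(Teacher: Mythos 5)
Your plan treats the interior part of \cref{eq:wave-neumann-regularity} as if it were literally contained in \cite[Theorem 9]{Tat98}, and this is where the genuine gap lies. As recorded in \cref{rem:reg-wave}, Tataru's theorem gives the interior regularity only in the space--time form $w \in H^{1-\eta}(\Omega \times (0,T))$ (together with the boundary trace bound), not the fixed-time statement $(w,\partial_t w)|_{t=T} \in H^{1-\eta}(\Omega) \times H^{-\eta}(\Omega)$. Since $w$ is merely $H^{1-\eta}$ in time with values in $L^2(\Omega)$, one cannot evaluate $(w,\partial_t w)$ at $t = T$ in the claimed topology without using the equation: the passage from the space--time class to $\C([0,T], H^{1-\eta}(\Omega)) \cap \C^1([0,T], H^{-\eta}(\Omega))$ is exactly the ``slight modification'' the theorem's footnote refers to, and it is not a matter of presentation. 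The paper supplies this bridge at the end of \cref{sec:cutoff}: one lifts to $v = (1-\Delta_N)^{-\eta}w$, observes that $v \in L^2(0,T;H_1)$ while $\ddot{v} \in L^2(0,T;H_{-1})$ by the equation, invokes the Lions--Magenes intermediate derivative theorem to get $\dot{v} \in L^2(0,T;L^2(\Omega))$, and then applies Strauss' continuity theorems for second-order evolution equations to obtain $v \in \C([0,T],H^1(\Omega)) \cap \C^1([0,T],L^2(\Omega))$, which translates back into the stated regularity for $w$. Your proposal contains no substitute for this argument, so the interior half of \cref{eq:wave-neumann-regularity} remains unproved.

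Two smaller points. First, the reduction to zero initial data you describe is unnecessary and runs in the wrong direction: the statement here already assumes $(w,\partial_t w)|_{t=0}=0$, so if Tataru's result covers general finite-energy data it specialises immediately; no splitting into free and controlled parts, and in particular no appeal to hidden regularity for the Dirichlet problem (which concerns the Neumann trace of Dirichlet solutions, not the Dirichlet trace of homogeneous-Neumann solutions), is needed. Second, the microlocal discussion of the exponents in \cref{eq:cases-eta} is fine as motivation but carries no proof content; citing \cite{Tat98} (and \cite{LasTri91} for the rectangle, as the surrounding text does) is the intended treatment of that part.
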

Tataru's theorem is sharp in the sense that, in each case described by \cref{eq:cases-eta}, one can construct geometries where \cref{eq:wave-neumann-regularity} would fail to hold for any better  exponent $\eta$. By \cite[Theorem A]{LasTri91},
when $\Omega$ is a rectangle in $\R^2$,
 the results of \cref{theo:wave-neumann} also hold with $\eta = 1/4 + \eps$ for arbitrary $\eps > 0$.

\para{Transfer function asymptotics for waves}
We shall combine our semigroup-theoretic apparatus with the above regularity results to obtain new high-frequency bounds for the Neumann-to-Dirichlet wave transfer function.  In what follows, $\Delta_N$ denotes the $L^2(\Omega)$-realisation of the Laplacian with zero Neumann boundary condition, $\gamma$ is the trace operator and $\gamma^\ast$ is its adjoint.

\begin{theo} \label{theo:wave-neumann-FQ}
Assume that $\Omega$ is either a rectangle, a smooth bounded domain or a suitable\footnote{Here, largely for simplicity, we take $\Omega$ to be either a half-space or an unbounded domain with bounded boundary.} smooth unbounded domain.
As  $|\lambda| \to + \infty$, $\lambda \in \R$,
\begin{subequations}
\label{eq:neumann-dirichlet-FQ}
\begin{align}
\label{eq:neumann-dirichlet-L2}
&\|\gamma ((1 + \i \lambda)^2 - \Delta_N)^{-1}\gamma^\ast\|_{\L(L^2(\partial \Omega))} = O(|\lambda|^{2\eta - 1}), \\
&\|\gamma ((1 + \i \lambda)^2 - \Delta_N)^{-1}\gamma^\ast\|_{\L(L^2(\partial \Omega), H^1(\partial \Omega))} = O(|\lambda|^{2\eta}).
\end{align}
\end{subequations}
\end{theo}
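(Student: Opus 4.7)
The plan is to recognise \cref{theo:wave-neumann} as providing precisely the time-domain regularity data needed to invoke the input--output framework of \cref{theo:sample-IO} (and its precise version \cref{theo:transfer-function}) on the Neumann wave system.

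I would begin by casting the Neumann wave equation in semigroup form on the energy space $X \simeq H^1(\Omega) \times L^2(\Omega)$ (quotiented by constants when $\Omega$ is bounded). The generator $A$ generates a $C_0$-group, so the right-invertibility assumption of \cref{theo:B-sample-FQ} is automatic. The Neumann lifting $B \in \L(L^2(\partial\Omega), X_{-1})$ and the Dirichlet trace $C \in \L(X_1, L^2(\partial\Omega))$ are chosen so that a Green's identity gives $C(p - A)^{-1}B = \gamma(p^2 - \Delta_N)^{-1}\gamma^\ast$ upon setting $p = 1 + \i\lambda$. Standard elliptic regularity for $-\Delta_N$ then identifies, for $|s|$ small enough, the abstract scales $X^{\fra}_s$ and $X^{\inte}_s$ with $H^{1+s}(\Omega) \times H^s(\Omega)$.

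Under this identification, the first conclusion of \cref{theo:wave-neumann}---namely $(w, \partial_t w)(T) \in H^{1-\eta}(\Omega) \times H^{-\eta}(\Omega)$---says precisely that $B$ satisfies the control-side hypothesis of \cref{th:B-sample} with $\eta_1 = \eta$. The observation-side hypothesis for $C$ with $\eta_2 = \eta$ is obtained symmetrically, either from a hidden-regularity counterpart of Tataru's theorem applied to the homogeneous Neumann system, or by a duality argument exploiting the essential skew-adjointness of the wave generator. The second conclusion of \cref{theo:wave-neumann}, the sharper spacetime trace regularity $w|_{\partial \Omega} \in H^{1-2\eta}(\partial \Omega \times (0, T))$, then supplies the input--output bound that drives \cref{theo:sample-IO}. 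At this stage the two estimates in \cref{eq:neumann-dirichlet-FQ} are read off as follows. The temporal part $w|_{\partial \Omega} \in H^{1-2\eta}(0, T; L^2(\partial \Omega))$ converts by Plancherel in time into the decay $O(|\lambda|^{2\eta-1})$ of $\gamma((1+\i\lambda)^2 - \Delta_N)^{-1}\gamma^\ast$ in $\L(L^2(\partial \Omega))$, which is \cref{eq:neumann-dirichlet-L2}. For the $\L(L^2(\partial \Omega), H^1(\partial \Omega))$ bound, I would combine the spatial part $L^2(0, T; H^{1-2\eta}(\partial \Omega))$ with the elliptic regularising structure of the NtD map---the operator factorises through a one-derivative gain on the boundary---to extract the extra power of $|\lambda|$ and arrive at the $O(|\lambda|^{2\eta})$ rate.

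The main technical obstacle is separating the temporal and spatial halves of Tataru's mixed spacetime bound into statements compatible with the purely temporal Bochner--Sobolev framework underlying \cref{theo:sample-IO}, and in particular justifying the extrapolation from $H^{1-2\eta}(\partial \Omega)$ up to $H^1(\partial \Omega)$ on the target side. The rectangle case falls outside Tataru's smooth pseudodifferential setting, so I would treat it separately via the eigenfunction-based refinement of \cite{LasTri91}, reconstructing $H^s(\partial \Omega)$ edge-by-edge; the unbounded cases require only choosing $\sigma = 1$ strictly above the real spectrum of $A$ and defining the abstract scales from the shifted generator.
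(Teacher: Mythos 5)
Your route to the first bound \cref{eq:neumann-dirichlet-L2} is essentially the paper's: recast \cref{eq:wave-neumann} as a collocated second-order system with control $\gamma^\ast$ and velocity-trace output, use Tataru's interior regularity to get $\eta$-admissibility of $\gamma^\ast$ (the identification of the abstract scale with $H^{1-\eta}(\Omega)\times H^{-\eta}(\Omega)$ only needs interpolation and $\eta<1/2$, not elliptic regularity for $\dom(\Delta_N)$, which matters for the Lipschitz rectangle), obtain the observation-side $\eta$-admissibility by the collocated duality of \cref{theo:collocated-SO}, and then feed the \emph{temporal} part of the boundary regularity, $\partial_t w|_{\partial\Omega}\in H^{-2\eta}(0,T;L^2(\partial\Omega))$, into \cref{theo:TF-SO} to read off the $O(|\lambda|^{2\eta-1})$ rate. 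The rectangle needs no edge-by-edge work: \cite[Theorem A]{LasTri91} gives the regularity in exactly the same format with $\eta=1/4+\eps$, and the abstract machinery applies verbatim.

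For the second bound there is a genuine gap. The framework of \cref{theo:sample-IO}/\cref{theo:transfer-function} converts \emph{time} regularity of $Y$-valued outputs into decay of $\L(U,Y)$-valued transfer functions; to reach $\L(L^2(\partial\Omega),H^1(\partial\Omega))$ by that route you would need $w|_{\partial\Omega}\in H^{-2\eta}(0,T;H^1(\partial\Omega))$, which Tataru's mixed statement does not provide — the spatial regularity it contains is only $H^{1-2\eta}(\partial\Omega)$, and the paper explicitly notes (remark after \cref{coro:add-tang-reg}) that the spacetime bound controls tangential derivatives only in $H^{-2\eta}(\partial\Omega\times(0,T))$, a strictly larger space than $H^{-2\eta}(0,T;L^2(\partial\Omega))$. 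Your appeal to a ``one-derivative gain'' of the Neumann-to-Dirichlet map is exactly where the missing quantitative input sits: that gain is not free of $\lambda$, and without a wavenumber-explicit constant it cannot produce the $O(|\lambda|^{2\eta})$ rate. The paper closes this by a Rellich-identity-based, $k$-explicit a priori estimate for the Helmholtz equation (\cref{lem:helmhotz}, after Spence), packaged as \cref{lem:comp-H1-L2}: the $\L(L^2(\partial\Omega),H^1(\partial\Omega))$-norm of the resolvent is bounded by $(1+|\lambda|)$ times its $\L(L^2(\partial\Omega))$-norm plus interior terms $\|((1+\i\lambda)^2-\Delta_N)^{-1}\gamma^\ast\|$ in $\L(L^2(\partial\Omega),H^1(\Omega))$ and $(1+|\lambda|)$ times the same in $\L(L^2(\partial\Omega),L^2(\Omega))$. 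These interior terms are then $O(|\lambda|^{\eta})$ and $O(|\lambda|^{\eta})$ respectively, by $\eta$-admissibility of $\gamma^\ast$ (\cref{theo:control-SO}) and a pairing argument, and together with the already-proved $L^2$-bound everything is $O(|\lambda|^{2\eta})$. Some $\lambda$-explicit elliptic trace estimate of this kind (or the interpolation/intermediate-derivative detour mentioned after \cref{theo:reg-schro}) is indispensable; your proposal as written does not supply it.
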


The exponents in \cref{eq:wave-neumann-regularity} are optimal, at least in the smooth case; as a matter of fact, one could use our results to deduce \cref{theo:wave-neumann} from \cref{theo:wave-neumann-FQ}. On the other hand, a ``geometry-agnostic'' approach relying solely on continuity of the trace map from $H^{1/2+\eps}(\Omega)$ into $L^2(\partial\Omega)$, $\eps > 0$,  would lead to a strictly worse parameter $\eta = 1/2 + \eps$ in \cref{eq:neumann-dirichlet-FQ}.
The reader may notice a resemblance between
 \cref{eq:neumann-dirichlet-FQ} and
 wavenumber-explicit Neumann-to-Dirichlet bounds for  Helmhotz-type equations, as considered in scattering theory; see, e.g., \cite{Spe14,BasSpe15}.
The transfer function asymptotics of \cref{theo:wave-neumann-FQ}, and especially \cref{eq:neumann-dirichlet-L2}, have a wide range of applications, some of which are presented in this manuscript.
They are especially useful in, e.g., applying ``semi-abstract'' techniques in control and stability  problems, as we now illustrate.



\para{Energy decay rates for waves with Neumann feedback} 
For a class of ``well-posed, conservative'' systems, Russell's principle \cite{Rus78survey} states that exact \emph{observability} of the uncontrolled dynamics is \emph{equivalent} to  uniform exponential stability  under a natural output feedback. In boundary stabilisation problems this control often takes the form of a dissipative boundary condition, and the well-posedness condition essentially amounts to continuity on $L^2(\partial \Omega \times (0, T))$ of a certain  ``boundary to boundary'' (input-output) operator. For \cref{eq:wave-neumann}, this operator would be the map $u \mapsto \partial_t w|_{\partial \Omega}$, which, as we have just seen,  fails to meet that requirement; see also the discussion in \cite{LasTri03}.
In the context of \emph{non-uniform} stability, the analysis in \cite{AmmTuc01,AnaLea14,ChiPau23,KleWan23arxiv} reveals that Russell's principle remains partially effective:  relaxed but quantified observability properties yield (possibly non-optimal) energy decay rates for smooth feedback solutions. As shown in \cite{ChiPau23}, in the case of an unbounded control operator, this procedure requires knowledge of transfer function bounds such as \cref{eq:neumann-dirichlet-FQ}.
Following that spirit, we combine our results  with observability properties established in \cite{RamTak05,TenTuc09} to derive explicit decay rates for the boundary-stabilised wave equation on a rectangle.
\begin{theo}
\label{theo:wave-neumann-stab} Let $\Omega$ be a rectangle of $\R^2$ and let $b \in L^\infty(\partial \Omega)$, $b \geq 0$, be such that, for some nonempty open subset $\Op$ of $\partial \Omega$ and some positive constant $b_0$, $b \geq b_0$ a.e.\  on $\Op$. Let $(w_0, w_1) \in H^1(\Omega) \times H^1(\Omega)$ be initial data satisfying $\Delta w_0 \in L^2(\Omega)$ and the compatibility condition $\partial_\vn w_0 = -b^2 w_1$ on $\partial \Omega$, and let $w$ be the corresponding solution to the initial boundary value problem
\begin{equation}
\label{eq:wave-neumann-stab}
(\partial_t^2 - \Delta)w = 0 \quad \mbox{in}~ \Omega \times (0, +\infty), \quad \partial_{\vec{n}}w = - b^2 \partial_t w \quad \mbox{on}~\partial \Omega \times (0, +\infty), \quad (w, \partial_t w)|_{t = 0} = (w_0, w_1).
\end{equation}
Then, for any fixed $\eps > 0$,
\begin{equation}
E(t; w) \triangleq
\frac{1}{2} \int_\Omega |\partial_t w(x, t)|^2 + \|\vec{\nabla} w(x, t)\|^2 \, \d x  = o(t^{-2/3 + \eps}), \quad t \to + \infty.
\end{equation}
\end{theo}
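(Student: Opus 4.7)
The plan is to reduce the decay claim to a resolvent estimate on the imaginary axis for the closed-loop generator, and then combine the transfer function asymptotics of \cref{theo:wave-neumann-FQ} with polynomial observability results for the free wave equation on a rectangle. Concretely, I cast \cref{eq:wave-neumann-stab} as a first-order problem $\dot{x} = \Acl x$ on the natural energy space $\H$ (essentially $H^1(\Omega) \times L^2(\Omega)$, possibly quotiented by constants). Let $A$ denote the skew-adjoint free wave generator with homogeneous Neumann boundary condition, and let $B \colon L^2(\partial \Omega) \to \H_{-1}$ be the control operator implementing inhomogeneous Neumann data. The compatibility condition $\partial_\vn w_0 = -b^2 w_1$ places $(w_0, w_1)$ in $\dom \Acl$, where $\Acl = A - B b^2 B^{\ast}$. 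The operator $B$ fits the framework of \cref{th:B-sample} with parameter $\eta = 1/4 + \epsilon$ by \cref{theo:wave-neumann} in the rectangle case; m-dissipativity of $\Acl$ on $\H$ and the inclusion $\i \R \subset \rho(\Acl)$ (via unique continuation, which is elementary on a rectangle) are then standard.

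\textbf{Resolvent identity, transfer function, and observability.} Given $y \in \H$ and $x$ solving $(\i s - \Acl) x = y$, the decomposition $\Acl = A - B b^2 B^{\ast}$ combined with the boundary energy relation $\|b B^{\ast} x\|_{L^2(\partial \Omega)}^2 = \re \langle y, x \rangle_\H$ and a Neumann-series expansion of $(I + b B^{\ast} (\i s - A)^{-1} B b)^{-1}$ allows one to express $\|x\|_\H$ in terms of $\|y\|_\H$, the transfer function $B^{\ast} (\i s - A)^{-1} B$, and the undamped resolvent $(\i s - A)^{-1}$. Up to a cosmetic shift, $B^{\ast} (\i s - A)^{-1} B$ coincides with the Neumann-to-Dirichlet operator of \cref{theo:wave-neumann-FQ}, and the rectangle case (with $\eta = 1/4 + \epsilon$) yields $\|B^{\ast} (\i s - A)^{-1} B\|_{\L(L^2(\partial \Omega))} = O(|s|^{-1/2 + \epsilon})$; in particular, the Neumann series converges for $|s|$ large. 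On the other hand, the Diophantine analysis underlying \cite{RamTak05, TenTuc09} provides, through the semigroup Hautus test, a polynomial observability estimate $\|x\|_\H \leq C |s|^{k} (\|(\i s - A) x\|_\H + \|b B^{\ast} x\|_{L^2(\partial \Omega)})$ for $|s|$ large, with an explicit exponent $k$ arbitrarily close to the sharp value available on the rectangle.

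\textbf{Resolvent bound and conclusion.} Inserting the observability into the identity of the previous step and balancing the transfer function bound against the observability exponent yields $\|(\i s - \Acl)^{-1}\|_{\L(\H)} = O(|s|^{3 + \epsilon'})$ on the imaginary axis. The quantified Borichev-Tomilov theorem---in the $o$-refinement of Batty-Chill-Tomilov and Rozendaal-Seifert-Stahn---then produces the announced $o(t^{-2/3 + \epsilon})$ decay of $E(t; w)$ for initial data in $\dom \Acl$, which is exactly the regularity encoded by the compatibility condition in the statement.

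\textbf{Main obstacle.} The delicate point is that $B$ is not admissible in the classical sense, so the resolvent identity of the second step must be interpreted in appropriately weakened Sobolev scales; the careful bookkeeping between $X^\inte$ and $X^\fra$ furnished by \cref{th:B-sample,theo:sample-IO} is what turns the formal algebra into a bona fide norm estimate in $\L(\H)$. Reading off the sharp value of the observability exponent $k$ from the Diophantine analysis of \cite{RamTak05, TenTuc09} is a separate technical issue, but it is this value---together with the improved transfer function bound from \cref{theo:wave-neumann-FQ}, which is strictly better than what a naive trace theorem would provide---that pins down the final decay exponent $-2/3 + \epsilon$.
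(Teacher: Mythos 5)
Your overall plan (closed-loop resolvent bound on $\i\R$, improved transfer function growth with $\eta=1/4+\eps$, observability on the rectangle from \cite{RamTak05,TenTuc09}, then Borichev--Tomilov) is the same skeleton as the paper's, and your target bound $\|(\i s-\Acl)^{-1}\|_{\L(\H)}=O(|s|^{3+\eps'})$ is indeed what the paper obtains. However, the step that is supposed to produce this bound does not work as written. Your Neumann-series expansion of $(I+bB^\ast(\i s-A)^{-1}Bb)^{-1}$ requires the \emph{undamped} resolvent $(\i s-A)^{-1}$ and the transfer function $B^\ast(\i s-A)^{-1}B$ at purely imaginary frequencies; but $A$ is skew-adjoint with an unbounded discrete spectrum on $\i\R$ (the Neumann wave eigenvalues of the rectangle), so these objects are undefined at infinitely many $s$ and blow up nearby, and the estimate of \cref{theo:wave-neumann-FQ} is only valid on the shifted line $\re p=1$ -- the shift is not ``cosmetic'' and the bound does not transfer to $\i\R$. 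Consequently the Neumann series does not converge uniformly for large $|s|$, and the whole perturbative identity collapses precisely where it is needed. The paper circumvents this in \cref{prop:SO-res}: it never inverts $L-\lambda^2$, but combines the non-uniform Hautus \emph{inequality} with the shifted resolvent $R(\lambda)=((1+\i\lambda)^2+L)^{-1}$ through the auxiliary variable $v=w+\i\lambda R(\lambda)DD^\ast w$, the dissipation identity $|\lambda|\,\|D^\ast w\|_U^2\le\|L(\lambda)w\|_H\|w\|_H$, and Young's inequality; no series inversion is involved.

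The second gap is the observability input. You invoke a wave-type Hautus estimate $\|x\|_\H\le C|s|^k(\|(\i s-A)x\|_\H+\|bB^\ast x\|)$ with an unspecified exponent $k$, and then assert that ``balancing'' gives $O(|s|^{3+\eps'})$ without any arithmetic. Since $\Op$ is an arbitrary nonempty open subset of $\partial\Omega$, the geometric control condition fails and the sharp wave-Hautus loss is not zero, so the final exponent hinges entirely on the value of $k$ -- which you never determine. What the paper actually uses (\cref{lem:hautus-rectangle}) is exact observability of the \emph{Schr\"odinger} group on the rectangle \cite{TenTuc09}, combined with admissibility of $\mathds{1}_\Op\gamma$ for that group \cite{RamTak05} and Miller's equivalence \cite{Mil05}, yielding a Hautus test for the pair $(\i\Delta_N,\mathds{1}_\Op\gamma)$ with \emph{frequency-independent} constants ($\alpha=\beta=0$ in \cref{theo:apriori-decay-waves}); it is exactly this uniformity, together with $\eta=1/4+\eps$ in the transfer function bound, that produces $E(t;w)=o(t^{-1/(1+2\eta)})=o(t^{-2/3+\eps})$. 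In addition, the passage from the second-order resolvent bound to semigroup decay is not routine here: $0$ is a Neumann eigenvalue, $b^2$ may vanish on most of the boundary, and the control operator is maximally unbounded, so the paper relies on the Riesz-projection decomposition and contraction renormalisation of \cref{theo:decay-rate-w} (following \cite{KleWan23arxiv}) rather than the standard quotient-by-constants shortcut you allude to. As it stands, your argument establishes neither the resolvent bound nor the exponent bookkeeping that the theorem requires.
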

The energy decay in \cref{theo:wave-neumann-stab} is unlikely to be sharp and the exponent remains $1/3 + \eps$ off the \emph{a priori} rate $o(t^{-1})$ that one may expect under Schr\"odinger group observability; see \cite[Part II]{AnaLea14}, and also \cite[Remark 2.6]{AbbNic15}. The latter reference suggests that, specifically when $b$ is constant and nonzero on some side of the rectangle and zero elsewhere, the energy decay rate $o(t^{-1}$) can be established through separation of variable and explicit Fourier expansion of the feedback solutions.
However, for general control function $b$ the result of \cref{theo:wave-neumann-stab} is, to our knowledge, new and also the best possible among currently available blackbox methods. Note that, for general (smooth) bounded domains $\Omega$, \cref{theo:apriori-decay-waves} below gives \emph{a priori} energy decay rates under the so-called non-uniform Hautus test.

\subsection{Additional background and outline}

In the context of abstract linear systems on Hilbert spaces, admissible control and observation operators in their current form have been introduced in \cite{Wei89,Wei89b}, but the concept can be traced back to \cite{HoRus83} and also appears in, e.g., \cite{Sal87}. Admissibility in the Banach space setting is also a topic of interest \cite{HaaKun07,JacSch19,AroGlu24arxiv,PreSch24arxiv}; note however that most work in that direction relies on various special features of the semigroup, the state space or the space of controls. Several  relaxed admissibility-type properties can be found in the literature: for instance, a weighted  property called ``$\alpha$-admissibility'', which is tailored to analytic semigroups, is considered in \cite{HaaLeM05,HaaKun07}; also, \cite{LatRan05} introduces ``$\beta$-admissibility'' in order to deal with controls and observations that cannot be modelled as $A$-bounded operators.

In \cite{GuiOpm23,GuiLog24}, the authors investigate causal translation-invariant linear operators between vector-valued Bessel potential spaces and their relationship with polynomially-growing  multipliers in the frequency domain.
Their results bear some similarity with our \cref{theo:sample-IO}, although the setting and the techniques required differ substantially.
Our framework should make it possible to combine regularity properties in \cref{theo:sample-IO} and semigroup stability  in order to verify the input-output ``Sobolev stability'' property introduced in \cite{GuiLog24}; see also \cite{GuiLog24b} for related system-theoretic applications.




The remainder of the paper is organised as follows. \Cref{sec:preliminary} contains notation, definitions and, most importantly, the precise construction of the semigroup-theoretic Sobolev scales under consideration in our work, along with their basic properties. In \Cref{sec:main-res} we state the complete versions of our main results. \Cref{sec:proof} is devoted to their proofs. In \cref{sec:second-order} we apply our main results to a special class of  second-order  systems, which in particular encompasses the wave model \cref{eq:wave-neumann}. In \cref{sec:wave}, we carry out our case study of the wave equation with Neumann boundary data \cref{eq:wave-neumann}. In addition to proving \cref{theo:wave-neumann-FQ}, we also give sharp regularity results for the Schr\"odinger equation posed in the half-space and subject to a Neumann boundary condition. \Cref{sec:decay} is devoted to observability-based energy decay results for second-order systems under dissipative feedback; there, we prove \cref{theo:wave-neumann-stab}, which pertains to the wave equation with Neumann boundary damping, but the abstract results of \cref{sec:stability} may be of independent interest to the reader. Finally, \cref{sec:conc} concludes our paper, and \cref{sec:add-hilb,sec:cutoff} contain various technical results used throughout the manuscript.

\section{Preliminaries}
\label{sec:preliminary}

\subsection{Notation and definitions}
\label{sec:def}



\para{Basic notation} In this paper, $\R^+$ and $\N$ stand for the sets of nonnegative reals and integers, respectively. We make standard use of asymptotic ``big-O'' and ``small-o'' notation. The letter $\C$ indicates various spaces of continuous (often vector-valued) functions. Also, unless stated otherwise, $K, K', \dots$ denote generic positive constants that may change from line to line but remain independent of all variables of interest in a given situation.

\para{Vector spaces}
Throughout this work, all vector spaces are over the ground field $\Co$ of complex numbers.
The norm of a normed vector space $E$ is denoted by $\|\cdot \|_E$. We use brackets $\langle \cdot, \cdot \rangle$ to write sesquilinear forms: if $E$ is a Hilbert space, $\langle \cdot, \cdot \rangle_E$ denotes its scalar product; if $E$ is a Banach space, $E^\ast$ stands for its topological antidual (equipped with the dual norm) and $\langle \cdot, \cdot \rangle_{E^\ast, E}$ indicates the antiduality pairing between $E^\ast$ and $E$. If $E$ and $F$ are Banach spaces, $\L(E, F)$ denotes the space of bounded linear operators between $E$ and $F$, equipped with the operator norm, which makes it a Banach space. Given a Banach space $E$, $\L(E)$ denotes the Banach algebra $\L(E, E)$. If $A : \dom(A) \subset E \to E$ is an (in general unbounded) linear operator, $\dom(A)$, $\ker(A)$ and $\ran(A)$ denote its domain, kernel and range, respectively. The resolvent set $\rho(A)$ of such an operator $A$ is the set of all complex numbers $\mu$ such that $\mu - A$ possesses a two-sided inverse in $\L(E)$. Here and in the sequel,
when $\mu$ is a complex number we shall use the same symbol to denote  the linear operator $\mu \id$, where $\id$ is the identity.

\para{Interpolation spaces} Let $E_0$ and $E_1$ be Hilbert spaces such that $E_1$ is a subspace of $E_0$ with continuous and dense embedding.
Given $0 < \theta <1$, $[E_1, E_0]_\theta$ denotes the (unique) \emph{geometric} interpolation space of exponent $\theta$ between $E_0$ and $E_1$; see \cite{McC92} in the separable case and \cite[Section 3]{ChaHew15}\footnote{
A corrigendum available online completes this reference.
} in general. As explained in \cite{ChaHew15},
this is, up to renormalisation, the same Hilbert space as produced by standard quadratic interpolation techniques, namely:
\begin{itemize*}
  \item the $K$- and $J$-methods with parameter $q = 2$ \cite[Chapter 3]{Ber76book};
  \item the methods of trace spaces with parameters $p = q = 2$ \cite{Lio61};
  \item complex interpolation \cite[Chapter 4]{Ber76book}; 
  \item diagonalisation of a positive operator $\Lambda$ on $E_0$ such that $\dom(\Lambda) = E_1$, see \cite[Section 2.1, Chapter 1]{LioMag68book} or \cite{ChaHew15}.
\end{itemize*}  For convenience of notation, we also let $[E_1, E_0]_1 \triangleq E_0$ and $[E_1, E_0]_0 \triangleq E_1$. 
Given two other Hilbert spaces $F_0$ and $F_1$ with continuous and dense embedding $F_1 \hookrightarrow F_0$ and a linear map  $S$ that is continuous from $E_0$ into $F_0$ and also from $E_1$ into $F_1$, then $S$ continuously maps $[E_1, E_0]_{\theta}$ into $[F_1, F_0]_\theta$ for any $0 \leq \theta \leq1$, and moreover
\begin{equation}
\label{eq:geo-inter}
\|S\|_{\L([E_1, E_0]_{\theta}, [F_1, F_0]_\theta)} \leq \|S\|_{\L(E_1, F_1)}^{1-\theta} \|S\|_{\L(E_0, F_0)}^\theta.
\end{equation}
Note the absence of indeterminate constant in \cref{eq:geo-inter}. When using operator norm estimates of this kind, we will write for short that we interpolate between $E_0 \to  F_0$ and $E_1 \to F_1$.



\para{Vector-valued Sobolev spaces} Let $E$ be a Hilbert space and let $- \infty \leq a < b \leq + \infty$.
The space of $E$-valued test functions, i.e., infinitely differentiable functions $\varphi : (a, b) \to E$ with compact support, is denoted by $\D(a, b; E)$.
 The space $\D'(a, b; E)$ of $E$-valued distributions is the space of linear mappings from $\D(a, b; E)$ into $E$ that are continuous in the following sense: $u \in \D'(a, b; E)$ if and only if for every compact set $K \subset (a, b)$, there exist a real number $M > 0$ and an integer $m \geq 0$ such that the inequality $\|u[\varphi]\|_E \leq M \sum_{j=0}^{m} \sup_{t \in K} \|(\d^j / \d t^j) \varphi(t)\|_E$ holds for all $\varphi \in \D(K, E)$.
Having identified (classes of) locally Bochner-integrable $E$-valued functions on $(a, b)$ with \emph{regular} distributions (see, e.g., \cite[Section 1.1, Chapter III]{Ama95book}), for $m \in \N$, we define the $L^2$-based $E$-valued Sobolev space 
\begin{equation}
\label{eq:def-sob}
H^m(a, b; E) \triangleq \left \{ u \in \D'(a, b; E) : \frac{\d^ju}{\d t^j} \in L^2(a, b; E),~ j = 0, \dots, m \right \},
\end{equation}
endowed with its natural Hilbert norm.
In \cref{eq:def-sob}, the derivatives are \emph{a priori} understood in the distributional sense: given $u \in \D'(a, b; E)$, $(\d / \d t)u$ is defined in $\D'(a, b; E)$ by $(\d / \d t)u[\varphi] = - u[(\d / \d t)\varphi]$ for all $\varphi \in \D(a, b; E)$. Following \cite[Theorem 1.2.2, Chapter III]{Ama95book} or \cite[Lemma 3.1, Chapter II]{Tem12book},
we can also characterise $H^1(a, b; E)$ as the space of all $u \in L^2(a, b; E)$ such that, for some $v \in L^2(a, b; E)$ and $u_0 \in E$, $u(\tau) = u_0 + \int_0^t v(t) \, \d t$ for a.e.\  $\tau \in (a, b)$. For such $u$, $(\d / \d t)u = v$ in $\D'(a, b; E)$ and we may identify $u$ with its unique continuous representative.
For reals $s = m + \theta$, $m \in \N$, 
$0 < \theta < 1$,
we let
\begin{equation}
\label{eq:def-inter}
H^{s}(a, b; E) \triangleq [H^{m+1}(a, b; E), H^m(a, b; E)]_{1 - \theta}.
\end{equation}
Up to equivalence of norms, the space defined by \cref{eq:def-inter} coincides with the $E$-valued Sobolev-Slobodeckii space $W^{s,2}(a, b; E)$ as used in, e.g., \cite{Sim90,Ama97}. For $s \geq 0$, we define $H^s_0(a, b; E)$ as the closure of $\D(a, b; E)$ in  $H^s(a, b; E)$; we then let
\begin{equation}
H^{-s}(a, b; E) \triangleq (H^s_0(a, b; E))^\ast.
\end{equation}
We may (and will) identify $L^2(a, b; E)$ with its antidual through the Riesz isomorphism. Furthermore, given real numbers $0 \leq s_1 \leq s_2$, we recall that $H_0^{s_2}(a, b; E) \hookrightarrow H_0^{s_1}(a, b; E) \hookrightarrow L^2(a, b; E)$ with continuous and dense embeddings. This results in a chain of continuous and dense embeddings
\begin{equation}
\label{eq:chain-H-spaces}
H^{s_2}_0(a, b; E) \hookrightarrow H_0^{s_1}(a, b; E) \hookrightarrow L^2(a, b; E) \hookrightarrow H^{-s_1}(a, b; E) \hookrightarrow H^{-s_2}(a, b; E), \quad 0 \leq s_1 \leq s_2,
\end{equation}
where the antiduality pairing between $H^{-s_2}(a, b; E)$ and $H^{s_2}_0(a, b; E)$ extends that between $H^{-s_1}(a, b; E)$ and $H^{s_1}_0(a, b; E)$, which in turn extends the scalar product in $L^2(a, b; E)$.
In particular, 
\begin{equation}
\label{eq:dual-norm}
\|u\|_{H^{-s}(a, b; E)} 
 = \sup_{\varphi \in \D(a, b; E)\setminus \{0\}} \frac{|\langle u, \varphi\rangle_{L^2(a, b; E)}|}{\|\varphi\|_{H^s(a, b; E)}}, \quad u \in L^2(a, b; E), \quad s \geq 0.
\end{equation}
Finally, Sobolev spaces enjoy the \emph{restriction property}: given $s \in \R$ and two nonempty open intervals $I$ and $J$ of $\R$ with $J \subset I$, the restriction map $u \to u|_{J}$ is continuous from $H^s(I, E)$ into (in fact, onto) $H^s(J, E)$.

\begin{rem}
It is more common to define negative order Sobolev spaces as \emph{duals} of $H^s_0$-spaces instead of antiduals. The distinction is insignificant in practice as antidual and dual spaces are trivially isomorphic via the antilinear map $\langle u, \cdot \rangle \mapsto \overline{\langle u, \cdot\rangle}$. We find that antiduals are more convenient when dealing with Gelfand triples and adjoint operators.
\end{rem}

 \para{The Fourier transform} The Schwartz space of smooth rapidly decreasing $E$-valued functions on $\R$  is denoted by $\S(\R, E)$ and endowed with its canonical Fr\'echet space topology.
The space $\S'(\R, E)$ of $E$-valued tempered distributions is the space of continuous linear maps from $\S(\R, E)$ into $E$. It is equipped with the topology of uniform convergence on bounded sets.
The Fourier transform
\begin{equation}
\F[\varphi](\omega) \triangleq \int_\R e^{- \i \omega t} \varphi(t) \, \d t, \quad \omega \in \R, \quad \varphi \in \S(\R, E),
\end{equation}
is an isomorphism from $\S(\R, H)$ onto itself. Furthermore its extension to $\S'(\R, E)$, defined by
$
\F[u]\varphi \triangleq u \F[\varphi]
$ for all $u \in \S'(\R, E)$ and $\varphi \in \S(\R, E)$,
is also an isomorphism from $\S'(\R, E)$ onto itself. 
 With the Fourier transform  we can characterise the spaces $H^{s}(\R, E)$ as  Bessel potential spaces.
More precisely, using, e.g.,
 \cite[Theorems 4.4.2 and 4.5.1, Chapter VII]{Ama19book} 
we see that, for all $s \in \R$,
\begin{equation}
\label{eq:bessel-spaces}
H^{s}(\R, E) = \left \{ u \in \S'(\R, E) :   (1 + \cdot^2)^{s/2} \F[u] \in L^2(\R, E) \right \}
\end{equation}
 and $(\int_\R (1 + \omega^2)^s \|\F[\cdot](\omega)\|^2_E \, \d \omega)^{1/2}$ is an equivalent norm on $H^{s}(\R, E)$. Note that, for  \cref{eq:bessel-spaces} to make sense when $s < 0$, we implicitely rely on the identification between $\S'(\R, E)$ and the topological antidual of $\S(\R, E)$; see \cite[Corollary 1.4.10 and Theorem 1.7.5, Appendix]{Ama19book}.





\para{The Laplace integral} Finally, let $u : (0, +\infty) \to E$ be a Bochner-measurable function that is integrable on $(0, \tau)$ for all (finite) $\tau > 0$. We define the Laplace transform $\hat{u}$ of $u$
by
\begin{equation}
\hat{u}(p) \triangleq \lim_{\tau \to +\infty} \int_0^\tau e^{-pt} u(t) \, \d t
\end{equation}
for any $p \in \Co$ such that the limit exists in $E$. The reader is referred to \cite[Part A, Section 1.4]{AreBat01book} for more details on the convergence of the Laplace integral. Denote by $\L$ the operator sending $u$ to $\hat{u}$.
 If $u \in L^1(0, + \infty; E)$, having extended $u$ with zero for negative time, we have
\begin{equation}
\hat{u}(\i \omega) = \L[u](\i \omega) =  \F[u](\omega), \quad \omega \in \R.
\end{equation}


\subsection{Semigroup-theoretic framework}
\label{sec:adm-semi}
\label{sec:pre-sta}

Let $A : \mathrm{dom}(A) \to X$ be the infinitesimal generator of a strongly continuous semigroup $\{S_t\}_{t \geq 0}$ on the Hilbert space $X$. In that follows, $\mu$ is a fixed element of the resolvent set $\rho(A)$ of $A$; if possible, it is convenient to choose $\mu = 0$.
 The domain $\dom(A)$ of $A$ is equipped with the norm $\|(\mu - A) \cdot \|_X$, which is equivalent to the graph norm, and the resulting Hilbert space is denoted by $X_1$. The first extrapolation space $X_{-1}$ is the completion of $X$ with respect to the norm $\| (\mu - A)^{-1}\cdot\|_{X}$ and is itself a Hilbert space; see, e.g., \cite[Section II.5.a]{EngNag00book}. We have the chain of continuous and dense embeddings 
\begin{equation}
X_{1} \hookrightarrow X \hookrightarrow X_{-1}.
\end{equation}
We use the same symbols to denote the unique extensions of $A$ in $\L(X, X_{-1})$ and $A^{-1}$ in $\L(X_{-1}, X)$.
Furthermore,
$\{S_t \}_{t \geq 0}$ uniquely extends to a strongly continuous semigroup on $X_{-1}$, which we likewise denote by the same symbol.
More generally, we consider the standard discrete Sobolev tower $\{X_n\}_{n\in \Z}$ (i.e., the extrapolated discrete power scale generated by $A$): if $n \in \N$, $X_n$ denotes $\dom(A^{n})$ equipped with the norm $\|(\mu - A)^n\cdot\|_X$, while $X_{-n}$ stands for the completion of $X$ with respect to the norm $\|(\mu - A)^{-n}\cdot\|_X$. The semigroup $\sg{S}$ and its generator $A$, after extension or restriction, act naturally on $\{X_n\}_{n\in \Z}$; we refer the reader to \cite[Section II.5.a]{EngNag00book} and the 
 diagrams therein for more details. In particular, for any $n \in \Z$, we have $X_n \hookrightarrow X_{n-1}$ with continuous and dense embedding, and $\mu - A$  is an isometric isomorphism from $X_n$ onto $X_{n-1}$.

In order to fill the gaps in $\{X_n\}_{n\in\N}$, we now introduce two continuous scales of intermediate spaces
 based on the functional calculus of $\mu -A$ and interpolation in Hilbert spaces, respectively. 
\begin{enumerate}[label=(\roman*)]
\item
The operator $\mu-A$ possesses fractional powers
 $(\mu -A)^{s}$, $s \in \R$; see for instance \cite[Section II.2.9]{Ama95book} or \cite[Section II.5.c]{EngNag00book}.
For $s \geq 0$,
 we let
\begin{equation}
X_s^\fra \triangleq \dom((\mu -A)^{s}),
\end{equation}
equipped with the norm $\|(\mu -A)^s \cdot \|_X$ (the superscript ``fr'' stands for ``fractional''). For $s < 0$, we define $X_s^\fra$ as the completion of $X$ with respect to the norm $\|(\mu-A)^{s} \cdot\|_X$.
 This produces Hilbert spaces. 
For all real numbers $s_1 \leq s_2$, we have $X_{s_2}^\fra \hookrightarrow X_{s_1}^\fra$ with continous and dense embedding, and
\begin{equation}
(\mu -A)^{s_2 - s_1}: X_{s_2}^\fra \to X_{s_1}^\fra
~\mbox{is an isometric isomorphism.}
\end{equation}

\item Let $s \in \R$ and write $s = n + \theta$ with $n \in \Z$ and $0 \leq \theta < 1$. We define $X_{s}^\inte$ by
\begin{equation}
X_s^{\mathrm{in}} \triangleq [X_{n+1}, X_n]_{1 - \theta}
\end{equation}
(here, ``in'' stands for ``interpolation''). Again, for all reals $s_1 \leq s_2$, we have $X_{s_2}^\inte \hookrightarrow X_{s_1}^\inte$ with continous and dense embedding. Interpolating between discrete values, we see that, for any $s \in \R$, $\mu - A$ is an isometric isomorphism between $X^\inte_{s +1}$ and $X^\inte_{s}$. This allows us to show, with the reiteration property, that
$\{X^\inte_s\}_{s \in \R}$ is an \emph{exact interpolation scale}, i.e., for $s_2 \geq s_1$ and $0 < \theta < 1$, having set $s \triangleq (1- \theta)s_1 + \theta s_2$, we have
\begin{equation}
X_s^\inte = [X_{s_2}^\inte, X_{s_1}^\inte]_{1 -\theta}
\end{equation}
with \emph{equality} of norms.
\end{enumerate}
\begin{rem}
A different choice of $\mu \in \rho(A)$ leads to the same spaces as above up to equivalence of norms.
\end{rem}

Let $X_s$ denote either $X_s^\fra$ or $X_s^\inte$. Just as in the discrete setting, the semigroup $\sg{S}$ behaves well on the continuous scale $\{X_s\}_{s \in \R}$.  Given $s \in \R$, after extension or restriction, $\sg{S}$ as a collection of operators in $\L(X_s)$ is well-defined, satisfies the semigroup property and is strongly continuous. Furthermore, the domain of its infinitesimal generator as a semigroup on $X_\theta$ is precisely $X_{\theta+1}$, that is,
\begin{equation}
\left\{ x \in X_s: \lim_{t \to 0^+} \frac{S_t x - x}{t} ~\mbox{exists in}~X_s \right\} = X_{s+1}.
\end{equation}
In  the case $X_s = X_s^\fra$, these facts readily follow from
commutativity  of the semigroup with any fractional power of $\mu -A$. When $X_s = X_s^\inte$, one can again use interpolation between discrete values; see also \cite[Section V.2]{Ama00} for a more detailed and  general study of semigroups in interpolation-extrapolation scales.
\begin{rem}
\label{rem:int-fra}
The spaces $X_s^\inte$ and $X_s^\fra$ need not coincide. However, they are equal (with equivalence of norms) when $A$ generates a contraction semigroup \cite[Corollary 4.30]{Lun09book}.  Another sufficient condition for $X_s^\inte$ and $X_s^\fra$ to coincide is that $-A$ possesses a bounded $H^\infty$-calculus \cite[Theorem 16.3]{Yag10book}. On the other hand,  \cite[Chapter 16, Section 1.5]{Yag10book} exhibits generators $A$ of analytic semigroups for which the spaces $X_{1/2}^\inte$ and $X_{1/2}^\fra$ are not equal.
\end{rem}



Finally, denote by $\sigma_0$ the growth bound of $\sg{S}$, that is, the infimum of all reals $\sigma$ such that there exists $K > 0$ with $\|S_t\|_{\L(X)} \leq K e^{\sigma t}$ for all $t \geq 0$.
The resolvent $p \mapsto (p - A)^{-1}$ is bounded in operator norm on closed half-planes of the form $\{ p \in \Co,~ \re p \geq \sigma \}$, where $\sigma > \sigma_0$.
It follows from the resolvent identity that, for each $\sigma > \sigma_0$,
\begin{equation}
\label{eq:est-res}
\|(\sigma + \i \omega - A)^{-1}\|_{\L(X, X_1)} =  O(|\omega|), \quad \omega \in \R, \quad |\omega| \to + \infty.
\end{equation}
 We can then deduce from \cref{eq:est-res} that, for $\sigma > \sigma_0$ and $0 < s < 1$,
\begin{equation}
\label{eq:est-res-theta}
\|(\sigma + \i \omega - A)^{-1}\|_{\L(X, X_s)} = O(|\omega|^s), \quad \omega \in \R, \quad |\omega| \to + \infty.
\end{equation}
In the case $X_s = X_s^\fra$, \cref{eq:est-res-theta} is obtained by using the moment inequality \cite[Proposition 6.6.4]{Haa06book}. If $X_s = X_s^\inte$, then the estimate can be derived by interpolating between the pairs $\{X, X\}$ and $\{X, X_1\}$. Note also that, on \emph{bounded} subsets of such half-planes, the resolvent is bounded in $\L(X_{s_2}, X_{s_1})$ for all $s_1 \geq s_2$. We shall make use of this fact without further comment and focus on high-frequency asymptotics.



\section{Main results for operator semigroups}
\label{sec:main-res}

\subsection{Admissibility of observation operators}


We consider the  Cauchy problem associated with the semigroup $\sg{S}$:
\begin{equation}
\label{eq:cauchy-pb-stat}
\dot{x} = Ax, \quad x(0) = x_0.
\end{equation}
Recall that \emph{classical} solutions $x$ to \cref{eq:cauchy-pb-stat} are those originating from initial data $x_0$ in 
$X_1$.
 Such solutions have regularity $x \in \C(\R^+, X_1) \cap \C^{1}(\R^+, X)$.

Let $Y$ be another Hilbert space  and consider a map $C \in \L(X_1, Y)$, which we will interpret as an $A$-bounded observation operator.
Having noted that $Cx$ is well-defined in $\C(\R^+, Y)$ for any solution to \cref{eq:cauchy-pb-stat} with initial data in $X_1$,
let us recall the definition of admissibility of $C$ as an observation operator for the semigroup $\sg{S}$.
\begin{defi}[Admissible observation operators]\label{def:admissible-observation} The  operator $C$ is \emph{admissible} if there exist $K, T > 0$ such that solutions $x$ to \cref{eq:cauchy-pb-stat} satisfy
\begin{equation}
\label{eq:std-admi}
\|Cx\|_{L^2(0, T; Y)} \leq K \|x_0\|_X, \quad x_0 \in X_1.
\end{equation}
\end{defi}
If \cref{eq:std-admi} holds for some $T > 0$ then it holds for all $ T > 0$, up to a change of constant $K$. In particular, if $C$ is admissible, for any $T > 0$, the map $x_0 \mapsto Cx$ uniquely extends as a bounded linear operator from $X$ into $L^2(0, T; Y)$. 

Our first  theorem 
shows the equivalence of three conditions generalising \cref{eq:std-admi} and  establishes a connection with frequency-domain growth of the operator-valued function $p \mapsto C(p - A)^{-1}$. We recall that $\sigma_0$ is the growth bound of $\sg{S}$.
\begin{theo}[Admissibility of observation operators] 
\label{theo:inter-group}Let $0 \leq \eta \leq 1$. 
Consider solutions $x$ to the Cauchy problem \cref{eq:cauchy-pb-stat} with initial data $x_0$.
The following  are equivalent:
\begin{enumerate}[label=(\roman*),series=inter-group]
  \item \label{it:adm-eta-fra} \emph{(Smoother data, fractional power.)} 
There exist $K, T > 0$ such that
  \begin{equation}
\|Cx\|_{L^2(0, T; Y)}\leq K \|x_0\|_{X^\fra_\eta}, \quad x_0 \in X_1;
  \end{equation}
  \item \label{it:adm-eta} \emph{(Smoother data, interpolation.)} There exist $K, T > 0$ such that
  \begin{equation}
\|Cx\|_{L^2(0, T; Y)}\leq K \|x_0\|_{X^\inte_\eta}, \quad x_0 \in X_1;
  \end{equation}
  \item \label{it:alt-adm} \emph{(Distributional outputs.)} There exist $K, T > 0$ such that
    \begin{equation}
  \|Cx\|_{H^{-\eta}(0, T; Y)} \leq K \|x_0\|_X, \quad x_0 \in X_1.
  \end{equation}
\end{enumerate}
Furthermore, if \cref{it:adm-eta-fra,it:adm-eta,it:alt-adm} are satisfied then:
\begin{enumerate}[resume*=inter-group]
  \item \label{it:FQ-cond} \emph{(Frequency-domain condition.)} For some (equivalently, all) $\sigma > \sigma_0$,
 \begin{equation} 
  \label{eq:FQ-cond-C}
  \|C(\sigma + \i \omega - A)^{-1}\|_{\L(X, Y)} =
  O(|\omega|^\eta),
  \quad \omega \in \R, \quad |\omega| \to + \infty.
  \end{equation}
\end{enumerate}
Finally, under the additional assumption that $\sg{S}$ is left-invertible,\footnote{
The semigroup $\sg{S}$ is \emph{left-invertible} (resp. \emph{right-invertible}) if for some (hence all) $t>  0$, the operator $S_t$ possesses a left inverse (resp. right inverse) in $\L(X)$.
} 
\cref{it:adm-eta-fra,it:adm-eta,it:alt-adm,it:FQ-cond} are all equivalent.
\end{theo}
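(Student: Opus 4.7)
The plan is to organize the proof in three blocks. First I would prove the equivalence of the time-domain conditions (i), (ii), (iii) without any further assumption on $\sg{S}$; then derive the frequency bound (iv) from any of them; and finally close the loop under left-invertibility via a Plancherel-type argument.

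At the core of the first block is the intertwining identity $\Phi \circ (\mu - A) = (\mu - \partial_t) \circ \Phi$, where $\Phi : x_0 \mapsto C S_{\cdot} x_0$ denotes the observability map; this identity is immediate on $X_1$. Iterating and passing to fractional powers, one expects that applying $(\mu - A)^{-\eta}$ in the spatial variable corresponds, on trajectories, to applying $(\mu - \partial_t)^{-\eta}$ in the time variable. To prove (i) $\Leftrightarrow$ (iii), I would first recast (i), via the isometric isomorphism $(\mu - A)^{-\eta} : X \to X_\eta^\fra$ and the fact that fractional powers of $\mu - A$ commute with $\sg{S}$, as the standard admissibility of the shifted operator $\tilde{C} \triangleq C(\mu - A)^{-\eta}$ for $\sg{S}$ on $X$. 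Invoking the Bessel-potential characterization of $H^{-\eta}$ norms, condition (iii) then reduces to an $L^2$-bound after the application of $(\mu - \partial_t)^{-\eta}$, which, via the intertwining, matches the admissibility inequality for $\tilde{C}$. For (ii) $\Leftrightarrow$ (iii), I would pass to the dual picture: the adjoint $\Phi^\ast$ is, up to time-reversal, a control-to-state map for $C^\ast$ on the adjoint semigroup, and the exact-interpolation structure of $\{X_s^\inte\}$ together with the identification $(X_\eta^\inte)^\ast = X_{-\eta}^\inte$ reduces (ii) $\Leftrightarrow$ (iii) to the dual version of the same time-space intertwining.

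For (iii) $\Rightarrow$ (iv) I would use a Laplace-transform argument: testing the integral representation $(\sigma + i\omega - A)^{-1} x_0 = \int_0^{\infty} e^{-(\sigma + i\omega)t} S_t x_0 \, dt$ against $C^\ast \psi$ with $\psi \in Y$, and truncating the exponential with a smooth cutoff on $(0, T)$, one obtains $\langle C(\sigma + i\omega - A)^{-1} x_0, \psi\rangle_Y$ as $\langle Cx, \chi(\cdot) e^{-(\sigma - i\omega)\cdot}\psi\rangle_{L^2(0,T;Y)}$ up to an exponentially small error. Applying (iii) and noting that the $H_0^\eta$-norm of the test function scales like $|\omega|^\eta \|\psi\|_Y$ as $|\omega| \to \infty$ yields the desired $O(|\omega|^\eta)$ growth. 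For the converse (iv) $\Rightarrow$ (iii) under left-invertibility, I would use that a left-invertible semigroup admits a group extension, or equivalently a group dilation on an enlarged Hilbert space. Plancherel on $\R$ then turns the pointwise frequency bound (iv) into an $L^2$-bound on the Fourier side, and weighting by $(1 + \omega^2)^{-\eta/2}$ recovers the $H^{-\eta}$ estimate in the time domain.

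The main technical obstacle will be to make the time-space intertwining rigorous in the first block: the fractional operator $(\mu - \partial_t)^{-\eta}$ is naturally defined on $\R$, but semigroup trajectories live only on $[0, +\infty)$, so boundary contributions at $t = 0$ and $t = T$ must be handled carefully; this is precisely the role of the cutoff techniques of \cref{sec:cutoff}. A secondary difficulty is showing (ii) $\Leftrightarrow$ (iii) without invoking a fractional power of $\mu - A$ acting on the interpolation scale, since $X_\eta^\fra$ and $X_\eta^\inte$ need not coincide in general; here it is the exact-interpolation nature of $\{X_s^\inte\}$, rather than any functional-calculus identity, that makes the equivalence go through.
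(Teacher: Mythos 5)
Your three-block architecture (time-domain equivalences first, then (i)--(iii) $\Rightarrow$ (iv), then the converse under left-invertibility) matches the paper's, and the intertwining $\Phi\circ(\mu-A)^{-\eta}=(\mu-\partial_t)^{-\eta}\circ\Phi$ you put at the centre of the fractional-power part is essentially the time-domain twin of the contour-integral cancellation the paper uses there, so that block is in the right spirit. The real problems are elsewhere. First, your step (ii) $\Leftrightarrow$ (iii) has no mechanism: passing to the adjoint semigroup merely converts it into the corresponding control-operator statement (the analogue appearing in \cref{theo:input-group}) for $A^\ast$ with control operator $C^\ast$ --- which is exactly what the paper later deduces \emph{from} this theorem by that same duality --- and the dual statement is of identical difficulty, because the space involved, $(X_\eta^\inte)^\ast\simeq X^{\d,\inte}_{-\eta}$, is again an interpolation space and not a fractional domain, so ``the same time-space intertwining'' does not apply to it. What is actually needed (and what the paper supplies) is, for (iii) $\Rightarrow$ (ii), interpolation of the output map between $X\to H^{-\eta}(0,+\infty;Y)$ and $X_1\to H^{1-\eta}(0,+\infty;Y)$ combined with the identification $[H^{1-\eta},H^{-\eta}]_{1-\eta}=L^2$ (\cref{lem:L2-rec}), and, for (ii) $\Rightarrow$ (iii), the integrated variable $z$ together with the translation-semigroup (Besov-type) characterisation of interpolation spaces (\cref{theo:interpolation-sg}, \cref{prop:est-z}); appealing to ``exactness of the scale'' does not substitute for either ingredient.

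Second, the left-invertibility block would fail as described. There is no extension or dilation of a general left-invertible semigroup to a group that you can simply invoke (left-invertible $S_t$ need not be surjective, and even granting such an extension you would still have to transport $C$ and the bound (iv) to it); more decisively, ``Plancherel plus the weight $(1+\omega^2)^{-\eta/2}$'' is applied to the wrong object: (iv) is a Fourier-multiplier bound, so it controls $C(S\ast g)$ for $g\in L^2(\R,X)$, not the orbit $Cx$ itself --- estimating $\|C(\sigma+\i\omega-A)^{-1}x_0\|_Y$ by the operator norm and integrating the weight produces a divergent frequency integral. The whole role of left-invertibility in the paper is to recover the orbit from the convolution, via $x(t)=t^{-1}(S\ast x)(t)$ on $(1,+\infty)$ followed by the left inverse $S^{\mathrm{left}}_{-1}$ to reach $(0,1)$; that step (or the route of \cite[Corollary 5.2.4]{TucWei09book}) is absent from your plan. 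Two further inaccuracies: in (iii) $\Rightarrow$ (iv) the truncation error is not exponentially small --- the tail over $(T,+\infty)$ has to be summed over time windows using the semigroup property, as in \cref{lem:X-to-Cx} --- and for $\eta>1/2$ the pairing requires test functions in $H^\eta_0(0,T;Y)$, i.e.\ vanishing at $t=0$, so the contribution near the origin cannot be discarded as you claim; relatedly, in your first block the identification of the $H^{-\eta}$-norm with an $L^2$-norm after $(\mu-\partial_t)^{-\eta}$ is not exact at $\eta=1/2$ (the half-line calculus yields the Lions--Magenes space rather than $H^{1/2}_0=H^{1/2}$) and extension by zero is no longer bounded on $H^{-\eta}$ once $\eta>1/2$. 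These are precisely the points the paper resolves with the antiderivative $C\tilde{z}$, \cref{coro:ext-zero} and the windowed estimates; cutoffs alone will not.
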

\begin{rem}
If any of the conditions \cref{it:adm-eta-fra,it:adm-eta,it:alt-adm} holds for some $T > 0$, it must hold for all $T > 0$. This is straightforward for \cref{it:adm-eta-fra,it:adm-eta} and follows from our proof for \cref{it:alt-adm}.
\end{rem}
The case $\eta = 0$ corresponds to classical admissibility:  \cref{it:adm-eta-fra,it:adm-eta,it:alt-adm} are identical statements and \cref{it:FQ-cond} reads as uniform boundedness of $p \mapsto C(p - A)^{-1}$ on  vertical lines $\sigma + \i \R$. The fact that admissibility implies this boundedness property, and the converse implication in the left-invertible case, was already known; see, e.g., \cite[Theorem 4.3.7 and Corollary 5.2.4]{TucWei09book}. On the other hand, if $\eta = 1$ then the properties in \cref{theo:inter-group} are always true due to $A$-boundedness of $C$. This can be seen directly: \cref{it:adm-eta-fra,it:adm-eta} are trivially satisfied, one can check \cref{it:alt-adm} similarly as in \cref{lem:output-H-1} below and \cref{it:FQ-cond} 
follows from \cref{eq:est-res}, which is a direct consequence of the resolvent identity.
To our knowledge, the results of \cref{theo:inter-group} in the case $0 < \eta < 1$ are completely new. In the light of \cref{sec:pre-sta}, \cref{it:adm-eta-fra,it:adm-eta} amount to admissibility of $C$ for the semigroup $\sg{S}$ \emph{restricted} to the spaces (of ``smoother'' initial data) $X^\fra_\eta$ and $X^\inte_\eta$, respectively. We stress again that those spaces are different in general. \Cref{it:alt-adm} means that, even for solutions at the $X$-level, i.e., \emph{mild} solutions, the output $Cx$ is well-defined in a distributional sense in $H^{-\eta}$-spaces.
\begin{rem}
It is interesting to compare \cref{theo:inter-group} with \cite[Theorem 6.7.3]{Haa06book}, which, specialised to our setting, reads as follows: for $0 < \theta < 1$, the condition $\|C(\lambda -A)^{-1}\|_{\L(X, Y)} = O(\lambda^{\theta-1})$, $\lambda \in \R$, $\lambda \to + \infty$, is \emph{equivalent} to $C$ being bounded from the \emph{real} interpolation space $(X_1, X)_{1 - \theta, 1}$ into $Y$. Thus  real line resolvent growth characterises a certain form of relative boundedness of $C$ with respect to $A$. In contrast,
growth conditions on vertical lines such as \cref{eq:FQ-cond-C} need not imply additional boundedness properties of $C$.
\end{rem}

\begin{rem}
To some extent, one may shift the equivalent conditions of \cref{theo:input-group} up and down across the Sobolev scale: see for instance the proof of \cref{prop:frac} and the tools therein for the spaces $\{X^\fra_s\}_{s \in \R}$, and \cref{lem:reg-shift} below for $\{X^\inte_s\}_{s \in \R}$.
\end{rem}

Before moving on to the next section,
let us introduce some terminology, mostly for the sake of simplifying further statements.
\begin{defi}[$\eta$-admissible observation operators]
\label{def:eta-output} Let $0 \leq \eta \leq 1$. We say that the operator $C$ is \emph{$\eta$-admissible}  if it satisfies any of the conditions \cref{it:adm-eta-fra,it:adm-eta,it:alt-adm} of \cref{theo:inter-group}.
\end{defi}



\subsection{Admissibility of control operators
}
\label{sec:adm-input}

Let $U$  be another Hilbert space and $B \in \L(U,X_{-1})$. We will say that $B$ is a $A$-bounded control operator.
Consider  the inhomogeneous Cauchy problem
\begin{equation}
\label{eq:cauchy-u}
\dot{x} = Ax + Bu, \quad x(0) = 0.
\end{equation}
Given $T > 0$,
input functions $u$ in (say) $L^2(0, T; U)$ produce solutions $x$ in $\C([0, T], X_{-1})$, with
\begin{equation}
\label{eq:conv}
x(t) =  \int_0^t S_{t - s}Bu(s) \, \d s, \quad  0 \leq t \leq T.
\end{equation}
Let $\sigma > \sigma_0$.
If $u$ has further regularity, say $u \in H^1(0, T; U)$, integrating by parts in \cref{eq:conv} yields
\begin{equation}
\label{eq:conv-ipp}
x(t) =
e^{-\sigma t}(A - \sigma)^{-1}(S_t Bu(0)
- Bu(t)) + \int_0^t S_{t-s}(A^{-1}- \sigma)^{-1}B(\dot{u}(s) - \sigma u(s)) \, \d s, \quad 0 \leq t \leq T,
\end{equation}
which shows that for such inputs, $x$ belongs to $\C(\R^+, X)$. With that in mind, we recall the definition of admissibility of $B$ as a control operator for the semigroup $\sg{S}$.
\begin{defi}[Admissible control operators] The operator $B$ is \emph{admissible} if there exist $K, T > 0$ such that solutions $x$ to \cref{eq:cauchy-u} satisfy
\begin{equation}
\label{eq:B-adm-est}
\|x(T)\|_X \leq K \|u\|_{L^2(0, T; U)}, \quad u \in H^1(0, T; U).
\end{equation}
\end{defi}
\begin{rem}
\label{rem:quant-adm}
Equivalently, admissibility of $B$ can be defined as follows: for all inputs $u \in L^2(0, T; U)$, the final state $x(T)$, \emph{a priori} defined in the extrapolation space $X_{-1}$, belongs to the original state space $X$. Indeed, the quantified estimate \cref{eq:B-adm-est} then follows from the closed graph theorem.
\end{rem}
Again, the particular choice of $T$ does not matter, and when admissibility holds, the map $u \mapsto x(T)$ uniquely extends as a bounded linear operator from $L^2(0, T; U)$ into $X$. Furthermore, a simple translation argument shows that solutions $x$ generated by inputs $u \in L^2(0, T; U)$ in fact satisfy $x \in \C([0, T], X)$.

Our next result is the control operator counterpart to \cref{theo:inter-group} and, likewise, sheds light on situations intermediate between admissibility and mere $A$-boundedness.
\begin{theo}[Admissibility of control operators] 
\label{theo:input-group}
Let $0 \leq \eta \leq 1$. Consider solutions $x$ to the inhomogeneous Cauchy problem \cref{eq:cauchy-u} with controls $u$.
The following are equivalent:
\begin{enumerate}[label=(\roman*'), series=inter-group-input]
\item
\label{it:input-frac}
\emph{(Rougher state, fractional power.)} There exist $K, T > 0$ such that
\begin{equation}
\|x(T)\|_{X^\fra_{-\eta}} \leq K \|u\|_{L^2(0, T; U)}, \quad u \in H^1(0, T; U);
\end{equation}
\item
\label{it:input-int} 
\emph{(Rougher state, interpolation.)} There exist $K, T > 0$ such that
\begin{equation}
\|x(T)\|_{X^\inte_{-\eta}} \leq K \|u\|_{L^2(0, T; U)}, \quad u \in H^1(0, T; U);
\end{equation}
\item
\label{it:smooth-input}
\emph{(Smoother inputs.)} There exist $K, T > 0$ such that
\begin{equation}
  \|x(T)\|_X \leq K \|u\|_{H^\eta(0, T; U)}, \quad u \in H^1(0, T; U).
\end{equation}
\end{enumerate}
Furthermore, if \cref{it:input-frac,it:input-int,it:smooth-input} are satisfied then:
\begin{enumerate}[resume=inter-group-input,label=(\roman*')]
  \item
  \label{it:input-FQ} 
  \emph{(Frequency-domain condition.)} For some (equivalently, all) $\sigma > \sigma_0$,
 \begin{equation} 
  \label{eq:input-FQ}
  \|(\sigma + \i \omega - A)^{-1}B\|_{\L(U, X)} =
  O(|\omega|^\eta),
  \quad \omega \in \R, \quad |\omega| \to + \infty.
  \end{equation}
\end{enumerate}
Finally, under the additional assumption that $\sg{S}$ is right-invertible, \cref{it:input-frac,it:input-int,it:smooth-input,it:input-FQ} are all equivalent.
\end{theo}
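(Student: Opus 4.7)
The plan is to reduce \cref{theo:input-group} to \cref{theo:inter-group} by duality. The adjoint $B^\ast$ of the control operator $B$ is $A^\ast$-bounded and plays the role of an observation operator for the adjoint semigroup $\{S_t^\ast\}_{t \geq 0}$, generated by $A^\ast$ on the antidual space $X^\ast$. With the antiduality conventions of \cref{sec:def}, the fractional and interpolation scales built on $A^\ast$ are canonically identified with the antiduals of the corresponding scales built on $A$, namely $(X^\ast)^\fra_s \simeq (X^\fra_{-s})^\ast$ and $(X^\ast)^\inte_s \simeq (X^\inte_{-s})^\ast$ for every $s \in \R$. The first identification reflects the functional-calculus identity $((\mu - A)^s)^\ast = (\overline{\mu} - A^\ast)^s$, while the second rests on the duality formula $[E_1, E_0]_\theta^\ast \simeq [E_1^\ast, E_0^\ast]_\theta$ for quadratic Hilbert interpolation.

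Next, I would dualize each of the three time-domain conditions. Combining the convolution formula $x(T) = \int_0^T S_{T-s} B u(s) \,\d s$, valid in $X_{-1}$, with the pairing
\begin{equation}
\langle x(T), y\rangle = \int_0^T \langle u(s), B^\ast S_{T-s}^\ast y\rangle_{U^\ast, U} \,\d s,
\end{equation}
the norms $\|x(T)\|_{X^\fra_{-\eta}}$ and $\|x(T)\|_{X^\inte_{-\eta}}$ are computed as suprema of the right-hand side over $y$ in the unit balls of $(X^\ast)^\fra_\eta$ and $(X^\ast)^\inte_\eta$, respectively. A standard duality argument then shows that \cref{it:input-frac,it:input-int} are equivalent to conditions \cref{it:adm-eta-fra,it:adm-eta} of \cref{theo:inter-group} applied to the adjoint system $(A^\ast, B^\ast, \{S_t^\ast\})$. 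Similarly, \cref{it:smooth-input} asserts that $u \mapsto x(T)$ extends boundedly from $H^\eta(0, T; U)$ into $X$, which by the same pairing is equivalent to boundedness of $y \mapsto B^\ast S_{T-\cdot}^\ast y$ from $X^\ast$ into $H^{-\eta}(0, T; U^\ast)$, i.e.\ \cref{it:alt-adm} of \cref{theo:inter-group} for the adjoint system.

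The equivalences of \cref{it:input-frac,it:input-int,it:smooth-input} and the implication to \cref{it:input-FQ} then follow from \cref{theo:inter-group}. For the frequency-domain condition, since $\sigma \in \R$ we have $(\sigma + \i\omega - A)^\ast = \sigma - \i\omega - A^\ast$, whence
\begin{equation}
\|(\sigma + \i \omega - A)^{-1} B\|_{\L(U, X)} = \|B^\ast (\sigma - \i \omega - A^\ast)^{-1}\|_{\L(X^\ast, U^\ast)};
\end{equation}
thus \cref{it:input-FQ} is exactly \cref{it:FQ-cond} for the adjoint system. Finally, $\{S_t\}$ is right-invertible if and only if $\{S_t^\ast\}$ is left-invertible (by dualizing one-sided inverses), so the additional equivalence in the right-invertible case is a direct consequence of the corresponding statement in \cref{theo:inter-group}.

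The main obstacle will be to set up the antidual pairings rigorously across the two continuous scales, in particular to verify that the integral pairing above realises the dual norms on $X^\fra_{-\eta}$ and $X^\inte_{-\eta}$ when $x(T) \notin X$. This will require approximating general controls $u \in L^2$ by smoother ones and testing against a dense subclass of $y$ for which $B^\ast S_{T-\cdot}^\ast y$ is \emph{a priori} a well-defined $L^2$ or Sobolev function; once that continuity is obtained from \cref{theo:inter-group}, the pairing extends by density and yields the desired equivalences.
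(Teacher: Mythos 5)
Your strategy coincides with the paper's: it too identifies $X^\fra_{-\eta}$ and $X^\inte_{-\eta}$ isometrically with the antiduals of the corresponding scales built on $A^\ast$ (\cref{lem:embedding}), transports each condition to its counterpart in \cref{theo:inter-group} for the adjoint system $(A^\ast, B^\ast, \{S_t^\ast\}_{t\geq 0})$ via the input--output duality pairing (\cref{lem:dual-input-output}), handles \cref{it:input-FQ} through $\|(\sigma + \i\omega - A)^{-1}B\|_{\L(U,X)} = \|B^\ast(\sigma - \i\omega - A^\ast)^{-1}\|_{\L(X,U)}$, and uses that right-invertibility of $\{S_t\}_{t \geq 0}$ is equivalent to left-invertibility of $\{S_t^\ast\}_{t\geq 0}$. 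Those parts of your proposal are sound and essentially identical to the paper's argument.

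There is, however, a genuine gap in your treatment of \cref{it:smooth-input}. The space $H^{-\eta}(0,T;U)$ is defined as the antidual of $H^{\eta}_0(0,T;U)$, so the duality pairing only tests against controls vanishing at the endpoints: what you actually obtain from \cref{it:alt-adm} for the adjoint system is the estimate $\|x(T)\|_X \leq K\|u\|_{H^\eta(0,T;U)}$ for $u \in H^1_0(0,T;U)$ (this is \cref{eq:X-H-eta-zero} in the paper), not for all $u \in H^1(0,T;U)$ as \cref{it:smooth-input} demands. For $0 \leq \eta \leq 1/2$ the two statements coincide since $H^\eta_0 = H^\eta$ by \cref{coro:charac-Hs-0}, but for $1/2 < \eta \leq 1$ membership of $H^\eta_0$ forces $u(0) = u(T) = 0$, and your claim that the equivalence follows ``by the same pairing'' breaks down exactly there: the pairing says nothing about controls with nonzero boundary values. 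The paper closes this gap with an additional argument: given $u \in H^1(0,T;U)$, subtract the affine function $u_2$ matching $u(0)$ and $u(T)$ (these traces are controlled in $U$ by $\|u\|_{H^\eta}$ since $\eta > 1/2$), apply the dualised estimate to $u - u_2 \in H^\eta_0(0,T;U)$, and bound $\Phi_T u_2$ directly via the integration-by-parts formula \cref{eq:conv-ipp} and $A$-boundedness of $B$. Some step of this kind is indispensable for the implication from the adjoint condition \cref{it:alt-adm} to \cref{it:smooth-input}; as written, your proposal does not supply it.
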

We make several comments analogous to those following \cref{theo:inter-group}.
The case $\eta = 0$ is already covered in the literature; see, e.g., \cite[Proposition 4.4.6 and Theorem 5.2.2]{TucWei09book}. For $\eta = 1$, the conditions of \cref{theo:input-group} are always satisfied by $A$-boundedness of $B$: \cref{it:input-frac,it:input-int} are immediate, \cref{it:smooth-input} follows from  \cref{eq:conv-ipp} and \cref{it:input-FQ} is obtained by means of resolvent identity.
With a closed graph argument as in \cref{rem:quant-adm}, we may reformulate \cref{it:input-frac,it:input-int} as the range of the input-to-state map $u \mapsto x(T)$ applied to $L^2(0, T; U)$ being contained in the negative order spaces $X_{-\eta}^\fra$ and $X_{-\eta}^\inte$, respectively. As a matter of fact, \cref{theo:input-group} reveals that, if this range is contained in $X^\fra_{-\eta}$ or $X^\inte_{-\eta}$ for some $0 < \eta < 1$, it must in fact be contained in the (in general smaller by \cref{rem:int-fra}) subspace
$X^\fra_{-\eta} \cap X^\inte_{-\eta}$.
On the other hand, \cref{it:smooth-input} means that inputs $\eta$-smoother in time on the (positive) Sobolev scale produce $X$-valued (i.e., finite-energy) solutions.

\begin{defi}[$\eta$-admissible control operators] 
\label{def:eta-input} Let $0 \leq \eta \leq 1$. We say that $B$ is $\eta$-admissible  if it satisfies any of the conditions \cref{it:input-frac,it:input-int,it:smooth-input} of \cref{theo:input-group}.
\end{defi}


\subsection{Transfer function asymptotics}
\label{sec:TF-IO}

We now turn our attention to the properties of the input-output map $u \mapsto Cx$, where $x$ solves the inhomogeneous Cauchy problem with zero initial data \cref{eq:cauchy-u},
in relation to the operator-valued \emph{transfer function}
\begin{equation}
\label{eq:transfer-function}
p \mapsto C(p - A)^{-1}B.
\end{equation}
The previous standing assumptions remain in place. In contrast to our input and output results, we have to resort to an additional 
 hypothesis in order to make sense of \cref{eq:transfer-function}.
Namely, we assume that there exists an intermediate Hilbert space $Z$ satisfying $X_1 \hookrightarrow Z \hookrightarrow X$ with continuous embeddings, and  that
\begin{equation}
\label{eq:hyp-Z}
C \in \L(Z, Y), \qquad \ran((p - A)^{-1}B) \subset Z ~\mbox{for some (hence all)}~p \in \rho(A).
\end{equation}
Since $Z$ is continuously embedded into $X$ and $(p - A)^{-1}B \in \L(U, X)$, a closed graph argument yields $(p - A)^{-1}B \in \L(U, Z)$ for any $p \in \rho(A)$. In particular, this allows us to define the transfer function \cref{eq:transfer-function} as a $\L(U, Y)$-valued function on the open right-half plane. Note that $Z$ need not be invariant under the semigroup.
\begin{rem}
\label{rem:solution-space}
A possible choice of $Z$ is given by the \emph{solution space} $X_1^B \triangleq X_1 + \ran((\mu - A)^{-1}B)$ equipped with the norm
\begin{equation}
 \|x\|^2_{X_1^B} \triangleq \inf \{ \|w\|^2_{X_1} + \|u\|_U^2 : w \in X_1,~ u \in Y,~ x = w + (\mu - A)^{-1}Bu \}.
\end{equation}
It is a Hilbert space \cite[Lemma 4.3.12]{Sta05book} that satisfies our requirements for $Z$. In fact, any suitable space $Z$ must contain $X_1^B$.
\end{rem}
Let $T > 0$; further integrating by parts in \cref{eq:conv-ipp} and using \cref{eq:hyp-Z} show that, when $u \in \D(0, T; U)$, the corresponding solution $x$ to \cref{eq:cauchy-u} satisfies $x \in \C([0, T], Z)$.\footnote{
See the proof of \cref{lem:u-cut-Z} below for details.}
 For such solutions, $Cx$ is defined in a classical, pointwise sense. 

\begin{theo}[Transfer function estimate]
\label{theo:transfer-function}
 Let $0 \leq \eta_1, \eta_2 \leq 1$. Assume that the control and observation operators $B$ and $C$ are $\eta_1$- and $\eta_2$-admissible, respectively.
Consider solutions $x$ to the inhomogeneous Cauchy problem
\cref{eq:cauchy-u} with controls $u$.
The following are equivalent:
\begin{enumerate}[label=(\alph*)]
  \item
  \label{it:L2-H-eta}
  \emph{($L^2$-inputs produce $H^{-\eta}$-outputs.)} There exist $K, T > 0$ such that
  \begin{equation}
  \label{eq:L2-H-eta}
  \|Cx\|_{H^{-\eta_1 - \eta_2}(0, T; Y)} \leq K \|u\|_{L^2(0, T; U)}, \quad u \in \D(0, T; U).
  \end{equation}
  \item 
  \label{it:FQ-IO}
  \emph{(Frequency-domain condition.)} For some (equivalently, all) $\sigma > \sigma_0$,
  \begin{equation}
  \label{eq:FQ-IO}
  \|C(\sigma + \i \omega - A)^{-1}B\|_{\L(U, Y)} = O(|\omega|^{\eta_1 + \eta_2}), \quad \omega \in \R, \quad  |\omega| \to + \infty.
  \end{equation}
\end{enumerate}
 If \cref{eq:FQ-IO} holds for some $\sigma > \sigma_0$, then  \cref{it:L2-H-eta} holds, even without assuming  $\eta_1$- and $\eta_2$-admissibility of $B$ and $C$.
\end{theo}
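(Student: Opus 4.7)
Both directions rest on the Fourier-analytic identity obtained from the convolution representation \cref{eq:conv}. Take $u \in \D(0, T; U)$ and extend it by zero to all of $\R$; let $x$ be the corresponding solution to \cref{eq:cauchy-u}, extended to $\R$ by $x(t) = 0$ for $t < 0$. Since $\sigma > \sigma_0$, the weighted function $e^{-\sigma \cdot}x$ belongs to $L^2(\R, X_{-1})$, and taking the Fourier transform yields
\begin{equation}
\label{eq:FQ-identity-plan}
\widehat{e^{-\sigma \cdot}x}(\omega) = (\sigma + \i \omega - A)^{-1} B \, \widehat{e^{-\sigma \cdot}u}(\omega), \quad \omega \in \R.
\end{equation}
The range condition in \cref{eq:hyp-Z} ensures that the right-hand side is $Z$-valued, so applying the operator $C \in \L(Z, Y)$ produces an analogous identity with $e^{-\sigma \cdot}Cx$ on the left-hand side and the transfer function $C(\sigma + \i \omega - A)^{-1}B$ acting on $\widehat{e^{-\sigma \cdot}u}(\omega)$ on the right.

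For $(b) \Rightarrow (a)$, which is the direction that does not require admissibility, I would combine the above identity with the Bessel potential characterisation \cref{eq:bessel-spaces} and the assumed bound \cref{eq:FQ-IO} to get
\begin{equation}
\|e^{-\sigma \cdot} C x\|^2_{H^{-\eta_1 - \eta_2}(\R, Y)} \leq K \int_\R \|\widehat{e^{-\sigma \cdot} u}(\omega)\|_U^2 \, \d \omega = K \|e^{-\sigma \cdot} u\|^2_{L^2(\R, U)} \leq K' \|u\|^2_{L^2(0, T; U)},
\end{equation}
and then restrict to $(0, T)$, using that multiplication by $e^{\sigma \cdot}$ is bounded on $H^{-\eta_1 - \eta_2}(0, T; Y)$, to recover \cref{eq:L2-H-eta}.

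For the converse $(a) \Rightarrow (b)$, the first---and most delicate---task is to upgrade \cref{eq:L2-H-eta} to a global weighted bound
\begin{equation}
\label{eq:global-est-plan}
\|e^{-\sigma \cdot}Cx\|_{H^{-\eta_1-\eta_2}(\R^+, Y)} \leq K \|e^{-\sigma \cdot} u\|_{L^2(\R^+, U)}, \quad u \in \D(\R^+, U).
\end{equation}
This is where the admissibility hypotheses come in: chopping $\R^+$ into intervals of length $T$, on $[nT, (n+1)T]$ the output $Cx$ splits as an ``input'' contribution, controlled by (a) applied to the restricted input on that interval, plus an ``evolution'' contribution $CS_{\cdot - nT} x(nT)$. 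The $\eta_1$-admissibility of $B$ provides $x(nT) \in X^\fra_{-\eta_1}$ with norm controlled by $\|u\|_{L^2(0, nT; U)}$, while a shifted form of the $\eta_2$-admissibility of $C$---namely the estimate $\|CS_\cdot x_0\|_{H^{-\eta_1 - \eta_2}(0, T; Y)} \leq K \|x_0\|_{X^\fra_{-\eta_1}}$, which should follow from \cref{it:alt-adm} through the regularity shift of \cref{lem:reg-shift}---controls the evolution part. Summing geometrically thanks to the weight $e^{-\sigma \cdot}$ with $\sigma > \sigma_0$ then yields \cref{eq:global-est-plan}.

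Once \cref{eq:global-est-plan} is in hand, I would translate it to the frequency domain via Plancherel and the identity from the first paragraph, obtaining
\begin{equation}
\int_\R (1 + \omega^2)^{-\eta_1 - \eta_2} \|C(\sigma + \i \omega - A)^{-1} B\, f(\omega)\|^2_Y \, \d \omega \leq K^2 \int_\R \|f(\omega)\|^2_U \, \d \omega
\end{equation}
for every $f = \widehat{e^{-\sigma \cdot}u}$ with $u \in \D(\R^+, U)$. By the Paley--Wiener theorem such $f$ are dense in $L^2(\R, U)$, so the operator-valued multiplier $\omega \mapsto (1+\omega^2)^{-(\eta_1+\eta_2)/2}\, C(\sigma + \i \omega - A)^{-1}B$ induces a bounded multiplication operator on $L^2$; standard results identify the norm of such an operator with the essential supremum of the pointwise operator norm of the multiplier, and continuity of the transfer function on $\{\re p \geq \sigma\}$ upgrades the conclusion to a pointwise estimate, yielding \cref{eq:FQ-IO}. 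The main obstacle is precisely the global estimate \cref{eq:global-est-plan}: establishing the required shifted admissibility of $C$ against $X^\fra_{-\eta_1}$-data---the step that synthesises both admissibility levels---is the technical heart of the proof.
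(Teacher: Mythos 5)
Your overall architecture matches the paper's for the direction \cref{it:FQ-IO} $\Rightarrow$ \cref{it:L2-H-eta} (Fourier multiplier from $L^2(\R,U)$ into $H^{-\eta_1-\eta_2}(\R,Y)$, then restriction and removal of the exponential weight), and your windowed decomposition towards the half-line bound is essentially the paper's \cref{prop:L2-H-eta-cut,coro:L2-H-eta} (the paper works with $X^\inte_{-\eta_1}$ rather than $X^\fra_{-\eta_1}$, since \cref{lem:reg-shift} is proved by interpolation, but that is a cosmetic difference). The genuine gap is in the step ``once \cref{eq:global-est-plan} is in hand, translate it to the frequency domain via Plancherel.'' Your estimate is in the dual norm $H^{-\eta_1-\eta_2}(0,+\infty;Y)$, and Plancherel requires control of $(1+\omega^2)^{-(\eta_1+\eta_2)/2}$ times the Fourier transform of the \emph{zero-extension} of $e^{-\sigma\cdot}Cx$, i.e.\ a bound in the Bessel potential space $H^{-\eta_1-\eta_2}(\R,Y)$. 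These are comparable only when extension by zero is bounded, which by \cref{coro:ext-zero} holds for $\eta_1+\eta_2\leq 1/2$; for $\eta_1+\eta_2$ up to $2$ it fails, and this passage is exactly what the paper singles out as the hard part: \cref{prop:h-eta-est-io} handles it by integrating the solution twice, invoking the hypothesis \cref{eq:hyp-Z} (so that $CA^{-1}B\in\L(U,Y)$), and running an elliptic-regularity argument with the operator $L=-(\d^2/\d t^2)-2(\d/\d t)+4$ together with the interpolation and duality lemmas of the appendix, with separate cases around $\eta_1+\eta_2=3/2$. Your proposal does not address this at all, while identifying the (comparatively routine) windowed estimate as the technical heart.

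A secondary, fixable flaw lies in your final multiplier step: the functions $f=\F[e^{-\sigma\cdot}u]$ with $u\in\D(0,+\infty;U)$ are \emph{not} dense in $L^2(\R,U)$ — their inverse transforms are supported in $[0,+\infty)$, so they are dense only in the Hardy class $H^2$ of the half-plane. You therefore cannot conclude that the weighted transfer function defines a bounded multiplication operator on all of $L^2(\R,U)$ and then invoke the essential-supremum characterisation. The correct route, and the one the paper takes, is to observe that the induced map $\tilde u\mapsto\tilde f$ is a bounded \emph{shift-invariant} operator on $L^2(0,+\infty)$-spaces and to apply the Fourès--Segal/Weiss characterisation of such operators by bounded holomorphic symbols on the open right half-plane; the boundary estimate \cref{eq:FQ-IO} then follows by continuity of the symbol up to the imaginary axis.
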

Note that equivalence between the time-domain and frequency-domain conditions does not require additional invertibility properties of the semigroup $\sg{S}$. This is in contrast to \cref{theo:inter-group,theo:input-group}. Now, solutions $x$ to \cref{eq:cauchy-u} are exactly convolution products with the semigroup $\sg{S}$, and \cref{it:FQ-IO} can be seen as a related Fourier multiplier property; hence, the connection with $H^{-\eta}$-spaces is natural and the result might seem straightforward. The challenging part of the proof actually consists in extending the finite-time property \cref{it:L2-H-eta} into a suitable Bessel potential space estimate; see \cref{prop:h-eta-est-io} in the proof below. In that regard, we wish to point out that the idea of viewing weighted transfer functions as Laplace multipliers also appears in \cite{GuiOpm23,GuiLog24}.

 Similarly as before, one can pass to the limit in \cref{eq:L2-H-eta} and see that, assuming \cref{it:L2-H-eta}, all solutions $x$ to \cref{eq:cauchy-u} produced by inputs $u \in L^2(0, T; U)$ possess an output $Cx$ that is well-defined in a distributional sense in $H^{-\eta_1 - \eta_2}(0, T; Y)$. Also, in the case that $\eta_1 = \eta_2 = 0$, $B$ and $C$ are admissible control and observation operators, and \cref{it:L2-H-eta} means that, in addition, the system defined by the semigroup generator $A$ and the control and observation operators $B$ and $C$ is \emph{well-posed} in the sense of Weiss; see, e.g., 
\cite{TucWei14survey}.
In the other extremal case, namely $\eta_1 = \eta_2 = 1$, \cref{it:L2-H-eta,it:FQ-IO} are again automatically satisfied.


\begin{rem}
Existence of a space $Z$ satisfying \cref{eq:hyp-Z} allows a straightforward definition of the transfer function \cref{eq:transfer-function}, at the cost of some generality. Perhaps more so than \cref{theo:inter-group,theo:input-group}, \cref{theo:transfer-function} is mostly application-driven and such a hypothesis is rarely an issue in the context of partial differential equations. The reader interested in  system-theoretic and algebraic aspects of transfer function theory is referred to \cite{Sta05book,TucWei14survey} and the references therein.
\end{rem}




\section{Proofs of the main results
}

\label{sec:proof}


\subsection{Proof of \cref{theo:inter-group}}

\label{sec:proof-output}

This section is devoted to the proof of \cref{theo:inter-group}. 
\Cref{theo:input-group} will then follow by duality arguments.

\para{Notation}
In this subsection, the variable $x$ will always denote the (unique) solution to the abstract Cauchy problem
\begin{equation}
\label{eq:cauchy-x-A}
\dot{x} = Ax, \quad x(0) = x_0, \quad x_0 \in X,
\end{equation}
i.e., $x(t) = S_t x_0$ for $t \geq 0$. If $x_0 \in X_1$, then $x$ is a {classical} solution to \cref{eq:cauchy-x-A} and satisfies $x \in \C(\R^+, X_1) \cap \C^1(\R^+, X)$. We shall frequently assume that $x_0 \in X_1$, derive estimates that are uniform with respect to the $X$-norm, and then pass to the limit for general data $x_0 \in X$ by density.  Throughout the proofs, we will repeatedly make use of exponential shifting, so it is convenient to introduce the following notation: given any vector-valued function $f$ defined on $\R^+$, we let
\begin{equation}
\tilde{f}(t) \triangleq e^{-t} f(t), \quad t \geq 0.
\end{equation}
Furthermore, unless specified otherwise, vector-valued functions $f$ defined on $\R^+$ are extended to functions on $\R$ by setting $f(t) = 0$ for $t < 0$. In particular, if $\tilde{f} \in L^1\hl{E}$ we have the identity
\begin{equation}
\F [\tilde{f}](\omega) = \L[\tilde{f}](\i \omega) =  \hat{f}(1 + \i \omega), \quad \omega \in \R,
\end{equation}
where we recall that the hat denotes the Laplace transform.

\para{Simplification} We observe that, given $\sigma \in \R$, replacing $x$ by $e^{-\sigma \cdot} x$ (which amounts to replacing $A$ by $A - \sigma$) does not change whether or not conditions \cref{it:adm-eta-fra,it:adm-eta,it:alt-adm} hold.
Furthermore, by the resolvent identity, for any $\sigma, \sigma' > \sigma_0$, we have
\begin{equation}
\label{eq:res-id-sigma}
C(\sigma + \i \omega - A)^{-1} = C(\sigma' + \i \omega - A)^{-1} + (\sigma' - \sigma) C(\sigma + \i \omega - A)^{-1}(\sigma' + \i \omega - A)^{-1}, \quad \omega \in \R.
\end{equation}
Since $\sup_{\omega \in \R} \|(\sigma' + \i \omega - A)^{-1}\|_{\L(X)} < + \infty$, \cref{eq:res-id-sigma} proves the ``for some (equivalently, all)'' part in \cref{it:FQ-cond}. Without loss of generality, we may assume that the semigroup $\sg{S}$ is uniformly bounded and that $0 \in \rho(A)$. In particular, we choose $\mu = 0$ when defining the norms of \cref{sec:adm-semi}. Note also that $\sg{S}$ remains uniformly bounded as a semigroup on any of the spaces $X_s^\inte$ or $X_s^\fra$, $s \in \R$.

\subsubsection{Basic lemmas}

We continue by establishing three auxiliary results. First, the following lemma shows that we can always define $Cx$ in a distributional sense even for initial data in $X$.
\begin{lemma}
\label{lem:output-H-1}
The map $x_0 \mapsto C\tilde{x}$, defined from $X_1$ into $\C(\R^+, Y)$, uniquely extends to a bounded linear operator from $X$ into $H^{-1}\hl{Y}$.
\end{lemma}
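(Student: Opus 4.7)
The plan is to use $A$-boundedness of $C$ (i.e.\ $CA^{-1} \in \L(X, Y)$) together with the fact that the exponential shift $t \mapsto e^{-t}$ makes the orbit decay fast enough to be $L^2$ in time, and then transfer one derivative from $\tilde{x}$ onto the test function via distributional duality to land in $H^{-1}$.

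More precisely, suppose first that $x_0 \in X_1$. Since $\sg{S}$ is assumed uniformly bounded and $\tilde{x}(t) = e^{-t}S_t x_0$, we have $\|\tilde{x}(t)\|_X \leq K e^{-t}\|x_0\|_X$, so $\tilde{x} \in L^2\hl{X}$ with $\|\tilde{x}\|_{L^2(0,+\infty; X)} \leq K \|x_0\|_X$. Moreover, $x_0 \in X_1$ implies $\tilde{x} \in \C(\R^+, X_1) \cap \C^1(\R^+, X)$ and, directly from the semigroup equation,
\begin{equation*}
\dot{\tilde{x}} = (A - 1)\tilde{x},
\end{equation*}
which, by the same reasoning applied in $X_{-1}$ and $X$, yields $\dot{\tilde{x}} \in L^2\hl{X}$. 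Hence $\tilde{x} \in H^1\hl{X}$. Applying $CA^{-1} \in \L(X, Y)$ and using the decomposition $C = CA^{-1}(A - 1) + CA^{-1}$, we obtain the identity
\begin{equation*}
C\tilde{x} \;=\; CA^{-1}\dot{\tilde{x}} + CA^{-1}\tilde{x} \;=\; \frac{\d}{\d t}\bigl(CA^{-1}\tilde{x}\bigr) + CA^{-1}\tilde{x}.
\end{equation*}

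Next I would bound this sum in $H^{-1}\hl{Y}$. The function $CA^{-1}\tilde{x}$ belongs to $L^2\hl{Y}$ with $\|CA^{-1}\tilde{x}\|_{L^2(0, +\infty; Y)} \leq K' \|x_0\|_X$, and $L^2\hl{Y} \hookrightarrow H^{-1}\hl{Y}$ continuously. For the derivative term, the standard duality estimate gives, for every $\varphi \in \D(0, +\infty; Y)$,
\begin{equation*}
\left| \left\langle \frac{\d}{\d t}\bigl(CA^{-1}\tilde{x}\bigr), \varphi \right\rangle_{L^2\hl{Y}} \right|
= \left| \left\langle CA^{-1}\tilde{x}, \dot{\varphi} \right\rangle_{L^2\hl{Y}} \right|
\leq \|CA^{-1}\tilde{x}\|_{L^2\hl{Y}}\, \|\varphi\|_{H^1\hl{Y}},
\end{equation*}
so by \cref{eq:dual-norm} the distributional derivative of $CA^{-1}\tilde{x}$ lies in $H^{-1}\hl{Y}$ with norm controlled by $\|CA^{-1}\tilde{x}\|_{L^2\hl{Y}}$. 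Combining these two bounds,
\begin{equation*}
\|C\tilde{x}\|_{H^{-1}\hl{Y}} \leq K'' \|x_0\|_X, \quad x_0 \in X_1,
\end{equation*}
for some constant $K'' > 0$ independent of $x_0$.

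Finally, since $X_1$ is dense in $X$ and the map $x_0 \mapsto C\tilde{x}$ is linear, the above estimate allows a unique bounded linear extension from $X$ to $H^{-1}\hl{Y}$, which is the required conclusion. There is no real obstacle here — the only subtle point to keep in mind is that the argument genuinely uses the shift by $1$ so as to have an integrable weight $e^{-t}$ making the half-line $L^2$-norm finite (uniform boundedness of $\sg{S}$ alone would not be enough); equivalently, the resolvent $A^{-1}$ and the shifted generator $A - 1$ are what encode the natural regularisation.
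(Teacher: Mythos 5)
Your proof is correct and follows essentially the same route as the paper: both write $C\tilde{x}$ as the time derivative of a function controlled in $L^2\hl{Y}$ by $\|x_0\|_X$ plus another such function, and conclude via the duality characterisation \cref{eq:dual-norm} of the $H^{-1}$-norm and density of $X_1$ in $X$; your auxiliary function $CA^{-1}\tilde{x}$ differs from the paper's $C\tilde{z}$ (built from $z(t)=\int_0^t x(s)\,\d s = S_tA^{-1}x_0 - A^{-1}x_0$) only by the harmless term $e^{-t}CA^{-1}x_0$. Note that, as in the paper, your argument implicitly works under the standing normalisation that $\sg{S}$ is uniformly bounded and $0\in\rho(A)$, which is legitimate by the simplification made at the start of \cref{sec:proof-output}.
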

\begin{proof}
We recall that $X_1$ is dense in $X$ and let $x_0 \in X_1$. Then $x$ is a classical solution to \cref{eq:cauchy-x-A} and, by $A$-boundedness of $C$, $C\tilde{x}$ is defined in a classical, pointwise sense, with $C\tilde{x} \in \C(\R^+, Y)$. For $t \geq 0$, let 
$
 z(t) \triangleq \int_0^t x(s) \, \d s.
$
 Then, $z(t) = S_t A^{-1}x_0 - A^{-1}x_0$ for $t \geq 0$, which gives
\begin{equation}
\label{eq:first-est-z}
\|C\tilde{z}\|_{L^2(0, +\infty; Y)} \leq K \|A^{-1}x_0\|_{X_1} = K \|x_0\|_X.
\end{equation}
Furthermore,
 $\tilde{z} \in \C^1(\R^+, X_1)$  with $(\d / \d t)\tilde{z} = - \tilde{z} + \tilde{x}$. Integration by parts yields
\begin{equation}
\label{eq:ipp-z-x-varphi}
\langle C\tilde{x}, \varphi \rangle_{L^2\hl{Y}} = \int_0^{+\infty} \langle C\tilde{z}(t), \varphi(t) \rangle_Y \, \d t + \int_0^{+\infty} \langle C\tilde{z}(t), (\d / \d t){\varphi}(t) \rangle_Y \, \d t, \quad \varphi \in \D\hl{Y}.
\end{equation}
We use \cref{eq:ipp-z-x-varphi} and our previous $L^2$-estimate \cref{eq:first-est-z}  to deduce that
\begin{equation}
|\langle C\tilde{x}, \varphi \rangle_{L^2\hl{Y}}| \leq K \|C\tilde{z}\|_{L^2(0, +\infty; Y)} \|\varphi\|_{H^1(0, +\infty; Y)} \leq K' \|x_0\|_X \|\varphi\|_{H^1(0, +\infty; Y)}
\end{equation}
for arbitrary $\varphi \in \D\hl{U}$. Together with \cref{eq:dual-norm}, this gives $\|C\tilde{x}\|_{H^{-1}\hl{Y}} \leq K \|x_0\|_X$, as required.
\end{proof}


Next, 
we demonstrate how we may take advantage of the semigroup properties in order to extend the $H^{-\eta}$-norm estimate \cref{it:alt-adm} from the finite time interval $(0, T)$ to the half-line $(0, +\infty)$.
\begin{lemma}
\label{lem:X-to-Cx}
Let $0 \leq \eta \leq 1$.
Supposing that \cref{it:alt-adm} holds, the map $x_0 \mapsto C\tilde{x}$ is in fact continuous from $X$ into $H^{-\eta}(0, +\infty; Y)$.
\end{lemma}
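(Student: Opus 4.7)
The plan is to upgrade the finite-time estimate \cref{it:alt-adm} to a uniform half-line estimate by combining the semigroup shift identity $\tilde x(t+a) = e^{-a}\,\tilde{y}(t)$, where $y(s) = S_s(S_a x_0)$ solves \cref{eq:cauchy-x-A} starting from $S_a x_0$, with a smooth partition of unity of $\R^+$. I would work first with $x_0 \in X_1$, so that $C\tilde x$ is classically defined in $\C(\R^+, Y)$, aiming at a uniform bound $\|C\tilde x\|_{H^{-\eta}\hl{Y}} \leq K\|x_0\|_X$, and then pass to $x_0 \in X$ by density; compatibility with the distributional extension provided by \cref{lem:output-H-1} is automatic since both bounded extensions of $x_0 \mapsto C\tilde x$ from $X_1$ to $X$ agree on $X_1$ and land in the continuously embedded chain $H^{-\eta}\hl{Y} \hookrightarrow H^{-1}\hl{Y}$.

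The key ingredient is the following translated estimate. Since $X_1$ is semigroup-invariant, applying \cref{it:alt-adm} to the initial datum $S_a x_0 \in X_1$ and translating time by $a$ yields $\|Cx\|_{H^{-\eta}(a, a+T; Y)} \leq K\|S_a x_0\|_X \leq K_0 \|x_0\|_X$. Multiplication by the smooth function $e^{-t} = e^{-a}\,e^{-(t-a)}$ is bounded on $H^{-\eta}(a, a+T; Y)$ with a constant depending only on $T$ (uniformly in $a$), so
\[
\|C\tilde x\|_{H^{-\eta}(a, a+T; Y)} \leq K'\,e^{-a}\,\|x_0\|_X, \quad a \geq 0.
\]
Next, I would fix a smooth partition of unity $\{\chi_n\}_{n\in\N}$ of $\R^+$ of the form $\chi_n(t) = \chi(2t/T - n)$, where $\chi \in \D(\R)$ satisfies $\sum_{k \in \Z}\chi(\cdot - k) = 1$ and $\operatorname{supp}\chi \subset (-1, 1)$. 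Then $\operatorname{supp}\chi_n$ lies strictly inside some interval $J_n$ of length $T$, and multiplication by $\chi_n$ acts boundedly on $H^\eta(\R; Y)$ with a bound independent of $n$ by translation invariance.

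Given any $\varphi \in \D\hl{Y}$, I would decompose $\langle C\tilde x, \varphi \rangle_{L^2\hl{Y}} = \sum_n \langle C\tilde x, \chi_n\varphi \rangle_{L^2(J_n; Y)}$ (a finite sum since $\varphi$ is compactly supported) and bound each term by $\|C\tilde x\|_{H^{-\eta}(J_n; Y)}\cdot\|\chi_n\varphi\|_{H^\eta_0(J_n; Y)}$, invoking the duality $H^{-\eta}(J_n; Y) = (H^\eta_0(J_n; Y))^\ast$ and the fact that $\chi_n\varphi \in \D(J_n; Y) \subset H^\eta_0(J_n; Y)$ because the support is strictly inside $J_n$. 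Combining with the translated estimate produces a bound of the form $K''\,e^{-c n T}\,\|x_0\|_X\,\|\varphi\|_{H^\eta\hl{Y}}$; summing the geometric series and invoking the dual characterisation \cref{eq:dual-norm} of the $H^{-\eta}$-norm closes the argument. The delicate point will be arranging the partition of unity so that both (i) the supports $\operatorname{supp}\chi_n$ lie \emph{strictly} inside intervals of length $T$, which is what allows access to the $H^\eta_0$-duality, and (ii) the multipliers $\chi_n$ form a uniform family on $H^\eta(\R; Y)$ — both are achieved by taking translates and a common dilation of a single compactly supported function.
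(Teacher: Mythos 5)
Your proof is correct, and its core is the same as the paper's: translate by $nT$, use the semigroup property and uniform boundedness to apply \cref{it:alt-adm} on each window with constant $K\|x_0\|_X$, and let the weight $e^{-t}$ supply geometric decay to sum. The difference is purely in how the half-line $H^{-\eta}$-norm is reassembled from the windows: the paper invokes its appendix \cref{lem:localize-shift} (whose proof, via cut-offs $\chi,\rho,\chi_0$ and the summed multiplier bound of \cref{lem:localize-cut}, is essentially your partition-of-unity/duality argument packaged as a general lemma), whereas you inline the localisation directly. Your version is in fact slightly lighter: because of the exponential decay you only need a \emph{termwise} bound $\|\chi_n\varphi\|_{H^\eta}\leq K\|\varphi\|_{H^\eta\hl{Y}}$ rather than the $\ell^2$-summed estimate of \cref{lem:localize-cut}, and your overlapping windows with supports strictly inside intervals of length $T$ play exactly the role of the half-shifted family in \cref{lem:localize-shift} (without which the windowed inequality is false, as the paper notes). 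Two small points of care in writing it up: (i) state the uniform multiplier bound for $\chi_n$ directly on the half-line, by interpolating multiplication between $L^2\hl{Y}$ and $H^1\hl{Y}$ and then using that restriction to $J_n$ has norm at most one — if you instead go through the zero-extension of $\varphi$ to $\R$, the uniform bound of \cref{coro:ext-zero} fails at $\eta=1/2$; and (ii) the window $J_0$ must be chosen inside $(0,+\infty)$, which is harmless since $\varphi\in\D\hl{Y}$ is supported away from $t=0$, so $\chi_0\varphi\in\D(J_0;Y)$ for a suitable $J_0\subset(0,T)$. With these details your pairing estimate, \cref{eq:dual-norm} (valid since $C\tilde{x}\in L^2\hl{Y}$ for $x_0\in X_1$), the geometric series, and the density/compatibility step via \cref{lem:output-H-1} give the claim.
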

\begin{proof}
Let us fix $T > 0$ as in \cref{it:alt-adm}. Let $x_0 \in X_1$.
By \cref{lem:localize-shift},
 we have 
\begin{equation}
\label{eq:Cx-sum-sum}
\| C\tilde{x}\|^2_{H^{-\eta}\hl{Y}} \leq K \sum_{n=0}^{+\infty} \| C\tilde{x}(nT + \cdot) \|^2_{H^{-\eta}(0, T; Y)} + \|C\tilde{x}((n+1/2)T + \cdot)\|^2_{H^{-\eta}(0, T; Y)}.
\end{equation}
 Multiplication by $e^{-\cdot}$ is continuous on $H^{-\eta}(0, T; Y)$. Thus,
\begin{equation}
\|C\tilde{x}(nT + \cdot)\|_{H^{-\eta}(0, T; Y)} = e^{-nT} \|Cx(nT + \cdot)\|_{H^{-\eta}(0, T; Y)} \leq K e^{-nT} \|Cx\|_{H^{-\eta}(0, T; Y)}, \quad n \in \N.
\end{equation}
Now, for all $n \in \N$ and $0 \leq t \leq T$, $Cx(nT + t) = CS_t x(nT)$. Therefore, using the hypothesis \cref{it:adm-eta} 
and boundedness of the semigroup $\sg{S}$ yields
\begin{equation}
\label{eq:est-Cx}
\|Cx(nT + \cdot)\|_{H^{-\eta}(0, T; Y)} \leq K \|x(nT)\|_X \leq K' \|x_0\|_X, \quad n \in \N.
\end{equation}
It follows that
\begin{equation}
\sum_{n=0}^{+\infty} \| C\tilde{x}(nT + \cdot) \|^2_{H^{-\eta}(0, T; Y)} \leq K \sum_{n=0}^{+\infty} e^{-2nT} \|x_0\|^2_X \leq K' \|x_0\|^2_X.
\end{equation}
We complete the proof by estimating the second term in \cref{eq:Cx-sum-sum} in a similar way.
\end{proof}



Finally, under condition \cref{it:adm-eta-fra} (resp.\ \cref{it:adm-eta}) we derive some other half-line estimates for data in $X_\eta^\fra$ (resp.\ $X_\eta^\inte$) as well as preliminary resolvent bounds.
\begin{lemma}
\label{lem:adm-bounded-CR}
Let $0 \leq \eta \leq 1$ and let $X_\eta$ denote $X_\eta^\fra$ (resp. $X_\eta^\inte$).
Assume that  \cref{it:adm-eta-fra} (resp. \cref{it:adm-eta}) holds. Then, 
\begin{equation}
\label{eq:inf-adm-eta}
\|C\tilde{x}\|_{L^2\hl{Y}} \leq K \|x_0\|_{X_\eta}, \quad x_0 \in X_1.
\end{equation}
Furthermore,
\begin{equation}
\label{eq:adm-bounded-CR}
\|C(1 + \i \omega - A)^{-1}\|_{\L(X_\eta, Y)} = O(1), \quad \omega \in \R, \quad |\omega| \to + \infty.
\end{equation}
\end{lemma}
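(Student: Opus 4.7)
The plan is to prove the two estimates in sequence via a ``restart and geometric sum'' scheme parallel to that of \cref{lem:X-to-Cx}. For the half-line bound \eqref{eq:inf-adm-eta}, I would fix $T > 0$ as in the hypothesis, take $x_0 \in X_1$, and split $(0, +\infty)$ into blocks $[nT, (n+1)T)$. Using the semigroup identity $x(nT + s) = S_s x(nT)$, the pointwise bound $e^{-2t} \leq e^{-2nT}$ on each block, and the hypothesis applied to the ``restarted'' datum $x(nT) \in X_1$ (with the $X_\eta$-norm on the right), I obtain
\[
\|C\tilde{x}\|^2_{L^2\hl{Y}} \leq \sum_{n=0}^{+\infty} e^{-2nT} \|CS_\cdot x(nT)\|^2_{L^2(0,T;Y)} \leq K^2 \sum_{n=0}^{+\infty} e^{-2nT} \|x(nT)\|^2_{X_\eta}.
\]
Uniform boundedness of $\sg{S}$ on $X_\eta$ (valid after the normalisation made at the start of the section) yields $\|x(nT)\|_{X_\eta} \leq M \|x_0\|_{X_\eta}$, and the geometric series converges.

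For the resolvent bound \eqref{eq:adm-bounded-CR}, my idea is to upgrade the $L^2$-estimate just obtained to an $L^1$-estimate with the same exponential weight, and then realise $C(1 + \i\omega - A)^{-1}y$ as the Fourier transform of an $L^1$ function, which is automatically bounded in $Y$ by its $L^1$-norm uniformly in $\omega$. For $y \in X_1$, Cauchy--Schwarz on each block together with the hypothesis applied to $S_{nT}y$ yields
\[
\int_{nT}^{(n+1)T} e^{-t}\|CS_t y\|_Y \, \d t \leq e^{-nT} \sqrt{T}\, \|CS_{nT + \cdot}y\|_{L^2(0, T; Y)} \leq e^{-nT}\sqrt{T}\, KM\|y\|_{X_\eta},
\]
which sums geometrically to $\|e^{-\cdot}CS_\cdot y\|_{L^1\hl{Y}} \leq K'\|y\|_{X_\eta}$. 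Using the standard semigroup representation $(1 + \i\omega - A)^{-1}y = \int_0^{+\infty} e^{-(1 + \i\omega)t} S_t y \, \d t$ (absolutely convergent in $X_1$ for $y \in X_1$) and pulling $C \in \L(X_1, Y)$ under the integral,
\[
C(1 + \i\omega - A)^{-1}y = \int_0^{+\infty} e^{-(1 + \i\omega)t} CS_t y \, \d t = \F\!\left[e^{-\cdot} CS_\cdot y\right]\!(\omega),
\]
so that $\|C(1 + \i\omega - A)^{-1}y\|_Y \leq K'\|y\|_{X_\eta}$ uniformly in $\omega \in \R$.

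The operator norm estimate then follows by density of $X_1$ in $X_\eta$, which holds in both the fractional setting ($X_1$ is a core for $(-A)^\eta$) and the interpolation setting (by the standard density in geometric interpolation spaces). The only real obstacle is bookkeeping: one must keep track of the constants coming from uniform boundedness of $\sg{S}$ on $X$, $X_1$ and $X_\eta$ and verify they do not interact badly with the geometric sums, and one must justify the Fubini-type swap of $C$ with the Laplace integral through absolute convergence. Both are routine once the preliminaries from \cref{sec:pre-sta} are in hand.
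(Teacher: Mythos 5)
Your proof is correct, and its first half is exactly the paper's argument: you block the half-line into intervals of length $T$, restart the orbit at $x(nT)=S_{nT}x_0$, apply the hypothesis on each block, and sum the geometric series produced by the weight $e^{-2nT}$ together with uniform boundedness of $\sg{S}$ on $X_\eta$. For the resolvent bound your route genuinely differs in execution: the paper applies Cauchy--Schwarz to the Laplace integral, which (because the extra factor $e^{-t}$ shifts the abscissa) yields a uniform bound for $C(2+\i\omega-A)^{-1}$ on $X_\eta$ first, and then returns to the line $1+\i\R$ via the resolvent identity and uniform boundedness of $(1+\i\omega-A)^{-1}$ in $\L(X_\eta)$; you instead upgrade the blockwise estimates to the weighted $L^1$-bound $\|e^{-\cdot}CS_{\cdot}y\|_{L^1\hl{Y}}\leq K'\|y\|_{X_\eta}$ and read off the bound for $C(1+\i\omega-A)^{-1}y$ directly from the Laplace representation at abscissa $1$, uniformly in $\omega$. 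This avoids the resolvent-identity step altogether and gives boundedness for every $\omega\in\R$, not merely as $|\omega|\to+\infty$; the paper's $L^2$ route has the mild advantage that it reuses the half-line estimate \cref{eq:inf-adm-eta} verbatim, which is the quantity it needs elsewhere in any case. Your supporting justifications are the right ones and unproblematic: the Laplace integral converges absolutely in $X_1$ for $y\in X_1$ (so $C\in\L(X_1,Y)$ passes under the integral), $t\mapsto CS_ty$ is continuous in $Y$, and $X_1$ is densely embedded in $X_\eta$ for both scales, which, since $C(1+\i\omega-A)^{-1}\in\L(X,Y)$ is in particular continuous on $X_\eta$, lets the estimate extend from $X_1$ to $X_\eta$.
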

\begin{proof}
Let $x_0 \in X_1$ and $\omega \in \R$. As $x$ is a classical solution to \cref{eq:cauchy-x-A} and $\sg{S}$ is uniformly bounded, $x$ remains bounded in $X_1$. In particular, $x$ is Laplace transformable and for all complex $p$ with $\re p > 0$, $\hat{x}(p)$ exists as an absolutely convergent $X_1$-valued integral.
Therefore,
\begin{equation}
C(2 + \i \omega - A)^{-1}x_0 = \int_0^{+\infty} e^{-(2 + \i \omega)t} Cx(t) \, \d t = \int_0^{+\infty} e^{-(1 + \i \omega)t} C\tilde{x}(t) \, \d t,
\end{equation}
with integrals that are absolutely convergent in $Y$. Using the Cauchy--Schwarz inequality, we then obtain
\begin{equation}
\label{eq:CS-shift}
\|C(2 + \i \omega - A)^{-1}x_0\|_Y^2 \leq \left ( \int_0^{+\infty} e^{-2t} \, \d t \right ) \left ( \int_0^{+\infty} \|C\tilde{x}(t)\|^2_Y \, \d t \right ) = \frac{1}{2} \int_0^{+\infty} \|C\tilde{x}(t)\|^2_Y \, \d t.
\end{equation}
On the other hand,
\begin{equation}
\label{eq:sum-n-Cx}
 \int_0^{+\infty} \| C\tilde{x}(t)\|^2_Y \, \d t = \sum_{n =0}^{+\infty} e^{-2nT} \int_0^T e^{-2t}\|Cx(nT +t)\|^2_Y \, \d t \leq \sum_{n =0}^{+\infty} e^{-2nT} \int_0^T \|Cx(nT +t)\|^2_Y \, \d t.
\end{equation}
Using the hypothesis \cref{it:adm-eta-fra} or \cref{it:adm-eta} together with the property that the operators $S_t$
are also uniformly bounded in $\L(X_\eta)$-norm, we get
\begin{equation}
\label{eq:est-CR-n}
 \int_0^T \|Cx(nT + t)\|^2_Y \, \d t \leq K \|x(nT)\|^2_{X_\eta} = K \|S_{nT}x_0\|^2_{X_\eta} \leq K' \|x_0\|_{X_\eta}, \quad n \in \N.
\end{equation}
Substituting \cref{eq:est-CR-n} into the sum \cref{eq:sum-n-Cx}, which is made convergent by the factors $e^{-2nT}$, leads to the first desired estimate \cref{eq:inf-adm-eta}. By plugging \cref{eq:inf-adm-eta} into \cref{eq:CS-shift}, we then obtain
\begin{equation}
\label{eq:est-CR-X1}
\|C(1 + \i \omega - A)^{-1}x_0\|_Y \leq
K \|x_0\|_{X_\eta}.
\end{equation}
This holds for arbitrary $x_0 \in X_1$ and $\omega \in \R$. Because $X_1$ is dense in $X_\eta$, it follows that
\begin{equation}
\label{eq:adm-bounded-CR-2}
\sup_{\omega \in \R} \|C(2 + \i \omega - A)^{-1}\|_{\L(X_\eta, Y)} < + \infty.
\end{equation}
By using that
$
C(1 + \i \omega - A)^{-1} = C(2 + \i \omega - A)^{-1} + C(2 + \i \omega -A)^{-1} (1 + \i \omega - A)^{-1}
$ for $\omega \in \R$,
and that $\sup_{\omega \in \R} \|(1 + \i \omega - A)^{-1}\|_{\L(X_\eta)} < + \infty$, we can deduce \cref{eq:adm-bounded-CR} from \cref{eq:adm-bounded-CR-2} and complete the proof.
\end{proof}



\subsubsection{The case of fractional powers of the generator}

We shall now prove the part of \cref{theo:inter-group} involving fractional powers of the semigroup generator. Our approach is based on the technique of resolvent growth cancellation; see, e.g., \cite[Lemma 3.2]{LatShv01} or the more general \cite[Theorem 5.5]{BatChi16}. Given $0 \leq \eta \leq 1$, we introduce  the following intermediate condition:
\begin{enumerate}[resume*=inter-group]
  \item 
  \label{it:bounded-fra} (Alternative frequency-domain condition, fractional powers.)
   \begin{equation}
  \label{eq:bounded-fra}
    \|C(1 + \i \omega - A)^{-1}\|_{\L(X^\fra_\eta,Y)} = O(1), \quad \omega \in \R, \quad |\omega| \to + \infty.
\end{equation}
\end{enumerate} 
We already know from \cref{lem:adm-bounded-CR} that \cref{it:adm-eta-fra} implies \cref{it:bounded-fra}. The other implications  we need are collected in the following proposition.

\begin{prop}[Fractional powers]
\label{prop:frac}
Let $0 < \eta \leq 1$. Then,
\begin{subequations}
\begin{align}
&\cref{it:adm-eta-fra} \implies \cref{it:bounded-fra}, \quad \cref{it:FQ-cond} \iff \cref{it:bounded-fra}, \quad \cref{it:alt-adm} \implies \cref{it:adm-eta-fra} &&\mbox{in general;}\\
&\cref{it:bounded-fra} \implies \cref{it:adm-eta-fra} &&\mbox{under the left-invertibility assumption.}
\end{align}
\end{subequations}
\end{prop}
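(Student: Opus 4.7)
\emph{Plan.} The proposition splits into four sub-claims. The first, \cref{it:adm-eta-fra} $\Rightarrow$ \cref{it:bounded-fra}, is an immediate application of \cref{lem:adm-bounded-CR} with $X_\eta = X_\eta^\fra$; I sketch the remaining three in turn.

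For the equivalence \cref{it:FQ-cond} $\Leftrightarrow$ \cref{it:bounded-fra}, I would exploit the fact that $(-A)^\eta$ is an isometric isomorphism from $X_\eta^\fra$ onto $X$ and commutes with the resolvent $R_\omega := (1+\i\omega-A)^{-1}$, so that
\begin{equation*}
\|CR_\omega\|_{\L(X_\eta^\fra,Y)} = \|CR_\omega(-A)^{-\eta}\|_{\L(X,Y)} = \|C(-A)^{-\eta}R_\omega\|_{\L(X,Y)}.
\end{equation*}
The bridge with the growth $O(|\omega|^\eta)$ in \cref{it:FQ-cond} is the estimate $\|R_\omega\|_{\L(X,X_\eta^\fra)} = O(|\omega|^\eta)$ provided by \cref{eq:est-res-theta}, which in turn rests on the moment inequality. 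To extract the scalar factor $(1+\i\omega)^{\pm\eta}$ from $C(-A)^{\mp\eta}R_\omega$ up to a uniformly bounded remainder, I would invoke the Balakrishnan integral representation $(-A)^{-\eta} = \frac{\sin\pi\eta}{\pi}\int_0^\infty \lambda^{-\eta}(\lambda - A)^{-1}\,\d\lambda$ together with the two-parameter resolvent identity, in the spirit of the resolvent growth cancellation developed in \cite{LatShv01,BatChi16}.

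For \cref{it:alt-adm} $\Rightarrow$ \cref{it:adm-eta-fra}, I would first promote \cref{it:alt-adm} to the half-line via \cref{lem:X-to-Cx}; by \cref{eq:bessel-spaces} and Plancherel's theorem, this reads
\begin{equation*}
\int_\R (1+\omega^2)^{-\eta}\|CR_\omega x_0\|_Y^2\,\d\omega \leq K\|x_0\|_X^2, \quad x_0 \in X_1.
\end{equation*}
Specialising to data $x_0 \in X_\eta^\fra$ through the substitution $y_0 := (-A)^\eta x_0 \in X$, the same commutation identities between $R_\omega$ and $(-A)^\eta$ effectively multiply $CR_\omega x_0$ by a scalar of order $(1+\i\omega)^\eta$, so the weight $(1+\omega^2)^{-\eta}$ gets absorbed; a reverse Plancherel plus finite-time truncation then yields \cref{it:adm-eta-fra}. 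Finally, under left-invertibility, \cref{it:bounded-fra} $\Rightarrow$ \cref{it:adm-eta-fra} would follow by adapting the classical admissibility argument of \cite[Theorem 5.2.4]{TucWei09book}: the left inverse of $S_T$ lets one reverse the semigroup flow and turn the pointwise resolvent bound into a finite-time $L^2$-estimate, here on the fractional scale through the isomorphism $(-A)^\eta$.

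The hardest step will be the direction \cref{it:bounded-fra} $\Rightarrow$ \cref{it:FQ-cond} inside the equivalence: a naive iteration of the resolvent identity combined with \cref{eq:est-res-theta} only produces the bound $\|CR_\omega^2\|_{\L(X,Y)} = O(|\omega|^\eta)$, not $\|CR_\omega\|_{\L(X,Y)} = O(|\omega|^\eta)$. Getting the latter requires the explicit Balakrishnan-type spectral extraction of the scalar multiplier $(1+\i\omega)^\eta$, which is where the technical content of the proposition sits.
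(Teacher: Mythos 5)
Your route is essentially the paper's: \cref{it:adm-eta-fra}$\implies$\cref{it:bounded-fra} from \cref{lem:adm-bounded-CR}, the equivalence \cref{it:FQ-cond}$\iff$\cref{it:bounded-fra} and the implication \cref{it:alt-adm}$\implies$\cref{it:adm-eta-fra} via resolvent growth cancellation built on an integral representation of $(\mu_0-A)^{-\eta}$ (the paper uses a shifted sectorial contour rather than the half-line Balakrishnan formula, the two-sided bounds on $I_{1+\i\omega}$ from \cite{LatShv01} playing exactly the role of the scalar multiplier of order $|\omega|^{\mp\eta}$ you invoke), and the left-invertible case via \cite[Corollary 5.2.4]{TucWei09book}; you also correctly identify that composing with \cref{eq:est-res-theta} only controls the squared resolvent, so the cancellation step is where the content lies.

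Three places in your sketch, however, gloss over genuine difficulties. First, in \cref{it:alt-adm}$\implies$\cref{it:adm-eta-fra}, passing from the half-line bound of \cref{lem:X-to-Cx} to the weighted Plancherel estimate is not ``by \cref{eq:bessel-spaces} and Plancherel'': extension by zero is bounded on $H^{-\eta}(0,+\infty;Y)$ only for $\eta\leq 1/2$ (\cref{coro:ext-zero}), and for $\eta>1/2$ the paper instead works with the integrated variable $z$ and an $H^{1-\eta}$ bound before taking Fourier transforms. Second, your claim that ``the weight gets absorbed'' hides the treatment of the remainder term produced by the integral representation: a crude $L^2_\omega$ bound on that term fails for $0<\eta\leq 1/2$ (it only decays like $|\omega|^{-\eta}$), and the paper disposes of it by noting that its inverse Fourier transform is supported in $(-\infty,0)$, hence does not contribute to the $L^2(0,+\infty)$-norm of $C\tilde{v}$; without this (or an equivalent) argument the absorption step does not go through. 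Third, in the left-invertible direction you must run the Tucsnak--Weiss argument for the shifted operator $C_\eta=C(-A)^{-\eta}$ on $X$ and only afterwards transport back through the isomorphism $(-A)^{-\eta}$: applying the argument ``on the fractional scale'', as your wording suggests, fails because the left inverses furnished by left-invertibility in $\L(X)$ need not be bounded on $X^\fra_\eta$ --- the paper flags exactly this pitfall in the remark following the proof.
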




\begin{proof}
 We start by  introducing some notation and parameters. Fix some real number $\mu_0 \geq 4$. Let $0 <\phi < \pi/6$ and let $\gamma$ be the piecewise smooth simple curve running from $\infty e^{-\i \phi}$ to $\infty e^{\i \phi}$ and made up of the union of the rays $\{- 1 + te^{\i \phi}, t \geq 0 \}$ and $\{- 1 - te^{\i \phi}, t \geq 0 \}$. Because $A - \mu_0$ is a semigroup generator and $\{ p \in \Co : \re p \geq 2 \} \subset\rho(A - \mu_0)$, one can always find $\phi$ for which there exists $K > 0$ such that, denoting by $\Sigma_\phi$ the sector generated by $\gamma$, $\Sigma_\phi \subset \rho(A - \mu_0)$ and
\begin{equation}
\label{eq:sectorial}
\|(\mu_0 + p - A)^{-1}\|_{\L(X)} \leq \frac{K}{1 + |p|}, \quad p \in \Sigma_\phi.
\end{equation}
The operator $(\mu_0 -A)^{-\eta}$ admits the integral representation 
\begin{equation}
(\mu_0 -A)^{-\eta}x_0 = \frac{1}{2\pi \i} \int_\gamma (-\mu)^{-\eta} (\mu_0 + \mu - A)^{-1}x_0 \, \d \mu, \quad x_0 \in X.
\end{equation}
The integral converges absolutely in $X$ due to \cref{eq:sectorial}.
Just as in the proof of \cite[Lemma 3.2]{LatShv01}, as a consequence of the resolvent identity we see  that
\begin{multline}
\label{eq:formula-res-A-eta}
(p - A)^{-1}(\mu_0-A)^{-\eta}x_0 = \frac{1}{2\pi \i} \int_\gamma (-\mu)^{-\eta}(p - A)^{-1}(\mu_0 + \mu - A)^{-1}x_0 \, \d \mu
 \\ = \frac{1}{2 \pi \i} \left (\int_\gamma \frac{(- \mu)^{-\eta}}{\mu_0 + \mu - p} \, \d \mu \right ) (p - A)^{-1}x_0  - \frac{1}{2 \pi \i} \int_\gamma \frac{(- \mu)^{-\eta}}{\mu_0 + \mu - p} (\mu_0 + \mu - A)^{-1}x_0 \, \d \mu
\end{multline}
for all $p \in \Co$ satisfying $p - \mu_0 \not \in \gamma$ and $\re p > 0$, and all $x_0 \in X$.
Let
\begin{equation}
I_p \triangleq \int_\gamma \frac{(- \mu)^{-\eta}}{\mu_0 + \mu - p} \, \d \mu, \quad p \not \in \gamma.
\end{equation}
We observe that
$\re (\mu_0 + \mu - 1 - \i \omega) \geq 2$ for all $\omega \in \R$ and $\mu \in \gamma$.
As a consequence, $I_{1 + \i \omega}$ is defined for any $\omega \in \R$. The following estimate, proved in \cite{LatShv01}, will be instrumental in the proof: there exist positive constants $K_0$ and $\omega_0$ such that
\begin{equation}
\label{eq:est-I}
\frac{1}{K_0|\omega|^\eta} \leq |I_{1 + \i \omega}| \leq \frac{K_0}{|\omega|^\eta}, \quad |\omega| \geq \omega_0.
\end{equation}
We shall complement \cref{eq:est-I} by proving also that $|I_{1 + \i \omega}|$ is bounded uniformly in $\omega \in \R$. First, we note that
\begin{equation}
|\mu_0 + \mu - 1 - \i \omega | \geq \re (\mu_0 + \mu) - 1 = \re(\mu + 1) + \mu_0 -2, \quad \omega \in \R, \quad \mu \in \gamma.
\end{equation}
But by definition of $\gamma$, $\re(\mu + 1) = \cos(\phi) |\mu + 1|$. Recalling that $\mu_0 \geq 4$, we find that
\begin{equation}
\label{eq:est-big-mu}
\frac{|\mu|^{-\eta}}{|\mu_0 + \mu - 1 - \i \omega |} \leq \frac{|\mu|^{-\eta}}{\cos(\phi)|\mu + 1| + 2}, \quad \omega \in \R, \quad \mu \in \gamma, \quad \re \mu \geq 0.
\end{equation}
On the other hand, we also have
\begin{equation}
\label{eq:est-small-mu}
\frac{|\mu|^{-\eta}}{|\mu_0 + \mu - 1 - \i \omega |}  \leq \frac{1}{2}, \quad \omega \in \R, \quad \mu \in \gamma, \quad -1 \leq \re \mu < 0.
\end{equation}
Combining \cref{eq:est-big-mu,eq:est-small-mu} finally yields a $K_1 > 0$ such that
\begin{equation}
\label{eq:est-I-global}
|I_{1+ \i \omega}| \leq K_1, \quad \omega \in \R.
\end{equation}
After these preparations, we are ready to prove the various implications in \cref{prop:frac}.
 As mentioned above, $\cref{it:adm-eta-fra} \implies \cref{it:bounded-fra}$ already follows from \cref{lem:adm-bounded-CR}.

{$\cref{it:FQ-cond} \iff \cref{it:bounded-fra}$:} First, we can rewrite  \cref{it:bounded-fra} 
as follows: for $\omega \in \R$, $\|C(1 + \i \omega - A)^{-1}(-A)^{-\eta}\|_{\L(X, Y)} = O(1)$ as $|\omega| \to +\infty$.
We then use the resolvent identity to obtain
\begin{subequations}
\begin{align}
&C(1+ \i \omega - A)^{-1} = CA^{-1} + (1 + \i \omega)CA^{-1}(1 + \i \omega - A)^{-1}, \quad \omega \in \R, \\
\label{eq:res-frac-eta}
&C(1+ \i \omega - A)^{-1}(-A)^{-\eta} = CA^{-1}(-A)^{-\eta} + (1 + \i \omega)CA^{-1}(1 + \i \omega - A)^{-1}(-A)^{-\eta}, \quad \omega \in \R.
\end{align}
\end{subequations}
Recalling our assumption of $A$-boundedness of $C$, we see that the desired result amounts to equivalence between, on one hand,
\begin{equation}
\label{eq:frac-shift-eta}
\|CA^{-1}(1 + \i \omega - A)^{-1}\|_{\L(X, Y)} = O(|\omega|^{-1+\eta}), \quad\omega \in \R, \quad |\omega| \to + \infty,
\end{equation}
and, on the other hand,
\begin{equation}
\label{eq:frac-shift-weight}
\|CA^{-1}(1 + \i \omega - A)^{-1}(-A)^{-\eta}\|_{\L(X, Y)} = O(|\omega|^{-1}), \quad\omega \in \R, \quad |\omega| \to + \infty.
\end{equation}
Furthermore, since $(-A)^{\eta}(\mu_0 - A)^{-\eta}$ is an isomorphism on $X$, 
 we may replace $(-A)^{-\eta}$ by $(\mu_0 - A)^{-\eta}$ in \cref{eq:res-frac-eta,eq:frac-shift-weight}.
Let $\omega \in \R$ be such that $|\omega| \geq \omega_0$.
By applying $CA^{-1} \in \L(X, Y)$ to \cref{eq:formula-res-A-eta}, we obtain, for all $x_0 \in X$,
\begin{multline}
\label{eq:func-calc}
CA^{-1}(1 + \i \omega - A)^{-1}(\mu_0 -A)^{-\eta}x_0 \\ = \frac{I_{1 + \i \omega}}{2\pi \i} CA^{-1}(1 + \i \omega - A)^{-1}x_0  - \frac{1}{2\pi\i} CA^{-1} \int_\gamma \frac{(- \mu)^{-\eta}}{\mu_0 + \mu - 1 - \i \omega} (\mu_0 + \mu - A)^{-1}x_0 \, \d \mu.
\end{multline}
The second term on the right-hand side of \cref{eq:func-calc} is dealt with by making use of the sectorial growth condition \cref{eq:sectorial} together with the uniform boundedness \cref{eq:est-small-mu} of $I_{1+\i\omega}$:
\begin{multline}
\label{eq:est-LOT}
\left \| CA^{-1} \int_\gamma \frac{(- \mu)^{-\eta}}{\mu_0 + \mu - 1 - \i \omega} (\mu_0 + \mu - A)^{-1}x_0 \, \d \mu \right \|_Y \\ \leq \|CA^{-1}\|_{\L(X, Y)} \int_\gamma \frac{|\mu|^{-\eta}}{|\mu_0 + \mu - 1 - \i \omega|} \|(\mu_0 + \mu - A)^{-1}x_0\|_X \, \d \mu
\leq K \|x_0\|_X, \quad x_0 \in X.
\end{multline}
Upon combining \cref{eq:func-calc,eq:est-LOT}, we end up with the following estimates, valid for $|\omega| \geq \omega_0$:
\begin{subequations}
\begin{align}
&|I_{1 + \i \omega}|^{-1}\|CA^{-1}(1 + \i \omega - A)^{-1}(\mu_0-A)^{-\eta}\|_{\L(X, Y)} \leq  K  \|CA^{-1}(1 + \i \omega)^{-1}\|_{\L(X, Y)} + K', \\
& |I_{1 + \i \omega}| \|CA^{-1}(1 + \i \omega)^{-1}\|_{\L(X, Y)} \leq K  + K'\|CA^{-1}(1 + \i \omega - A)^{-1}(\mu_0-A)^{-\eta}\|_{\L(X, Y)}.
\end{align}
\end{subequations}
The desired equivalence between \cref{eq:frac-shift-eta,eq:frac-shift-weight} then follows from the lower and upper bounds on $|I_{1 + \i \omega}|$ provided by the crucial estimate \cref{eq:est-I}.

$\cref{it:alt-adm} \implies \cref{it:adm-eta-fra}$:
Given $x_0 \in X_1$,
 we will write 
\begin{equation}
x(t) = S_t x_0, \quad
v_0 \triangleq (\mu_0 - A)^{-\eta} x_0, \quad v(t) \triangleq S_t v_0 = S_t (\mu_0 - A)^{-\eta}x_0, \quad t \geq 0.
\end{equation}
 \emph{A priori}, the $v$-variable is smoother than the $x$-variable by $\eta$ unit 
on the scale $\{X_s^\fra\}_{s\in \R}$. We also recall that $(\mu_0 - A)^{-\eta}$ is an isomorphism from $X$ onto $X^\fra_\eta$ and from $X_1$ onto $X^\fra_{1 + \eta}$. By density of $X^\fra_{1+\eta}$ in $X_1$, to get \cref{it:adm-eta-fra},
it
suffices to derive an estimate of the form
$\|C\tilde{w}\|_{L^2\hl{Y}} \leq K \|x_0\|_X$ for arbitrary $x_0 \in X_1$,
which we now fix.
By \cref{it:alt-adm} and \cref{lem:X-to-Cx},
$
\|C\tilde{x}\|_{H^{-\eta}\hl{Y}} \leq K \|x_0\|_X.
$
This has the following consequences. First, if $0 < \eta \leq 1/2$, then by \cref{coro:ext-zero} extension by zero is a continuous map from $H^{-\eta}\hl{Y}$ into $H^{-\eta}(\R, Y)$ and we directly obtain that
  \begin{equation}
  \label{eq:C-tilde-x-bis}
  \|C\tilde{x}\|_{H^{-\eta}(\R, Y)} \leq K \|x_0\|_X.
  \end{equation}
If $1/2 < \eta \leq 1$, again by \cref{coro:ext-zero} extension by zero is continuous $H^{1-\eta}_0\hl{Y} \to H^{1-\eta}(\R, Y)$. This suggests using the variable $z(t) \triangleq \int_0^{t}x(r) \, \d r = S_t A^{-1}x_0 - A^{-1}x_0$, $t \geq 0$, similarly as in \cref{lem:output-H-1}, so that $C\tilde{z} \in H^1_0\hl{Y}$. Then,
 $(\d / \d t)\tilde{z} = \tilde{x} - \tilde{z}$ and by \cref{lem:diff-shift} we obtain
  \begin{equation}
  \label{eq:C-tilde-z-bis}
  \|C\tilde{z}\|_{H^{1-\eta}(\R, Y)} \leq K \|C\tilde{z}\|_{H^{1-\eta}\hl{Y}} \leq K' \|C \tilde{z}\|_{L^2\hl{Y}} + K'\|C\tilde{x}\|_{H^{-\eta}\hl{Y}} \leq K'' \|x_0\|_X,
  \end{equation}
  where we also used $A$-boundedness of $C$ to control the $L^2$-term. Now, regardless of the value of $\eta$, we observe that
 \cref{eq:C-tilde-x-bis,eq:C-tilde-z-bis} lead to the same frequency-domain estimate, namely
\begin{equation}
\label{eq:freq-est}
\int_\R \frac{1}{(1 + \omega^2)^{\eta}} \|C \hat{x}(1 + \i \omega)\|^2_Y \, \d \omega \leq K \|x_0\|_X^2.
\end{equation}
With \cref{eq:freq-est} in hand, we return to the smoother $v$-variable.
Thanks to the formula \cref{eq:formula-res-A-eta}, we are able to derive the following frequency-domain expression of $C\tilde{v}$:
\begin{equation}
\label{eq:dec-w-FQ}
C \hat{v}(1 + \i \omega)  = \frac{I_{1 + \i \omega}}{2\pi \i} C(1 + \i \omega - A)^{-1}x_0  - \frac{1}{2\pi\i} C \int_\gamma \frac{(- \mu)^{-\eta}}{\mu_0 + \mu - 1 - \i \omega} (\mu_0 + \mu - A)^{-1}x_0 \, \d \mu, \quad \omega \in \R.
\end{equation}
Recall that for the purpose of the proof $x_0$ is chosen to be in $X_1$; thus
 the integral in \cref{eq:dec-w-FQ} is absolutely convergent in $X_1$ and we may apply $C$ to it.
For $\omega \in \R$, let $G_1(\omega)$ and $G_2(\omega)$ be the first and second terms in \cref{eq:dec-w-FQ}, respectively. Let $g_i \triangleq \F^{-1} [G_i]$, so that $C\tilde{v} = g_1 + g_2$ a.e.\  in $(0, +\infty)$. A closer examination of $G_2$ reveals that,
for a.e.\  $t \in \R$,
\begin{equation}
g_2(t) = - \mathds{1}_{(- \infty, 0)}(t) \frac{1}{2\pi\i } C \int_\gamma e^{(\mu_0 + \mu - 1)t} (-\mu)^{-\eta}(\mu_0 + \mu - A)^{-1} x_0 \, \d \mu.
\end{equation}
In particular,
 $g_2 = 0$ a.e.\ in $(0, +\infty)$, meaning that the $L^2\hl{Y}$-norms of $C\tilde{v}$ and $g_1$ coincide.
 Using Plancherel's theorem,
\begin{equation}
\label{eq:norm-g1}
\|g_1\|_{L^2\hl{Y}}\leq \|g_1\|_{L^2(\R, Y)}^2 = \frac{1}{2\pi} \int_\R \|G_1(\omega)\|^2_Y \, \d \omega \leq K \int_\R |I_{1 + \i \omega}|^2 \|C(1 + \i \omega - A)^{-1}x_0\|_Y^2 \, \d \omega.
\end{equation}
We split the last integral in \cref{eq:norm-g1} into low frequencies $|\omega| < \omega_0$ and high frequencies $|\omega| > \omega_0$.
 On the one hand,
\begin{equation}
\label{eq:LF-v}
\int_{-\omega_0}^{\omega_0} |I_{1 + \i \omega}|^2 \|C(1 + \i \omega - A)^{-1}x_0\|_Y^2 \, \d \omega  \leq K \int_{-\omega_0}^{\omega_0}  \|C(1 + \i \omega - A)^{-1}x_0\|_Y^2 \, \d \omega \leq K' \|x_0\|^2_X,
\end{equation}
where we first used \cref{eq:est-I-global} and then $A$-boundedness of $C$ together with boundedness of the resolvent in $\L(X, X_1)$-norm on \emph{bounded} subsets of the open right-half plane. On the other hand,
\begin{equation}
\label{eq:HF-v}
\begin{aligned}
\int_{
|\omega| > \omega_0}
 |I_{1 + \i \omega}|^2 \|C(1 + \i \omega - A)^{-1}x_0\|_Y^2 \, \d \omega &\leq K \int_{
|\omega| > \omega_0
} |\omega|^{-2\eta} \|C(1 + \i \omega - A)^{-1}x_0\|_Y^2 \, \d \omega \\ &\leq K' \int_\R \frac{1}{(1 + |\omega|^\eta)^2} \|C(1 + \i \omega -A)^{-1}x_0\|_Y^2 \, \d \omega
\end{aligned}
\end{equation}
by to the upper bound for $|I_{1+\i\omega}|$ in \cref{eq:est-I}. Recalling \cref{eq:freq-est}, we see that the last integral in \cref{eq:HF-v} is bounded by $K\|x_0\|^2$, and we finally obtain the desired estimate $\|C\tilde{v}\|_{L^2\hl{Y}} \leq K \|x_0\|$ by substituting \cref{eq:LF-v,eq:HF-v} back into \cref{eq:norm-g1}, concluding the proof of \cref{it:adm-eta-fra}.

$\cref{it:bounded-fra} \implies \cref{it:adm-eta-fra}$ (With $\sg{S}$ left-invertible): Let $C_\eta \triangleq C(-A)^{-\eta}$.  Then $C_\eta$ is $A$-bounded and since $(-A)^{-\eta}$ commutes with the resolvent of $A$, \cref{it:bounded-fra} is equivalent to
\begin{equation}
\label{eq:bound-fra-bis}
\|C_\eta(1 + \i \omega - A)^{-1}\|_{\L(X, Y)} = O(1), \quad \omega \in \R, \quad |\omega| \to + \infty.
\end{equation}
Thus, taking advantage of the left-invertibility of $\sg{S}$ (as a semigroup on $X$), by \cite[Corollary 5.2.4]{TucWei09book},\footnote{The reader will also find the argument in the proof of \cref{prop:interpolation} below.} we may deduce from \cref{eq:bound-fra-bis} that $C_\eta$ is an admissible observation operator in the usual sense of \cref{def:admissible-observation}. Recall that  $(-A)^{-\eta}$ commutes with the semigroup, and is also an isomorphism from $X$ onto $X_\eta^\inte$ and from $X_1$ onto $X_{\eta+1}^\inte$.
 We may therefore reformulate the admissibility property of $C_\eta$ as the existence of positive constants $K, T$ such that
\begin{equation}
\int_0^T \|C S_t x_0\|^2_Y \, \d t \leq K \|x_0\|^2_{X_\eta^\fra}, \quad x_0 \in X_{1 + \eta}^\fra.
\end{equation}
By density of $X^\fra_{1 + \eta}$ in $X_1$, \cref{it:adm-eta-fra} follows.
\end{proof}

\begin{rem}
For the implication $\cref{it:bounded-fra} \implies \cref{it:adm-eta-fra}$, it is to tempting to apply \cite[Corollary 5.2.4]{TucWei09book} directly to $\sg{S}$ as a semigroup on $X^\fra_\eta$;  however the left inverses provided by left-invertibility assumption might not be in $\L(X_\eta^\fra)$ in general.
\end{rem}


\subsubsection{The case of interpolation spaces}

We now turn to the part of the proof of \cref{theo:inter-group} involving the interpolation spaces $X_\eta^\inte$. 
The key result here is the following proposition.
\begin{prop} 
\label{prop:est-z}
Let $0 \leq \eta \leq 1$ and assume that \cref{it:adm-eta} holds. Given $x_0 \in X$,
 consider the new variable $z$ defined by
\begin{equation}
\label{eq:formula-z}
z(t) \triangleq \int_0^t x(r) \, \d r = S_t A^{-1}x_0 - A^{-1}x_0, \quad t \geq 0.
\end{equation}
There exists $K > 0$ such that
\begin{equation}
\label{eq:est-z}
\|C\tilde{z}\|_{H^{1-\eta}\hl{Y}} \leq K \|x_0\|_X, \quad x_0 \in X.
\end{equation}
\end{prop}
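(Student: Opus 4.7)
The plan is to split the formula \cref{eq:formula-z} as $\tilde z = \widetilde{S_\cdot A^{-1}x_0} - e^{-\cdot} A^{-1}x_0$ and treat the two terms separately. The constant piece $g(t) \triangleq e^{-t} CA^{-1}x_0$ lies in $H^1\hl{Y}$, with $\|g\|_{H^1\hl{Y}} \leq K \|CA^{-1}x_0\|_Y \leq K' \|x_0\|_X$ by $A$-boundedness of $C$, and the continuous embedding $H^1 \hookrightarrow H^{1-\eta}$ disposes of it. Setting $\Psi \colon v_0 \mapsto C\widetilde{S_\cdot v_0}$ and using that $\|A^{-1}x_0\|_{X_1} = \|x_0\|_X$ (recall the simplification $\mu = 0$), the proposition then reduces to showing $\Psi \in \L(X_1, H^{1-\eta}\hl{Y})$.

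To establish this I would interpolate between two endpoint estimates on the scale $\{X_s^\inte\}$. First, \cref{lem:adm-bounded-CR} applied under hypothesis \cref{it:adm-eta} already gives the $L^2$-endpoint $\|\Psi(v_0)\|_{L^2\hl{Y}} \leq K \|v_0\|_{X_\eta^\inte}$ for $v_0 \in X_1$, which extends by density to $\Psi \in \L(X_\eta^\inte, L^2\hl{Y})$. Second, for $v_0 \in X_2$ the differentiation rule $\frac{\d}{\d t}\widetilde{S_\cdot v_0} = -\widetilde{S_\cdot v_0} + \widetilde{S_\cdot A v_0}$ holds classically in $X_1$. Composing with $C$ and invoking the first endpoint bound applied to both $v_0$ and $Av_0 \in X_\eta^\inte$, together with the fact that $-A$ is an isometric isomorphism $X_{\eta+1}^\inte \to X_\eta^\inte$, I obtain $\|\Psi(v_0)\|_{H^1\hl{Y}} \leq K \|v_0\|_{X_{\eta+1}^\inte}$; density of $X_2$ in $X_{\eta+1}^\inte$ yields $\Psi \in \L(X_{\eta+1}^\inte, H^1\hl{Y})$.

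To conclude, I would interpolate with parameter $\theta = \eta$. Since $\{X_s^\inte\}_{s\in\R}$ is an exact interpolation scale and the vector-valued Sobolev--Slobodeckii spaces are defined via quadratic interpolation in \cref{eq:def-inter}, one has
\begin{equation*}
[X_{\eta+1}^\inte, X_\eta^\inte]_\eta = X_1, \qquad [H^1\hl{Y}, L^2\hl{Y}]_\eta = H^{1-\eta}\hl{Y},
\end{equation*}
so that $\Psi \in \L(X_1, H^{1-\eta}\hl{Y})$, as required. The trivial endpoints $\eta \in \{0, 1\}$ are covered directly by the two endpoint estimates above, without appealing to interpolation.

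The main obstacle is checking the second equality above with the precise normalisation fixed in \cref{eq:def-inter}, so that interpolation can be performed with the sharp constant \cref{eq:geo-inter} and no equivalent-norm factors intervene; once this is done, the rest of the argument is a clean consequence of the interpolation structure of both the state-space scale $\{X_s^\inte\}$ and the time-variable Sobolev scale. Care must also be taken that the classical differentiation step in the second endpoint is genuinely justified on the dense subset $X_2$ before passing to the limit in $X_{\eta+1}^\inte$.
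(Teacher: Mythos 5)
Your argument is correct, and it reaches the estimate by a genuinely different route from the paper. The paper works "by hand": it invokes \cref{theo:interpolation-sg} twice, once for the left translation semigroup to get the Besov-type seminorm on $H^{1-\eta}\hl{Y}$, and once for $\sg{\tilde S}$ on $X^\inte_\eta$, then splits the difference $\tilde z(t+\tau)-\tilde z(t)$ as $\tilde S_t(\tilde S_\tau-1)A^{-1}x_0$ plus an exponential remainder and applies \cref{lem:adm-bounded-CR} to the data $(\tilde S_\tau-1)A^{-1}x_0$, finally using the identity $X_1=[X^\inte_{1+\eta},X^\inte_\eta]_\eta$ to close the double integral. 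You instead interpolate the orbit map $\Psi\colon v_0\mapsto C\widetilde{S_\cdot v_0}$ between the $L^2$ endpoint furnished by \cref{lem:adm-bounded-CR} and an $H^1$ endpoint obtained by differentiating classical orbits on the dense subset $X_2$, and then use the same reiteration identity together with the definition \cref{eq:def-inter} of $H^{1-\eta}\hl{Y}$ as $[H^1\hl{Y},L^2\hl{Y}]_\eta$. Both proofs therefore rest on the same two ingredients (\cref{lem:adm-bounded-CR} and the exactness of the scale $\{X^\inte_s\}$); yours avoids the Besov-seminorm computation and \cref{theo:interpolation-sg} altogether, at the modest cost of the density/consistency check for the $H^1$ endpoint — which you rightly flag, and which goes through since convergence in $H^1\hl{Y}$ of $\Psi(v_n)$ for $v_n\to v_0$ in $X^\inte_{1+\eta}$ is compatible with the already-known $L^2$ continuity on $X^\inte_\eta$. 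One small remark: the "main obstacle" you identify is not one. The identity $[H^1\hl{Y},L^2\hl{Y}]_\eta=H^{1-\eta}\hl{Y}$ is literally the definition \cref{eq:def-inter}, and in any case only equivalence of norms is needed, since \cref{eq:est-z} allows an unspecified constant $K$; likewise $[X^\inte_{1+\eta},X^\inte_\eta]_\eta=X_1$ holds with equality of norms by the exact interpolation property, so no normalisation issue can arise.
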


To prove \cref{prop:est-z} we will make use of a general interpolation result, which we state here for the convenience of the reader; see, e.g., \cite[Proposition 2.4, Chapter 4]{Tay96book} or \cite[Theorem 10.1, Chapter 1]{LioMag68book} for proofs and additional statements.
\begin{theo}
\label{theo:interpolation-sg}
Let $E_0$ and $E_1$ be Hilbert spaces with continuous and dense embedding $E_1 \hookrightarrow E_0$. Let $\sg{T}$ be {any} bounded strongly continuous semigroup on $E_0$ such that the domain of its generator (equipped with the graph norm) is $E_1$. Fix  $ 0 < \theta <1$. Then, given a vector $u\in E_0$,
\begin{equation}
\label{eq:carac-int-sg}
u \in [E_1, E_0]_\theta ~\mbox{if and only if}~ \int_0^{+\infty} \frac{1}{\tau^{3 - 2\theta}}\|(T_t - 1)u\|^2_{E_0} \, \d \tau < + \infty.
\end{equation}
Furthermore, there exists $K > 0$ such that 
\begin{equation}
\label{eq:carac-int-sg-norm}
K^{-1}\|u\|_{[E_1, E_0]_\theta}^2 \leq \|u\|^2_{E_0} + \int_0^{+\infty} \frac{1}{\tau^{3 - 2\theta}}\|(T_t - 1)u\|^2_{E_0} \, \d \tau \leq K \|u\|_{[E_1, E_0]_\theta}^2, \quad u \in [E_1, E_0]_\theta.
\end{equation}
\end{theo}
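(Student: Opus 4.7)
The plan is to identify the space $F_\theta$ defined by the finiteness of the integral in \cref{eq:carac-int-sg} with $[E_1, E_0]_\theta$ via a two-sided inclusion, using the K-functional characterization of quadratic interpolation. Under the paper's convention, $[E_1, E_0]_\theta$ coincides with the Hilbertian real-interpolation space $(E_0, E_1)_{1-\theta, 2}$ and therefore admits the equivalent norm
\begin{equation*}
\|u\|^2_{[E_1, E_0]_\theta} \approx \int_0^{+\infty} \frac{K(\tau, u)^2}{\tau^{3 - 2\theta}} \, \d \tau, \quad K(\tau, u) \triangleq \inf\{\|a\|_{E_0} + \tau \|b\|_{E_1} : u = a + b\}.
\end{equation*}
The weight $\tau^{-(3-2\theta)}$ is precisely the one appearing in the statement, so the task reduces to comparing the orbit-based integral with the K-method integral up to multiplicative constants, bootstrapping to the full estimate \cref{eq:carac-int-sg-norm}.

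For the upper bound $[E_1, E_0]_\theta \hookrightarrow F_\theta$, I would fix any admissible decomposition $u = a + b$ with $a \in E_0$, $b \in E_1$, write $(T_\tau - I)b = \int_0^\tau T_s Ab \, \d s$, and use the uniform bound $M \triangleq \sup_{t \geq 0} \|T_t\|_{\L(E_0)} < +\infty$ to obtain
\begin{equation*}
\|(T_\tau - I)u\|_{E_0} \leq 2M \|a\|_{E_0} + M\tau \|Ab\|_{E_0} \leq K (\|a\|_{E_0} + \tau \|b\|_{E_1}).
\end{equation*}
Passing to the infimum gives $\|(T_\tau - I)u\|_{E_0} \leq K K(\tau, u)$ pointwise in $\tau$, and substituting into the weighted integral closes this direction.

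For the reverse inclusion $F_\theta \hookrightarrow [E_1, E_0]_\theta$, given $u \in F_\theta$ I would construct an explicit decomposition by means of the Cesàro average $u_\tau \triangleq \tau^{-1}\int_0^\tau T_s u \, \d s$, which lies in $\dom(A) = E_1$ and satisfies the fundamental identity $Au_\tau = \tau^{-1}(T_\tau - I)u$. Writing $u = u_\tau + (u - u_\tau)$ and using Cauchy--Schwarz on $u - u_\tau = \tau^{-1}\int_0^\tau (I - T_s)u \, \d s$ yields
\begin{equation*}
K(\tau, u) \leq K \Bigl( \tau^{-1/2} \Bigl( \int_0^\tau \|(I - T_s)u\|^2_{E_0} \, \d s \Bigr)^{1/2} + \|(T_\tau - I)u\|_{E_0} + \tau \|u\|_{E_0} \Bigr).
\end{equation*}
On $(0, 1)$, a Fubini swap transforms the most delicate contribution, built from the running integral $\int_0^\tau \|(I-T_s)u\|_{E_0}^2\,\d s$, into a constant multiple of $\int_0^1 s^{-(3-2\theta)}\|(I - T_s)u\|^2_{E_0}\,\d s$, which is finite by hypothesis; the low-order term $\tau\|u\|_{E_0}$ gives a convergent integral at zero since $\theta > 0$. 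For $\tau \geq 1$, the trivial decomposition $u = u + 0$ gives $K(\tau, u) \leq \|u\|_{E_0}$, and the weight $\tau^{-(3-2\theta)}$ is integrable on $[1,+\infty)$ precisely because $\theta < 1$.

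\textbf{Main obstacle.} Since $\sg{T}$ is merely assumed bounded and strongly continuous, one cannot use $T_\tau u$ itself as an approximant in $E_1$, and the Cesàro average is the canonical surrogate. The delicate point is the Hardy/Fubini bookkeeping needed to absorb the $(u - u_\tau)$ remainder into the weighted integral defining $F_\theta$, ensuring that the orbit norm and the K-method norm are comparable with constants depending only on $\theta$ and $M$.
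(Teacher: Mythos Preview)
The paper does not actually prove this theorem: it is stated ``for the convenience of the reader'' with references to \cite[Proposition 2.4, Chapter 4]{Tay96book} and \cite[Theorem 10.1, Chapter 1]{LioMag68book} for proofs. So there is no ``paper's own proof'' to compare against.

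Your proposal is the standard Lions--Peetre argument via the $K$-method and is essentially correct. A couple of minor points worth tightening: (i) in the upper bound you need the integral only on $(0,1)$, since for $\tau \geq 1$ the crude bound $\|(T_\tau - I)u\|_{E_0} \leq (M+1)\|u\|_{E_0}$ already gives a convergent tail (this is implicit in your treatment of the reverse direction but should be made symmetric); (ii) the Fubini computation you sketch is exactly right --- the inner integral $\int_s^1 \tau^{-(4-2\theta)}\,\d\tau$ is bounded by $(3-2\theta)^{-1}s^{-(3-2\theta)}$, which is what makes the Hardy-type swap work. The constants you obtain depend only on $\theta$ and $M = \sup_{t\geq 0}\|T_t\|_{\L(E_0)}$, which is all that is claimed. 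Note also that the statement as printed in the paper has a typographical inconsistency ($T_t$ under an integral in $\d\tau$); you have correctly read it as $T_\tau$.
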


\begin{proof}[Proof of \cref{prop:est-z}] The result is straightforward if $\eta = 0$ or $\eta = 1$, so we assume that $0 < \eta < 1$.
Applying \cref{theo:interpolation-sg} to the left translation semigroup on $L^2\hl{Y}$ leads to a Besov-type characterisation of the space $H^{1-\eta}\hl{Y}$:
given $f \in L^2\hl{Y}$, we have $f \in H^{1-\eta}\hl{Y}$ if and only if
\begin{equation}
\label{eq:trans-norm}
\int_0^{+\infty} \frac{1}{\tau^{3 -2\eta}} \int_0^{+\infty} \|f(t + \tau) - f(t)\|^2_Y \, \d t \, \d \tau < + \infty.
\end{equation} 
The integral in \cref{eq:trans-norm} defines an equivalent norm on $H^{1-\eta}\hl{Y}$
if supplemented with a lower-order $L^2$-term; see also \cite[Section 10, Chapter 1]{LioMag68book} or \cite{Sim90}.
In what follows,
we write $\tilde{S}_t \triangleq e^{-t} S_t$ for $t \geq 0$. Clearly, $\sg{\tilde{S}}$ is also a bounded strongly continuous semigroup and its generator is $A - 1$.
Fix
$x_0 \in X$. We already know that $\|C\tilde{z}\|_{L^2\hl{Y}} \leq K \|x_0\|_X$ by $A$-boundedness of $C$.
We also observe that
\begin{equation}
\label{eq:dec-z-fra}
\tilde{z}(t + \tau) - \tilde{z}(t) 
= \tilde{S}_t (\tilde{S}_\tau - 1)A^{-1}x_0 -(e^{-t - \tau} - e^{-t})A^{-1}x_0, \quad t \geq 0, \quad \tau \geq 0.
\end{equation}
Therefore, with \cref{eq:trans-norm} in mind, we obtain
\begin{multline}
\label{eq:dec-z-fra-est}
\int_0^{+\infty} \frac{1}{\tau^{3 -2\eta}} \int_0^{+\infty} \|C\tilde{z}(t + \tau) - C\tilde{z}(t)\|^2_Y \, \d t \, \d \tau 
\leq 2 \int_0^{+\infty} \frac{1}{\tau^{3 -2\eta}} \int_0^{+\infty} \|C\tilde{S}_t (\tilde{S}_\tau - 1)A^{-1}x_0\|_Y^2 \, \d t \, \d \tau \\ + 2 \|CA^{-1}x_0\|^2_Y \int_0^{+\infty} \frac{1}{\tau^{3 -2\eta}} \int_0^{+\infty} |e^{-t - \tau} -e^{-t}|^2 \, \d t \, \d \tau.
\end{multline}
Of course
$e^{-\cdot}$ is in $H^{1-\eta}(0, +\infty)$, which means that the second double integral on the right-hand side of \cref{eq:dec-z-fra-est} is finite. Moreover,
$\|CA^{-1}x_0\|_Y \leq K \|x_0\|_X$ by $A$-boundedness of $C$.
Now, under condition \cref{it:adm-eta}
we can use
 \cref{lem:adm-bounded-CR}, whose first statement can be rewritten as follows: there exists $K > 0$ such that
\begin{equation}
\label{eq:eta-adm-v}
\int_0^{+\infty} \|C\tilde{S}_t v_0\|^2_Y \, \d t \leq K \|v_0\|^2_{X_\eta^\inte},
\quad v_0 \in X_\eta^\inte.
\end{equation}
\Cref{eq:eta-adm-v} applied to $v_0 = (\tilde{S}_\tau - 1)x_0$, $ \tau \geq 0$, yields
\begin{equation}
\label{eq:est-shift-adm}
\int_0^{+\infty} \|C \tilde{S}_t (\tilde{S}_\tau - 1) A^{-1}x_0\|^2_Y \, \d t \leq K \|(\tilde{S}_\tau - 1)A^{-1}x_0\|_{X_\eta^\inte}^2, \quad \tau \geq 0.
\end{equation}
Plugging \cref{eq:est-shift-adm} into \cref{eq:dec-z-fra-est} and using our preliminary $L^2$-estimate, we obtain
\begin{multline}
\label{eq:Cztilde-L2-trans}
\|C\tilde{z}\|^2_{L^2\hl{Y}} + 
\int_0^{+\infty} \frac{1}{\tau^{3 -2\eta}} \int_0^{+\infty} \|C\tilde{z}(t + \tau) - C\tilde{z}(t)\|^2_Y \, \d t \, \d \tau  \\ \leq K \|x_0\|^2_X + K'
\int_0^{+\infty} \frac{1}{\tau^{3 -2\eta}} \|(\tilde{S}_\tau - 1)A^{-1}x_0\|_{X_\eta^\inte}^2 \, \d \tau.
\end{multline}
Recall from \cref{sec:pre-sta} that the domain of the generator of $\sg{\tilde{S}}$ \emph{restricted} to $X_\eta^{\inte}$ is precisely $X_{1 + \eta}^\inte$. Notice also that
$
X_1 = [X^\inte_{1 + \eta}, X_\eta^{\inte}]_\eta
$.
Therefore, \cref{theo:interpolation-sg} yields
\begin{equation}
\label{eq:final-frac-int}
\int_0^{+\infty} \frac{1}{\tau^{3 -2\eta}} \|(\tilde{S}_\tau - 1)A^{-1}x_0\|^2_{X_\eta^\inte} \, \d \tau \leq K \|A^{-1} x_0\|^2_{X_1} = K \|x_0\|^2_X,
\end{equation}
which finally leads to $\|C\tilde{z}\|^2_{H^{1-\eta}} \leq K \|x_0\|^2$, as required.
\end{proof}


\begin{coro}[\cref{it:adm-eta} implies \cref{it:alt-adm}]
\label{coro:ii-to-iii}
Let $0 \leq \eta \leq 1$ and assume that \cref{it:adm-eta} holds. Then there exists $K > 0$ such that
\begin{equation}
\label{eq:alt-ex-bis}
\|C\tilde{x}\|_{H^{-\eta}(0, +\infty; Y)} \leq K \|x_0\|_X, \quad x_0 \in X_1.
\end{equation}
In particular, for any $T > 0$, there exists $K > 0$ such that 
\begin{equation}
\label{eq:alt-adm-bis}
\|Cx\|_{H^{-\eta}(0, T; Y)} \leq K \|x_0\|_X, \quad x_0 \in X_1.
\end{equation}
\end{coro}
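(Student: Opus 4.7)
The plan is to derive the $H^{-\eta}$-estimate on $C\tilde{x}$ by differentiating the $H^{1-\eta}$-estimate on $C\tilde{z}$ provided by \cref{prop:est-z}. The starting point is the identity
\begin{equation*}
\dt{\tilde{z}} = -\tilde{z} + \tilde{x} \quad \mbox{on}~ (0, + \infty),
\end{equation*}
which follows directly from $\dot{z} = x$ and the definition $\tilde{z} = e^{-\cdot} z$. Applying the $A$-bounded operator $C$ (which makes sense pointwise because $x_0 \in X_1$ and $z \in \C^1(\R^+, X_1)$) yields
\begin{equation*}
C\tilde{x} = \dt{} (C\tilde{z}) + C\tilde{z}
\end{equation*}
in the sense of $Y$-valued distributions on $(0, + \infty)$.

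First I would apply \cref{prop:est-z} to obtain $\|C\tilde{z}\|_{H^{1-\eta}\hl{Y}} \leq K \|x_0\|_X$. Together with the continuous embedding $H^{1-\eta}\hl{Y} \hookrightarrow L^2\hl{Y} \hookrightarrow H^{-\eta}\hl{Y}$, this already controls the zeroth-order term $C\tilde{z}$ in the desired $H^{-\eta}$-norm. For the derivative term, the key observation is that $\tilde{z}(0) = z(0) = 0$, so, depending on whether $1 - \eta \leq 1/2$ or $1 - \eta > 1/2$, \cref{coro:ext-zero} (extension by zero from $H^{1-\eta}\hl{Y}$ or $H^{1-\eta}_0\hl{Y}$ to $H^{1-\eta}(\R, Y)$) allows us to extend $C\tilde{z}$ to a function in $H^{1-\eta}(\R, Y)$ vanishing on $(-\infty, 0)$. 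Since differentiation is continuous from $H^{1-\eta}(\R, Y)$ into $H^{-\eta}(\R, Y)$, restricting back to $(0, +\infty)$ and invoking the restriction property yields
\begin{equation*}
\left \| \dt{}(C\tilde{z}) \right \|_{H^{-\eta}\hl{Y}} \leq K \|C\tilde{z}\|_{H^{1-\eta}\hl{Y}} \leq K' \|x_0\|_X.
\end{equation*}
This step corresponds essentially to the use of \cref{lem:diff-shift} as employed earlier in the proof of \cref{prop:frac}, so I would reuse it here.

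Combining these two bounds through the identity $C\tilde{x} = (\d/\d t)(C\tilde{z}) + C\tilde{z}$ gives the half-line estimate \cref{eq:alt-ex-bis}. The finite-time estimate \cref{eq:alt-adm-bis} then follows immediately: the restriction map from $H^{-\eta}\hl{Y}$ to $H^{-\eta}(0, T; Y)$ is continuous, and $Cx = e^{\cdot}\, C\tilde{x}$ on $(0, T)$, where multiplication by the smooth bounded function $e^{\cdot}$ is continuous on $H^{-\eta}(0, T; Y)$.

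The only subtle point is the boundary-value check required to extend $C\tilde{z}$ by zero across $t = 0$; this splits into the two regimes $\eta \geq 1/2$ and $\eta < 1/2$, but in both cases the conclusion follows once one notes that $\tilde{z}(0) = 0$, so no real obstruction arises. Everything else is a direct consequence of \cref{prop:est-z} and standard facts about vector-valued Sobolev spaces collected in \cref{sec:preliminary,sec:cutoff}.
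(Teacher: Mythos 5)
Your decomposition is exactly the paper's: write $C\tilde{x} = (\d/\d t)(C\tilde{z}) + C\tilde{z}$, invoke \cref{prop:est-z} for the $H^{1-\eta}$-bound on $C\tilde{z}$, bound the zeroth-order term trivially, and obtain the finite-time statement via the restriction property and continuity of multiplication by $e^{\cdot}$. The one place where you deviate is the bound on the derivative term, and there your argument as written has a gap: you extend $C\tilde{z}$ by zero using \cref{coro:ext-zero}, but that result excludes the critical exponent $s = 1/2$, and the remark following it states that extension by zero is genuinely \emph{not} bounded from $H^{1/2}(0,+\infty)$ into $H^{1/2}(\R)$. Since $s = 1-\eta$ runs through $[0,1]$, your case split ``$1-\eta \leq 1/2$ or $1-\eta > 1/2$'' silently includes $\eta = 1/2$, where neither branch of \cref{coro:ext-zero} applies and the observation $\tilde{z}(0) = 0$ does not rescue the argument; so the proof you describe does not cover $\eta = 1/2$.

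The fix is the one the paper uses and that you gesture at in passing: apply \cref{lem:diff-shift} directly, which gives $\|(\d/\d t)(C\tilde{z})\|_{H^{-\eta}\hl{Y}} \leq K \|C\tilde{z}\|_{H^{1-\eta}\hl{Y}}$ for every $0 \leq \eta \leq 1$ (indeed every $s \in \R$), because its proof goes through extension by \emph{reflection} (\cref{lem:reflec-plus,lem:reflec-gen}) rather than extension by zero, and hence requires neither a vanishing trace at $t = 0$ nor the restriction $s \neq 1/2$. Your identification of the extension-by-zero step with \cref{lem:diff-shift} is therefore not accurate as stated — the two mechanisms differ precisely at the half-integer exponent — but once you replace that step by a direct appeal to \cref{lem:diff-shift}, the detour through $\tilde{z}(0) = 0$ becomes unnecessary and your proof coincides with the paper's.
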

\begin{proof}
Let $x_0 \in X_1$ and define $z$ as in \cref{prop:est-z}. Since $(\d / \d t)\tilde{z} = - \tilde{z} + \tilde{x}$, \cref{eq:alt-ex-bis} follows from
the $H^{1-\eta}$-estimate \cref{eq:est-z} and the elementary $L^2$-bound 
$\|C\tilde{z}\|_{L^2\hl{Y}} \leq K \|x_0\|_X$ using \cref{lem:diff-shift}.
\Cref{eq:alt-adm-bis} then readily follows from the restriction property and 
the fact that multiplication by $e^{\cdot}$ is continuous $H^{-\eta}(0, T; Y) \to H^{-\eta}(0, T; Y)$.
\end{proof}

With the help of \cref{prop:est-z,coro:ii-to-iii}, we are able prove another series of implications.
\begin{prop}[Interpolation]
\label{prop:interpolation}
Let $0 \leq \eta \leq 1$. Then,
\begin{subequations}
\begin{align}
&\cref{it:adm-eta} \implies \cref{it:alt-adm} \implies \cref{it:FQ-cond}, \quad \cref{it:adm-eta} \iff \cref{it:alt-adm} &&\mbox{in general;}\\
&\cref{it:bounded-fra} \implies \cref{it:alt-adm} &&\mbox{under the left-invertibility assumption.}
\end{align}
\end{subequations}
\end{prop}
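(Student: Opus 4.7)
The implication $\cref{it:adm-eta} \implies \cref{it:alt-adm}$ is already established in \cref{coro:ii-to-iii}, and $\cref{it:alt-adm} \implies \cref{it:FQ-cond}$ follows by chaining $\cref{it:alt-adm} \implies \cref{it:adm-eta-fra} \implies \cref{it:bounded-fra} \iff \cref{it:FQ-cond}$ from \cref{prop:frac}. The two genuine contributions are therefore the converse $\cref{it:alt-adm} \implies \cref{it:adm-eta}$ and, under left-invertibility, $\cref{it:bounded-fra} \implies \cref{it:alt-adm}$.

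For $\cref{it:alt-adm} \implies \cref{it:adm-eta}$ the plan is to exploit operator interpolation on the map $\Psi : x_0 \mapsto C\tilde{x}$. By \cref{lem:X-to-Cx}, hypothesis \cref{it:alt-adm} extends $\Psi$ to a bounded operator $X \to H^{-\eta}\hl{Y}$. To obtain a matching endpoint at the $X_1$ level I would take $x_0 \in X_2$ (and pass to the limit by density) so that $x \in \C^1(\R^+, X_1)$ and the classical identity $(\d / \d t)(C\tilde{x}) = -C\tilde{x} + C\tilde{y}$ holds pointwise in $Y$, where $y(t) \triangleq S_t(Ax_0)$. Applying the extended output map to the datum $Ax_0 \in X$ gives $\|C\tilde{y}\|_{H^{-\eta}\hl{Y}} \leq K \|Ax_0\|_X \leq K' \|x_0\|_{X_1}$; combined with the elementary $L^2$ bound on $C\tilde{x}$ coming from $A$-boundedness of $C$ and the differentiation principle \cref{lem:diff-shift}, this upgrades $\Psi$ to a bounded operator $X_1 \to H^{1-\eta}\hl{Y}$. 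Interpolation with parameter $1-\eta$ then delivers a bounded
\begin{equation*}
\Psi : [X_1, X]_{1-\eta} \to [H^{1-\eta}\hl{Y}, H^{-\eta}\hl{Y}]_{1-\eta}.
\end{equation*}
The source is exactly $X_\eta^\inte$ by definition, and a standard Sobolev interpolation computation identifies the target with $L^2\hl{Y}$, since the exponent $(1-\theta)(1-\eta) + \theta(-\eta)$ vanishes at $\theta = 1 - \eta$. Restricting to $(0, T)$ and removing the exponential weight delivers \cref{it:adm-eta}.

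For $\cref{it:bounded-fra} \implies \cref{it:alt-adm}$ under left-invertibility, I would introduce $C_\eta \triangleq C(-A)^{-\eta}$, which belongs to $\L(X_1, Y)$ by the moment inequality. Since $(-A)^{-\eta}$ lies in the functional calculus of $A$ and commutes with every resolvent, \cref{it:bounded-fra} reformulates as $\sup_{\omega \in \R} \|C_\eta(1 + \i \omega - A)^{-1}\|_{\L(X, Y)} < +\infty$, which is the standard admissibility frequency-domain condition for $C_\eta$. Left-invertibility of $\sg{S}$ together with \cite[Corollary 5.2.4]{TucWei09book} then make $C_\eta$ admissible in the sense of \cref{def:admissible-observation}, and by Plancherel this translates into
\begin{equation*}
\int_\R \|C(1 + \i \omega - A)^{-1}(-A)^{-\eta} x_0\|^2_Y \, \d \omega \leq K \|x_0\|^2_X, \quad x_0 \in X.
\end{equation*}
To transfer this bound to $C\tilde{x}$, I would invoke the intertwining identity \cref{eq:formula-res-A-eta}, rearranged as
\begin{equation*}
C(1 + \i \omega - A)^{-1} x_0 = \frac{2\pi \i}{I_{1 + \i \omega}} \left( C(1 + \i \omega - A)^{-1}(\mu_0 - A)^{-\eta} x_0 + G(\omega) \right),
\end{equation*}
where $G$ is the contour residual appearing in the proof of \cref{prop:frac}. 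The upper bound $|I_{1 + \i \omega}|^{-1} \leq K(1 + |\omega|)^\eta$, following from \cref{eq:est-I} at high frequencies and from continuity of $\omega \mapsto I_{1 + \i \omega}$ for bounded $\omega$, ensures that $(1 + \omega^2)^{-\eta}|I_{1+\i\omega}|^{-2}$ is uniformly bounded. Since the inverse Fourier transform of $G$ is supported in $(-\infty, 0)$ with exponentially decaying tails stemming from $\re(\mu_0 + \mu - 1) \geq 2$ on $\gamma$, the sectorial decay \cref{eq:sectorial} and the weight $(-\mu)^{-\eta}$ yield $\|G\|_{L^2(\R, Y)} \leq K\|x_0\|_X$. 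Integrating the displayed identity against $(1 + \omega^2)^{-\eta}$ and applying Plancherel then produces $\|C\tilde{x}\|_{H^{-\eta}(\R, Y)} \leq K \|x_0\|_X$, and restriction to $(0, T)$ finishes the proof.

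The main technical obstacle is the clean identification $[H^{1-\eta}\hl{Y}, H^{-\eta}\hl{Y}]_{1-\eta} = L^2\hl{Y}$ used in the first half, since the paper's Sobolev scale mixes positive-order spaces (without boundary condition) with negative-order antiduals of $H^s_0$. In practice I would carry out the interpolation on $\R$ after extension by zero using \cref{coro:ext-zero}, treating $\eta < 1/2$ and $\eta \geq 1/2$ separately to keep the extensions in the correct class, and then restrict. The parallel delicate point in the left-invertible argument is the $L^2(\R, Y)$ control of the contour residual $G$, which requires splitting the $\gamma$-integral between the low- and high-$|\mu|$ regimes relative to $|\omega|$ so as to extract enough decay.
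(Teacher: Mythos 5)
On the ``in general'' part your proposal is essentially sound and close to the paper. The implication \cref{it:adm-eta}$\implies$\cref{it:alt-adm} is \cref{coro:ii-to-iii}, and your proof of \cref{it:alt-adm}$\implies$\cref{it:adm-eta} (two endpoint bounds $X \to H^{-\eta}\hl{Y}$ via \cref{lem:X-to-Cx} and $X_1 \to H^{1-\eta}\hl{Y}$ via the shifted-derivative identity and \cref{lem:diff-shift}, then interpolation) is exactly the paper's argument. The identification $[H^{1-\eta}\hl{Y}, H^{-\eta}\hl{Y}]_{1-\eta} = L^2\hl{Y}$ that you flag as the main obstacle is precisely \cref{lem:L2-rec} in the appendix, proved there by duality and reiteration (via \cref{lem:int-H10-L2,lem:int-V-V-ast}) rather than by extension by zero; note that extension by zero cannot serve as a common coretraction for this couple, since by \cref{coro:ext-zero} it is bounded on $H^{-\eta}\hl{Y}$ only for $\eta \leq 1/2$, while on the positive-order endpoint it is bounded only on the $H^{1-\eta}_0$-subspace, which the image of $x_0 \mapsto C\tilde{x}$ does not lie in (a reflection extension as in \cref{lem:reflec-gen} would work, or simply cite \cref{lem:L2-rec}). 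Your route to \cref{it:alt-adm}$\implies$\cref{it:FQ-cond}, chaining through \cref{prop:frac} and \cref{lem:adm-bounded-CR}, is a legitimate shortcut (handle $\eta = 0$ separately, since \cref{prop:frac} assumes $\eta > 0$); the paper instead proves \cref{it:adm-eta}$\implies$\cref{it:FQ-cond} directly by a Paley--Wiener and Plancherel argument based on \cref{prop:est-z}.

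The genuine gap is in the left-invertibility implication, where you also diverge most from the paper. Your argument hinges on the claim $\|G\|_{L^2(\R, Y)} \leq K\|x_0\|_X$ for the contour residual in \cref{eq:formula-res-A-eta}, justified by ``sectorial decay and the weight $(-\mu)^{-\eta}$'' after splitting the $\gamma$-integral. What that splitting actually yields is only the pointwise bound $\|G(\omega)\|_Y \leq K(1+|\omega|)^{-\eta}\|x_0\|_X$, which is not square-integrable on $\R$ when $\eta \leq 1/2$ -- exactly the range relevant to the wave application. Moreover this pointwise bound is essentially sharp at the level of generality you invoke: take $A$ self-adjoint with spectrum in $(-\infty,-1]$, $C = A$ and $x_0$ a unit eigenvector with eigenvalue $-\lambda$; an explicit residue computation gives $\|G(\omega)\| \gtrsim \lambda^{-\eta}$ on an interval of length comparable to $\lambda$ around $\omega \approx \lambda$, so $\|G\|^2_{L^2(\R,Y)} \gtrsim \lambda^{1-2\eta}$ is unbounded over unit vectors when $\eta < 1/2$. (In that example the semigroup is not left-invertible and \cref{it:alt-adm} indeed fails, so there is no conflict with the theorem; the point is that your justification of the $G$-estimate uses neither left-invertibility nor \cref{it:bounded-fra}, hence cannot suffice, and no argument deriving the $L^2$ bound from those hypotheses is given.) A secondary issue: at low frequencies, continuity of $\omega \mapsto I_{1+\i\omega}$ does not give an upper bound on $|I_{1+\i\omega}|^{-1}$ without non-vanishing, which you do not establish; this one is harmless, since for $|\omega| \leq \omega_0$ the desired weighted bound on $C(1+\i\omega-A)^{-1}x_0$ follows directly from boundedness of the resolvent in $\L(X, X_1)$ on bounded sets. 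A possible repair of the main step would exploit structure rather than size: the inverse Fourier transform of $G$ is supported in $(-\infty,0)$ and the deconvolution by $I_{1+\i\omega}$ is anticausal, so the residual should not contribute to the restriction to $(0,+\infty)$; but that is a different argument from the one you wrote and needs care to make rigorous. The paper avoids the issue altogether: it reads \cref{it:FQ-cond} as a Fourier multiplier bound $L^2(\R,X) \to H^{-\eta}(\R,Y)$ for $\omega \mapsto C(1+\i\omega-A)^{-1}$, applies it to the convolution identity $x(t) = t^{-1}(S \ast x)(t)$, and then uses a left inverse of $S_1$ to translate the resulting estimate on $(1,+\infty)$ back to $(0,+\infty)$.
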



\begin{proof}
We already know from \cref{coro:ii-to-iii} that \cref{it:adm-eta} implies \cref{it:alt-adm}.

$\cref{it:adm-eta} \implies \cref{it:FQ-cond}$:
We shall prove that
\begin{equation}
\label{eq:freq-2-omega}
\sup_{\omega \in \R} \frac{1}{|2 + \i \omega|}\|C(2 + \i \omega - A)^{-1}\|_{\L(X, Y)}< + \infty.
\end{equation}
The corresponding growth estimate on $1 + \i \R$ can then be deduced using the resolvent identify, much as in the proof of \cref{lem:adm-bounded-CR}.


Let $x_0 \in X_1$. The corresponding solution $x$ to \cref{eq:cauchy-x-A} satisfies
$x \in \C(\R^+, X_1) \cap \C^1(\R^+, X)$ and $x$ is bounded in $X_1$. Since $C$ is $A$-bounded,
the $Y$-valued function $F$ defined by
\begin{equation}
\label{eq:def-F-eta}
F(p) \triangleq \frac{1}{p^\eta} C\hat{x}(p) = \frac{1}{p^\eta} C(p - A)^{-1}x_0, \quad \re p > 0,
\end{equation}
is holomorphic and bounded on every open half-plane of the form $\re p \geq \sigma$ with $\sigma > 0$.
By the Paley--Wiener theorem \cite[Theorem 1.8.3]{AreBat01book}, $F$ is the Laplace transform of a measurable $Y$-valued function $f$ that satisfies $e^{-\sigma \cdot}f \in L^2(0, +\infty; Y)$ for any $\sigma > 0$. We can then write
\begin{equation}
\label{eq:time-int-F}
F(2 + \i \omega) = \int_0^{+\infty} e^{-(2 + \i \omega)t} f(t) \, \d t, \quad \omega \in \R,
\end{equation}
where the integral is absolutely convergent in $Y$. The Cauchy--Schwarz inequality yields
\begin{equation}
\label{eq:simp-cs}
\|F(2 + \i \omega)\|_Y^2 \leq \left (\int_0^{+\infty} e^{-2t} \, \d t \right) \left (\int_0^{+\infty} \|e^{-t}f(t)\|^2_Y \, \d t \right ) = \frac{1}{2}\int_0^{+\infty} \|e^{-t}f(t)\|^2_Y \, \d t, \quad \omega \in \R.
\end{equation}
By Plancherel's theorem,
\begin{equation}
\label{eq:F-CS-plan}
\int_0^{+\infty} \|e^{-t}f(t)\|^2_Y \, \d t =  \frac{1}{2 \pi}  \int_\R \|\hat{f}(1 + \i \xi)\|^2_Y \, \d \xi = \frac{1}{2\pi} \int_\R \frac{1}{|1 + \i \xi|^{2\eta}} \|C\hat{x}(1 + \i \xi)\|^2_Y \, \d \xi.
\end{equation}
Substituting \cref{eq:F-CS-plan} into \cref{eq:simp-cs} gives
\begin{equation}
\label{eq:frac-C-Cx}
\frac{1}{|2 + \i \omega|^2} \|C(2 + \i \omega - A)^{-1}x_0\|_Y^2 \leq \frac{1}{4\pi} \int_\R \frac{1}{|1 + \i \xi|^{2\eta}} \|C\hat{x}(1 + \i \xi)\|^2_Y \, \d \xi, \quad \omega \in \R.
\end{equation}
Under condition \cref{it:adm-eta}, \cref{prop:est-z,coro:ii-to-iii} yield the estimates
\begin{equation}
\label{eq:est-Cx-Cz}
\|C\tilde{x}\|_{H^{-\eta} \hl{Y}} \leq K \|x_0\|_X, \quad \|C\tilde{z}\|_{H^{1-\eta}\hl{Y}} \leq K' \|x_0\|_X.
\end{equation}
Just as in the proof of the implication from \cref{it:alt-adm} to \cref{it:adm-eta-fra} in \cref{prop:frac}, we may deduce from \cref{eq:est-Cx-Cz} that 
\begin{equation}
\int_\R \frac{1}{(1 + \xi^2)^\eta} \|C\hat{x}(1 + \i \xi)\|^2_Y \, \d \xi \leq K \|x_0\|^2_X.
\end{equation}
In combination with \cref{eq:frac-C-Cx}, this shows that
\begin{equation}
\label{eq:unif-estimate-TF}
\frac{1}{|2 + \i \omega|} \|C(2 + \i \omega - A)^{-1}x_0\|_Y  \leq K \|x_0\|_X, \quad \omega \in \R.
\end{equation}
\Cref{eq:unif-estimate-TF} is valid for arbitrary data $x_0$ in $X_1$ and thus for $x_0$ in $X$ by density.

$\cref{it:alt-adm} \implies \cref{it:adm-eta}$:
We already know from \cref{lem:X-to-Cx} that, under condition \cref{it:alt-adm}, the map $x_0 \mapsto C\tilde{x}$ is continuous  $X \to H^{-\eta}(0, +\infty; Y)$. 
Let us show that it is also continuous $X_1 \to H^{1-\eta}(0, +\infty; Y)$. Pick some (smoother) data $v_0 = A^{-1}x_0 \in X_1$, and write
 $v(t) \triangleq S_t v_0$ and $x(t) \triangleq S_t x_0$ for $t\geq 0$.
We have $(\d / \d t)\tilde{v} = - \tilde{v} + \tilde{x}$ so that, using \cref{lem:diff-shift} and also $A$-boundedness of $C$,
\begin{equation}
\label{eq:diff-Cw}
\|C\tilde{v}\|_{H^{1-\eta}\hl{Y}} \leq K \|C\tilde{v}\|_{L^2\hl{Y}} + K \|C \tilde{x}\|_{H^{-\eta}\hl{Y}}  \leq K' \|v_0\|_{X_1},
\end{equation}
as required.
Now,
by \cref{lem:L2-rec}, 
$
L^2(0, +\infty; Y) = [H^{1-\eta}(0, +\infty; Y), H^{-\eta}(0, +\infty; Y)]_{1 - \eta}
$
.
With that in hand,
we obtain continuity $X_\eta^\inte \to L^2\hl{Y}$ of the map $x_0 \mapsto C\tilde{x}$
by interpolating between
 $X \to H^{-\eta}\hl{Y}$ and $X_1 \to H^{1-\eta}\hl{Y}$.
 It immediately follows that the map $x_0 \mapsto Cx$ is continuous $X_\eta^\inte \to L^2(0, T; Y)$ for any $T > 0$, which proves \cref{it:adm-eta}.

$\cref{it:FQ-cond} \implies \cref{it:alt-adm}$ (With $\sg{S}$ left-invertible): 
Here we will take advantage of the left-invertibility assumption to deduce information on semigroup orbits $x$ from convolution products $S\ast x$.\footnote 
{
In the context of standard admissibility theory, a similar idea is used in the proof of \cite[Corollary 5.2.4]{TucWei09book}.
}
To this end, we start by observing that, as a consequence of \cref{it:FQ-cond},
\begin{equation}
\sup_{\omega \in \R} \frac{1}{(1 + \omega^2)^{\eta/2}} \|C(1 + \i \omega - A)^{-1}\|_{\L(X, Y)} < + \infty.
\end{equation}
Hence $\omega \mapsto C(1 + \i \omega - A)^{-1}$ is a Fourier multiplier from $L^2(\R, X)$ into $H^{-\eta}(\R, Y)$, i.e.,
\begin{equation}
\int_\R \frac{1}{(1 + \omega^2)^\eta} \|C(1 + \i \omega - A)^{-1} \hat{f}(\i \omega)\|^2_Y \, \d \omega \leq K \int_\R \|f(t)\|^2_X \, \d t, \quad f \in L^2(\R, X).
\end{equation}
In particular,
\begin{equation}
\label{eq:eta-mult-g}
\|C(\widetilde{S \ast g})\|_{H^{-\eta}(\R, Y)} \leq K \|\tilde{g}\|_{L^2(0, +\infty; X)}, \quad g \in L^2(0, +\infty, X).
\end{equation}
Now, let $x_0 \in X_1$ and observe that
\begin{equation}
x(t) = \frac{1}{t} \int_0^{t} x(t) \, \d r = \frac{1}{t} \int_0^t S_{t - r} S_r x_0 \, \d r = \frac{1}{t} (S \ast x)(t), \quad t > 0.
\end{equation}
Therefore (notice that we take the norms on the interval $(1, +\infty)$),
\begin{equation}
\label{eq:coincide-norms-1}
\|C\tilde{x}\|_{H^{-\eta}(1, +\infty; Y)} = \|(1/\cdot) C(\widetilde{S \ast x})\|_{H^{-\eta}(1, +\infty; Y)}.
\end{equation}
Both the function $t \mapsto 1/t$ and and its derivative $t \mapsto t-1/t^2$ are bounded on $(1, +\infty)$, and with interpolation and duality it is straightforward to show that
that multiplication by $t \mapsto 1/t$ is continuous with respect to the $H^{-\eta}(1, +\infty)$-norm. Using the restriction property, we then have

\begin{equation}
\label{eq:conv-est}
 \|(1/\cdot) C(\widetilde{S \ast x})\|_{H^{-\eta}(1, +\infty; Y)} \leq K \|C(\widetilde{S \ast x})\|_{H^{-\eta}(1, +\infty; Y)} \leq K' \|C(\widetilde{S \ast x})\|_{H^{-\eta}(\R, Y)}.
\end{equation}
On the other hand, the Fourier multiplier estimate \cref{eq:eta-mult-g} applied to $x$ yields
\begin{equation}
 \|C(\widetilde{S \ast x})\|_{H^{-\eta}(\R, Y)} \leq K \|\tilde{x}\|_{L^2\hl{X}} \leq K' \|x_0\|_X.
\end{equation}
With \cref{eq:coincide-norms-1,eq:conv-est}, it follows that
\begin{equation}
\label{eq:est-eta-hl}
\|C\tilde{x}\|_{H^{-\eta}(1, +\infty; Y)}  \leq K \|x_0\|_X.
\end{equation}
\Cref{eq:est-eta-hl}
 valid for arbitrary  data $x_0 \in X$
and we may rephrase it as continuity $X \to H^{1-\eta}(0, +\infty; Y)$ of the map
$x_0 \mapsto [C\tilde{x}](\cdot + 1)$. 
Next, we recall from \cref{lem:output-H-1}
 that $x \mapsto C\tilde{x}$ is well-defined as a bounded linear operator $X \to H^{-1}\hl{Y}$. Note here that, as right translations continuously map $H^{1}_0\hl{Y}$ into itself, by duality left translations extend to bounded linear operators on  $H^{-1}(0, +\infty; Y)$.
 Since we assume left-invertibility of the semigroup,
 there exists $S_{-1}^{\mathrm{left}} \in \L(X)$ such that $S_{-1}^{\mathrm{left}} S_1 = \id$.
Let $x_0 \in X$, $x_{-1} \triangleq S_{-1}^\mathrm{left} x_0$ and $v(t) \triangleq S_{t}x_{-1}$ for $t \geq 0$. It is easy to check that $C\tilde{x}$ and $e^{-1}[C\tilde{v}](\cdot + 1)$ coincide as elements of $H^{-1}\hl{E}$. But we also know from above that 
 $[C\tilde{v}](\cdot + 1)$ must in fact belong to $H^{-\eta}\hl{Y}$ and we have an estimate of its $H^{-\eta}$-norm in terms of $\|v(0)\|_X = \|x_{-1}\|_X$. 
Thus
\begin{equation}
\label{eq:est-Cx-inf}
\|C\tilde{x}\|_{H^{-\eta}\hl{Y}} \leq K \|x_{-1}\|_X = K \|S_{-1}^\mathrm{left}x_0\|_X \leq K' \|x_0\|_X.
\end{equation}
 Just as in the proof of \cref{coro:ii-to-iii}, we infer from
\cref{eq:est-Cx-inf}, which is valid for arbitrary $x \in X$, that the map $x \mapsto Cx$ is continuous $X \to H^{-\eta}(0, T; Y)$ for any $T > 0$. In particular, \cref{it:alt-adm} is true.
\end{proof}

\subsection{Proof of \cref{theo:input-group}}
\label{sec:proof-input}

\Cref{theo:input-group} is essentially the dual version of \cref{theo:inter-group}. Considering the inhomogeneous Cauchy problem with zero initial data
\begin{equation}
\label{eq:in-cauchy-zero}
\dot{x} = Ax + Bu, \quad x(0) = 0,
\end{equation}
we shall show that each item of \cref{theo:input-group} is equivalent to its counterpart in \cref{theo:inter-group} stated for the adjoint semigroup generator $A^\ast$ and the dual output operator $B^\ast$. Again, without loss of generality we may assume that $\sg{S}$ is uniformly bounded and that $0 \in \rho(A)$.

%
\subsubsection{Duality in abstract Sobolev scales}

Let $A^* : \dom(A^*) \to X$ be the adjoint of $A$ and let $X_1^\d$ denote $\dom(A^*)$ equipped with the graph norm $\|A^* \cdot \|_X$. 
As in \cref{sec:pre-sta}, we may define for any $-1 \leq \theta \leq 1$ the spaces $X_\theta^{\d, \fra}$ and $X_\theta^{\d, \inte}$ associated with $A^\ast$. 
Let $0 \leq \theta \leq 1$ and let $X_\theta^\d$ denote either
 $X_\theta^{\d, \fra}$ or $X_\theta^{\d, \inte}$. 
Recall that $X_1^\d \hookrightarrow X_\theta^\d \hookrightarrow X$ with continuous and dense embeddings.
By considering the
 natural restriction maps (which are the adjoints of the previous embeddings), we see that
\begin{equation}
\label{eq:dual-chain}
X^\ast \hookrightarrow (X_\theta^\d)^\ast \hookrightarrow (X_1^\d)^\ast.
\end{equation}
The embeddings are continuous and, since our spaces are reflexive, also dense. Now let $j : X \to X^\ast$ be the map defined by
\begin{equation}
\langle jx, \varphi\rangle_{X^\ast, X} \triangleq \langle x, \varphi \rangle_X, \quad x \in X, \quad \varphi \in X.
\end{equation}
In view of \cref{eq:dual-chain}, we have
\begin{equation}
\langle x, \varphi \rangle_X = \langle jx, \varphi \rangle_{X^\ast, X} = \langle jx, \varphi \rangle_{(X_\theta^\d)^\ast, X_\theta^\d} = \langle j x, \varphi\rangle_{(X_1^\d)^\ast, X_1^\d}, \quad x \in X, \quad \varphi \in X_1^\d.
\end{equation}
Furthermore, by Riesz' theorem, $j$ is an isometric isomorphism from $X$ onto $X^\ast$.
In the sequel,  $X_{-\theta}$ denotes either $X_{-\theta}^\fra$ (if $X_\theta^\d = X_\theta^{\d, \fra}$) or $X_{-\theta}^\inte$ (if $X_\theta^\d = X_\theta^{\d, \inte}$).

\begin{prop}
\label{lem:embedding}
Let $0 \leq \theta \leq 1$. Then $j$ extends to an isometric isomorphism between $X_{-\theta}$ and $(X_\theta^\d)^\ast$.
\end{prop}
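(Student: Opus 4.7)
The strategy is to establish the result at the endpoints $\theta = 0$ and $\theta = 1$ directly, then extend to intermediate $\theta$ by two different routes depending on whether we are dealing with the fractional-power scale $\{X_s^\fra\}$ or the interpolation scale $\{X_s^\inte\}$. The endpoint $\theta = 0$ is trivial since $X_0 = X$, $X_0^\d = X$, and $j$ is then the standard Riesz isomorphism. For $\theta = 1$, I would show directly that $j$ extends isometrically from $X_{-1}$ onto $(X_1^\d)^\ast$: given $x \in X$ and $\varphi \in X_1^\d$, the identity $\langle x, \varphi \rangle_X = \langle A^{-1}x, A^\ast \varphi\rangle_X$ combined with the fact that $A^\ast : X_1^\d \to X$ is (by definition of the $X_1^\d$-norm) an isometric isomorphism yields, after taking the supremum over $\|\varphi\|_{X_1^\d} \leq 1$ and substituting $y = A^\ast \varphi$, the equality $\|jx\|_{(X_1^\d)^\ast} = \|A^{-1}x\|_X = \|x\|_{X_{-1}}$. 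The extension to all of $X_{-1}$ follows by density of $X$ in $X_{-1}$; surjectivity onto $(X_1^\d)^\ast$ follows because the image is complete (hence closed) and contains $j(X) = X^\ast$, which is dense in $(X_1^\d)^\ast$ by \cref{eq:dual-chain}.

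For the fractional case $X_\theta^\d = X_\theta^{\d,\fra}$ with $0 < \theta < 1$, I would repeat the $\theta = 1$ argument verbatim with $A$ replaced by $(-A)^\theta$. The two key inputs are: (a) that $(-A^\ast)^\theta$ is the Banach-space adjoint of $(-A)^\theta$, a standard property of fractional powers of generators, giving the identity $\langle x, \varphi \rangle_X = \langle (-A)^{-\theta}x, (-A^\ast)^\theta \varphi\rangle_X$ for $x \in X$ and $\varphi \in X_\theta^{\d,\fra}$; (b) that $(-A^\ast)^\theta : X_\theta^{\d,\fra} \to X$ is an isometric isomorphism by definition of the $X_\theta^{\d,\fra}$-norm. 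Taking the supremum over the unit ball of $X_\theta^{\d,\fra}$ then gives $\|jx\|_{(X_\theta^{\d,\fra})^\ast} = \|(-A)^{-\theta}x\|_X = \|x\|_{X_{-\theta}^\fra}$, and the density argument (density of $X$ in $X_{-\theta}^\fra$, density of $X^\ast$ in $(X_\theta^{\d,\fra})^\ast$) concludes the proof.

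For the interpolation case $X_\theta^\d = X_\theta^{\d,\inte}$ with $0 < \theta < 1$, I would argue by interpolation functoriality rather than direct computation. By the endpoint cases just treated, $j$ is an isometric isomorphism both from $X$ onto $X^\ast$ and from $X_{-1}$ onto $(X_1^\d)^\ast$. Applying \cref{eq:geo-inter} to $j$ viewed as a morphism of Hilbert couples from $(X, X_{-1})$ to $(X^\ast, (X_1^\d)^\ast)$, and to its inverse, yields an isometric isomorphism
\begin{equation*}
j : [X, X_{-1}]_\theta \longrightarrow [X^\ast, (X_1^\d)^\ast]_\theta.
\end{equation*}
The source is precisely $X_{-\theta}^\inte$ by definition. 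To identify the target with $(X_\theta^{\d,\inte})^\ast$, I would invoke duality for Hilbert quadratic interpolation, which gives $(X_\theta^{\d,\inte})^\ast = ([X_1^\d, X]_{1-\theta})^\ast \cong [(X_1^\d)^\ast, X^\ast]_{1-\theta} = [X^\ast, (X_1^\d)^\ast]_\theta$, where the last equality uses the symmetry $[E_0, E_1]_\tau = [E_1, E_0]_{1-\tau}$ of quadratic interpolation.

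\textbf{Main obstacle.} The delicate point is the interpolation-duality identification in the last paragraph: although this formula is classical for Hilbert quadratic interpolation (see, e.g., the methods listed after \cref{eq:geo-inter}), the paper's convention places the smaller space on the left and one has to keep track of reversed orderings. A cleaner alternative, which sidesteps the duality theorem, would be to \emph{define} the pairing between $X_{-\theta}^\inte$ and $X_\theta^{\d,\inte}$ by bilinear interpolation of the obvious $X$-pairing (an $L^2$-pairing on both endpoints), and then verify that this pairing coincides with $\langle j \cdot, \cdot\rangle$ on the dense subspace $X \times X_1^\d$; exact norm identification is then guaranteed by the quadratic nature of the interpolation, as it essentially reduces to diagonalising a single positive operator.
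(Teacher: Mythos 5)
Your proposal is correct and follows essentially the same route as the paper: the fractional case is handled by the identity $\langle x, \varphi\rangle_X = \langle (-A)^{-\theta}x, (-A^\ast)^\theta\varphi\rangle_X$ plus the fact that $(-A^\ast)^\theta$ maps $X_\theta^{\d,\fra}$ isometrically onto $X$, with density and completeness giving surjectivity, exactly as in the paper's computation. The interpolation case is likewise the paper's argument: interpolate $j$ and $j^{-1}$ between the endpoints $X \to X^\ast$ and $X_{-1} \to (X_1^\d)^\ast$ and identify the target with $(X_\theta^{\d,\inte})^\ast$ via the duality theorem for quadratic interpolation (\cite{ChaHew15}), so no further changes are needed.
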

\begin{proof}
We start by writing
\begin{equation}
\langle x, \varphi \rangle_X  = \langle x, (- A^\ast)^{-\theta} (-A^\ast)^\theta \varphi \rangle_X = \langle (-A)^{-\theta} x, (-A^\ast)^\theta \varphi \rangle_X, \quad x \in X, \quad \varphi \in X_\theta^{\d,\fra}.
\end{equation}
Recalling that $(-A^\ast)^\theta$ is an isometric isomorphism from $X_\theta^{\d, \fra}$ onto $X$, we have, for all $x \in X$ 
\begin{equation}
\label{eq:j-X-theta}
\begin{aligned}
\|j x \|_{(X^{\d, \fra}_\theta)^\ast} &
= \sup_{\varphi \in X_\theta^{\d,\fra} \setminus \{0\}} \frac{|\langle (-A)^{-\theta} x, (-A^\ast)^\theta \varphi \rangle_X|}{\|(-A^\ast)^\theta \varphi\|_X}
\\ &= \sup_{\psi \in X \setminus \{0\}} \frac{|\langle (-A)^{-\theta} x, \psi \rangle_X|}{\|\psi\|_X}
 = \|(-A)^{-\theta} x \|_X = \|x\|_{X_{-\theta}^\fra}.
\end{aligned}
\end{equation}
Since $X$ is dense in $X_{-\theta}^\fra$, \cref{eq:j-X-theta} shows that 
$j$ extends to an isometry
$
X_{-\theta}^\fra \to (X_\theta^{\d, \fra})^\ast
$.
In particular, the extension, which we still denote by $j$, is injective. 
To check that it is surjective, we first note that
 the image of $X^\fra_{-\theta}$ under $j$ must contain $X^\ast$ and is therefore dense in $(X_\theta^{\d, \fra})^\ast$; owing to the isometric nature of $j$ and completeness of $X^\fra_{-\theta}$,
this image must also be closed and the claim follows.

The result is now proved for the scale of domains of fractional powers. In particular, the case $\theta = 1$ means that
$j : X_{-1} \to (X_{1}^\d)^\ast$ is an isometric isomorphism.
We can now deduce the required property for the interpolation scale by interpolating between $X_{-1} \to (X_1^{\d})^\ast$ and $X \to X^\ast$. More precisely,
we
recall that $X_{-\theta}^\inte = [X, X_{-1}]_{1 - \theta}$ and
apply \cite[Theorem 3.7]{ChaHew15} to see that
\begin{equation}
(X_\theta^{\d, \inte})^\ast = ([X_1^\d, X]_{1 - \theta})^\ast = [(X_1^\d)^\ast, X]_{\theta},
\end{equation}
with \emph{equality} of norms. As a result, considering $j$ and $j^{-1}$, we finally obtain that
$j : X_{-\theta}^\inte \to (X_\theta^{\d, \inte})^\ast$ is an isometric\footnote{
Recall that our choice of norm for interpolation spaces preserves constants in interpolation inequalities; see \cref{eq:geo-inter}.}
 isomorphism as well, which completes the proof.
\end{proof}

\Cref{lem:embedding} makes possible the identification $X_{-\theta} \simeq (X_\theta^\d)^\ast$ through the map $j$. In particular, $X$ is identified with its antidual $X^\ast$ and we can work
with the chain of continuous and dense embeddings
\begin{equation}
\label{eq:pivot-duality}
X_1^\d \hookrightarrow X_\theta^\d \hookrightarrow X \hookrightarrow X_{-\theta} \hookrightarrow X_{-1}, \quad 0 < \theta < 1,
\end{equation}
where the antiduality pairing $\langle \cdot, \cdot \rangle_{X_{-1}, X_1^\d}$ naturally extends $\langle \cdot, \cdot \rangle_{X_{-\theta}, X_\theta^\d}$, which in turn extends the scalar product $\langle \cdot, \cdot \rangle_X$.


\subsubsection{Duality between control and observation}
\label{sec:input-output-dual}

Having also identified $U$ with its antidual, we define the adjoint $B^* \in \L(X_1^\d, U)$ of $B$ in the pivot duality $X_1^\d \hookrightarrow X \hookrightarrow X_{-1}$ by
\begin{equation}
\langle u, B^*x \rangle_U \triangleq \langle Bu, x \rangle_{X_{-1}, X_1^\d}, \quad u \in U, \quad x \in X_1^\d.
\end{equation} 
We will apply \cref{theo:inter-group} 
 to the adjoint semigroup $\sg{S^\ast}$ with  generator $A^\ast : X_1^\d \to X$ and the $A^\ast$-bounded observation operator $B^\ast : X_1^\d \to U$. We also note that $\sg{S}$ is right-invertible if and only if $\sg{S^*}$ is left-invertible. Furthermore,
for all $\omega \in \R$, $u \in U$ and $x \in X$,
\begin{equation}
\langle (1 + \i \omega - A)^{-1}Bu, x \rangle_X = \langle Bu, (1 - \i \omega - A^\ast)^{-1}x \rangle_{X_{-1}, X_1^\d} = \langle u, B^\ast (1 - \i \omega -A^\ast)^{-1}x \rangle_U.
\end{equation}
As a consequence,
\begin{equation}
\label{eq:id-dual-op-norm}
\|(1 + \i \omega - A)^{-1}B\|_{\L(U, X)} = \|B^\ast(1 - \i \omega - A^\ast)^{-1}\|_{\L(X, U)}, \quad \omega \in \R.
\end{equation}
Now, given $T > 0$,
let $\Phi_T$ be the map that
 associates to an input $u$ defined on $(0, T)$ the state $x(T)$, where $x$ solves the inhomogeneous Cauchy problem \cref{eq:in-cauchy-zero}. It follows from \cref{eq:conv,eq:conv-ipp} that
\begin{equation}
\Phi_T \in \L(L^2(0, T; U), X_{-1}), \quad \Phi_T \in \L(H^1_0(0, T; U), X).
\end{equation}
Similarly as in \cref{lem:output-H-1}, we also see that
the map $\Psi_T : x_0 \mapsto B^*x$, where $x$ solves $\dot{x} = A^*x$, $x(0) = x_0$, satisfies
\begin{equation}
\Psi_T \in \L(X_1^\d, L^2(0, T; U)), \quad \Psi_T \in \L(X, H^{-1}(0, T; U)),
\end{equation}
where we extended $\Psi_T$ by density in the second statement.
 The following lemma is an implementation of a well-known duality relation between input and output maps; see, e.g.,
\cite[Section 4.10]{TucWei09book}. 
\begin{lemma}
\label{lem:dual-input-output}
The following identity holds:
\begin{equation}
\langle \Phi_T [u(T - \cdot)], x_0\rangle_{X} = \langle u, \Psi_T x_0\rangle_{L^2(0, T; U)}, \quad u \in H^1_0(0, T; U), \quad x_0 \in X_1^\d.
\end{equation}
\end{lemma}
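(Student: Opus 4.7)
The plan is a direct unwinding of the definitions combined with the standard duality between input and output maps. First, I would use the explicit convolution formula \cref{eq:conv}, which defines $\Phi_T$ on $L^2(0,T;U)$ into $X_{-1}$. Noting that $u(T-\cdot)$ remains in $H^1_0(0, T; U)$ (time reversal preserves both $L^2$ and the vanishing boundary conditions), and performing the change of variables $s \mapsto T - s$, one obtains
\begin{equation}
\Phi_T[u(T - \cdot)] = \int_0^T S_{T-s} B u(T - s) \, \d s = \int_0^T S_s B u(s) \, \d s,
\end{equation}
where the integral converges absolutely in $X_{-1}$ since $B \in \L(U, X_{-1})$ and $\sg{S}$ is bounded on $X_{-1}$. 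Because $u \in H^1_0(0, T; U)$, the left-hand side actually lies in $X$.

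Second, I would pair against $x_0 \in X_1^\d$. The key point is that, as recalled above, the antiduality pairing $\langle \cdot, \cdot \rangle_{X_{-1}, X_1^\d}$ extends the scalar product $\langle \cdot, \cdot \rangle_X$. Thus, viewing $y \mapsto \langle y, x_0 \rangle_{X_{-1}, X_1^\d}$ as a continuous linear form on $X_{-1}$ and exchanging it with the Bochner integral yields
\begin{equation}
\langle \Phi_T [u(T - \cdot)], x_0\rangle_X = \int_0^T \langle S_s B u(s), x_0\rangle_{X_{-1}, X_1^\d} \, \d s.
\end{equation}

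Third, I would transfer the semigroup to the adjoint side. Since $x_0 \in X_1^\d = \dom(A^\ast)$, the adjoint semigroup preserves this domain and $S_s^\ast x_0 \in X_1^\d$ for all $s \geq 0$; the identity $\langle S_s z, x_0 \rangle_{X_{-1}, X_1^\d} = \langle z, S_s^\ast x_0 \rangle_{X_{-1}, X_1^\d}$ for $z = Bu(s)$ then follows from the usual adjoint relation (which passes to $X_{-1}$ by the density of $X$ in $X_{-1}$ and the continuity of both the semigroup in $X_{-1}$ and its adjoint in $X_1^\d$). Applying the definition of $B^\ast$ gives
\begin{equation}
\langle S_s B u(s), x_0\rangle_{X_{-1}, X_1^\d} = \langle B u(s), S_s^\ast x_0\rangle_{X_{-1}, X_1^\d} = \langle u(s), B^\ast S_s^\ast x_0\rangle_U.
\end{equation}

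Finally, recognising that the pointwise expression $s \mapsto B^\ast S_s^\ast x_0$ is precisely $\Psi_T x_0$ (for data $x_0 \in X_1^\d$, where $\Psi_T$ is defined classically), integration in $s$ yields the claimed identity. The main technical point — hardly an obstacle, but worth stating carefully — is the justification that the pairing may be taken inside the integral and that $\langle \cdot, x_0\rangle_X$ and $\langle \cdot, x_0\rangle_{X_{-1}, X_1^\d}$ agree on the common range; this is exactly what \cref{eq:pivot-duality} records. No density argument is needed since we work with the smooth data $u \in H^1_0(0, T; U)$ and $x_0 \in X_1^\d$.
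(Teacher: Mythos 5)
Your argument is correct, but it follows a different route from the paper's. The paper starts from the integrated-by-parts representation $\Phi_T u = \int_0^T S_{T-t}A^{-1}B\dot{u}(t)\,\d t$ (a special case of \cref{eq:conv-ipp}), pairs with $x_0$ using only the $X$-scalar product, transfers $A^{-1}$ to $(A^\ast)^{-1}$ across the $X_{-1}$--$X_1^\d$ duality, and then integrates by parts in time once more — exploiting that $t \mapsto (A^\ast)^{-1}S^\ast_{T-t}x_0$ is $\C^1([0,T], X_1^\d)$ — to trade $\dot{u}$ back for $u$. You instead work directly with the convolution formula \cref{eq:conv} at the $X_{-1}$-level: you exchange the functional $\langle\cdot, x_0\rangle_{X_{-1},X_1^\d}$ with the Bochner integral and use that the extrapolated semigroup on $X_{-1}$ is adjoint (in the identified duality of \cref{eq:pivot-duality}) to $S_s^\ast$ restricted to $X_1^\d$, which you correctly justify by density of $X$ in $X_{-1}$ together with continuity of both sides in the $X_{-1}$-norm; the definition of $B^\ast$ then finishes the computation. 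Your approach is shorter and avoids both integrations by parts, at the price of invoking the duality relation between the extended and restricted semigroups and Hille-type interchange of pairing and integral, whereas the paper's computation stays with $X$-valued integrands and classical differentiation of sesquilinear forms, needing only the elementary adjoint identity in $X$. Both are complete; do note (as you implicitly do) that membership $\Phi_T[u(T-\cdot)] \in X$, needed to interpret the left-hand side in the $X$-scalar product, comes from \cref{eq:conv-ipp} applied to the $H^1$-input $u(T-\cdot)$.
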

\begin{proof}
Let $u \in H^1_0(0, T; U)$ and $x_0 \in X_1^\d$. Certainly, $u \in H^1_0(0, T; U)$ if and only if $u(T - \cdot) \in H^1_0(0, T; U)$. We recall that $[\Psi_T x_0](t) = B^\ast S_t^\ast x_0$ for $0 \leq t \leq 1$ and
\begin{equation}
\Phi_T u =  \int_0^T S_{T - t} A^{-1}B \dot{u}(t) \, \d t, \quad \Phi_T [u(T - \cdot)] =  - \int_0^T S_{T - t} A^{-1}B \dot{u}(T - t) \, \d t.
\end{equation}
 Therefore,
\begin{equation}
\label{eq:integral-Phi}
\langle \Phi_T [u(T - \cdot)], x_0 \rangle_X = - \int_0^T \langle S_{T - t}A^{-1}B \dot{u}(T -t), x_0 \rangle_X \, \d t = - \int_0^T \langle A^{-1}B \dot{u}(T -t), S^\ast_{T - t} x_0 \rangle_X \, \d t.
\end{equation}
Now, for a.e.\  $0 < s < T$,
\begin{equation}
\label{eq:bracket-A-star-B}
\langle A^{-1} B \dot{u}(T - s), S_{T - s}^\ast x_0 \rangle_X
= \langle B\dot{u}(T - s), (A^\ast)^{-1}S_{T - s}^\ast x_0 \rangle_{X_{-1}, X_1^\d}.
\end{equation}
Furthermore, as $x_0 \in X_1^\d$,
the function
$t \mapsto (A^\ast)^{-1}S_{T - t}^\ast x_0$ belongs to $
\C^1([0, T], X_1^\d)$ and
\begin{equation}
\label{eq:diff-A-star}
\dt{} (A^\ast)^{-1}S_{T - t}^\ast x_0 = - S^\ast_{T - t} x_0, \quad 0 < t < T,
\end{equation}
in the sense of strong differentiation in $X_1^\d$. Therefore, using differentiation rules for continuous  sesquilinear  forms, we may 
integrate by parts in \cref{eq:integral-Phi} and obtain
\begin{equation}
\langle \Phi_T [u(T - \cdot)], x_0 \rangle_X =  \int_0^T \langle B u(T - t), S^\ast_{T - t} x_0\rangle_{X_{-1}, X_1^\d} \, \d t = \int_0^T \langle u(t), B^\ast S^\ast_t x_0\rangle_U,
\end{equation}
which is the desired equality.
\end{proof}

We are finally in a position to complete the proof of \cref{theo:input-group}.




\begin{proof}[Proof of \cref{theo:input-group}] 

As mentioned above, we shall show that each input condition in \cref{it:input-frac,it:input-int,it:smooth-input,it:input-FQ} stated for $A$ and $B$ in \cref{theo:input-group} is equivalent to its output counterpart in \cref{it:adm-eta-fra,it:adm-eta,it:alt-adm,it:FQ-cond} for $A^\ast$ and $B^\ast$ in \cref{theo:inter-group}.  

$\cref{it:adm-eta-fra}  \iff \cref{it:input-frac}$, $\cref{it:adm-eta} \iff \cref{it:input-int}$: We treat the cases of fractional powers and interpolation spaces simultaneously. Recall that \cref{it:adm-eta-fra} (if $X_\eta^\d = X_\eta^{\d, \fra}$) or \cref{it:adm-eta} (if $X_\eta^\d = X_\eta^{\d, \inte}$) hold if and only if there exist $K, T > 0$ such that
\begin{equation}
\|\Psi_T x_0\|_{L^2(0, T; U)} \leq K \|x_0\|_{X_\eta^{\d}}, \quad x_0 \in X_1^\d;
\end{equation}
whereas \cref{it:input-frac} or \cref{it:input-int} hold if and only if there exist $K, T > 0$ such that
\begin{equation}
\|\Phi_T u\|_{X_{-\eta}} \leq K \|u\|_{L^2(0, T; U)}, \quad u \in H^1_0(0, T; U).
\end{equation}
Now, let $T > 0$.
If $u \in H^1_0(0, T; U)$, then $\Phi_T u \in X$ and, recalling from \cref{lem:embedding} that $X_{-\eta} \simeq (X_\eta^{\d})^\ast$ through the Riesz map of $X$,
\begin{equation}
\label{eq:norm-Phi}
\begin{aligned}
\|\Phi_T u\|_{X_{-\eta}} & = \sup_{x_0 \in X_1^\d \setminus \{0\}} \frac{|\langle \Phi_T u, x_0 \rangle_{X_{-\eta}, X_\eta^\d}|}{\|x_0\|_{X_\eta^\d}} =  \sup_{x_0 \in X_1^\d \setminus \{0\}} \frac{|\langle \Phi_T u, x_0 \rangle_{X}|}{\|x_0\|_{X_\eta^\d}} \\ &=  \sup_{x_0 \in X_1^\d \setminus \{0\}} \frac{|\langle u(T - \cdot), \Psi_T x_0 \rangle_{L^2(0, T; U)}|}{\|x_0\|_{X_\eta^\d}},
\end{aligned}
\end{equation}
where we used the density of $X_1^\d$ in $X_\eta^\d$ and also \cref{lem:dual-input-output}. Similarly, if $x_0 \in X_1^\d$, then $\Psi_T x_0 \in L^2(0, T; U)$ and
\begin{equation}
\label{eq:norm-Psi}
\|\Psi_T x_0\|_{L^2(0, T; U)} = 
 \sup_{u \in L^2(0, T; U) \setminus \{0\}} \frac{|\langle \Phi_T [u(T - \cdot)], x_0 \rangle_{X_{-\eta}, X_\eta^\d}|}{\|u\|_{L^2(0, T; U)}}.
\end{equation}
We note that the map $u \mapsto u(T - \cdot)$ preserves the $L^2$-norm and we deduce from \cref{eq:norm-Phi,eq:norm-Psi} that if \cref{it:adm-eta-fra} (resp.\ \cref{it:adm-eta}) holds for some $K, T > 0$, then \cref{it:input-frac} (resp.\ \cref{it:input-int}) holds with the same constants, and vice versa.


$\cref{it:alt-adm} \iff \cref{it:smooth-input}$:
Let $T > 0$. 
We use \cref{lem:dual-input-output} again to obtain the following equalities:
\begin{subequations}
\label{eq:norm-Phi-Psi-bis}
\begin{align}
&\|\Phi_T u \|_X = \sup_{x_0 \in X_1^\d \setminus \{0\}} \frac{|\langle \Psi_T x_0, u(T - \cdot) \rangle_{H^{-\eta}(0, T; U), H^\eta_0(0, T; U)}|}{\|x_0\|_X}, && u \in H^1_0(0, T; U); \\
& \|\Psi_T x_0\|_{H^{-\eta}(0, T; U)} = 
\sup_{u \in H^1_0(0, T; U) \setminus \{0\}} 
\frac{|\langle x_0, \Phi_T [u(T - \cdot)] \rangle_X|}{\|u\|_{H^\eta(0, T; U)}},
&& x_0 \in X_1^\d.
\end{align}
\end{subequations}
Using interpolation,
we  see that the map $u \mapsto u(T - \cdot)$ is bounded with respect to the $H^\eta$-norm,  
and \cref{eq:norm-Phi-Psi-bis} shows that
\cref{it:alt-adm} in \cref{theo:inter-group} is equivalent to the condition
\begin{equation}
\label{eq:X-H-eta-zero}
\|x(T)\|^2_X \leq K \|u\|_{H^{\eta}(0, T; U)}, \quad u \in H^1_0(0, T; U),
\end{equation}
where $x$ solves the inhomogeneous Cauchy problem \cref{eq:in-cauchy-zero}. On the other hand,
\cref{it:smooth-input} in \cref{theo:input-group}  is almost the same as \cref{eq:X-H-eta-zero}, but for $u \in H^1(0, T; U)$ instead of $H^1_0(0, T; U)$. It therefore remains to show that \cref{eq:X-H-eta-zero} actually implies \cref{it:smooth-input}.
\begin{itemize}[wide]
\item
\emph{First case: $0 \leq \eta \leq 1/2$.} By density of $\D(0, T; U)$ in $H^\eta(0, T; U)$ (see \cref{coro:charac-Hs-0}),
 we have
 $H^\eta(0, T; U) = H^\eta_0(0, T; U)$ and there is nothing left to prove.
\item
\emph{Second case: $1/2 < \eta \leq 1$.} We recall from trace theory that the map $u \mapsto u(\tau)$ is well-defined and continuous from $H^\eta(0, T; U)$ into $U$ for any $0 \leq \tau \leq T$; see, e.g., \cite[Corollary 27]{Sim90}. Furthermore, according to \cref{coro:charac-Hs-0} and \cref{rem:finite-interval}, $H_0^\eta(0, T; U)$ is precisely the space of those $u \in H^\eta(0, T; U)$ such that $u(0) = u(T) = 0$. 
Let $u \in H^\eta(0, T; U)$. We may write $u = u_1 + u_2$, where $u_2$ is the unique affine function such that $u_2(0) = u(0)$ and $u_2(T) = u(T)$. Then, by construction $u_1 \in H^\eta_0(0, T; U)$ and, by \cref{eq:X-H-eta-zero},
$
\|\Psi_Tu_1\|_{X} \leq K \|u_1\|_{H^\eta(0, T; U)} \leq K \|u\|_{H^\eta(0, T; U)} +  K\|u_2\|_{H^\eta(0, T; U)}
$.
On the other hand, because $u_2$ is affine,
\begin{equation}
\label{eq:u2-affine}
\|u_2\|_{H^\eta(0, T; U)} \leq K \|u_2\|_{H^1(0, T; U)}  \leq K' \|u(0)\|_U + K' \|u(T)\|_U \leq K'' \|u\|_{H^\eta(0, T; U)}.
\end{equation}
Furthermore, recall from \cref{eq:conv-ipp} that
\begin{equation}
\label{eq:sol-aff}
\Phi_T u_2 = S_TA^{-1}Bu(0) -A^{-1}Bu(T) + \int_0^T S_{T-t}A^{-1}B\dot{u}_2(t) \, \d t.
\end{equation}
Using $A$-boundedness of $B$ and \cref{eq:u2-affine}, we deduce from \cref{eq:sol-aff} that
$
\|\Phi_T u_2\|_X \leq K \|u_2\|_{H^1(0, T; U)} \leq K' \|u\|_{H^\eta(0, T; U)}
$.
After summation and use of the triangle inequality we are left with the desired estimate  $\|\Phi_T u\|_X \leq K \|u\|_{H^\eta(0, T; U)}$.
\end{itemize}

$\cref{it:FQ-cond} \iff \cref{it:input-FQ}$: The equivalence readily follows from \cref{eq:id-dual-op-norm}.
\end{proof}


\subsection{Proof of \cref{theo:transfer-function}}

This section contains the proof of \cref{theo:transfer-function}. At this point, \cref{theo:inter-group,theo:input-group} are proved and will be used in the arguments. In particular, recalling \cref{def:eta-input} we will say that the input operator $B$ is $\eta_1$-admissible if any of the (equivalent) conditions \cref{it:input-frac,it:input-int,it:smooth-input} of \cref{theo:input-group} is satisfied with parameter $0 \leq \eta_1 \leq 1$. Similarly, in accordance with \cref{def:eta-output}, the output operator $C$ is said to be $\eta_2$-admissible if \cref{it:adm-eta-fra,it:adm-eta,it:alt-adm} are verified with parameter $0 \leq \eta_2 \leq 1$.

First we need to take care of the dependence on the real numbers $\sigma$ in part \cref{it:FQ-IO} of \cref{theo:transfer-function}.
\begin{lemma}
\label{lem:sigma-IO}
Assume that $B$ and $C$ are $\eta_1$- and $\eta_2$-admissible, respectively.
Let $\sigma, \sigma' > \sigma_0$. Then,
\begin{equation}
\|C(\sigma + \i \omega - A)^{-1}B\|_{\L(U, Y)} = \|C(\sigma' + \i \omega - A)^{-1}B\|_{\L(U, Y)} + O(|\omega|^{\eta_1 + \eta_2}), \quad \omega \in \R, \quad |\omega| \to + \infty.
\end{equation}
\end{lemma}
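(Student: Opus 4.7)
The plan is to control the difference of the two transfer function values using the second resolvent identity, namely
\begin{equation}
(\sigma + \i\omega - A)^{-1} - (\sigma' + \i\omega - A)^{-1} = (\sigma' - \sigma)(\sigma + \i\omega - A)^{-1}(\sigma' + \i\omega - A)^{-1}, \quad \omega \in \R,
\end{equation}
valid as an identity in $\L(X_{-1}, X_1)$ (so in particular as an identity in $\L(X)$ after restriction/extension), from which we obtain, by composing on the left with $C$ and on the right with $B$,
\begin{equation}
\label{eq:diff-TF-sketch}
C(\sigma + \i\omega - A)^{-1}B - C(\sigma' + \i\omega - A)^{-1}B = (\sigma' - \sigma)\, C(\sigma + \i\omega - A)^{-1}(\sigma' + \i\omega - A)^{-1}B.
\end{equation}
Note that \cref{eq:diff-TF-sketch} makes sense in $\L(U, Y)$ thanks to \cref{eq:hyp-Z}.

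Next, I would invoke the frequency-domain bounds established in \cref{theo:inter-group,theo:input-group}. Since $C$ is $\eta_2$-admissible, by \cref{it:FQ-cond} in \cref{theo:inter-group} we have
\begin{equation}
\|C(\sigma + \i\omega - A)^{-1}\|_{\L(X, Y)} = O(|\omega|^{\eta_2}), \quad \omega \in \R, \quad |\omega| \to + \infty;
\end{equation}
since $B$ is $\eta_1$-admissible, by \cref{it:input-FQ} in \cref{theo:input-group} we likewise have
\begin{equation}
\|(\sigma' + \i\omega - A)^{-1}B\|_{\L(U, X)} = O(|\omega|^{\eta_1}), \quad \omega \in \R, \quad |\omega| \to + \infty.
\end{equation}
Both bounds hold for every $\sigma, \sigma' > \sigma_0$, by the ``for some (equivalently, all)'' parts of those statements; in any case, only the existence of one such $\sigma$ and one such $\sigma'$ is needed here.

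Composing the two previous estimates and feeding them into \cref{eq:diff-TF-sketch} yields
\begin{equation}
\|C(\sigma + \i\omega - A)^{-1}B - C(\sigma' + \i\omega - A)^{-1}B\|_{\L(U, Y)} = O(|\omega|^{\eta_1 + \eta_2})
\end{equation}
as $|\omega| \to +\infty$. The conclusion follows from the reverse triangle inequality applied to the operator norms. No substantial obstacle arises: the whole argument is bookkeeping based on the resolvent identity and the fact that the individual frequency-domain bounds for $B$ and $C$ are already contained in the previously proved admissibility theorems.
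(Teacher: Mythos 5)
Your argument is correct and follows essentially the same route as the paper: the resolvent identity applied to the difference, the factorisation of $C(\sigma + \i\omega - A)^{-1}(\sigma' + \i\omega - A)^{-1}B$ through $X$, and the frequency-domain bounds $O(|\omega|^{\eta_2})$ and $O(|\omega|^{\eta_1})$ supplied by \cref{theo:inter-group,theo:input-group}. Nothing further is needed.
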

\begin{proof}
By the resolvent identity,
\begin{equation}
C(\sigma + \i \omega - A)^{-1}B = C(\sigma' + \i \omega - A)^{-1}B + (\sigma' - \sigma) C(\sigma + \i \omega - A)^{-1}(\sigma' + \i \omega - A)^{-1}B, \quad \omega \in \R.
\end{equation}
On the other hand, for all $\omega \in \R$,
\begin{equation}
  \label{eq:CB-C-B}
 \|C(\sigma + \i \omega - A)^{-1}(\sigma' + \i \omega - A)^{-1}B\|_{\L(U, Y)} \leq  \|C(\sigma + \i \omega - A)^{-1}\|_{\L(X, Y)} \|(\sigma' + \i \omega - A)^{-1}B\|_{\L(U, X)}.
\end{equation}
By \cref{theo:inter-group,theo:input-group}, the factors on the right-hand side of \cref{eq:CB-C-B} grow like $O(|\omega|^{\eta_2})$ and $O(|\omega|^{\eta_1})$, respectively, as $|\omega| \to + \infty$, and the result  follows immediately.
\end{proof}

\Cref{lem:sigma-IO} proves the ``for some (equivalently, all)'' part of \cref{it:FQ-IO}. Furthermore, 
 it is clear that replacing $A$ by $A - \sigma$ for $\sigma \in \R$ does not change \cref{it:L2-H-eta}. Therefore, under $\eta_1$- and $\eta_2$-admissibility of $B$ and $C$, for the proof of equivalence between \cref{it:L2-H-eta,it:FQ-IO}, we may assume that $\sg{S}$ is uniformly bounded and that $0 \in \rho(A)$.


As indicated previously, the main idea behind \cref{theo:transfer-function} is rather simple but the step of extending the finite-time condition \cref{it:L2-H-eta} into a suitable  estimate (\cref{prop:h-eta-est-io} below) on the real line is lengthy and contains some tedious technicalities. The reader may wish to skip these by assuming  \cref{prop:h-eta-est-io} in the first instance and going straight to the end of the present section.

Let us recall and introduce some notation.
 Given $T > 0$,
 the operator $\Phi_T \in \L(L^2(0, T; U), X_{-1})$ maps input signals $u$ defined on $(0, T)$ to the final state $x(T)$, where $x$ is the corresponding solution to $\dot{x} = Ax + Bu$, $x(0) = 0$.  For the sake of convenience, if $0 < t < T$, we shall write $\Phi_t u$ instead of $\Phi_t [u_{|(0, t)}]$, and we also set $\Phi_0 \triangleq 0$. 
 On the other hand, we define $\Psi_T \in \L(X_1, L^2(0, T; Y))$ to be the operator mapping an initial state $x_0$ into $Cx$, where $x$ solves $\dot{x} = Ax$, $x(0) = x_0$. Finally, as mentioned above, when $u \in \D(0, T; U)$, then the function $t \mapsto \Phi_t u$ belongs to $\C([0, T], Z)$. This allows us to define the input-output map $\Xi_T$ as follows:
\begin{equation}
\label{eq:IO-map}
\Xi_T u \triangleq (t \in [0, T] \mapsto C\Phi_t u), \quad u \in \D(0, T; U).
\end{equation}
Finally, we recall that the tilde denotes multiplication by $e^{-\cdot}$.
Before delving into the input-output problem, we prove a lemma that also complements \cref{theo:inter-group}. 
\begin{lemma}[Regularity shift]
\label{lem:reg-shift}
Let $0 \leq \eta_1, \eta_2 \leq 1$ and $T > 0$. If $C$ is $\eta_2$-admissible then $\Psi_T$ extends to a continuous mapping from $X_{-\eta_1}^\inte$ into $ H^{-\eta_1 - \eta_2}(0, T; Y)$.
\end{lemma}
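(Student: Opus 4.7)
The plan is to combine two endpoint estimates via interpolation. By condition (iii) of \cref{theo:inter-group} (the definition of $\eta_2$-admissibility of $C$), $\Psi_T$ extends to a bounded operator $X \to H^{-\eta_2}(0, T; Y)$. I will complement this with an endpoint estimate at the level of $X_{-1}$, namely that $\Psi_T$ extends continuously to $X_{-1} \to H^{-1-\eta_2}(0, T; Y)$.

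For the second endpoint, given $x_0 \in X_1$, set $y_0 \triangleq A^{-1}x_0 \in X_2$ and $y(t) \triangleq S_t y_0$. Then $y \in \C(\R^+, X_2) \cap \C^1(\R^+, X_1)$ and, in the strong sense in $X_1$,
\begin{equation*}
\dot{y}(t) = A y(t) = S_t A y_0 = S_t x_0 = x(t), \quad t \geq 0.
\end{equation*}
Applying $C$ pointwise (which is licit since $y(t) \in X_1$) yields $(\d/\d t)(Cy) = Cx$ in $\C(\R^+, Y)$, and in particular as distributions on $(0, T)$. Since distributional differentiation maps $H^{-\eta_2}(0, T; Y)$ into $H^{-1-\eta_2}(0, T; Y)$ continuously, and the $\eta_2$-admissibility estimate applied to $y_0$ gives $\|Cy\|_{H^{-\eta_2}(0, T; Y)} \leq K\|y_0\|_X = K\|x_0\|_{X_{-1}}$, we deduce
\begin{equation*}
\|Cx\|_{H^{-1-\eta_2}(0, T; Y)} \leq K \|x_0\|_{X_{-1}}, \quad x_0 \in X_1.
\end{equation*}
The claimed second endpoint extension then follows by density of $X_1$ in $X_{-1}$.

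Finally, since $\{X_s^\inte\}_{s \in \R}$ is an exact interpolation scale (\cref{sec:pre-sta}), $X_{-\eta_1}^\inte = [X, X_{-1}]_{\eta_1}$, so interpolating the two endpoint estimates gives continuity of $\Psi_T$ from $X_{-\eta_1}^\inte$ into $[H^{-\eta_2}(0, T; Y), H^{-1-\eta_2}(0, T; Y)]_{\eta_1}$. To conclude, I would identify this target space with $H^{-\eta_1 - \eta_2}(0, T; Y)$ (up to equivalent norms) by combining the characterisation $H^{-s}(0, T; Y) = (H^s_0(0, T; Y))^*$ with duality of quadratic interpolation for Hilbert spaces, which reduces the matter to the standard interpolation identity $[H^{1+\eta_2}_0(0, T; Y), H^{\eta_2}_0(0, T; Y)]_{1-\eta_1} = H^{\eta_1+\eta_2}_0(0, T; Y)$ for Hilbert-valued Sobolev spaces on a bounded interval. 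The main technical point is this last identity at arbitrary real exponents, particularly near half-integer values where $H^s_0$ and $H^s$ diverge; it is standard and can, if needed, be verified by passing to the real line through zero extension and invoking the Bessel potential description \cref{eq:bessel-spaces}.
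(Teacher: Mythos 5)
Most of your argument is sound: the two endpoint bounds are correct (the $X_{-1}\to H^{-1-\eta_2}(0,T;Y)$ one via $y_0=A^{-1}x_0$ and the continuity of $\d/\d t$ from $H^{-\eta_2}$ to $H^{-1-\eta_2}$ works), the two extensions are compatible on $X$, and $X^\inte_{-\eta_1}=[X,X_{-1}]_{\eta_1}$ is as you say. The gap is the final identification, which you flag as "the main technical point" and dismiss as standard: it is false at exactly the exponents where $H^s_0$ and $H^s$ diverge. By duality your target space is $\bigl([H^{1+\eta_2}_0(0,T;Y),H^{\eta_2}_0(0,T;Y)]_{1-\eta_1}\bigr)^\ast$, and the identity $[H^{1+\eta_2}_0,H^{\eta_2}_0]_{1-\eta_1}=H^{\eta_1+\eta_2}_0$ fails when $\eta_1+\eta_2\in\{1/2,3/2\}$: one obtains the Lions--Magenes space (e.g.\ $[H^1_0,L^2]_{1/2}=H^{1/2}_{00}\subsetneq H^{1/2}_0=H^{1/2}$; this is precisely the exclusion in \cref{lem:int-H10-L2} and the remark following it). Since $H^{\eta_1+\eta_2}_{00}$ embeds strictly into $H^{\eta_1+\eta_2}_0$, its antidual is strictly \emph{larger} than $H^{-\eta_1-\eta_2}(0,T;Y)$, so your interpolation only gives continuity into a weaker space, and even the one-sided embedding you would need, $H^{\eta_1+\eta_2}_0\hookrightarrow[H^{1+\eta_2}_0,H^{\eta_2}_0]_{1-\eta_1}$, fails there. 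Your proposed repair (zero extension to the line plus the Bessel potential description) breaks at the same exponents: extension by zero is not bounded on $H^{1/2}$ (see \cref{coro:ext-zero} and the remark after it). Since the lemma is claimed for all $0\leq\eta_1,\eta_2\leq1$, pairs such as $(\eta_1,\eta_2)=(1/2,0)$ or $(1/4,1/4)$ are not covered by your argument.

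This is exactly the trap the paper's proof is engineered to avoid ("in order to avoid the trouble of interpolating between spaces of positive and negative order\dots"): it interpolates only between the positive-order bounds $X_1\to H^{1-\eta_2}$ and $X_2\to H^{2-\eta_2}$ (where interpolation of the full spaces $H^s(0,+\infty;Y)$ has no half-integer anomaly), obtaining $X^\inte_{2-\eta_1}\to H^{2-\eta_1-\eta_2}$, and then descends to $X^\inte_{-\eta_1}\to H^{-\eta_1-\eta_2}$ by taking $v_0=A^{-2}x_0$ and applying \cref{lem:diff-shift} twice, since differentiation shifts the Sobolev index by one at \emph{every} real exponent. If you want to salvage your route, you must either restrict to $\eta_1+\eta_2\notin\{1/2,3/2\}$ and handle the critical cases separately, or restructure along the paper's lines.
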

\begin{proof}
Because $C$ is assumed to be $\eta_2$-admissible, we already know
that the map $x_0 \mapsto C\tilde{x}$ is continuous $X_1 \to H^{1-\eta_2}\hl{Y}$.
In order to avoid the trouble of interpolating between spaces of positive and negative order, we will start from smoother solutions and then differentiate to reach the $X^\inte_{-\eta_1}$-level. First, let $x_0 \in X_1$, $v_0 \triangleq A^{-1}x_0$, $x(t) \triangleq S_t x_0$ and $v(t) \triangleq S_t v_0$, $t \geq 0$, so that $\dot{v} = x$. By $A$-boundedness of $C$ and the aforementioned continuity property,
we see that
\begin{equation}
\label{eq:w-x-x0-w0}
\|Cv\|_{H^1(0, T; Y)} + \|Cx\|_{H^{1-\eta_2}(0, T; Y)} \leq K \|x_0\|_{X_1} = K \|v_0\|_{X_2}.
\end{equation}
Using
 \cref{lem:diff-shift}, we infer from \cref{eq:w-x-x0-w0} that $\|C\tilde{v}\|_{H^{2-\eta_2}\hl{Y}} \leq K \|v_0\|_{X_2}$. Renaming the variables, we may reformulate this estimate as follows: 
\begin{equation}
x_0 \mapsto C{x} ~\mbox{is continuous}~X_2 \to H^{2-\eta_2}(0, T; Y).
\end{equation}
 We can then interpolate between $X_2 \to H^{2-\eta_2}\hl{Y}$ and $X_1 \to H^{1-\eta_1}\hl{Y}$ to obtain that
\begin{equation}
\label{eq:new-cont}
x_0 \mapsto C \tilde{x} ~\mbox{is continuous}~ X_{2-\eta_1}^\inte \to H^{2-\eta_1 - \eta_2}\hl{Y}.
\end{equation}
To complete the proof, given $x_0 \in X_1$, this time we let $v_0 = A^{-2}x_0$,  $x(t) \triangleq S_t x_0$ and $v(t) \triangleq S_t v_0$ for $t \geq 0$. Now $\ddot{v} = x$ and applying \cref{lem:diff-shift} twice (with \cref{rem:finite-interval}) leads to
\begin{equation}
\|Cx\|_{H^{-\eta_1 - \eta_2}(0, T; Y)} \leq K \|Cv\|_{H^{2-\eta_1 - \eta_2}(0, T; Y)} \leq K' \|v_0\|_{X^\inte_{2-\eta_1}} = K'\|x_0\|_{X^\inte_{-\eta_1}},
\end{equation}
where we applied \cref{eq:new-cont} to the $v$-variable and also used that $A^{-2}$ is an isometric isomorphism
from $X^\inte_{-\eta_1}$ onto  $X^\inte_{2 - \eta_1}$.
\end{proof}

With \cref{lem:reg-shift} in hand, let us summarise the different continuity properties of the input, output and input-ouput maps that we will use in the proof:
\begin{itemize}
  \item Given $0 \leq \eta_1 \leq 1$,  if the control operator $B$ is $\eta_1$-admissible then $\Phi_T$ is continuous from $L^2(0, T; U)$ into $X_{-\eta_1}^{\inte}$ for any $T > 0$;
  \item Given $0 \leq \eta_2 \leq 1$, if the observation operator $C$ is $\eta_2$-admissible, then $\Psi_T$ is continuous from $X_{-\eta_1}^\inte$ into $H^{-\eta_1 - \eta_2}\hl{Y}$ for any $T > 0$;
  \item If $0 \leq \eta_1, \eta_2 \leq 1$ and $T > 0$ are such that the input-ouput hypothesis \cref{it:L2-H-eta} is satisfied, then $\Xi_T$ defined by \cref{eq:IO-map} extends to a bounded linear operator from $L^2(0, T; U)$ into $H^{-\eta_1 -\eta_2}(0, T; Y)$.
\end{itemize}
From now on, the variable $x$ will be used to denote the unique (in general $X_{-1}$-valued) solution to the inhomogeneous Cauchy problem with zero initial condition
\begin{equation}
\label{eq:CPZ}
\dot{x} = Ax + Bu, \quad x(0) = 0,
\end{equation}
where the input $u$ is assumed to be at least locally integrable.
As pointed out before, when $u \in \D\hl{U}$, then $x \in \C(\R^+, Z)$; a stronger statement is given in \cref{lem:u-cut-Z} below.
 Our first goal here is to establish the following proposition.
\begin{prop}[Windowed estimate]
\label{prop:L2-H-eta-cut}
Let $0 \leq \eta_1,\eta_2 \leq 1$.
 If condition \cref{it:L2-H-eta} holds for some $T > 0$,  under $\eta_1$- and $\eta_2$-admissibility of $B$ and $C$, there exists $K > 0$ such that
\begin{equation}
\label{eq:L2-H-eta-cut}
\sum_{n=0}^{+\infty} e^{-2nT} \| C\tilde{x}(nT + \cdot) \|^2_{H^{-\eta_1 - \eta_2}(0, T; Y)}  \leq K \|\tilde{u}\|^2_{L^2\hl{U}}, \quad u \in \D\hl{E}.
\end{equation}
\end{prop}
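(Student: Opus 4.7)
The idea is to gain exponential decay by working with the shifted variables $\tilde{x} = e^{-\cdot}x$ and $\tilde{u} = e^{-\cdot}u$, localize on the windows $(nT,(n+1)T)$ via the semigroup property, and combine a windowed $H^{-\eta_1-\eta_2}$-bound with a discrete convolution estimate. In the simplified setting (uniform boundedness of $\sg{S}$ and $0 \in \rho(A)$), set $\tilde{S}_t \triangleq e^{-t}S_t$; since $\sg{S}$ is uniformly bounded on $X^\inte_{-\eta_1}$ by \cref{sec:pre-sta}, one has $\|\tilde{S}_t\|_{\L(X^\inte_{-\eta_1})} \leq M e^{-t}$. The function $\tilde{x}$ is the mild solution of $\dot{\tilde{x}} = (A-1)\tilde{x} + B\tilde{u}$. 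Because the $\eta_i$-admissibility properties of $B$ and $C$ are invariant under exponential shift (as observed at the start of \cref{sec:proof-output}), and because multiplication by $e^{\pm\cdot}$ is continuous on $H^{-\eta_1-\eta_2}(0,T;Y)$ (by interpolation and duality from the integer case), the corresponding input, output and input--output maps for the tilde-system, say $\tilde{\Phi}_T$, $\tilde{\Psi}_T$ and $\tilde{\Xi}_T$, inherit from their unshifted counterparts the boundedness $\tilde{\Phi}_T \in \L(L^2(0,T;U),X^\inte_{-\eta_1})$, $\tilde{\Psi}_T \in \L(X^\inte_{-\eta_1},H^{-\eta_1-\eta_2}(0,T;Y))$ (via \cref{lem:reg-shift}) and $\tilde{\Xi}_T \in \L(L^2(0,T;U),H^{-\eta_1-\eta_2}(0,T;Y))$ (via hypothesis \cref{it:L2-H-eta}).

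For $n \in \N$, set $v_n(s) \triangleq \tilde{u}(nT + s)$, $s \in [0,T]$. Since $u \in \D\hl{U}$, one has $\tilde{x}(nT) \in X \hookrightarrow X^\inte_{-\eta_1}$, and the semigroup property yields
\begin{equation}
\tilde{x}(nT + \tau) = \tilde{S}_\tau\,\tilde{x}(nT) + \tilde{\Phi}_\tau v_n, \quad 0 \leq \tau \leq T.
\end{equation}
Applying $C$ and reading the identity in $H^{-\eta_1-\eta_2}(0,T;Y)$ gives $C\tilde{x}(nT + \cdot) = \tilde{\Psi}_T \tilde{x}(nT) + \tilde{\Xi}_T v_n$, and the three boundedness statements above produce the windowed bound
\begin{equation}
\label{eq:plan-window}
\|C\tilde{x}(nT + \cdot)\|^2_{H^{-\eta_1-\eta_2}(0,T;Y)} \leq K \|\tilde{x}(nT)\|^2_{X^\inte_{-\eta_1}} + K \|v_n\|^2_{L^2(0,T;U)}.
\end{equation}

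Iterating the windowed decomposition from the zero initial state yields $\tilde{x}(nT) = \sum_{k=0}^{n-1} \tilde{S}_{(n-1-k)T}\tilde{\Phi}_T v_k$. Writing $a_n \triangleq \|\tilde{x}(nT)\|_{X^\inte_{-\eta_1}}$ and $b_k \triangleq \|v_k\|_{L^2(0,T;U)}$, the exponential decay of $\tilde{S}$ on $X^\inte_{-\eta_1}$ together with the $\eta_1$-admissibility of $B$ gives $a_n \leq K'\sum_{k=0}^{n-1} e^{-(n-1-k)T} b_k$. The discrete kernel $j \mapsto e^{-(j-1)T}$, $j \geq 1$, has finite $\ell^1$-norm $(1 - e^{-T})^{-1}$, so Young's convolution inequality provides $\sum_n a_n^2 \leq K''\|\tilde{u}\|^2_{L^2\hl{U}}$. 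Summing \cref{eq:plan-window} over $n$, observing that $\sum_n \|v_n\|^2_{L^2(0,T;U)} = \|\tilde{u}\|^2_{L^2\hl{U}}$ and that $e^{-2nT} \leq 1$, yields the claimed estimate. The only technical subtlety is the careful transfer of the admissibility conditions — and above all of the finite-time hypothesis \cref{it:L2-H-eta} — to the exponentially shifted system; the crucial quantitative ingredient is the exponential decay of $\tilde{S}_t$ on $X^\inte_{-\eta_1}$, which makes Young's inequality applicable for every $T > 0$ (not just large $T$).
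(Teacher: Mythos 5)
Your argument is essentially the paper's proof, just written in the exponentially shifted system: your windowed decomposition plus the iteration formula for $\tilde{x}(nT)$ is exactly the content of \cref{lem:formula-sol-n} with the weights $e^{-nT}$ redistributed onto $\tilde{S}_t$, and the three boundedness statements you invoke ($\eta_1$-admissibility for $\tilde{\Phi}_T$, $\eta_2$-admissibility plus \cref{lem:reg-shift} for $\tilde{\Psi}_T$, hypothesis \cref{it:L2-H-eta} for $\tilde{\Xi}_T$) together with the discrete Young inequality are precisely the ingredients used there.

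One step you gloss over does need justification: the identity $C\tilde{x}(nT+\cdot)=\tilde{\Psi}_T\tilde{x}(nT)+\tilde{\Xi}_T v_n$ in $H^{-\eta_1-\eta_2}(0,T;Y)$. You cannot simply ``apply $C$'' to each term of the windowed decomposition, because $\tilde{S}_\tau\tilde{x}(nT)$ need not lie in $Z$ (the space $Z$ is not semigroup-invariant) and $v_n=\tilde{u}(nT+\cdot)$ is generally not in $\D(0,T;U)$, so neither summand has a classically defined output; $\tilde{\Psi}_T$ and $\tilde{\Xi}_T$ are only the \emph{extensions} of the output and input--output maps, and one must check that their sum really coincides with the pointwise-defined $C\tilde{x}(nT+\cdot)$. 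The paper resolves this by first restricting to the dense class $\U$ of inputs whose restrictions $u_n$ lie in $\D(0,T;U)$, where \cref{lem:u-cut-Z} guarantees $\Phi_T u_n\in X_1$ and all terms can be handled classically, and then passing to general $u\in\D\hl{U}$ by density; you should insert an analogous approximation (or the same reduction to $\U$) before summing your window bound. With that repair, your proof is complete and equivalent to the paper's.
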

\Cref{prop:L2-H-eta-cut} is the first step towards an $H^{-\eta_1 -\eta_2}$-estimate of 
 $C\tilde{x}$ in terms of the $L^2$-norm of $\tilde{u}$.
Its proof makes use of two simple lemmas that are given below. Fix $T > 0$.
In order to simplify notation, given an input $u$ and the corresponding solution $x$, we will write
\begin{equation}
\label{eq:notation-n}
x_n \triangleq x(\cdot + nT), \quad u_n \triangleq u(\cdot + nT), \quad n \in \N.
\end{equation}
\begin{lemma}
\label{lem:formula-sol-n}
Let $u \in L^2\hl{U}$. Then,
\begin{equation}
\label{eq:formula-sol-n}
x_n(t) = \Phi_{t}u_n + \sum_{k = 0}^{n - 1} S_{kT + t} \Phi_T u_{n-k-1}, \quad 0 \leq t \leq T, \quad n \in \N.
\end{equation}
\end{lemma}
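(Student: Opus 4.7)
The plan is to exploit the time-shift structure of the mild solution to the inhomogeneous Cauchy problem \cref{eq:CPZ}. Recall that for $u \in L^2_{\mathrm{loc}}(0, +\infty; U)$, the corresponding solution $x$ admits the convolution representation $x(t) = \int_0^t S_{t-s}Bu(s) \, \d s$, valid in $X_{-1}$. The key observation is the cocycle-type identity
\begin{equation*}
x(\tau + t) = S_t x(\tau) + \Phi_t u(\cdot + \tau), \quad \tau, t \geq 0,
\end{equation*}
which one verifies by splitting the integral $\int_0^{\tau+t} = \int_0^\tau + \int_\tau^{\tau+t}$, applying the semigroup property $S_{\tau+t-s} = S_t S_{\tau-s}$ to the first piece, and changing variables $s \mapsto s - \tau$ in the second. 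Applied with $\tau = nT$ and $0 \leq t \leq T$, this gives
\begin{equation*}
x_n(t) = S_t x(nT) + \Phi_t u_n.
\end{equation*}

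Specialising to $t = T$ yields the discrete-time recurrence $x((n+1)T) = S_T x(nT) + \Phi_T u_n$, and since $x(0) = 0$, an elementary induction on $n$ gives
\begin{equation*}
x(nT) = \sum_{k=0}^{n-1} S_{kT} \Phi_T u_{n-k-1}, \quad n \in \N.
\end{equation*}
Substituting back into the expression for $x_n(t)$ and using once more the semigroup property $S_t S_{kT} = S_{kT+t}$ produces \cref{eq:formula-sol-n}, as required.

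There is no real obstacle: the identity is essentially a restatement of the variation-of-constants formula rewritten in terms of the finite-horizon input-to-state maps $\Phi_t$. The only care needed is bookkeeping of the indices in the finite sum (in particular, the reversed order $n-k-1$ arising from the change of summation variable). All manipulations are performed in $X_{-1}$, where the convolution representation is unambiguous, so no regularity assumption on $u$ beyond local integrability is needed for the formula itself.
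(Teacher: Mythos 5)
Your proposal is correct and follows essentially the same route as the paper, which simply notes that the identity is a straightforward consequence of the variation of constants formula \cref{eq:conv}; your cocycle identity and induction merely spell out that one-line argument in detail.
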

\begin{proof}
This is a straightforward consequence of the variation of constants formula \cref{eq:conv}.
\end{proof}
Next, 
let $\U$ be the subspace 
 of $L^2\hl{U}$ comprised of all those $u \in \D\hl{U}$ such that $u_n \in \D(0, T; U)$ for all $n \in \N$. It is easy to see that $\U$ is dense in $L^2\hl{U}$.

\begin{lemma}
\label{lem:u-cut-Z}
Let $u \in \D\hl{U}$.
Then, $x \in \C(\R^+, Z) \cap L^\infty\hl{Z}$. Furthermore, if $u \in \U$, then for all $n \in \N$, the function $t \mapsto \Phi_t u_n$ belongs to $\C([0, T], Z)$, and also $\Phi_T u_n \in X_1$.
\end{lemma}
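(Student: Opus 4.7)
The plan is to strengthen the single integration-by-parts identity \cref{eq:conv-ipp} by integrating once more, exploiting the fact that any $u \in \D\hl{U}$ vanishes together with all its derivatives at $t=0$. Specialising \cref{eq:conv-ipp} to $\sigma = 0$ (permissible since $0 \in \rho(A)$) with $u(0) = 0$, I have
\begin{equation*}
x(t) = -A^{-1}Bu(t) + \int_0^t S_{t-s}A^{-1}B\dot u(s) \, \d s.
\end{equation*}
Next I apply the very same identity \cref{eq:conv-ipp} with input $\dot u$ (which also satisfies $\dot u(0) = 0$) to rewrite $\int_0^t S_{t-s}B\dot u(s) \, \d s$, and multiply by $A^{-1}$, which commutes with $S_{t-s}$ on $X_{-1}$, to obtain the integral term above. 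This yields the twice-integrated-by-parts formula
\begin{equation}
\label{eq:proof-ipp-twice}
x(t) = -A^{-1}Bu(t) - A^{-2}B\dot u(t) + \int_0^t S_{t-s}A^{-2}B\ddot u(s) \, \d s.
\end{equation}

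By \cref{eq:hyp-Z} and the closed graph theorem, $A^{-1}B \in \L(U, Z)$, and hence $A^{-2}B = A^{-1}(A^{-1}B)$ belongs to $\L(U, X_1)$. The first two terms of \cref{eq:proof-ipp-twice} are thus continuous functions from $\R^+$ into $Z$, bounded because $u$ and $\dot u$ have compact support. The third term is an $X_1$-valued Bochner integral: strong continuity of $\sg{S}$ on $X_1$ gives continuity in $t$, while uniform boundedness of $\sg{S}$ on $X_1$ together with the compact support of $\ddot u$ yields a uniform $X_1$-bound (for $t$ beyond the support of $\ddot u$, the integral reduces to the action of $\sg{S}$ on a fixed $X_1$-element). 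Using the embedding $X_1 \hookrightarrow Z$, I conclude that $x \in \C(\R^+, Z) \cap L^\infty\hl{Z}$.

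For $u \in \U$ and $n \in \N$, the translate $u_n \in \D(0, T; U)$ satisfies $u_n(0) = \dot u_n(0) = 0$, so \cref{eq:proof-ipp-twice} applies verbatim with $u$ replaced by $u_n$ to give $t \mapsto \Phi_t u_n \in \C([0, T], Z)$. At $t = T$, both $u_n(T)$ and $\dot u_n(T)$ vanish as well, so
\begin{equation*}
\Phi_T u_n = \int_0^T S_{T - s} A^{-2} B \ddot u_n(s) \, \d s,
\end{equation*}
which, as just observed, is an $X_1$-valued Bochner integral. Hence $\Phi_T u_n \in X_1$, as required.

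The main technical point is that the single integration by parts \cref{eq:conv-ipp} is not sufficient: its integrand $S_{t-s}A^{-1}B\dot u(s)$ a priori takes values only in $X$, not in $Z$, since $Z$ is not assumed to be invariant under $\sg{S}$. A second integration by parts transfers one additional factor of $A^{-1}$ onto $B$, lifting the integrand into $X_1$, on which the semigroup acts boundedly and strongly continuously.
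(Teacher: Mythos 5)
Your proposal is correct and follows essentially the same route as the paper: a second integration by parts (using $u(0)=\dot u(0)=0$) to reach the formula $x(t) = -A^{-1}Bu(t) - A^{-2}B\dot u(t) + \int_0^t S_{t-s}A^{-2}B\ddot u(s)\,\d s$, then $A^{-1}B \in \L(U,Z)$, $A^{-2}B \in \L(U,X_1)$, the embedding $X_1 \hookrightarrow Z$ and boundedness of the semigroup, with the vanishing of $u_n$ and $\dot u_n$ at $t=0,T$ giving $\Phi_T u_n \in X_1$. Your closing remark on why a single integration by parts is insufficient (lack of semigroup invariance of $Z$) matches the paper's motivation exactly.
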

\begin{proof}
Here $u$ is smooth and satisfies $u(0) = \dot{u}(0) = 0$. Using the formula \cref{eq:conv-ipp} with an additional integration by parts leads to
\begin{equation}
x(t) =  - A^{-1}Bu(t) - A^{-2}B\dot{u}(t)  +  \int_0^t S_{t-s}A^{-2}B\ddot{u}(s)  \, \d s, \quad t \geq 0.
\end{equation}
Thus, boundedness of $x$ in $Z$ follows from  $A^{-1}B \in \L(U, Z)$, $A^{-2}B \in \L(U, X_1)$ and the continuous embedding $X_1 \hookrightarrow Z$, together with boundedness of the semigroup and the fact that $u$ is smooth and compactly supported. Furthermore,
$u_n \in \D(0, T; U)$ by definition of $\U$, which implies continuity of the function $t \mapsto \Phi_t u_n$
from $[0, T]$ into $Z$.
Since every $u_n$ vanishes at the end points $t = 0$ and $t =T$, using the variation of constants formula again we see that
\begin{equation}
\Phi_T u_n = \int_0^T S_{T-s} A^{-2}B \ddot{u}(s) \, \d s
\end{equation}
which in turn proves  that $\Phi_T u_n \in X_1$ for all $n \in \N$.
\end{proof}
\begin{rem}
Note that $\Phi_t u_n$ need not be in $X_1$ for $0 < t < T$.
\end{rem}

As a consequence of \cref{lem:u-cut-Z},  when $u \in \U$, we can apply the operator $C$ to each term of the formula \cref{eq:formula-sol-n} in a classical, pointwise sense. We can now give the proof of \cref{prop:L2-H-eta-cut}, which combines our technical machinery with some ideas from the system-theoretic literature, namely a convolution argument seen in, e.g., \cite[Theorem 2.5.4]{Sta05book}.

\begin{proof}[Proof of \cref{prop:L2-H-eta-cut}] We will write $\eta \triangleq \eta_1 + \eta_2$ and assume that $T > 0$ is as in \cref{it:L2-H-eta}. By density, it is enough the prove the result for $u \in \U$.
First, as noted in the proof of \cref{coro:ii-to-iii},
 $\|C \tilde{x}_n \|_{H^{-\eta}(0, T; Y)} \leq K \|C x_n\|_{H^{-\eta}(0, T; Y)}$  for all $n \in \N$. Next, recalling the formula \cref{eq:formula-sol-n} from \cref{lem:formula-sol-n}, we have
\begin{equation}
\label{eq:formula-Cx-est}
e^{-nT} \|C {x}_n \|_{H^{-\eta}(0, T; Y)} \leq e^{-nT} \|\Xi_T u_n \|_{H^{-\eta}(0, T; Y)} + \sum_{k=0}^{n-1} \|\Psi_T S_{kT}\Phi_T u_{n - k - 1}\|_{H^{-\eta}(0, T; Y)}, \quad n \in \N.
\end{equation}
By $\eta_2$-admissibility of $C$ and \cref{lem:reg-shift},  for all $0 \leq k \leq n- 1$, 
\begin{equation}
\label{eq:eta2-C-k}
\|\Psi_T S_{kT} \Phi_T u_{n-k-1}\|_{H^{-\eta}(0, T; Y)} \leq K \|S_{kT}\Phi_T u_{n-k-1}\|_{X_{-\eta_1}^\inte} \leq K' \|\Phi_Tu_{n-k-1}\|_{X_{-\eta_1}^\inte},
\end{equation}
where we also used that $\sg{S}$ extends to a uniformly bounded semigroup on $X_{-\eta_1}^\inte$.
 Furthermore, by $\eta_1$-admissibility of $B$, for all $0 \leq k \leq n- 1$, we have
\begin{equation}
\label{eq:eta1-B-n}
\begin{aligned}
e^{-nT}\|\Phi_Tu_{n-k-1}\|_{X_{-\eta_1}^\inte} &\leq K e^{-nT} \|u_{n-k-1}\|_{L^2(0, T; U)} \\
&= Ke^{-T} e^{-kT} e^{-(n - k -1)T} \|u_{n-k-1}\|_{L^2(0, T; U)}.
\end{aligned}
\end{equation}
Also, since condition \cref{it:L2-H-eta} is assumed, the bound
$\|\Xi_T u_{n}\|_{H^{-\eta}(0, T; Y)} \leq K \|u_n\|_{L^2(0, T; U)}$, which relates outputs and inputs, holds uniformly in  $n \in \N$.
Plugging this estimate, together with \cref{eq:eta2-C-k,eq:eta1-B-n}, into \cref{eq:formula-Cx-est} leads to
\begin{equation}
\label{eq:formula-Cx-final}
e^{-nT} \|C {x}_n \|_{H^{-\eta}(0, T; Y)} \leq K e^{-nT} \|u_n\|_{L^2(0, T; U)} + K \sum_{k = 0}^{n-1} e^{-kT} e^{-(n-k-1)T} \|u_{n-k-1}\|_{L^2(0, T; U)}
\end{equation}
for all $n \in \N$.
By means of a discrete Young's convolution inequality, we deduce from \cref{eq:formula-Cx-final} that
\begin{equation}
\sum_{n=0}^{+\infty} e^{-2nT} \|C {x}_n \|_{H^{-\eta}(0, T; Y)}^2 \leq K^2 \left( \sum_{n=0}^{+\infty} e^{-nT} \right)^{2} \sum_{n=0}^{+\infty} e^{-2nT} \|u_n\|^2_{L^2(0, T; U)} \leq K' \|\tilde{u}\|^2_{L^2\hl{U}},
\end{equation}
as required.
\end{proof}

We are now able to turn the finite-time property of \cref{it:L2-H-eta} into a half-line estimate.
\begin{coro}[Half-line estimate]
\label{coro:L2-H-eta}
In the setting of \cref{prop:L2-H-eta-cut},
there exists $K > 0$ such that
\begin{equation}
\|C\tilde{x}\|_{H^{-\eta_1 - \eta_2}\hl{Y}} \leq K \|\tilde{u}\|_{L^2\hl{U}}, \quad u \in \D(0, +\infty; U).
\end{equation}
\end{coro}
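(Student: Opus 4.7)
The plan is to combine the windowed estimate from \cref{prop:L2-H-eta-cut} with the reassembly \cref{lem:localize-shift}, which (as used in the proof of \cref{lem:X-to-Cx}) asserts
\begin{equation*}
\|C\tilde x\|^2_{H^{-\eta_1-\eta_2}\hl{Y}} \leq K\sum_{n=0}^{+\infty}\bigl(\|C\tilde x(nT+\cdot)\|^2_{H^{-\eta_1-\eta_2}(0,T;Y)} + \|C\tilde x((n+1/2)T+\cdot)\|^2_{H^{-\eta_1-\eta_2}(0,T;Y)}\bigr).
\end{equation*}
The sum over integer shifts is immediately bounded by $K\|\tilde u\|^2_{L^2\hl{U}}$ via \cref{prop:L2-H-eta-cut}, so the remaining task is to control the half-integer shift sum.

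To this end, I would decompose the time-shifted solution $\hat x(t) \triangleq x(T/2+t)$ as $\hat x = y + z$, where $y(t) \triangleq S_t x(T/2)$ is the homogeneous orbit emanating from $x(T/2)$ and $z$ is the zero-initial-condition solution on $[0,+\infty)$ driven by the shifted input $s\mapsto u(s+T/2)$. For the $z$-contribution, \cref{prop:L2-H-eta-cut} applied to this new input yields
\begin{equation*}
\sum_{n=0}^{+\infty} \|C\tilde z(nT+\cdot)\|^2_{H^{-\eta_1-\eta_2}(0,T;Y)} \leq K\|\widetilde{u(\cdot+T/2)}\|^2_{L^2\hl{U}} \leq K' \|\tilde u\|^2_{L^2\hl{U}},
\end{equation*}
the last inequality following from a change of variable that produces only an exponential factor $e^T$. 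For the $y$-contribution, I would combine the $\eta_1$-admissibility of $B$ (yielding $\|x(T/2)\|_{X_{-\eta_1}^\inte}\leq K\|u\|_{L^2(0,T/2;U)}\leq K'\|\tilde u\|_{L^2\hl{U}}$) with \cref{lem:reg-shift} (which provides continuity $X_{-\eta_1}^\inte \to H^{-\eta_1-\eta_2}(0,T;Y)$ of $w\mapsto CS_\cdot w$) and the uniform boundedness of $\sg{S}$ on $X_{-\eta_1}^\inte$. Writing $\tilde y(nT+t) = e^{-nT}\widetilde{S_\cdot(S_{nT}x(T/2))}(t)$ and using that multiplication by $e^{-\cdot}$ is bounded on $H^{-\eta_1-\eta_2}(0,T;Y)$, I extract a factor $e^{-nT}$ to obtain $\|C\tilde y(nT+\cdot)\|^2_{H^{-\eta_1-\eta_2}(0,T;Y)} \leq Ke^{-2nT}\|\tilde u\|^2_{L^2\hl{U}}$, which sums geometrically to $K'\|\tilde u\|^2_{L^2\hl{U}}$.

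The main obstacle is the careful bookkeeping of the exponential weight denoted by the tilde: one must distinguish the inherent semigroup decay (responsible through the $e^{-nT}$ factor for the summability of the homogeneous piece) from the exponential weight applied to the input, and verify that the tilde commutes harmlessly with time shifts, with finite-window $H^{-\eta}$-norms, and with the $y+z$ splitting of $\hat x$.
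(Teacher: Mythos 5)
Your proposal is correct and follows the paper's own proof essentially verbatim: the same reassembly via \cref{lem:localize-shift}, the same splitting of the shifted trajectory $x(T/2+\cdot)$ into the semigroup orbit $S_t\Phi_{T/2}u$ and the zero-data forced solution $\Phi_t u(\cdot+T/2)$, and the same estimates (\cref{prop:L2-H-eta-cut} for the forced part, $\eta_1$-admissibility of $B$ together with \cref{lem:reg-shift} and boundedness of $\sg{S}$ on $X^\inte_{-\eta_1}$ for the orbit part). The one point the paper makes explicit that you gloss over is that the shifted input $u(\cdot+T/2)$ need not belong to $\D\hl{U}$ (it may be nonzero at $t=0$), so \cref{prop:L2-H-eta-cut} is applied to approximations $f_\eps \in \D\hl{U}$ converging in $L^2\hl{U}$ and one passes to the limit -- a harmless but necessary density step.
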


\begin{proof}
Let $\eta \triangleq \eta_1 + \eta_2$. 
We write
\begin{subequations}
\begin{align}
&x_{n + 1/2}(t) \triangleq x((n+1/2)T + t),   &&0 \leq t \leq T, \quad n \in \N, \\ &x_{1/2}(t) \triangleq x(T/2 + t),\quad  u_{1/2}(t) \triangleq u(T/2 + t), && t \geq 0.
\end{align}
\end{subequations}
 By \cref{lem:localize-shift}, it suffices to show that, for all $u \in \D\hl{U}$,
\begin{equation}
\label{eq:Cx-u-shift}
\sum_{n=0}^{+\infty} e^{-2nT} \|C \tilde{x}_n \|^2_{H^{-\eta}(0, T; Y)} + \sum_{n=0}^{+\infty} e^{-2(n+1/2)T} \|C\tilde{x}_{n+1/2}\|^2_{H^{-\eta}(0, T; Y)} \leq K \|\tilde{u}\|^2_{L^2\hl{U}}.
\end{equation}
The estimate for the first sum in \cref{eq:Cx-u-shift} follows directly from \cref{prop:L2-H-eta-cut}. To handle the second sum, we observe that $x_{1/2}$ solves the inhomogeneous Cauchy problem
\begin{equation}
\dot{x}_{1/2} = Ax_{1/2} + Bu_{1/2}, \quad x_{1/2}(0) =x(T/2),
\end{equation}
so that, formulating the variation of constants formula in terms of the operators $S_t$ and $\Phi_t$,
\begin{equation}
\label{eq:dec-v-w}
x_{1/2}(t) = S_t \Phi_{T/2}u + \Phi_t u_{1/2} 
\quad t \geq 0.
\end{equation}
Let $v(t) \triangleq S_t\Phi_{T/2}u$ and $w(t) \triangleq \Phi_tu_{1/2}$ for $t \geq 0$.
In accordance with \cref{eq:notation-n}, 
the subscript $n$ will indicate translation by $nT$ and restriction to $[0, T]$. It is immediate that
\begin{equation}
\sum_{n=0}^{+\infty} e^{-2(n+1/2)T} \|C\tilde{x}_{n+1/2}\|^2_{H^{-\eta}(0, T; Y)} \leq K \sum_{n=0}^{+\infty} e^{-2nT} \left(\|C\tilde{v}_n\|^2_{H^{-\eta}(0, T; Y)} + \|C\tilde{w}_n\|^2_{H^{-\eta}(0, T; Y)}\right).
\end{equation}
Since $v$ is a semigroup orbit,  $\eta_2$-admissibility of $C$, \cref{lem:reg-shift} and boundedness of the semigroup on $X_{-\eta_1}^\fra$ imply that
\begin{equation}
\label{eq:est-Cvn}
\|C\tilde{v}_n\|^2_{H^{-\eta}(0, T; Y)} \leq K \|C{v}_n\|^2_{H^{-\eta}(0, T; Y)} \leq K' \| S_{nT} \Phi_{T/2}u\|^2_{X^\inte_{-\eta_1}} \leq K'' \|\Phi_{T/2}u\|^2_{X^\inte_{-\eta_1}}, \quad n \in \N.
\end{equation}
Summing \cref{eq:est-Cvn} over $\N$ and using $\eta_1$-admissibility of $B$ yield
\begin{equation}
\label{eq:est-Cv}
\sum_{n=0}^{+\infty} e^{-2nT} \|C\tilde{v}_n\|^2_{H^{-\eta}(0, T; Y)} \leq K \| \Phi_{T/2}u\|^2_{X^\inte_{-\eta_1}}
\leq K' \|u\|^2_{L^2(0, T/2; U)} \leq K' \|\tilde{u}\|^2_{L^2\hl{U}}.
\end{equation}
Let us now deal with $w$.  Note that $u_{1/2}$ need not be in $\D\hl{U}$; nevertheless we may temporarily replace it with approximations $f_\eps \in \D\hl{U}$ that satisfy $f_\eps \to u_{1/2}$ in $L^2\hl{U}$ as $\eps \to 0$. After applying \cref{prop:L2-H-eta-cut} and passing to the limit, we obtain
\begin{equation}
\label{eq:est-Cw}
\sum_{n=0}^{+\infty} e^{-2nT} \|C\tilde{w}_n\|^2_{H^{-\eta}(0, T; Y)} \leq K \|\tilde{u}_{1/2}\|^2_{L^2\hl{U}} \leq K \|\tilde{u}\|^2_{L^2\hl{U}},
\end{equation}
and we complete the proof of \cref{eq:Cx-u-shift} by summing \cref{eq:est-Cv,eq:est-Cw}.
\end{proof}
Finally, we are in a position to deduce the required Bessel potential space estimate from the half-line property of \cref{coro:L2-H-eta}.
\begin{prop}[$H^{-\eta}(\R, Y)$-estimate]
\label{prop:h-eta-est-io}
Let $0 \leq \eta_1,\eta_2 \leq 1$.
 Assuming \cref{it:L2-H-eta}, $\eta_1$-admissibility of $B$ and $\eta_2$-admissibility of $C$, there exists $K > 0$ such that
\begin{equation}
\label{eq:freq-est-y-u}
\int_\R \frac{1}{(1 + \omega^2)^{\eta_1 + \eta_2}} \|C\hat{x}(1 + \i \omega)\|^2_Y \, \d \omega \leq K \|\tilde{u}\|^2_{L^2\hl{U}}, \quad u \in \D\hl{U}.
\end{equation}
\end{prop}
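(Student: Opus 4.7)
The plan is to combine the half-line estimate of \cref{coro:L2-H-eta} with the Bessel potential characterisation \cref{eq:bessel-spaces} of $H^{-\eta}(\R, Y)$, where I write $\eta \triangleq \eta_1 + \eta_2$. For $u \in \D\hl{U}$, \cref{lem:u-cut-Z} together with $C \in \L(Z, Y)$ ensures that, after extension by zero to $(-\infty, 0)$, $C\tilde{x} \in L^1(\R, Y) \cap L^2(\R, Y)$, and a direct computation identifies its Fourier transform with $\F[C\tilde{x}](\omega) = C\hat{x}(1 + \i\omega) = C(1 + \i\omega - A)^{-1} B \F[\tilde{u}](\omega)$ as a bounded $Y$-valued function on $\R$. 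Plancherel's theorem together with \cref{eq:bessel-spaces} then reduces the target inequality \cref{eq:freq-est-y-u} to the time-domain bound
\begin{equation*}
\|C\tilde{x}\|_{H^{-\eta}(\R, Y)} \leq K \|\tilde{u}\|_{L^2\hl{U}}, \quad u \in \D\hl{U}.
\end{equation*}

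The main obstacle is that \cref{coro:L2-H-eta} only furnishes the bound on the half-line, whereas the zero-extension map $H^{-\eta}\hl{Y} \to H^{-\eta}(\R, Y)$ fails to be continuous as soon as $\eta > 1/2$, and here $\eta$ may reach $2$. I would circumvent this by iterating the antiderivative trick used in the proof of \cref{prop:frac}. Let $k \in \{0, 1, 2\}$ be the smallest integer satisfying $k - \eta \leq 1/2$, and define $z^{(0)} \triangleq x$ and $z^{(j)}(t) \triangleq \int_0^t z^{(j-1)}(s) \, \d s$ for $1 \leq j \leq k$. The normalised functions satisfy $\dot{\tilde{z}}^{(j)} = \tilde{z}^{(j-1)} - \tilde{z}^{(j)}$ on $(0, +\infty)$, and each $\tilde{z}^{(j)}$ is bounded in $Z$ uniformly in time thanks to \cref{lem:u-cut-Z}. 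An inductive application of \cref{lem:diff-shift}, starting from \cref{coro:L2-H-eta} and absorbing the lower-order $\tilde{z}^{(j-1)}$-contributions via the $L^2$-control inherited from $A$-boundedness of $C$, raises the regularity of $C\tilde{z}^{(j)}$ one unit at a time and yields
\begin{equation*}
\|C\tilde{z}^{(k)}\|_{H^{k-\eta}\hl{Y}} \leq K \|\tilde{u}\|_{L^2\hl{U}}.
\end{equation*}

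Because $u \in \D\hl{U}$ has support bounded away from the origin, $x$ itself vanishes identically near $t = 0$, and so do all the $\tilde{z}^{(j)}$ together with their derivatives. Consequently $C\tilde{z}^{(k)} \in H^{k-\eta}_0\hl{Y}$, and since $k - \eta \leq 1/2$, \cref{coro:ext-zero} ensures that the zero extension of $C\tilde{z}^{(k)}$ belongs to $H^{k-\eta}(\R, Y)$ with the same norm bound. The identities $\dot{\tilde{z}}^{(j)} = \tilde{z}^{(j-1)} - \tilde{z}^{(j)}$ persist in $\D'(\R, Y)$ thanks to the matching traces at $0$, so differentiating $C\tilde{z}^{(k)}$ successively $k$ times on the whole line (each step costing one unit of Sobolev regularity, while the lower-order $C\tilde{z}^{(j)}$-terms are absorbed by the already-established bounds) transfers the regularity down from $C\tilde{z}^{(k)}$ to $C\tilde{z}^{(0)} = C\tilde{x}$, producing the required $H^{-\eta}(\R, Y)$-bound. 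Plugging this into the Bessel potential formula \cref{eq:bessel-spaces} via Plancherel then delivers \cref{eq:freq-est-y-u} and completes the proof.
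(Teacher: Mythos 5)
Your overall architecture (Plancherel plus the Bessel-potential characterisation \cref{eq:bessel-spaces}, antiderivatives to beat the failure of extension by zero, then transferring the bound back down by differentiating on $\R$) matches the structure of the paper's proof, and both the case $\eta_1+\eta_2\leq 1/2$ and the downward transfer are fine. The gap is in the upward half-line estimate $\|C\tilde{z}^{(k)}\|_{H^{k-\eta}\hl{Y}} \leq K\|\tilde{u}\|_{L^2\hl{U}}$. Your induction step needs to absorb the lower-order term $\|C\tilde{z}^{(1)}\|_{H^{-\eta}\hl{Y}}$, i.e.\ an $L^2$-type control of the \emph{once}-integrated output, and you assert that this follows from $A$-boundedness of $C$. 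It does not: $A$-boundedness gives $\|Cv\|_Y \lesssim \|v\|_{X_1}$ (or $\|v\|_Z$), but the state $z^{(1)}(t) = \int_0^t x(s)\,\d s$ is not bounded in $X_1$, nor in $Z$, by $\|\tilde{u}\|_{L^2\hl{U}}$ --- such a bound would itself be an admissibility-type statement that is precisely what is at stake. In Laplace terms, $C\hat{z}^{(1)}(p) = p^{-1}C(p-A)^{-1}B\hat{u}(p)$, and a single resolvent identity leaves a term $CA^{-1}(p-A)^{-1}B\hat{u}(p)$ (or $C(p-A)^{-1}A^{-1}B\hat{u}(p)$) whose multiplier grows like $|\omega|^{\eta_1}$ (resp.\ $|\omega|^{\eta_2}$) on the line $\re p = 1$, so no $L^2$ bound on $C\tilde{z}^{(1)}$ is available from the standing hypotheses; the remark following this proposition in the paper states exactly this obstruction for $\eta < 1$.

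For $\eta \geq 1$ your ladder can be repaired: there $\|C\tilde{z}^{(1)}\|_{H^{-\eta}\hl{Y}} \leq \|C\tilde{z}^{(1)}\|_{H^{-1}\hl{Y}} \lesssim \|C\tilde{z}^{(2)}\|_{L^2\hl{Y}}$, and the $L^2$ bound on the \emph{twice}-integrated output follows from its Laplace representation together with the hypothesis \cref{eq:hyp-Z} (it uses $CA^{-1}B \in \L(U,Y)$, not mere $A$-boundedness of $C$); this is essentially the paper's treatment of the range $3/2 < \eta \leq 2$. In the range $1/2 < \eta < 1$, however, the missing intermediate estimate is a genuine obstruction, and the paper circumvents it differently: it first proves $\|C\tilde{z}^{(2)}\|_{L^2\hl{Y}} \lesssim \|\tilde{u}\|_{L^2\hl{U}}$ via \cref{eq:hyp-Z}, and then runs a one-dimensional elliptic regularity argument (the operator $L = -\d^2/\d t^2 - 2\,\d/\d t + 4$ on $H^2\hl{Y}\cap H^1_0\hl{Y}$, combined with \cref{lem:int-H10-L2,lem:yet-another-interpolation-lemma} and duality) to convert the $H^{-\eta}$ bound on $C\tilde{x}$ directly into an $H^{2-\eta}_0$ bound on $C\tilde{z}^{(2)}$, bypassing the once-integrated variable altogether. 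A secondary issue: your choice of $k$ allows the endpoint $k-\eta = 1/2$ (e.g.\ $\eta = 3/2$), where extension by zero fails on $H^{1/2}$ and \cref{coro:ext-zero} does not apply; the paper handles $\eta = 3/2$ as a separate case for exactly this reason.
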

\begin{proof}
Let $\eta \triangleq \eta_1 + \eta_2$. The argument depends on the value of $\eta$.
\begin{itemize}[wide]
\item \emph{First case: $0 \leq \eta \leq 1/2$.} By \cref{coro:ext-zero}, the extension by zero is continuous $H^{-\eta}\hl{Y} \to H^{-\eta}(\R, Y)$ and we may  deduce \cref{eq:freq-est-y-u} directly from the estimate $\|C\tilde{x}\|_{H^{-\eta}\hl{Y}} \leq K \|\tilde{u}\|_{L^2\hl{U}}$ provided by \cref{coro:L2-H-eta}.
\item \emph{Second case: $1/2 < \eta < 3/2$.}
Given $u \in \D\hl{U}$,  consider the twice integrated variable $z$ defined by
$z(t) \triangleq \int_0^t \int_0^r x(s) \, \d s \, \d r$ for  $t \geq 0$.
Then
\begin{equation}
\label{eq:input-z-lap}
C\hat{z}(p) = \frac{1}{p^2} C\hat{x}(p) = -\frac{1}{p^2} CA^{-1}B\hat{u}(p) - \frac{1}{p} CA^{-2}B\hat{u}(p) + CA^{-1}(p - A)^{-1}A^{-1}B\hat{u}(p), \quad \re p > 0.
\end{equation}
Here we recall that $CA^{-1} \in \L(X, Y)$, $A^{-1}B \in \L(U, X)$,
and also $CA^{-1}B \in \L(U, Y)$ by \cref{eq:hyp-Z}. Thus, with Plancherel's theorem we readily infer from \cref{eq:input-z-lap} that 
\begin{equation}
\label{eq:preliminary-z-L2}
\|C\tilde{z}\|^2_{L^2\hl{Y}} \leq K \|\tilde{u}\|_{L^2\hl{U}}.
\end{equation}
Next, we wish to estimate the $H^{2-\eta}$-norm of $\tilde{z}$ in terms of the $H^{-\eta}$-norm of $C\tilde{x}$. To do so  we implement an ``elliptic regularity'' argument. Let $L$ be the operator on $L^2\hl{Y}$ given by $L \triangleq - (\d^2 / \d t^2) - 2(\d / \d t) +4$ with $\dom (L) \triangleq H^2\hl{Y} \cap H^1_0\hl{Y}$. Note that $L^\ast = - (\d^2 / \d t^2) + 2(\d / \d t) +4$ with $\dom(L^\ast) = \dom(L)$. In what follows, $y$ denotes an arbitrary element of $\dom(L)$. First, pairing $Ly$ and $y$ leads to
\begin{equation}
\|\dot{y}\|^2_{L^2\hl{Y}} - 2\langle\dot{y}, y\rangle_{L^2\hl{Y}} + 4 \|y\|^2_{L^2\hl{Y}} = \langle Ly, y\rangle_{L^2\hl{Y}}.
\end{equation}
so that, by Young's inequality,
\begin{subequations}
\label{eq:H2-H1-apriori}
\begin{align}
\label{eq:apriori-H1}
&\frac{1}{4}\|\dot{y}\|^2_{L^2\hl{Y}} + \frac{7}{4}\|y\|^2_{L^2\hl{Y}} \leq \|Ly\|^2_{H^{-1}\hl{Y}}, \\
\label{eq:apriori-H2}
&\frac{1}{2}\|\dot{y}\|^2_{L^2\hl{Y}} + \frac{3}{2}\|y\|^2_{L^2\hl{Y}} \leq \frac{1}{2} \|Ly\|^2_{L^2\hl{Y}}.
\end{align}
\end{subequations}
Note that \cref{eq:H2-H1-apriori} also holds with $L$ replaced by $L^\ast$. Thus, both $L$ and $L^\ast$ are coercive and in particular $L^{-1}$ is well-defined and continuous from $L^2\hl{Y}$ onto $\dom(L)$. In fact, \cref{eq:apriori-H1} shows that $L^{-1}$ and $(L^\ast)^{-1}$ extend to bounded linear operators from $H^{-1}\hl{Y}$ onto $H^1_0\hl{Y}$. On the other hand, \cref{eq:apriori-H2} allows us to prove that the graph norms of $L$ and $L^\ast$ are equivalent to the $H^2$-norm. We now aim to establish the following property:
\begin{equation}
\label{eq:req-cont-L-1}
L^{-1} ~\mbox{is continuous}~H^{-\eta}\hl{Y} \to H_0^{2-\eta}\hl{Y}.
\end{equation}
If $\eta = 1$, \cref{eq:req-cont-L-1} is already proved.
Assume then that $1/2 < \eta < 1$. By interpolation,
\begin{equation}
\label{eq:int-L-eta}
L^{-1}~\mbox{is continuous}~[L^2\hl{Y}, H^{-1}\hl{Y}]_{ \eta} \to [\dom(L), H^{1}_0\hl{Y}]_{\eta}.
\end{equation}
First,
\begin{equation}
\begin{aligned}
[L^2\hl{Y}, H^{-1}\hl{Y}]_{ \eta} &= ([H^1_0\hl{Y}, L^2\hl{Y}]_{1-\eta})^\ast \\&= (H^\eta_0\hl{Y})^\ast = H^{-\eta}\hl{Y},
\end{aligned}
\end{equation}
where we used \cite[Theorem 3.7]{ChaHew15} to pass to antiduals and then \cref{lem:int-H10-L2} to characterise the interpolation space on the right-hand side. Second,
we recall that $\dom(L)$ equipped with the graph norm coincides algebraically and topologically with $H^2\hl{Y} \cap H^1_0\hl{Y}$ equipped with the $H^2$-norm. Therefore by \cref{lem:yet-another-interpolation-lemma} the interpolation space at the right-hand side of \cref{eq:int-L-eta} is precisely $H^{2-\eta}_0\hl{Y}$, and \cref{eq:req-cont-L-1} is proved. For the case $1 < \eta < 3/2$, we notice that by duality \cref{eq:req-cont-L-1} is equivalent to:
\begin{equation}
(L^\ast)^{-1} ~\mbox{is continuous}~ H^{\eta-2}\hl{Y} \to H^\eta_0\hl{Y},
\end{equation}
which we can prove exactly as in the previous case since $L^\ast$ enjoys the same properties as $L$ and $\eta$ is such that $1/2 < 2- \eta < 1$. Having established \cref{eq:req-cont-L-1}, we may return to our $z$-variable. We see that $- L C\tilde{z} = C\tilde{x} - 3C\tilde{z}$, and by \cref{eq:preliminary-z-L2,eq:req-cont-L-1} we have
\begin{equation}
\|C\tilde{z}\|_{H^{2-\eta}\hl{Y}} \leq K \|C\tilde{x}\|_{H^{-\eta}\hl{Y}} + K \|C\tilde{z}\|_{L^2\hl{Y}} \leq K' \|\tilde{u}\|_{L^2\hl{U}}.
\end{equation}
 Now recall from \cref{coro:ext-zero} that  extension by zero is continuous $H^{2-\eta}_0\hl{Y} \to H^{2-\eta}(\R, E)$. This allows us to deduce that
$
\|C\tilde{z}\|_{H^{2-\eta}(\R, Y)} \leq K \|\tilde{u}\|_{L^2\hl{U}}
$,
which in turn leads to the required estimate \cref{eq:freq-est-y-u} via the Laplace representation \cref{eq:input-z-lap} of $z$.
\item \emph{Third  case: $3/2 < \eta \leq 2$.} We use the $z$-variable again but in a slightly different way. 
Because $\eta \geq 1$, it follows from \cref{eq:preliminary-z-L2} that
\begin{equation}
\|(\d / \d t)C\tilde{z}\|_{H^{-\eta}\hl{Y}} \leq K \|(\d / \d t)C\tilde{z} \|_{H^{-1}\hl{Y}} \leq K' \|\tilde{u}\|_{L^2\hl{U}}.
\end{equation}
Recall that $(\d^2 / \d t^2)C\tilde{z} = -2(\d/\d t)C\tilde{z} - C\tilde{z} + C\tilde{x}$.
By \cref{lem:diff-shift} we obtain
\begin{equation}
\label{eq:dtCz-1-eta}
\begin{aligned}
\|(\d / \d t)C\tilde{z}\|_{H^{1-\eta}\hl{Y}}
&\leq K \|(\d^2/\d t^2)C\tilde{z}\|_{H^{-\eta}\hl{Y}} + K \|(\d/\d t)C\tilde{z}\|_{H^{-\eta}\hl{Y}}
\\
&\leq K' \|\tilde{u}\|_{L^2\hl{Y}}.
\end{aligned}
\end{equation}
Applying \cref{lem:diff-shift} again yields
\begin{equation}
\|\tilde{z}\|_{H^{2-\eta}\hl{Y}} \leq K \|\tilde{z}\|_{H^{1-\eta}\hl{Y}} + K \|(\d / \d t)\tilde{z}\|_{H^{1-\eta}\hl{Y}} \leq K' \|\tilde{u}\|_{L^2\hl{Y}},
\end{equation}
where the $H^{1-\eta}$-norm can be controlled by the $L^2$-norm since  $\eta \geq 1$. By \cref{coro:ext-zero},
extension by zero is continuous $H^{2-\eta}\hl{Y} \to H^{2-\eta}(\R, Y)$ and we complete the proof just as in the previous case.
\item \emph{Fourth case: $\eta = 3/2$.} 
In this case \cref{eq:dtCz-1-eta} remains valid. We recall from \cref{coro:ext-zero} that the extension by zero map (denoted by $\pi$) is continuous $H^{-1/2}\hl{Y} \to H^{-1/2}(\R, Y)$. Therefore
\begin{equation}
\label{eq:ext-zero-dt-z}
\|\pi(\d / \d t)\tilde{z}\|_{H^{-1/2}(\R, Y)} \leq K \|\tilde{u}\|_{L^2\hl{U}}.
\end{equation}
Here we use the explicit notation $\pi$ to avoid confusion as the derivative of the extension by zero need not be the extension by zero of the derivative, although it is true here for $\tilde{z} \in H^{1}_0\hl{Y}$.
Then
\begin{equation}
\label{eq:lap-dt-z}
\L\left[ \dt{\tilde{z}}  \right](p) = \frac{p}{(1 + p)^2} C(1 + p -A)^{-1}B\hat{u}(1 + p), \quad \re p \geq 0,
\end{equation}
so that we can readily turn our estimate \cref{eq:ext-zero-dt-z} for $(\d / \d t)\tilde{z}$ into
the desired inequality \cref{eq:freq-est-y-u}. \qedhere
\end{itemize}
\end{proof}

\begin{rem}
When $0 \leq \eta < 1$, it it not clear whether one can derive an intermediate estimate of $(\d / \d t)\tilde{z}$ in $H^{-\eta}$-norm using merely $A$-boundedness of $B$ and $C$ or the technical hypothesis \cref{eq:hyp-Z}. This is why we used the \emph{ad hoc} elliptic argument in that case, as opposed to the more straightforward application of \cref{lem:diff-shift} in the other cases.
\end{rem}

\begin{proof}[Proof of \cref{theo:transfer-function}] We first prove $\cref{it:L2-H-eta} \implies \cref{it:FQ-IO}$ and then, simultaneously, $\cref{it:FQ-IO} \implies \cref{it:L2-H-eta}$ and the last statement in the theorem.

$\cref{it:L2-H-eta} \implies \cref{it:FQ-IO}$: Here we assume that $B$ and $C$ are  $\eta_1$- and $\eta_2$-admissible, respectively. Let $u \in \D\hl{U}$. By \cref{lem:u-cut-Z}, $x \in L^{\infty}\hl{Z}$ so $x$ is Laplace transformable as a $Z$-valued function and  $C\hat{x}(p) = C(p -A)^{-1}B\hat{u}(p)$ for all $p \in \Co$ with $\re p > 0$. In particular, the $Y$-valued function $F$ defined by
$
F(p) \triangleq p^{-\eta} C(p-A)^{-1}B\hat{u}(p)
$
is holomorphic and bounded on each set of the form $\{ p \in \Co, \re p \geq \sigma \}$, $\sigma > 0$. Therefore, by the Paley--Wiener theorem, $F$ is the Laplace transform of some measurable $Y$-valued function $f$ satisfying $e^{-\sigma \cdot}f \in L^2\hl{Y}$ for all $\sigma > 0$. Similarly as in the proof of \cref{theo:inter-group}, by Plancherel's theorem the $H^{-\eta}$-estimate \cref{eq:freq-est-y-u} provided by \cref{prop:h-eta-est-io} implies that $f$ must satisfy
$
\|\tilde{f}\|_{L^2\hl{Y}} \leq K \|\tilde{u}\|_{L^2\hl{U}}
$.

Since $u \mapsto \tilde{u}$ is a bijection on $\D\hl{U}$, it then follows  that the map $\tilde{u} \mapsto \tilde{f}$ is well-defined and extends as a bounded linear operator $\Theta : L^2\hl{U} \to L^2\hl{Y}$. Define defined the $\L(U, Y)$-valued function $G$ 
\begin{equation}
G(p) \triangleq \frac{1}{(1 + p)^\eta} C(1 + p - A)^{-1}B, \quad \re p \geq 0.
\end{equation}
We may check that $\Theta$ is exactly multiplication by $G$ in the Laplace domain: $\L[\Theta v](p) = G(p) \hat{v}(p)$ for all $p \in \Co$ with $\re p > 0$ and $v \in L^2\hl{U}$. By definition of $\Theta$, this is immediate when $v \in \D\hl{U}$, and it follows from a density argument using the Paley--Wiener theorem  when $v$ is merely in $L^2\hl{U}$. As a consequence of its Laplace multiplier representation, the operator $\Theta$ is also  \emph{shift invariant}, i.e., $\Theta[v](\cdot - \tau) = \Theta[v(\cdot - \tau)]$ for all $\tau \geq 0$ and $v \in L^2\hl{U}$. 

Now, recall that
shift invariant bounded operators on Hilbert-valued $L^2(0, +\infty)$-spaces are {characterised} by bounded holomorphic transfer functions on the open right-half plane; see \cite{FouSeg55,Wei94}.
More precisely, \cite[Theorem 3.1]{Wei94} provides a bounded holomorphic $\L(U, Y)$-valued function $H$ defined on the open right-half plane and  such that $\L[\Theta v](p)= H(p)v(p)$ for all $p \in \Co$ with $\re p > 0$ and $v \in L^2\hl{U}$. Using elements $v$ of the form $v(t) = e^{-t}v_0$, $v_0 \in U$, it is easy to see that $G$ and $H$ coincide on the open right-half plane. Therefore, we have proved that
\begin{equation}
\label{eq:G-est}
\sup_{\re p > 0} \|G(p)\|_{\L(U, Y)} < + \infty, \quad \mbox{and thus}\quad \sup_{\omega \in \R} \|G(\i \omega)\|_{\L(U, Y)} < +\infty,
\end{equation}
where we also used that $G$ is continuous on the closed right-half plane. The growth condition \cref{it:FQ-IO} for $C(1 + \i \omega - A)^{-1}B$ then follows from \cref{eq:G-est}.

%


$\cref{it:FQ-IO} \implies \cref{it:L2-H-eta}$: Here we no longer assume $\eta_1$- and $\eta_2$-admissibility of $B$ and $C$. We also drop the working hypothesis that $\sg{S}$ is uniformly bounded and $0 \in \rho(A)$. We shall prove that if \cref{eq:FQ-IO} holds for some $\sigma > \sigma_0$ then \cref{it:L2-H-eta} holds,
 which 
also gives $\cref{it:FQ-IO} \implies \cref{it:L2-H-eta}$.
Assuming \cref{eq:FQ-IO},
 $\omega \mapsto C(\sigma + \i \omega - A)^{-1}B$ is a Fourier multiplier between $L^2(\R, U)$ and $H^{-\eta}(\R, Y)$. Let $u \in \D(0, 1; U)$, which we extend with zero outside of $(0, 1)$, so that $u \in \D\hl{U}$. The Fourier transform of $e^{-\sigma \cdot} C x$ is precisely $\omega \mapsto C(\sigma + \i \omega - A)^{-1}B\hat{u}(\sigma + \i \omega)$
and the above multiplier property leads to
\begin{equation}
\|Ce^{-\sigma\cdot}{x}\|_{H^{-\eta}(\R)} \leq K \|e^{-\sigma \cdot}{u}\|_{L^2\hl{U}} \leq e^\sigma K \|u\|_{L^2(0, 1; U)}.
\end{equation}
Now \cref{it:L2-H-eta} follows from  the restriction property and continuity on $H^{-\eta}(0, 1; Y)$ of multiplication by $e^{\sigma \cdot}$.
\end{proof}
\section{A class of second-order systems}
\label{sec:second-order}

\subsection{Admissibility for second-order systems}
\label{sec:second-order-adm}

In this section, we present specialised versions of \cref{theo:inter-group,theo:input-group} for a class of abstract (wave-type) second-order systems studied in, e.g., \cite{Mil05,Mil12,ChiPau23,KleWan23arxiv}.


Let $L : \dom(L) \to H$ be a  nonnegative self-adjoint operator on the Hilbert space $H$. 
Fix $\eps > 0$ or take $\eps = 0$ if $0 \in \rho(L)$.
The spectral theory of self-adjoint operators  provides fractional powers of  $(L+\eps)^s$  for $s \in \R$. Define the Hilbert spaces\footnote{
We use the same scaling as in \cite{Mil05,Mil12} so that, in the prototype case where $L$ is a positive Laplacian with prescribed boundary conditions, the abstract spaces $H_s$ and the usual Sobolev spaces $H^s(\Omega)$ are somewhat comparable. Our spaces $H_s$ are the spaces $H_{s/2}$ in the notation of \cite{TucWei09book,ChiPau23,KleWan23arxiv}.
}
\begin{equation}
H_s \triangleq \dom((L + \eps)^{s/2}), \quad \|\cdot\|_{H_s} \triangleq \|(L+\eps)^{s/2}\cdot\|_H, \quad s \geq 0.
\end{equation}
For $s < 0$ we let $H_s$ be the completion of $H$ with respect to the norm $\|(L + \eps)^{s/2} \cdot\|_H$.
Different choices of $\eps$ lead to equivalent norms. For all real numbers $s_1 \geq s_2$, $H_{s_1}$ is continuously and densely embedded into  $H_{s_2}$. Also, after identifying $H$ with its antidual, we have $(H_s)^\ast = H_{-s}$ for all $s \geq 0$, with equality of norms. Furthermore,  $\{H_s\}_{s \in \R}$ constitutes a continuous scale of interpolation spaces.
Consider the second-order Cauchy problem
\begin{equation}
\label{eq:second-order-cauchy}
\ddot{w} + Lw = 0, \quad w(0) = w_0, \quad \dot{w}(0) = w_1.
\end{equation}
Initial data $(w_0, w_1)$ in $H_2 \times H_1$ (resp. $H_1 \times H$) generate classical solutions $w \in \C(\R^+, H_{2}) \cap \C^1(\R^+, H_1)$ (resp. mild or finite-energy solutions $w \in \C(\R^+, H_{1}) \cap \C^1(\R^+, H)$) to \cref{eq:second-order-cauchy}. The $(w, \dot{w})$-variables give rise to a strongly continuous group $\{S_t\}_{t\in \R}$ on the phase space $X \triangleq H_1 \times H$. 


Let $Y$ be another Hilbert space.  Given $Q_0 \in \L(H_2, U)$ and
 $Q_1 \in \L(H_1, U)$,
we are interested in outputs of the form $Q_0w$ and $Q_1 \dot{w}$. Typically $Q_0$ is allowed to be ``more unbounded'' than $Q_1$. We deal with both cases simultaneously for the sake of compactness; the reader interested in one or the other may of course set either $Q_0$ or $Q_1$ to be $0$.

\begin{theo}[Observation operators for second-order systems]
\label{theo:observation-SO}
 Let $0 \leq \eta \leq 1$.
Consider solutions $w$ to the second-order Cauchy problem \cref{eq:second-order-cauchy} with initial data $(w_0, w_1)$.
The following are equivalent:
\begin{enumerate}[label=(\roman*)]
  \item \label{it:SO-obs-sm} \emph{(Smoother data.)} There exist $K, T > 0$ such that
\begin{equation}
\|Q_0w\|_{L^2(0, T; Y)} + \|Q_1 \dot{w}\|_{L^2(0, T; Y)} \leq K \|w_0\|_{H_{1+\eta}} + K\|w_1\|_{H_\eta}, \quad (w_0, w_1) \in H_2 \times H_1;
\end{equation}
  \item \emph{(Distributional outputs.)}\label{it:SO-obs-do} There exist $K, T > 0$ such that
\begin{equation}
\|Q_0w\|_{H^{-\eta}(0, T; Y)} + \|Q_1 \dot{w}\|_{H^{-\eta}(0, T; Y)} \leq K \|w_0\|_{H_1} + K\|w_1\|_{H}, \quad (w_0, w_1) \in H_2 \times H_1;
\end{equation}
\item \emph{(Frequency-domain condition.)}\label{it:SO-obs-FQ}
\begin{equation}
\label{eq:SO-obs-FQ}
\left \{
\begin{aligned}
&\|Q_0((1 + \i \lambda)^2 + L)^{-1}\|_{\L(H, Y)} = O(|\lambda|^\eta),&& \\
&\|Q_1((1 + \i \lambda)^2 + L)^{-1}\|_{\L(H, Y)} = O(|\lambda|^{\eta - 1}), &&
\end{aligned}
 \right. \quad \lambda \in \R,\quad |\lambda| \to + \infty.
\end{equation}
\end{enumerate}
\end{theo}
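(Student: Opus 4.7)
The plan is to reduce \cref{theo:observation-SO} to \cref{theo:inter-group} by the standard first-order reformulation of \cref{eq:second-order-cauchy}. Set $X \triangleq H_1 \times H$ and take
\[
A \triangleq \begin{pmatrix} 0 & I \\ -L & 0 \end{pmatrix}, \qquad \dom(A) = X_1 = H_2 \times H_1,
\]
so that $A$ generates a strongly continuous group on $X$, trivially both left- and right-invertible; consequently all four items of \cref{theo:inter-group} become equivalent when applied to the $A$-bounded observation $C \colon X_1 \to Y \oplus Y$ defined by $C(w_0, w_1) \triangleq (Q_0 w_0, Q_1 w_1)$.

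The key structural fact is the identification $X_s^\fra = X_s^\inte = H_{1+s} \times H_s$ (with equivalent norms) for every $s \in \R$. After renorming $X$ with the natural energy inner product, $A$ is skew-adjoint up to a bounded perturbation, so $-A$ admits a bounded $H^\infty$-calculus, forcing the two scales to coincide by \cref{rem:int-fra}; an explicit computation using the block-diagonalisation by $\pm \i \sqrt{L + \eps}$ then yields the product-space description. Under this identification, the $X$-norm matches $\|w_0\|_{H_1} + \|w_1\|_H$ and the $X_\eta^\inte$-norm matches $\|w_0\|_{H_{1+\eta}} + \|w_1\|_{H_\eta}$, so items \cref{it:SO-obs-sm} and \cref{it:SO-obs-do} of \cref{theo:observation-SO} coincide with \cref{it:adm-eta} and \cref{it:alt-adm} of \cref{theo:inter-group}, which establishes their equivalence at once.

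To match \cref{eq:SO-obs-FQ} with the first-order frequency condition \cref{it:FQ-cond}, I would solve $(p - A)(w, \dot w) = (u_0, u_1)$ by elementary algebra to obtain the block form
\[
C(p - A)^{-1} = \begin{pmatrix} p\,Q_0(p^2 + L)^{-1} & Q_0(p^2 + L)^{-1} \\ -Q_1 L(p^2 + L)^{-1} & p\,Q_1(p^2 + L)^{-1} \end{pmatrix}.
\]
Testing the bound $\|C(1 + \i \omega - A)^{-1}\|_{\L(X, Y \oplus Y)} = O(|\omega|^\eta)$ on the velocity slot $(0, u_1)$ with $u_1 \in H$ yields immediately the two conditions in \cref{eq:SO-obs-FQ}. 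For the converse, the second column of $C(p - A)^{-1}$ is controlled directly by \cref{eq:SO-obs-FQ} together with $|p| = O(|\omega|)$, while the first column, acting on $u_0 \in H_1$, is handled by substituting $u_0 = (L + \eps)^{-1/2} v_0$ with $v_0 \in H$ and combining \cref{eq:SO-obs-FQ} with the identity $L(p^2 + L)^{-1} = I - p^2(p^2 + L)^{-1}$ and the uniform spectral bound $\|\sqrt{L + \eps}\,(p^2 + L)^{-1}\|_{\L(H)} = O(1)$.

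The main obstacle will be precisely this last bookkeeping step: it has to reconcile the asymmetric norm of $X = H_1 \times H$ with the symmetric calculus of $(p^2 + L)^{-1}$, using a half-order spectral gain to absorb an extra factor of $p$ and to match the "missing" power $|\omega|^{\eta - 1}$ appearing in front of $Q_1$ in \cref{eq:SO-obs-FQ}. A conceptually cleaner alternative, bypassing the direct frequency-domain comparison, is to derive \cref{it:SO-obs-sm} $\Rightarrow$ \cref{eq:SO-obs-FQ} from the Laplace representations $\widehat{\cos(t \sqrt L)}(p) = p(p^2 + L)^{-1}$ and $\widehat{\sin(t\sqrt L)/\sqrt L}(p) = (p^2 + L)^{-1}$ together with the Paley--Wiener / Plancherel argument of \cref{prop:interpolation}, and to deduce \cref{eq:SO-obs-FQ} $\Rightarrow$ \cref{it:SO-obs-do} via the Bessel-potential multiplier estimate of \cref{prop:h-eta-est-io}.
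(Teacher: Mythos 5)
Your overall strategy coincides with the paper's: pass to the first-order group on $X = H_1 \times H$, identify $X_s^\fra = X_s^\inte = H_{1+s}\times H_s$, invoke \cref{theo:inter-group} (all four of its items being equivalent because the group is invertible), and compare the block form of $C(p-A)^{-1}$ with \cref{eq:SO-obs-FQ}. The easy direction is fine: testing the first-order frequency bound on the velocity slot does yield \cref{eq:SO-obs-FQ}, and your identification of \cref{it:SO-obs-sm,it:SO-obs-do} with the corresponding items of \cref{theo:inter-group} is exactly what the paper does.

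The gap lies in the converse, which is the real content of the theorem: one must show that the two $\L(H,Y)$ bounds in \cref{eq:SO-obs-FQ} imply the two first-column bounds $\|Q_0((1+\i\lambda)^2+L)^{-1}\|_{\L(H_1,Y)} = O(|\lambda|^{\eta-1})$ and $\|Q_1 L((1+\i\lambda)^2+L)^{-1}\|_{\L(H_1,Y)} = O(|\lambda|^{\eta})$. For $Q_0$ your partial-fraction identity does transfer the hypothesis to an $O(|\lambda|^{\eta-2})$ bound in $\L(H_2,Y)$, but you still need an interpolation (moment) inequality between the $H\to Y$ and $H_2\to Y$ bounds to reach $H_1$ --- you never state this step, although it is available via \cref{eq:geo-inter}, and without it your ``uniform spectral bound'' route only produces the trivial $O(|\lambda|)$ estimate. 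For $Q_1$ the proposed combination genuinely fails: writing $L(p^2+L)^{-1} = I - p^{2}(p^2+L)^{-1}$ reduces the problem to bounding $p^{2}\,Q_1((1+\i\lambda)^2+L)^{-1}$ on $H_1$, and since $Q_1$ does not commute with $(L+\eps)^{1/2}$ you cannot move the half-order gain $\|(L+\eps)^{1/2}((1+\i\lambda)^2+L)^{-1}\|_{\L(H)} = O(1)$ past $Q_1$ so as to meet the hypothesis; every submultiplicative combination of your ingredients gives only $O(|\lambda|)$ or $O(|\lambda|^{1+\eta})$, not $O(|\lambda|^{\eta})$. The paper's proof uses a different device at precisely this point: the factorisation $((1+\i\lambda)^2+L)^{-1} = (1+\i\lambda+\i L^{1/2})^{-1}(1+\i\lambda-\i L^{1/2})^{-1}$, a resolvent identity applied inside the non-resonant half-wave factor, the spectral bound $\|(1+\i\lambda+\i L^{1/2})^{-1}L^{1/2}\|_{\L(H)} \leq 1$ valid for $\lambda \geq 0$ (with its mirror image for $\lambda \leq 0$), and a case distinction on the sign of $\lambda$, the hypothesis being invoked on the term carrying the explicit factor $\lambda$. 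Your ``cleaner alternative'' via Paley--Wiener does not bypass this difficulty either: the Laplace transform of $Q_1\dot w$ for position data $w_0\in H_1$ again involves $Q_1 L(p^2+L)^{-1}w_0$, so exactly the same estimate is required.
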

\begin{rem}
The high-frequency bounds in \cref{eq:SO-obs-FQ} can be reformulated as ``resolvent conditions with variable coefficients'' in the terminology of \cite{Mil12}. For instance, the estimate related to $Q_1$ reads as follows:
there exist $K, \lambda_0 > 0$ such that
\begin{equation}
\|Q_1w\|^2_U \leq \frac{K (1 + \lambda^{2\eta})}{\lambda^2} \|(L  - \lambda^2)w\|^2_H + K(1 + \lambda^{2\eta}) \|w\|^2_H, \quad w \in H_2, \quad \lambda \in \R, \quad |\lambda| \geq \lambda_0.
\end{equation}
\end{rem}

Let $U$ be yet another Hilbert space and let $P \in \L(U, H_{-1})$ represent a control operator. We now consider the inhomogeneous second-order Cauchy problem
\begin{equation}
\label{eq:w-u-cauchy}
\ddot{w} + Lw = Pu, \quad w(0) = 0, \quad \dot{w}(0) = 0.
\end{equation}
Given $T > 0$, inputs $u \in L^2(0, T; U)$ produce solutions $w$ in $\C([0, T], H) \cap \C^1([0, T], H_{-1})$; furthermore, when $u \in H^1(0, T; U)$,
$w \in \C([0, T], H_1) \cap \C^1([0, T], H)$. This can be seen using the semigroup arguments of \cref{sec:adm-input}. 
\begin{theo}[Control operators for second-order systems]
\label{theo:control-SO}
Let $0 \leq \eta \leq 1$. Consider solutions $w$ to the second-order inhomogeneous 
Cauchy problem \cref{eq:w-u-cauchy} with controls $u$. The following are equivalent:
\begin{enumerate}[label=(\roman*')]
  \item \emph{($L^2$-inputs.)} There exist $K, T > 0$ such that
  \begin{equation}
  \|w(T)\|_{H_{1 - \eta}} + \|\dot{w}(T)\|_{H_{-\eta}} \leq K \|u\|_{L^2(0, T; U)}, \quad u \in H^1(0, T; U);
  \end{equation}
  \item \emph{(Smoother inputs.)} There exist $K, T > 0$ such that
  \begin{equation}
  \|w(T)\|_{H_{1}} + \|\dot{w}(T)\|_{H} \leq K \|u\|_{H^\eta(0, T; U)}, \quad u \in H^1(0, T; U);
  \end{equation}
  \item \emph{(Frequency-domain condition.)}
  \begin{equation}
  \label{eq:control-SO-FQ}
  \|((1 + \i \lambda)^2 + L)^{-1}P\|_{\L(U, H)} = O(|\lambda|^{\eta - 1}), \quad \lambda \in \R, \quad |\lambda| \to + \infty.
  \end{equation}
\end{enumerate}
\end{theo}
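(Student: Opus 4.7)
The plan is to reduce Theorem \ref{theo:control-SO} to \cref{theo:input-group} by recasting the second-order system as a first-order system on the phase space $X \triangleq H_1 \times H$. Write $A \triangleq \begin{pmatrix} 0 & I \\ -L & 0\end{pmatrix}$ with $\dom A \triangleq H_2 \times H_1$; then $A$ generates the group $\sg{S}$ from \cref{sec:second-order-adm}, which is in particular right-invertible, so all four parts of \cref{theo:input-group} are equivalent. The control operator $B \triangleq (0, P)^\top$ lies in $\L(U, X_{-1})$ because $X_{-1}$ is naturally identified with $H \times H_{-1}$. The second-order problem \cref{eq:w-u-cauchy} is then precisely the abstract Cauchy problem $\dot{x} = Ax + Bu$, $x(0) = 0$, with $x = (w, \dot{w})$.

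The next step is to match the Sobolev scale: I would first observe that, up to the equivalent norm $\|(w, v)\|^2 = \|w\|_{H_1}^2 + \|v\|_H^2 + \epsilon \|w\|_H^2$, $\sg{S}$ is unitary, so \cref{rem:int-fra} yields $X_s^\fra = X_s^\inte$. Computing $\dom(A^n)$ for integer $n$ and interpolating (using that $\{H_s\}_{s \in \R}$ is an interpolation scale) then gives
\begin{equation}
X_s^\fra = X_s^\inte = H_{s+1} \times H_s, \quad s \in \R,
\end{equation}
with equivalent norms. Under these identifications, conditions (i') and (ii') of \cref{theo:control-SO} are exactly \cref{it:input-int} and \cref{it:smooth-input} of \cref{theo:input-group}.

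It remains to translate \cref{it:input-FQ} into the second-order condition \cref{eq:control-SO-FQ}. A direct computation of the resolvent gives
\begin{equation}
(p - A)^{-1}B u = \bigl((L + p^2)^{-1}Pu,\; p(L+p^2)^{-1}Pu\bigr), \quad p \in \rho(A),
\end{equation}
so that $\|(p - A)^{-1}B\|_{\L(U, X)}$ is, up to a multiplicative constant, comparable to $\|(L+p^2)^{-1}P\|_{\L(U, H_1)} + |p|\, \|(L+p^2)^{-1}P\|_{\L(U, H)}$. The second summand, with $p = 1 + \i\lambda$, is $O(|\lambda|^\eta)$ if and only if \cref{eq:control-SO-FQ} holds, so the first-order FQ condition clearly implies the second-order one. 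The reverse direction is the key step and the main obstacle: one needs to show that the $H$-norm bound on $v \triangleq (L+p^2)^{-1}Pu$ bootstraps to an $H_1$-norm bound. The plan is to use the equation $Lv = Pu - p^2 v$ in $H_{-1}$, pair it with $v \in H_1$ through the antiduality $\langle \cdot, \cdot \rangle_{H_{-1}, H_1}$ (which is legitimate because both sides are well defined), and take real parts to obtain
\begin{equation}
\|v\|_{H_1}^2 = \re\langle Pu, v\rangle_{H_{-1}, H_1} + (\lambda^2 - 1 + \epsilon)\|v\|_H^2.
\end{equation}
The first term is controlled by $\|P\|\,\|u\|_U \|v\|_{H_1}$ and absorbed via Young's inequality; the second is controlled using \cref{eq:control-SO-FQ}, yielding $\|v\|_{H_1} = O(|\lambda|^\eta)\|u\|_U$, as required. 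Combining the two bounds gives \cref{it:input-FQ} of \cref{theo:input-group}, and the equivalence provided by right-invertibility of $\sg{S}$ completes the proof.
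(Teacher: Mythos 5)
Your proposal is essentially the paper's own proof: recast \cref{eq:w-u-cauchy} as a first-order problem on $H_1\times H$ with $B=(0,P)^\top$, identify the shifted state spaces with $H_{1+s}\times H_s$, apply \cref{theo:input-group} (all four conditions being equivalent since the semigroup is a group, hence right-invertible), compute $(p-A)^{-1}B=((p^2+L)^{-1}P,\,p(p^2+L)^{-1}P)$, and upgrade the $\L(U,H)$-bound \cref{eq:control-SO-FQ} to the $\L(U,H_1)$-bound $O(|\lambda|^{\eta})$ by pairing $((1+\i\lambda)^2+L)v=Pu$ with $v$ and taking real parts, which is exactly the paper's pairing argument. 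One inaccuracy, though an inessential one: when $0\in\sigma(L)$ (e.g.\ the Neumann Laplacian of the wave application) the group is not uniformly bounded, so it is not unitary with respect to any equivalent norm and \cref{rem:int-fra} cannot be invoked in the way you suggest; this does not affect the proof, since you only use the interpolation-scale conditions \cref{it:input-int,it:smooth-input}, and the identification $X_s^{\inte}=H_{1+s}\times H_s$ already follows from the integer-order identifications together with the fact that $\{H_s\}_{s\in\R}$ is an exact interpolation scale, while the equivalence with the fractional-power condition is in any case part of \cref{theo:input-group}.
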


\Cref{theo:observation-SO,theo:control-SO} tell us that, even though the semigroup formulation of problems \cref{eq:second-order-cauchy,eq:w-u-cauchy} give rise to two state variables (or ``coordinates''),
 the admissibility properties of \cref{theo:inter-group,theo:input-group} are completely determined by the behaviour of a single quadratic operator pencil in $\L(U,H)$ or $\L(H, Y)$.

\begin{proof}[Proofs of \cref{theo:observation-SO,theo:control-SO}] The first part of the proof consists in recasting the second-order problems \cref{eq:second-order-cauchy,eq:w-u-cauchy} in the semigroup framework of \cref{sec:adm-semi,sec:main-res}.
The generator $A$ of the group $\{S_t\}_{t \in\R}$ associated with the second-order Cauchy problem \cref{eq:second-order-cauchy} posed on $X = H_1 \times H$ is given by
\begin{equation}
A = \begin{pmatrix}
0 & 1 \\ -L & 0
\end{pmatrix}, \quad \dom(A) = H_2 \times H_1.
\end{equation}
Up to norm equivalence, the associated Sobolev scales $\{X_s^\inte\}_{s \in \R}$ and $\{X_s^\fra\}_{s\in \R}$ coincide and are given by
$
X_s \triangleq H_{1+s} \times H_s
$.
In particular, $X_{-1} = H \times H_{-1}$. The resolvent of $A$ has the form
\begin{equation}
\label{eq:res-A-SO}
(p - A)^{-1} = \begin{pmatrix}
p(p^2 + L)^{-1} & (p^2 + L)^{-1} \\ -L(p^2 + L)^{-1} & p(p^2 + L)^{-1}
\end{pmatrix}
, \quad \re p > 0.
\end{equation}
In first-order form, the associated observation and control operators $C_0, C_1 \in \L(X_1, Y)$, $B \in \L(U, X_{-1})$ are given by
\begin{equation}
C_0 \triangleq \begin{pmatrix}
Q_0 & 0 
\end{pmatrix}, \quad 
C_1 \triangleq \begin{pmatrix}
0 & Q_1 
\end{pmatrix}, \quad B \triangleq \begin{pmatrix}
0 \\ P
\end{pmatrix}.
\end{equation}
We start by giving the proof of \cref{theo:observation-SO}.
It follows from \cref{eq:res-A-SO} that
\begin{equation}
\left\{
\begin{aligned}
&C_0(p - A)^{-1} = \begin{pmatrix} 
pQ_0(p^2 + L)^{-1} & Q_0(p^2 + L)^{-1}
\end{pmatrix}, \\
&C_1(p- A)^{-1} =
\begin{pmatrix} 
-Q_1 L(p^2 + L)^{-1} & p Q_1(p^2 + L)^{-1}
\end{pmatrix},
\end{aligned}
\right. \quad \re p > 0.
\end{equation}
Applying \cref{theo:input-group} to $C_0$ and $C_1$ as observation operators for the group $\{S_t\}_{t \in \R}$ shows that
 \cref{it:SO-obs-sm,it:SO-obs-do} in \cref{theo:observation-SO} are equivalent and
 amount to the following four estimates when $\lambda \in \R$, $|\lambda|\to +\infty$:
\begin{subequations}
\label{eq:4-lambda}
\begin{align}
\label{eq:Q0-gr}
&\|Q_0((1 + \i \lambda)^2 + L)^{-1}\|_{\L(H_1, Y)} = O(|\lambda|^{\eta-1}), 
&& \|Q_0((1+\i \lambda)^2 + L)^{-1}\|_{\L(H, Y)} = O(|\lambda|^\eta), \\
\label{eq:Q1-gr}
&\|Q_1L((1 + \i \lambda)^2 + L)^{-1}\|_{\L(H_1, Y)} = O(|\lambda|^\eta), 
&& \|Q_1((1 + \i \lambda)^2 + L)^{-1}\|_{\L(H, Y)} = O(|\lambda|^{\eta-1}).
\end{align}
\end{subequations}
In particular,  both \cref{it:SO-obs-sm,it:SO-obs-do} imply \cref{it:SO-obs-FQ}.
Assume now that \cref{it:SO-obs-FQ} holds, i.e., suppose that the frequency-domain estimates for $Q_0$ and $Q_1$ in \cref{eq:SO-obs-FQ}
are valid. We need to prove that the second estimate in \cref{eq:Q0-gr} (resp. \cref{eq:Q1-gr}) implies the first one.

Let us start with $Q_0$. 
The resolvent identity yields
\begin{equation}
\label{eq:res-id-Q0}
Q_0((1+\i \lambda)^2 + L)^{-1} = Q_0(1 + L)^{-1} + (2\i \lambda - \lambda^2)Q_0((1 + \i \lambda)^2 + L)^{-1}(1+L)^{-1}, \quad \lambda \in \R.
\end{equation}
Recall that $(1 + L)^{-1}$ is an isomorphism from $H$ onto $H_2$ and $Q_0(1 + L)^{-1} \in \L(H, Y)$. Plugging the $Q_0$-part of \cref{eq:SO-obs-FQ} into \cref{eq:res-id-Q0}, we see that $\|Q_0((1 + \i \lambda)^2 + L)^{-1}\|_{\L(H_2, Y)} = O(|\lambda|^{\eta-2})$ when $\lambda \in \R$, $|\lambda| \to + \infty$. Using interpolation between $H \to Y$ and $H_2 \to Y$, we deduce that $\|Q_0((1 + \i \lambda)^2 + L)^{-1}\|_{\L(H_1, Y)} = O(|\lambda|^{\eta-1})$,
as required.

Next, we consider $Q_1$. Recalling that $H_1 = \dom(L^{1/2})$, it suffices to prove that
\begin{equation}
\label{eq:goal-Q1}
\|Q_1((1 + \i \lambda)^{2} + L)^{-1}L^{1/2} \|_{\L(H, Y)} = O(|\lambda|^\eta), \quad \lambda \in \R, \quad |\lambda| \to +\infty.
\end{equation}
To do so, we start from the factorisation
$((1 + \i \lambda)^2 + L)^{-1} = (1 + \i \lambda + \i L^{1/2})^{-1}(1 + \i \lambda - \i L^{1/2})^{-1}$
valid for all $\lambda \in \R$.
After using the resolvent identity in the second factor, we obtain that, for all $\lambda \in \R$,
\begin{multline}
\label{eq:fact-res}
Q_1((1 + \i \lambda)^{2} + L)^{-1}L^{1/2} = Q_1 (1 - \i L^{1/2})^{-1}(1 + \i \lambda + \i L^{1/2})^{-1} L^{1/2}  \\ - \i \lambda Q_1((1 + \i \lambda)^2 + L)^{-1}(1 - \i L^{1/2})^{-1}L^{1/2}.
\end{multline}
Now, $Q_1(1 - \i L^{1/2})^{-1} \in \L(H, Y)$ and $(1- \i L^{1/2})^{-1}L^{1/2}$ is an isomorphism on $H$. Furthermore, using the spectral theorem and the functional calculus of the (self-adjoint) operator $L^{1/2}$, we can write
\begin{equation}
\label{eq:spe-res}
(1 + \i \lambda + \i L^{1/2})^{-1}L^{1/2}  =  \int_0^{+\infty} \frac{\mu}{1 + \i \lambda + \i \mu} \, \d E(\mu), \quad \lambda \in \R,
\end{equation}
where $E$ is a spectral measure, and deduce from \cref{eq:spe-res} that $\|(1 + \i \lambda + \i L^{1/2})^{-1}L^{1/2}\|_{\L(H)} \leq 1$ whenever $\lambda \geq 0$. This shows that the first term on the right-hand side of \cref{eq:fact-res} is uniformly bounded in $\L(H, Y)$-norm for $\lambda \geq 0$. By the hypothesis \cref{eq:SO-obs-FQ},  the $\L(H, Y)$-norm of the second term must grow like $O(\lambda^\eta)$ as $\lambda \to +\infty$. In turn, we see that the growth rate \cref{eq:goal-Q1} holds as $\lambda \to +\infty$. To show that \cref{eq:goal-Q1} is also valid as $\lambda \to - \infty$ and thereby complete the proof, we carry out an analogous argument, but this time we use the resolvent identity to expand the factor $(1 + \i \lambda + \i L^{1/2})^{-1}$ instead and employ the spectral theorem to  bound $(1 + \i \lambda - \i L^{1/2})^{-1}L^{1/2}$ uniformly for $\lambda \leq 0$. 

It remains to prove \cref{theo:control-SO}. We have
\begin{equation}
(p - A)^{-1}B = \begin{pmatrix}
(p^2 + L)^{-1}P \\
p(p^2 + L)^{-1}P
\end{pmatrix}, \quad \re p > 0,
\end{equation}
and by \cref{theo:input-group} it suffices to show that the $\L(U, H)$-estimate \cref{eq:control-SO-FQ} implies
\begin{equation}
\label{eq:control-H1-growth}
\|((1 + \i \lambda)^2 + L)^{-1}P\|_{\L(U, H_1)} = O(|\lambda|^\eta), \quad \lambda \in \R, \quad |\lambda| \to + \infty.
\end{equation}
Here we use a simple pairing argument. Let $\lambda \in \R$ with $|\lambda| \geq 1$ (say), $u \in U$ and $w \triangleq ((1 + \i \lambda)^2 + L)^{-1}Pu$. Then $
\langle Lw +  2 \i \lambda w - \lambda^2 w, w\rangle_H = \langle Pu, w \rangle_{H_{-1}, H}$ and taking the real part gives
$
\|L^{1/2}w\|^2_{H} -  \lambda^2 \|w\|_H^2 = \re \langle Pu, w\rangle_{H_{-1}, H}
$.
Applying the Cauchy--Schwarz and Young inequalities, we arrive at
\begin{equation}
\label{eq:H1-w}
\|w\|^2_{H_1} \leq K(\|u\|^2_U + \lambda^2\|w\|^2_{H}),
\end{equation}
where we used that $(\|L^{1/2}\cdot\|^2_H + \|\cdot\|^2_H)^{1/2}$ defines an equivalent norm on $H_1$. Rewriting \cref{eq:H1-w} in resolvent terms and plugging it into \cref{eq:control-SO-FQ} directly yields \cref{eq:control-H1-growth}.
\end{proof}


\subsection{Transfer functions for second-order systems}

We now give a second-order version of \cref{theo:transfer-function} which applies to transfer functions of the form
\begin{equation}
p \mapsto Q_0(p^2 + L)^{-1}P, \quad p \mapsto pQ_1(p^2 + L)^{-1}P.
\end{equation}
 Note that for velocity observation modelled by $Q_1 \in \L(H_1, Y)$, we automatically have $Q_1(p^2 + L)^{-1}P \in \L(U, Y)$ for $\re p > 0$.
However, 
for $Q_0$ we will make an additional assumption, similar to that of \cref{sec:TF-IO}: namely, there exists a Hilbert space $W$ satisfying $H_2 \hookrightarrow W \hookrightarrow H_1$ with continuous embeddings and
\begin{equation}
\label{eq:hyp-W-space}
Q_0 \in \L(W, Y), \quad \ran((\mu + L)^{-1}P) \subset W ~\mbox{for some (hence all)}~\mu \in \rho(L). 
\end{equation}
This implies  $Q_0(p^2 + L)^{-1}P \in \L(U, Y)$ for $\re p > 0$. We will also see that, given $T > 0$, when $u \in \D(0, T; U)$, the corresponding solution $w$ to \cref{eq:w-u-cauchy} belongs to $\C^1([0, T], W)$.


\begin{theo}[Transfer functions for second-order systems]\label{theo:TF-SO}
Let $0 \leq \eta_1, \eta_2 \leq 1$. Suppose that $P$ satisfies the conditions of \cref{theo:control-SO} with parameter $\eta_1$.  Suppose also that $Q_0$ and $Q_1$ satisfy the (equivalent) conditions of \cref{theo:observation-SO} with parameter $\eta_2$. Consider solutions $w$ to the inhomogeneous second-order Cauchy problem \cref{eq:w-u-cauchy} with controls $u$. The following are equivalent:
\begin{enumerate}[label=(\alph*)]
  \item \emph{($L^2$-inputs give $H^{-\eta}$-outputs.)} There exist $K, T > 0$ such that  \begin{equation}
  \|Q_0w\|_{H^{-\eta_1 - \eta_2}(0, T; Y)} + \|Q_1 \dot{w}\|_{H^{-\eta_1 - \eta_2}(0, T; Y)} \leq K \|u\|_{L^2(0, T; U)}, \quad u \in \D(0, T; U);
  \end{equation}
  \item \emph{(Frequency-domain condition.)}\label{it:TF-SO-FQ}
\begin{equation}
\left \{
\begin{aligned}
&\|Q_0((1 + \i \lambda)^2 + L)^{-1}P\|_{\L(U, Y)} = O(|\lambda|^{\eta_1 + \eta_2}),\\
&\|Q_1((1 + \i \lambda)^2 + L)^{-1}P\|_{\L(U, Y)} = O(|\lambda|^{\eta_1 + \eta_2 - 1}), 
\end{aligned}
 \right. \quad \lambda \in \R,\quad |\lambda| \to + \infty.
\end{equation}
\end{enumerate}
\end{theo}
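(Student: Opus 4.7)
The plan is to recast the second-order problem \cref{eq:w-u-cauchy} in the first-order semigroup framework of \cref{sec:main-res} and apply \cref{theo:transfer-function} once for each of the two observation operators. Working on $X = H_1 \times H$ with the same first-order generator $A$, control operator $B = (0, P)^\top \in \L(U, X_{-1})$ and observation operators $C_0 = (Q_0, 0)$ and $C_1 = (0, Q_1)$ introduced in the proofs of \cref{theo:observation-SO,theo:control-SO}, formula \cref{eq:res-A-SO} yields
$$C_0(p-A)^{-1}B = Q_0(p^2 + L)^{-1}P, \qquad C_1(p-A)^{-1}B = p\, Q_1(p^2 + L)^{-1}P.$$
With $p = 1 + \mathrm{i}\lambda$ and $|p| \sim |\lambda|$, the two estimates of \cref{it:TF-SO-FQ} are exactly what \cref{theo:transfer-function} delivers for $(A, B, C_0)$ and $(A, B, C_1)$ with common exponent $\eta_1 + \eta_2$.

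Before invoking the transfer function theorem, the structural hypothesis \cref{eq:hyp-Z} needs to be verified, and a single intermediate space $Z \triangleq W \times H_1$ works simultaneously for both $C_0$ and $C_1$. The chain $H_2 \hookrightarrow W \hookrightarrow H_1$ together with $H_1 \hookrightarrow H$ yields the embeddings $X_1 \hookrightarrow Z \hookrightarrow X$; $C_0 \in \L(Z, Y)$ follows from $Q_0 \in \L(W, Y)$ and $C_1 \in \L(Z, Y)$ from $Q_1 \in \L(H_1, Y)$; and the range condition reduces to $(p^2+L)^{-1}Pu \in W$, which is the content of \cref{eq:hyp-W-space}, together with $p(p^2+L)^{-1}Pu \in H_1$, which holds because $p^2 + L$ is an isomorphism $H_1 \to H_{-1}$ for $\re p > 0$.

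Next, the admissibility hypotheses translate immediately: by \cref{theo:control-SO} the operator $B$ is $\eta_1$-admissible, and by \cref{theo:observation-SO} both $C_0$ and $C_1$ are $\eta_2$-admissible. Noting that the solution $x = (w, \dot w)$ to \cref{eq:cauchy-u} satisfies $C_0 x = Q_0 w$ and $C_1 x = Q_1 \dot w$, the equivalence between \cref{it:L2-H-eta} and \cref{it:FQ-IO} from \cref{theo:transfer-function}, applied separately to $(A, B, C_0)$ and to $(A, B, C_1)$, yields the stated equivalence in \cref{theo:TF-SO}, since a sum of two nonnegative terms is bounded by $K\|u\|_{L^2(0, T; U)}$ if and only if each term is.

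I expect the main obstacle to be the verification of \cref{eq:hyp-Z}, in particular the subtle point that the second component $p(p^2 + L)^{-1}Pu$ of $(p-A)^{-1}Bu$ lies in $H_1$ even though $P$ is only assumed to take values in $H_{-1}$. Once this is settled, the proof reduces to a direct transcription of the first-order result into the second-order language, with all the required regularity and admissibility inputs supplied by \cref{theo:observation-SO,theo:control-SO}.
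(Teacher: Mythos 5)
Your proposal is correct and follows essentially the same route as the paper: recast \cref{eq:w-u-cauchy} in first-order form, verify the structural hypothesis \cref{eq:hyp-Z} for a product intermediate space, compute $C_0(p-A)^{-1}B = Q_0(p^2+L)^{-1}P$ and $C_1(p-A)^{-1}B = pQ_1(p^2+L)^{-1}P$, and apply \cref{theo:transfer-function} to the two observation operators separately. The only difference is your choice $Z = W \times H_1$ in place of the paper's $W \times W$, which is a harmless variant that in fact makes the embedding $X_1 = H_2 \times H_1 \hookrightarrow Z$ immediate, whereas with $W \times W$ this inclusion is not literally valid in the second component.
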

\begin{proof}
Let
$
Z \triangleq W \times W
$.
We shall first prove that $Z$ satisfies the conditions of \cref{sec:TF-IO} for the group $\{S_t\}_{t\in\R}$. Clearly, $H_2 \times H_1$ is continuously embedded in $Z$; also, $C_0$ and $C_1$ both belong to $\L(U, Z)$. Now, pick $p \in \Co$ with $\re p > 0$.
By \cref{eq:res-A-SO},
\begin{equation}
(p - A)^{-1}B = \begin{pmatrix}
(p^2 + L)^{-1}P \\ p(p^2 + L)^{-1} P
\end{pmatrix}.
\end{equation}
By definition, the operators $p(p^2 + L)^{-1}P$ and $(p^2 + L)^{-1}P$ map $U$ into $W$, which is also continuously embeddeded into $H_1$. 
It follows from \cref{eq:res-A-SO} that the range of $(p - A)^{-1}B$ is indeed contained in $Z$, as required.
Now, the transfer functions associated to $C_0$ and $C_1$ are given by
\begin{equation}
C_0(p - A)^{-1}B = Q_0(p^2 + L)^{-1}P, \quad C_1(p - A)^{-1}B = pQ_1(p^2 + L)^{-1}P, \quad \re p > 0,
\end{equation}
respectively,
and we readily complete the proof by applying \cref{theo:transfer-function}.
\end{proof}
Finally, we consider a special case of interest, sometimes referred to as that of ``collocated input and output''; see, e.g., \cite{CurWei06,AmmNic14book}. We let again $P$ be the control operator, identify the input space $U$ with its antidual and introduce the adjoint $P^\ast \in \L(H_1, U)$ of $P$ in the pivot duality $H_1 \hookrightarrow H \hookrightarrow H_{-1}$:
\begin{equation}
\langle u, P^\ast w\rangle_U \triangleq \langle Pu, w \rangle_{H_{-1}, H_1}, \quad u \in U, \quad w \in H_1.
\end{equation}
When $P^\ast$ plays the role of the observation operator $Q_1$, the situation becomes much simpler. In particular,
one can deduce input-to-state and state-to-output properties from the corresponding input-output properties.
\begin{theo}[Systems with collocated input and output]\label{theo:collocated-SO} Let $0 \leq \eta \leq 1$. The following are equivalent:
\begin{enumerate}[label=(\arabic*),series=collocated]
  \item \label{it:col-output}The equivalent conditions of \cref{theo:observation-SO} with $Q_0 = 0$, $Q_1 = P^\ast$ and parameter $\eta$;
  \item \label{it:con-input}The equivalent conditions of \cref{theo:control-SO} with parameter $\eta$.
\end{enumerate}
Furthermore,
\begin{enumerate}[resume*=collocated]
  \item\label{it:col-FQ} \emph{(Frequency-domain condition.)}
  \begin{equation}
  \|P^\ast((1+\i \lambda)^2 + L)^{-1}P\|_{\L(U)} = O(|\lambda|^{2\eta -1}), \quad \lambda \in \R, \quad |\lambda| \to + \infty,
  \end{equation}
\end{enumerate}
implies \cref{it:col-output,it:con-input}.
\end{theo}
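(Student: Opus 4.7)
The plan is to establish \ref{it:col-output} $\iff$ \ref{it:con-input} by a pure duality argument, and then to deduce both from \ref{it:col-FQ} via an adjoint/resolvent-identity manipulation based on the collocated structure. Throughout, set $R(\lambda) \triangleq ((1+\i\lambda)^2 + L)^{-1}$ for $\lambda \in \R$. Since $L$ is self-adjoint on $H$ and the pivot duality $H_1 \hookrightarrow H \hookrightarrow H_{-1}$ identifies $H$ with its antidual, taking adjoints gives $R(\lambda)^\ast = R(-\lambda)$, and $P^\ast$ is the honest adjoint of $P$.

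For \ref{it:col-output} $\iff$ \ref{it:con-input}, I would argue purely at the level of the frequency-domain characterisations in \cref{theo:observation-SO}\cref{it:SO-obs-FQ} and the analogous condition in \cref{theo:control-SO}. With $Q_0 = 0$ and $Q_1 = P^\ast$, the $Q_0$-estimate in \cref{eq:SO-obs-FQ} is vacuous and the $Q_1$-estimate reads $\|P^\ast R(\lambda)\|_{\L(H,U)} = O(|\lambda|^{\eta-1})$. Taking adjoints, $\|P^\ast R(\lambda)\|_{\L(H,U)} = \|R(-\lambda) P\|_{\L(U,H)}$, and since the asymptotic is invariant under $\lambda \mapsto -\lambda$, this is the same as $\|R(\lambda) P\|_{\L(U,H)} = O(|\lambda|^{\eta-1})$, which is precisely the frequency-domain condition of \cref{theo:control-SO}. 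Hence \cref{theo:observation-SO,theo:control-SO} collapse to the same scalar estimate, proving the equivalence.

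For \ref{it:col-FQ} $\implies$ \ref{it:con-input}, I would exploit the collocated structure through the identity
\begin{equation*}
\|R(\lambda) P\|^2_{\L(U,H)} = \|P^\ast R(\lambda)^\ast R(\lambda) P\|_{\L(U)} = \|P^\ast R(-\lambda) R(\lambda) P\|_{\L(U)}.
\end{equation*}
Applying the resolvent identity between $R(\lambda)$ and $R(-\lambda)$, whose defining operators differ by $(1+\i\lambda)^2 - (1-\i\lambda)^2 = 4\i\lambda$, yields
\begin{equation*}
R(-\lambda) R(\lambda) = \frac{R(-\lambda) - R(\lambda)}{4\i\lambda}, \quad \lambda \in \R \setminus\{0\},
\end{equation*}
and sandwiching by $P^\ast$ and $P$ gives
\begin{equation*}
\|P^\ast R(-\lambda) R(\lambda) P\|_{\L(U)} \leq \frac{1}{4|\lambda|}\left( \|P^\ast R(-\lambda) P\|_{\L(U)} + \|P^\ast R(\lambda) P\|_{\L(U)} \right) = O(|\lambda|^{2\eta - 2})
\end{equation*}
by the hypothesis \ref{it:col-FQ}. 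Taking square roots delivers $\|R(\lambda) P\|_{\L(U,H)} = O(|\lambda|^{\eta - 1})$, which is exactly the frequency-domain condition in \cref{theo:control-SO}. Combined with the equivalence established above, this gives both \ref{it:col-output} and \ref{it:con-input}.

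I do not expect a serious obstacle: the key point is purely algebraic, namely the factorisation $R(-\lambda)R(\lambda) = (4\i\lambda)^{-1}(R(-\lambda) - R(\lambda))$ that converts the product of resolvents on the diagonal into a difference, picking up exactly one factor of $|\lambda|^{-1}$. The mild care needed is to keep track of the symmetry $\lambda \mapsto -\lambda$ (harmless in the asymptotic regime) and to verify that the $Q_0$-part in \cref{eq:SO-obs-FQ} is vacuous under the collocated choice $Q_0 = 0$, so that no additional condition is hidden in \cref{theo:observation-SO}.
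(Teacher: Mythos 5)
Your proposal is correct, and the equivalence \cref{it:col-output} $\iff$ \cref{it:con-input} is proved exactly as in the paper: both reduce, via the frequency-domain characterisations in \cref{theo:observation-SO,theo:control-SO}, to the adjoint identity $\|P^\ast R(\lambda)\|_{\L(H,U)} = \|R(-\lambda)P\|_{\L(U,H)}$ together with the $\lambda \mapsto -\lambda$ symmetry of the asymptotics (and the observation that the $Q_0$-part is vacuous). Where you genuinely diverge is in \cref{it:col-FQ} $\implies$ \cref{it:con-input}: you use the C*-identity $\|R(\lambda)P\|^2_{\L(U,H)} = \|P^\ast R(-\lambda)R(\lambda)P\|_{\L(U)}$ and the algebraic factorisation $R(-\lambda)R(\lambda) = (4\i\lambda)^{-1}(R(-\lambda)-R(\lambda))$, whereas the paper pairs the equation $(L + 2\i\lambda - \lambda^2 + 1)w = Pu$ with $w$ and takes imaginary parts, obtaining $2\lambda\|w\|^2_H = \im\langle u, P^\ast R(\lambda)Pu\rangle_U$ and hence the pointwise inequality $\|R(\lambda)P\|^2_{\L(U,H)} \leq (2|\lambda|)^{-1}\|P^\ast R(\lambda)P\|_{\L(U)}$. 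Both routes extract exactly one factor of $|\lambda|^{-1}$ and yield $O(|\lambda|^{\eta-1})$. Your version needs the hypothesis at both $\pm\lambda$ (harmless, since \cref{it:col-FQ} is stated for $|\lambda|\to+\infty$) and tacitly uses that the resolvent identity and the adjoint relation $(R(\lambda)P)^\ast = P^\ast R(-\lambda)$ persist after extending $R(\pm\lambda)$ to $\L(H_{-1},H_1)$, so that the sandwich by $P^\ast$ and $P$ is legitimate — worth one explicit sentence, but not a gap. The paper's pairing inequality has the minor advantage of being pointwise in $\lambda$ with an explicit constant, which it reuses later (in the proof of \cref{prop:SO-res}); for the theorem itself either argument suffices.
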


\begin{proof}
Equivalence between \cref{it:col-output,it:con-input} follows from the identity
\begin{equation}
\label{eq:norm-P-Past}
\|((1 + \i \lambda)^2 + L)^{-1}P\|_{\L(U, H)} = \|P^\ast((1 - \i \lambda)^2 + L)^{-1}\|_{\L(H, U)}, \quad \lambda \in \R.
\end{equation}
As for the implication from \cref{it:col-FQ} to \cref{it:con-input,it:col-output}, we use a pairing argument.
Let $\lambda \in \R$, $u \in U$ and $w \triangleq ((1 + \i \lambda)^2 +L)^{-1}Pu$. Then,
$
\langle Lw +  2 \i \lambda w - \lambda^2 w, w\rangle_H = \langle Pu, w \rangle_{H_{-1}, H}$ and taking the imaginary part yields
\begin{equation}
2 \lambda \|w\|^2_H = \im \langle Pu, w \rangle_{H_{-1}, H_1} = \im \langle u, P^\ast((1+\i \lambda)^2 +L)^{-1}Pu\rangle_U.
\end{equation}
If $\lambda \not = 0$, with the Cauchy--Schwarz inequality we get
\begin{equation}
\label{eq:est-P-PP}
\|((1+\i \lambda)^2 +L)^{-1}Pu\|_H^2 \leq \frac{1}{2|\lambda|} \|u\|_U \|P^\ast((1+\i \lambda)^2 +L)^{-1}Pu\|_U.
\end{equation}
\Cref{eq:est-P-PP} holds for arbitrary $u \in U$ and $\lambda \not = 0$. Thus,
\begin{equation}
\label{eq:passive-input-TF}
\|((1+\i \lambda)^2 +L)^{-1}P\|_{\L(U, H)}^2 \leq \frac{1}{2|\lambda|} \|P^\ast((1 + \i \lambda)^2 + L)^{-1}P\|_{\L(U)}, \quad \lambda \in \R, \quad \lambda \not = 0.
\end{equation}
The result follows from the frequency-domain characterisations of properties \cref{it:con-input,it:col-output} in \cref{theo:observation-SO,theo:control-SO}.
\end{proof}
\begin{rem}
Taking $\eta = 0$ in \cref{theo:collocated-SO}, we recover the fact that the \emph{impedance passive} system defined by the  equations \cref{eq:second-order-cauchy,eq:w-u-cauchy} with output $P^\ast \dot{w}$ is well-posed in the sense of Weiss if and only if its transfer function $p \mapsto p P^\ast(p^2 + L)^{-1}P$ is bounded on some  vertical line in the open right-half plane; see, e.g., \cite[Theorem 5.1]{Sta02}.
\end{rem}

\section{The wave and Schr\"odinger equations with Neumann boundary data}

\label{sec:wave}
\subsection{Frequency-domain asymptotics for the wave equation}

Let $\Omega$ be a  domain in $\R^d$, $d \geq 2$, with Lipschitz boundary\footnote{We follows \cite[Definition 1.2.1.1]{Gri85book}, which in particular does not exclude unbounded boundary.} $\partial \Omega$. 
We will assume that $\Omega$ satisfies at least one of the following conditions: 
\begin{itemize*}
 \item $\Omega$ has bounded boundary; \item $\Omega$ is the half-space. 
\end{itemize*}
In this section we consider the initial boundary value problem
\begin{subequations}
\label{eq:neumann-wave-IBVP}
\begin{align}
&(\partial^2_t - \Delta)w = 0&&\mbox{in}~\Omega \times (0, T), \\
&\partial_{\vec{n}} w = u&&\mbox{on}~\partial \Omega \times (0, T), \\
&(w, \partial_t w)|_{t =0} = 0 &&\mbox{in}~\Omega,
\end{align}
\end{subequations}
where $T > 0$ is fixed and $\partial_{\vec{n}}$ denotes the outward normal derivative. 
Define the Neumann Laplacian $\Delta_N$ on $L^2(\Omega)$ as follows:
\begin{equation}
\dom(\Delta_N) \triangleq  \{ w \in H^1(\Omega) : \Delta w \in L^2(\Omega),~ \partial_{\vec{n}} w = 0\}; \qquad \Delta_N w \triangleq \Delta w, \quad w \in \dom(\Delta_N).
\end{equation}
The above definition makes sense since any function $w$ in $H^1(\Omega)$ such that $\Delta w$ (defined \emph{a priori} as a distribution) belongs  to $L^2(\Omega)$ has a Neumann trace $\partial_\vn w$  well-defined in $H^{-1/2}(\partial \Omega)$; to see this, one may use Green's formula.\footnote{
Here and in the sequel, the use of Green's formula is justified by appropriate density arguments.}
When $\Omega$ is bounded and smooth,
the domain of $\Delta_N$ is exactly the space of all $w \in H^2(\Omega)$ such that $\partial_\vn w = 0$, and its graph norm is equivalent to the $H^2(\Omega)$-norm; see, e.g., \cite[Theorem 9.26]{Bre11book}.
In any case,  $-\Delta_N$ is a nonnegative self-adjoint operator, 
and $\dom((-\Delta_N)^{1/2}) = H^1(\Omega)$ with equivalence of norms. Next, denote by $\gamma = \cdot |_{\partial \Omega} \in \L(H^1(\Omega), L^2(\partial \Omega))$ the trace operator. Having identified $L^2(\Omega)$ and $L^2(\partial \Omega)$ with their respective antiduals, the adjoint $\gamma^\ast \in \L(L^2(\partial\Omega), (H^1(\Omega))^\ast)$ of $\gamma$ 
is given by
\begin{equation}
\langle \gamma^\ast u, w \rangle_{(H^1(\Omega))^\ast, H^1(\Omega)} = \langle u, \gamma w\rangle_{L^2(\partial \Omega)} = \int_{\partial \Omega} u \overline{w} \, \d \sigma, \quad u \in L^2(\partial \Omega), \quad w \in H^1(\Omega).
\end{equation}
In accordance  with its standard variational formulation, we can rewrite \cref{eq:neumann-wave-IBVP} as a second-order abstract Cauchy problem fitting into the framework of \cref{sec:second-order}, namely
\begin{equation}
\label{eq:neumann-wave-abstract}
\ddot{w} - \Delta_N w = \gamma^\ast u \quad \mbox{in}~(H^1(\Omega))^\ast, \quad w(0) = 0, \quad \dot{w}(0) = 0.
\end{equation}
It follows that boundary data $u \in L^2(\partial \Omega \times (0, T)) \simeq L^2(0, T; L^2(\partial\Omega))$\footnote{The identification stems from Fubini's theorem.} produce solutions $w$ that satisfy at least $w \in \C([0, T], L^2(\Omega)) \cap \C^1([0, T], (H^1(\Omega))^\ast)$. When $u \in \D(\partial \Omega \times (0, T))$ (or even $\D(0, T; L^2(\partial \Omega)))$, we see from the arguments of \cref{sec:second-order} that $w \in \C^1([0, T], H^1(\Omega))$.
\begin{rem}[Neumann harmonic extension]
 An alternative operator model for \cref{eq:neumann-wave-IBVP}, used for instance in \cite{LasTri81,LasTri91}, is given by
\begin{equation}
\label{eq:neumann-extension}
\ddot{w} - \Delta_N (w - Nu) = 0, \quad w(0) = 0, \quad \dot{w}(0) = 0,
\end{equation}
where 
$N$ is the Neumann harmonic extension map. Recall that $N$ maps $H^s(\partial \Omega)$ continuously into $H^{s +3/2}(\Omega)$ for all $s \in \R$ and in particular $\Delta_N N$ is continuous from $L^2(\partial \Omega)$ into $(H^1(\Omega))^\ast$. Using Green's formula, it is easy to show that $\gamma^\ast u$ and $\Delta_N N u$ coincide for all $u \in L^2(\partial \Omega)$, so that \cref{eq:neumann-wave-abstract,eq:neumann-extension} give rise to the same solutions.

\end{rem}

In the sequel, we are interested in the  transfer function
\begin{equation}
\label{eq:wave-neumann-TF}
p \mapsto  \gamma(p^2 - \Delta_N)^{-1}\gamma^{\ast}
\end{equation}
associated with the input-output behaviour of system \cref{eq:neumann-wave-IBVP}, the output  being the Dirichlet trace of the solutions. It is clear from the above discussion that for each $\lambda \in \R$ the operator in \cref{eq:wave-neumann} belongs to $\L(L^2(\partial \Omega))$. The next lemma, based on Ne\v cas' results on elliptic regularity, shows that it is also well-defined and continuous from $L^2(\partial \Omega)$ into $H^1(\partial \Omega)$.\footnote{When $\Omega$ is smooth, $\partial \Omega$ has a canonical Riemannian submanifold structure and $H^1(\partial \Omega)$ is the subspace of $L^2(\partial \Omega)$ with square-integrable Riemannian gradient; see, e.g., \cite[Section 3, Chapter 4]{Tay96book}. When $\Omega$ is merely Lipschitz but has bounded boundary,
 we can define $H^1(\partial \Omega)$ as in, e.g., \cite[Appendix A]{ChaGra12survey}.}

\begin{lemma}
\label{lem:elliptic-tang}
Let $w \in H^1(\Omega)$ be such that $\Delta w \in L^2(\Omega)$. If $\partial_{\vec{n}}w \in L^2(\partial \Omega)$, then $w|_{\partial \Omega} \in H^1(\partial \Omega)$. In fact, there exists $K > 0$ such that all such $w$ satisfy
\begin{equation}
\label{eq:elliptic-tang}
\|w|_{\partial \Omega}\|_{H^1(\partial \Omega)} \leq K( \|w\|_{H^1(\Omega)}+  \|\partial_{\vec{n}}w\|_{L^2(\partial \Omega)} + \|\Delta w\|_{L^2(\Omega)}).
\end{equation}
\end{lemma}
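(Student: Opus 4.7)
The plan is to establish the quantitative bound \cref{eq:elliptic-tang} via a Rellich-type boundary multiplier identity; the qualitative conclusion $w|_{\partial \Omega} \in H^1(\partial \Omega)$ then follows by combining with the continuous trace inclusion $H^1(\Omega) \hookrightarrow L^2(\partial \Omega)$. The key ingredient is a bounded Lipschitz vector field $\xi$ defined on $\overline{\Omega}$ with bounded first derivatives such that $\xi = \vn$ on $\partial \Omega$. When $\Omega$ is the half-space $\R^{d-1} \times (0, +\infty)$, one may simply take $\xi \equiv -e_d$ globally; when $\partial \Omega$ is bounded, such a $\xi$ can be built using local charts and a partition of unity, as in Ne{\v c}as.

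For smooth $w$, two successive applications of Green's formula yield the Rellich identity
\begin{equation*}
2 \re \int_\Omega (\Delta w)(\xi \cdot \vnabla \overline{w}) \, \d x = \int_{\partial \Omega} (\xi \cdot \vn) |\vnabla w|^2 - 2 \re [(\xi \cdot \vnabla w)\overline{\partial_\vn w}] \, \d \sigma + \int_\Omega 2 \re [(\Dd \xi \, \vnabla w) \cdot \vnabla \overline{w}] - (\dive \xi) |\vnabla w|^2 \, \d x.
\end{equation*}
On $\partial \Omega$ we have $\xi \cdot \vn = 1$, $\xi \cdot \vnabla w = \partial_\vn w$, together with the orthogonal decomposition $|\vnabla w|^2 = |\vnabla_\tau w|^2 + |\partial_\vn w|^2$. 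Isolating the tangential contribution from the boundary integral and applying the Cauchy--Schwarz and Young inequalities to the interior left-hand side term (and to the remaining boundary term, which only involves $\partial_\vn w$ paired against the tangential part of $\vnabla w$) leads to
\begin{equation*}
\|\vnabla_\tau w\|^2_{L^2(\partial \Omega)} \leq K( \|\Delta w\|^2_{L^2(\Omega)} + \|w\|^2_{H^1(\Omega)} + \|\partial_\vn w\|^2_{L^2(\partial \Omega)}),
\end{equation*}
which combined with the $L^2(\partial\Omega)$-trace bound for $w$ gives \cref{eq:elliptic-tang}.

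The main obstacle lies in justifying these manipulations at the regularity level assumed in the lemma, where $w$ need not belong to $H^2(\Omega)$ and Green's formulas do not apply classically. A density argument is required: one must approximate $w$ by $H^2(\Omega)$ functions while simultaneously controlling the $H^1(\Omega)$-, $\Delta$-in-$L^2(\Omega)$-, and $\partial_\vn$-in-$L^2(\partial \Omega)$-norms. For the half-space this can be carried out by tangential mollification together with a cutoff in $x_d$; for a bounded smooth domain, it follows from elliptic lifting of the Neumann datum combined with standard $H^2$-regularity. For a general bounded Lipschitz domain the approximation is delicate and is precisely the technical content of Ne{\v c}as' work, to which we defer.
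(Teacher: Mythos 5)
Your proposal follows essentially the same route as the paper: a Rellich-type multiplier identity applied to smooth functions, Cauchy--Schwarz/Young to isolate the tangential gradient on the boundary, the $L^2(\partial\Omega)$ trace bound for the zeroth-order part, and a density argument to pass to general $w$ with $\Delta w \in L^2(\Omega)$ and $\partial_\vn w \in L^2(\partial\Omega)$ (the paper proves this in \cref{sec:cutoff} as a byproduct of \cref{lem:helmhotz}, deferring the density of smooth restrictions in the space $W$ to the literature, and in the bounded case simply cites Ne\v cas).

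One technical point deserves attention. You require a Lipschitz field $\xi$ with bounded derivatives satisfying $\xi = \vn$ \emph{exactly} on $\partial\Omega$, and your boundary computation ($\xi\cdot\vn = 1$, $\xi\cdot\vnabla w = \partial_\vn w$) uses this. On a general Lipschitz boundary the outward normal is only defined a.e.\ and is discontinuous (e.g.\ at the corners of a rectangle, which is one of the domains the paper actually needs), so no such Lipschitz extension of $\vn$ exists; your ``local charts and partition of unity, as in Ne\v cas'' construction in fact produces, locally, a constant vertical field whose normal component is merely bounded below, not equal to $\vn$. The paper (following Grisvard) works with a smooth field $\vec h$ satisfying only $\vec h\cdot\vn \geq \delta > 0$ a.e.; the price is that the boundary cross term $2\re\bigl(\overline{\vec h\cdot\vnabla w}\,\partial_\vn w\bigr)$ now involves the full gradient, but it is absorbed by Young's inequality against the $\delta$-coercive term $(\vec h\cdot\vn)\|\vnabla w\|^2$, using the pointwise splitting $\|\vnabla w\|^2 = |\partial_\vn w|^2 + \|\vnabla_\tau w\|^2$ on $\partial\Omega$. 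So your argument is sound for the half-space and for smooth domains, and becomes correct in the Lipschitz case after replacing the condition $\xi = \vn$ by $\xi\cdot\vn \geq \delta$ and adding this one absorption step; as written, the multiplier you postulate does not exist in the generality the lemma (and its later use for the rectangle) requires.
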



In the case that $\Omega$ is bounded, \cref{lem:elliptic-tang} is a straightforward application of  \cite[Theorem 2.1, Chapter 5]{Nec12book}. Otherwise some additional (simple) arguments are required and we refer the reader to \cref{sec:cutoff} for the complete proof.
Next, we can recast \cref{lem:elliptic-tang}  
in our transfer function framework and, more specifically, verify that the technical hypothesis \cref{eq:hyp-W-space} holds when considering $H^1(\partial \Omega)$ as an output space for the observation $w|_{\partial \Omega}$.

\begin{coro}
\label{coro:elliptic-W}
Let $W \triangleq \{w \in H^1(\Omega) : \Delta w \in L^2(\Omega),~ \partial_\vn w \in L^2(\partial \Omega)\}$, equipped with the norm
\begin{equation}
\|w\|_W^2 \triangleq \|w\|^2_{H^1(\Omega)} + \|\Delta w \|^2_{L^2(\Omega)} + \|\partial_\vn w\|^2_{L^2(\partial \Omega)}, \quad w \in W.
\end{equation}
Then $W$ is a Hilbert space that satisfies $\dom(\Delta_N) \hookrightarrow W \hookrightarrow H^1(\Omega)$ with continuous embeddings. Furthermore, the trace map $\gamma$ is continuous from $W$ into $H^1(\partial \Omega)$ and, for each $p \in \Co$ with $\re p > 0$, $(p^2 - \Delta_N)^{-1}\gamma^\ast$ maps $L^2(\partial \Omega)$ into $W$. In particular,
\begin{equation}
\label{eq:lambda-L2-H1}
\gamma((1 + \i \lambda)^2 - \Delta_N)^{-1}\gamma^\ast \in \L(L^2(\partial \Omega), H^1(\partial\Omega)), \quad \lambda \in \R.
\end{equation}
\end{coro}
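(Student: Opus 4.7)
The plan is to verify the four assertions in order: (i) $W$ is a Hilbert space, (ii) the two continuous embeddings, (iii) continuity of $\gamma : W \to H^1(\partial\Omega)$, and (iv) that $w \triangleq (p^2 - \Delta_N)^{-1}\gamma^\ast u$ lies in $W$ for every $u \in L^2(\partial\Omega)$ and every $p$ with $\re p > 0$, from which \cref{eq:lambda-L2-H1} is then immediate.

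For (i), I would obtain completeness by a closed graph argument: the map $w \mapsto \Delta w$ is continuous from $H^1(\Omega)$ into $\D'(\Omega)$, and $w \mapsto \partial_\vn w$ is continuous from the auxiliary space $\{w \in H^1(\Omega) : \Delta w \in L^2(\Omega)\}$ (endowed with its graph norm) into $H^{-1/2}(\partial\Omega)$, both via Green's formula. A Cauchy sequence $(w_n)$ in $W$ therefore converges to some $w \in H^1(\Omega)$, its Laplacians and Neumann traces converge in $L^2(\Omega)$ and $L^2(\partial\Omega)$ respectively, and uniqueness of limits in the weaker ambient topologies identifies the respective limits with $\Delta w$ and $\partial_\vn w$. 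Assertion (ii) is immediate from the definitions: elements of $\dom(\Delta_N)$ have zero Neumann trace and $L^2$-Laplacian, whence $\dom(\Delta_N) \hookrightarrow W$ with continuity from the graph norm of $\Delta_N$; and $W \hookrightarrow H^1(\Omega)$ is tautological. For (iii), \cref{lem:elliptic-tang} gives exactly the norm estimate needed.

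The substantive step is (iv). Using the extrapolation of the nonnegative self-adjoint operator $-\Delta_N$ (with $\dom((-\Delta_N)^{1/2}) = H^1(\Omega)$) to a bounded operator $H^1(\Omega) \to (H^1(\Omega))^\ast$ via the Dirichlet form, the resolvent $(p^2 - \Delta_N)^{-1}$ lifts to a bounded map from $(H^1(\Omega))^\ast$ into $H^1(\Omega)$ whenever $p^2 \notin (-\infty, 0]$, which holds throughout the open right half-plane $\re p > 0$. Setting $w \triangleq (p^2 - \Delta_N)^{-1}\gamma^\ast u$, the variational identity reads
\begin{equation*}
p^2 \int_\Omega w \overline{\varphi} \, \d x + \int_\Omega \nabla w \cdot \overline{\nabla \varphi} \, \d x = \int_{\partial\Omega} u \, \overline{\gamma \varphi} \, \d \sigma, \quad \varphi \in H^1(\Omega).
\end{equation*}
Testing against $\varphi \in \D(\Omega)$ yields $-\Delta w + p^2 w = 0$ in $\D'(\Omega)$, so $\Delta w = p^2 w \in L^2(\Omega)$. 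Once interior regularity is in hand, Green's formula applies to $w$ and the same identity rewrites as $\langle \partial_\vn w, \gamma\varphi\rangle_{H^{-1/2}, H^{1/2}} = \int_{\partial\Omega} u \, \overline{\gamma\varphi} \, \d \sigma$ for all $\varphi \in H^1(\Omega)$. Since $\gamma$ surjects onto $H^{1/2}(\partial\Omega)$, this forces $\partial_\vn w = u$ in $H^{-1/2}(\partial\Omega)$, and in particular $\partial_\vn w \in L^2(\partial\Omega)$, so $w \in W$. Composing with $\gamma : W \to H^1(\partial\Omega)$ from (iii) then delivers \cref{eq:lambda-L2-H1}.

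The main obstacle is concentrated in (iii): without \cref{lem:elliptic-tang}, the identity $\partial_\vn w = u \in L^2(\partial\Omega)$ would only yield $\gamma w \in H^{1/2}(\partial\Omega)$ from standard trace theory, which is insufficient for the $H^1(\partial\Omega)$-valued bound needed downstream in \cref{theo:wave-neumann-FQ}; Ne\v cas' tangential regularity argument is precisely what bridges this gap. The remaining work is careful bookkeeping of variational identities and duality.
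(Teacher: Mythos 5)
Your proposal is correct and follows essentially the same route as the paper's own proof: completeness of $W$ by identifying limits of $\Delta w_n$ and $\partial_\vn w_n$ in weaker topologies, the trivial embeddings, continuity of $\gamma : W \to H^1(\partial\Omega)$ via \cref{lem:elliptic-tang}, and the pairing argument (test functions in $\D(\Omega)$ to get $\Delta w = p^2 w \in L^2(\Omega)$, then Green's formula against $H^1(\Omega)$ to get $\partial_\vn w = u$). The only differences are cosmetic (closed-graph phrasing, surjectivity of the trace onto $H^{1/2}(\partial\Omega)$), not substantive.
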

\begin{proof}
It is clear that $W$ is a subspace of $H^1(\Omega)$ with continuous embedding. To check that it is a Hilbert space, observe first that any Cauchy sequence $\{w_m\}_{m\in \N}$ in $W$ must converge in $H^1(\Omega)$ to some limit $w$, which we will prove belongs to $W$. The elements $\Delta w_m$ converge to $\Delta w$ strongly in  $H^{-1}(\Omega)$ but they must also have a weak sublimit in $L^2(\Omega)$, so that $\Delta w \in L^2(\Omega)$ as well. Similarly, using Green's formula, we see that $\partial_\vn w_m$ converges to $\partial_\vn w$ strongly in $H^{-1/2}(\partial \Omega)$ and also have a weak sublimit in $L^2(\partial \Omega)$, so that $\partial_\vn w \in L^2(\partial \Omega)$, as required. 
The continuous embedding $\dom(\Delta_N) \hookrightarrow W$ is also clear. By \cref{lem:elliptic-tang}, $\gamma$ is indeed continuous from $W$ into $H^1(\partial \Omega)$. Finally, let $u \in L^2(\partial \Omega)$, $p \in \Co$ with $\re p > 0$, and $w \triangleq (p^2 - \Delta_N)^{-1}\gamma^\ast u \in H^1(\Omega)$. Pairing $(p^2 - \Delta_N)w = \gamma^\ast u$ with test functions $\varphi \in \D(\Omega)$ shows that $p^2w - \Delta w = 0$, and in particular $\Delta w \in L^2(\Omega)$. Then, pairing the same equality with arbitrary elements $v \in H^1(\Omega)$ and using Green's formula give $\partial_\vn w = u$ in $H^{-1/2}(\partial\Omega)$, which implies $\partial_\vn w \in L^2(\partial \Omega)$. Here, \cref{eq:lambda-L2-H1} directly follows  from the estimate \cref{eq:elliptic-tang} in \cref{lem:elliptic-tang}.
\end{proof}

Recall from \cref{sec:intro-edp} that our ``loss of derivative'' parameter $\eta$ is defined as follows:
\begin{equation}
\label{eq:reminder-loss}
\mbox{If $\Omega$ is smooth}, \quad
\eta \triangleq 
\left \{
\begin{aligned}
&1/6&&\mbox{if $\partial \Omega$ is concave}, \\
&1/4&&\mbox{if $\partial \Omega$ is flat,} \\
&1/3&&\mbox{otherwise;}
\end{aligned}
\right. \qquad \eta \triangleq 1/4 + \eps \quad \mbox{if $\Omega$ is a rectangle},
\end{equation}
where $\eps$ is any (small) positive real. Note that the cases of concave and flat boundary are mostly relevant when $\Omega$ is unbounded, e.g., in exterior problems. The following theorem summarises regularity results from Lasiecka, Triggiani \cite[Theorem A]{LasTri91} and Tataru \cite[Theorem 10]{Tat98}.
\begin{theo}[Sharp interior and boundary regularity\cite{LasTri91,Tat98}]
\label{theo:sharp-wave}
Let $\Omega$ and $\eta$ be as in \cref{eq:reminder-loss}.
For all $u \in L^2(\partial \Omega \times (0, T))$, the corresponding solution $w$ to \cref{eq:neumann-wave-IBVP} 
satisfies
\begin{equation}
\label{eq:sharp-wave}
(w, \partial_t w)|_{t = T} \in H^{1-\eta}(\Omega) \times H^{-\eta}(\Omega), \quad w|_{\partial \Omega} \in H^{1-2\eta}(\partial \Omega \times (0, T)),
\end{equation}
with continuous dependence on $u$.
\end{theo}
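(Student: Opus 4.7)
The plan is to recognise that Theorem~\ref{theo:sharp-wave} is essentially a compilation of two deep regularity results from \cite{LasTri91,Tat98}, so the proof reduces to invoking them case-by-case according to the geometry of $\Omega$. I would split along the dichotomy in \cref{eq:reminder-loss}.

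For $\Omega$ a rectangle and $\eta = 1/4 + \eps$, I would appeal to \cite[Theorem~A]{LasTri91}, whose argument is by separation of variables: the Neumann Laplacian is explicitly diagonalised by the cosine basis $\cos(k\pi x/a)\cos(l\pi y/b)$, and the solution to \cref{eq:neumann-wave-IBVP} is written as a double Fourier series in space and time. The claimed regularity reduces to square-summability estimates of the form $\sum_{(k,l)} |\hat u_{kl}(\tau)|^2(1+k^2+l^2)^s$, analysed by grouping modes into dyadic frequency annuli and invoking lattice point counting; the $1/4+\eps$ loss comes from modes for which one of $k,l$ dominates, corresponding to ``near-tangential'' rays. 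For smooth $\Omega$ (bounded, or flat/concave as specified in \cref{eq:reminder-loss}) I would invoke \cite[Theorem~10]{Tat98}, which is the genuinely hard input. Its proof relies on a detailed microlocal analysis of propagation of singularities up to $\partial\Omega$; the characteristic $\eta$-derivative loss is dictated by Airy-type behaviour near glancing rays, with $\eta = 1/3$ for generic glancing, $\eta = 1/4$ when $\partial\Omega$ is flat (no second-order tangency), and $\eta = 1/6$ when $\partial\Omega$ is concave (diffractive rays bend away from the boundary). I would not attempt to reproduce this pseudodifferential calculus; the sharp estimates are precisely what is being quoted.

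Continuous dependence on $u$ is then automatic: the maps $u \mapsto (w,\partial_t w)|_{t=T}$ and $u \mapsto w|_{\partial\Omega}$ are well-defined and linear from $L^2(\partial\Omega \times (0,T))$ into the stated target Hilbert spaces, so the closed graph theorem delivers the norm estimate as soon as the range inclusions \cref{eq:sharp-wave} are established.

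The principal difficulty I foresee is not the microlocal analysis (entirely contained in the cited works) but the bookkeeping required to reconcile the functional framework: matching the Lions--Magenes-type Sobolev spaces on $\partial\Omega \times (0,T)$ used here with the conventions of \cite{Tat98}, verifying that the endpoint exponent $s = 1-2\eta$ is attained in the isotropic space-time sense stated, and (where applicable in the rest of the paper) reducing the unbounded-domain geometries to the bounded case via a cutoff combined with finite speed of propagation for the wave equation.
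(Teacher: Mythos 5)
Your overall strategy matches the paper's: \cref{theo:sharp-wave} is quoted from \cite{LasTri91,Tat98}, the microlocal analysis is not reproduced, and continuous dependence is obtained by a closed graph argument. However, there is one genuine gap that you dismiss as bookkeeping but which the paper treats as a separate step requiring a real (if short) argument. Tataru's theorem gives the \emph{interior} regularity only in the isotropic space-time form $w \in H^{1-\eta}(\Omega \times (0,T))$, not in the form stated in \cref{eq:sharp-wave}, namely $(w,\partial_t w)|_{t=T} \in H^{1-\eta}(\Omega) \times H^{-\eta}(\Omega)$. Membership of $w$ in $H^{1-\eta}(\Omega\times(0,T))$ does not by itself give a well-defined trace of $w$, let alone of $\partial_t w$, at the fixed time $t=T$ in the claimed spaces (note $1-\eta<1$, so one cannot even invoke a time trace theorem directly for $\partial_t w$); your proposal names the space-matching issue only for the boundary trace and offers no argument for the interior part. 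This is precisely the point of \cref{rem:reg-wave}, and the paper supplies the missing argument at the end of \cref{sec:cutoff}: using the abstract scale $\{H_s\}$ of $-\Delta_N$ (with $H_{1-\eta}=H^{1-\eta}(\Omega)$ and, since $\eta<1/2$, $H_{-\eta}=H^{-\eta}(\Omega)$), one introduces the lifted variable $v=(1-\Delta_N)^{-\eta}w$, uses the equation \cref{eq:neumann-wave-abstract} to see that $v\in L^2(0,T;H_1)$ with $\ddot v\in L^2(0,T;H_{-1})$, applies the Lions--Magenes intermediate derivative theorem to get $\dot v\in L^2(0,T;L^2(\Omega))$, and then Strauss's continuity theorems for abstract second-order equations to conclude $v\in \C([0,T],H^1(\Omega))\cap\C^1([0,T],L^2(\Omega))$, i.e.\ $w\in \C([0,T],H^{1-\eta}(\Omega))\cap\C^1([0,T],H^{-\eta}(\Omega))$, which is what \cref{eq:sharp-wave} actually asserts at $t=T$.

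For the rectangle the pointwise-in-time form is indeed available directly from \cite[Theorem A]{LasTri91}, and the boundary trace statement $w|_{\partial\Omega}\in H^{1-2\eta}(\partial\Omega\times(0,T))$ is quoted verbatim, so those parts of your plan are fine; the closed graph argument for the quantitative estimate is also consistent with how the paper proceeds. You should simply add the upgrade argument above (or an equivalent one using the equation to control $\partial_t w$ and $\partial_t^2 w$ in negative-order spaces) before claiming the interior trace regularity at $t=T$ in the smooth case.
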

First, we use our results in combination with \cref{theo:sharp-wave} to establish an estimate for \cref{eq:wave-neumann-TF} in $\L(L^2(\partial \Omega))$-norm.

\begin{theo}[Transfer function asymptotics, $L^2(\partial\Omega)$-level]
\label{coro:trace-TF} Let $\Omega$ and $\eta$ be as in \cref{eq:reminder-loss}. We have
\begin{equation}
\label{eq:trace-TF}
\|\gamma ((1 + \i \lambda)^2 - \Delta_N)^{-1}\gamma^\ast\|_{\L(L^2(\partial \Omega))} = O(|\lambda|^{2\eta-1}), \quad \lambda \in \R, \quad |\lambda| \to + \infty.
\end{equation}
\end{theo}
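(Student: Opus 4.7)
The plan is to recast problem \cref{eq:neumann-wave-IBVP} in the second-order abstract framework of \cref{sec:second-order-adm}, taking $H = L^2(\Omega)$, $L = -\Delta_N$, $U = L^2(\partial\Omega)$, control operator $P = \gamma^\ast$, and \emph{velocity} observation $Q_1 = \gamma$ (so that $Q_1 = P^\ast$; this is the collocated setting). Since $\gamma \in \L(H_1, U)$ and no intermediate-space hypothesis is required for velocity observations, the framework of \cref{theo:TF-SO} applies directly, and the operator $Q_1((1+\i\lambda)^2 + L)^{-1} P = \gamma((1+\i\lambda)^2 - \Delta_N)^{-1}\gamma^\ast$ is precisely the one we wish to estimate.

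The first task is to verify the admissibility hypotheses of \cref{theo:TF-SO} with $\eta_1 = \eta_2 = \eta$. The state-regularity part of \cref{theo:sharp-wave}, namely $(w(T), \dot w(T)) \in H^{1-\eta}(\Omega) \times H^{-\eta}(\Omega)$ with continuous dependence on $u \in L^2$, is exactly the defining condition for $\eta$-admissibility of $P = \gamma^\ast$ through \cref{theo:control-SO}. The collocated duality of \cref{theo:collocated-SO} then automatically yields $\eta$-admissibility of $Q_1 = \gamma$ as a velocity observation.

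Next, I would translate the boundary-regularity part of \cref{theo:sharp-wave}, namely $w|_{\partial\Omega} \in H^{1-2\eta}(\partial\Omega \times (0,T))$ with continuous dependence on $u$, into the time-domain input-output estimate required by \cref{theo:TF-SO}. Since $\partial_t$ maps $H^{1-2\eta}$ continuously into $H^{-2\eta}$, differentiating in time immediately gives
\begin{equation*}
\|Q_1 \dot w\|_{H^{-2\eta}(0, T; L^2(\partial\Omega))} \leq K \|u\|_{L^2(0, T; L^2(\partial\Omega))},
\end{equation*}
which (together with $Q_0 = 0$) is exactly condition (a) of \cref{theo:TF-SO} with $\eta_1 + \eta_2 = 2\eta$.

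The equivalence in \cref{theo:TF-SO} then delivers the desired frequency-domain bound
$\|\gamma((1+\i\lambda)^2 - \Delta_N)^{-1}\gamma^\ast\|_{\L(L^2(\partial\Omega))} = O(|\lambda|^{\eta_1 + \eta_2 - 1}) = O(|\lambda|^{2\eta - 1})$ as $|\lambda| \to +\infty$. The crux of the argument, and the only nontrivial maneuver beyond bookkeeping, is the choice of the velocity trace $Q_1 \dot w$ rather than the position trace $Q_0 w$: differentiation in $t$ shifts the positive-order hidden regularity of the Dirichlet trace into the negative-order range that the abstract input-output framework is designed to exploit, and it is precisely this shift that produces the decaying (rather than growing) exponent $2\eta - 1 < 0$.
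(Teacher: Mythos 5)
Your proposal is correct and follows essentially the same route as the paper: $\eta$-admissibility of $\gamma^\ast$ from the interior regularity in \cref{theo:sharp-wave} via \cref{theo:control-SO}, $\eta$-admissibility of the collocated velocity trace via \cref{theo:collocated-SO}, and then the input–output equivalence of \cref{theo:TF-SO}. The only step to state with care is the passage from $w|_{\partial \Omega} \in H^{1-2\eta}(\partial \Omega \times (0, T))$ to $\|\partial_t w|_{\partial \Omega}\|_{H^{-2\eta}(0, T; L^2(\partial \Omega))} \leq K \|u\|_{L^2}$: one must first use the identification $H^{1-2\eta}(\partial \Omega \times (0, T)) \simeq L^2(0, T; H^{1-2\eta}(\partial \Omega)) \cap H^{1-2\eta}(0, T; L^2(\partial \Omega))$ and differentiate the second factor (as the paper does via \cref{lem:diff-shift}), since differentiating in the anisotropic space only gives membership of the larger space $H^{-2\eta}(\partial \Omega \times (0, T))$, which is not enough for condition (a) of \cref{theo:TF-SO}.
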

\begin{proof}
We will apply \cref{theo:control-SO,theo:TF-SO}. Denote by $\{H_s\}_{s\in \R}$ the Sobolev scale associated with $-\Delta_N$, built following the procedure of \cref{sec:second-order}. 
 Recall that the spaces $H_s$ form an interpolation scale so that, up to equivalence of norms,
\begin{equation}
H_{1-\eta} = [H_{1}, H]_{\eta} = [H^1(\Omega), L^2(\Omega)]_{\eta} = H^{1-\eta}(\Omega).
\end{equation}
Notice also that, regardless of $\Omega$, we always have $0 \leq \eta < 1/2$, which means that $H^\eta(\Omega) = H^\eta_0(\Omega)$. Thus, regarding $H^{-\eta}(\Omega)$ as a space of antilinear forms,
\begin{equation}
H_{-\eta} = (H_\eta)^\ast = (H^\eta_0(\Omega))^\ast = H^{-\eta}(\Omega).
\end{equation}
Therefore,  continuity $L^2(\partial \Omega \times (0, T)) \to H^{1-\eta}(\Omega) \times H^{-\eta}(\Omega)$ of the map $u \mapsto (w, \partial_t w)|_{t = T}$, where $w$ solves \cref{eq:neumann-wave-IBVP}, as given by \cref{theo:sharp-wave}, is exactly $\eta$-admissiblity of the Neumann control operator $\gamma^\ast$ for our wave group on $H_1 \times H = H^1(\Omega) \times L^2(\Omega)$.  Together with \cref{theo:collocated-SO}, this moreover shows that the Dirichlet trace $\gamma$ of the velocity defines an $\eta$-admissible observation operator. We are therefore in a position to apply \cref{theo:TF-SO}. First, recall from, e.g., \cite[Section 13.3, Chapter 1]{LioMag68book} that $H^{1-2\eta}(\partial \Omega \times (0, T)) \simeq L^2(0, T; H^{1-2\eta}(\partial \Omega)) \cap H^{1-2\eta}(0, T; L^2(\partial \Omega))$, so that by \cref{lem:diff-shift} the map $u \mapsto \partial_t w|_{\partial \Omega}$ is continuous $L^2(0, T; L^2(\partial \Omega)) \to H^{-2\eta}(0, T; L^2(\partial \Omega))$. The transfer function estimate \cref{eq:trace-TF} then follows from \cref{theo:TF-SO}.
\end{proof}
\begin{rem}
\label{rem:reg-wave}
In \cite{Tat98}, the regularity result \cref{eq:sharp-wave} appears in a seemingly weaker form, namely:
\begin{equation}
w \in H^{1-\eta}(\Omega \times (0, T)), \quad w|_{\partial \Omega} \in H^{1-2\eta}(\partial \Omega \times (0, T)).
\end{equation}
Fortunately, owing to the special structure of \cref{eq:neumann-wave-abstract}, the property $w \in H^{1-\eta}(\Omega \times (0, T))$ can be transformed into $w \in \C([0, T], H^{1-\eta}(\Omega))  \cap \C^1([0, T], H^{-\eta}(\Omega))$ by means of a short functional-analytic argument, which is presented in full at the end of \cref{sec:cutoff}.
\end{rem}

Our next result provides high-frequency bounds for the transfer function \cref{eq:wave-neumann-TF} in the (stronger) $\L(L^2(\partial\Omega), H^1(\partial \Omega))$-norm. While the $L^2$-regularity property for $u \mapsto \partial_t w |_{\partial \Omega}$ in \cref{theo:sharp-wave} fitted nicely into our abstract framework, here we will make us of additional partial differential equation arguments. By using a wavenumber-dependent \emph{a priori} estimate for the Helmhotz equation, derived by Spence in \cite{Spe14} and based on a quantified version  of Ne\v cas' elliptic regularity results \cite{Nec12book}, we are able to control the $\L(L^2(\partial\Omega), H^1(\partial \Omega))$-norm of \cref{eq:wave-neumann-TF} in terms of its $\L(L^2(\partial \Omega))$-norm up to additional growth and lower-order terms.
\begin{lemma}
\label{lem:comp-H1-L2}
There exists $K > 0$ such that, for all $\lambda \in \R$,
\begin{multline}
\label{eq:comp-H1-L2}
\|\gamma((1+\i \lambda)^2 - \Delta_N)^{-1}\gamma^\ast\|_{\L(L^2(\partial \Omega), H^1(\partial \Omega))} \leq K \big( 1+ (1 + |\lambda|) \|\gamma((1+\i \lambda)^2 - \Delta_N)^{-1}\gamma^\ast\|_{\L(L^2(\partial \Omega))} \\ + \|((1 + \i \lambda)^2 - \Delta_N)^{-1}\gamma^\ast\|_{\L(L^2(\partial \Omega), H^1(\Omega))} + (1 + |\lambda|)\|((1 + \i \lambda)^2 - \Delta_N)^{-1}\gamma^\ast\|_{\L(L^2(\partial \Omega), L^2(\Omega))}\big).
\end{multline}
\end{lemma}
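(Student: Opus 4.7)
The plan is to reduce the inequality to a quantitative Rellich--Ne\v{c}as boundary identity for the Helmholtz-type equation satisfied by $w \triangleq ((1+\i \lambda)^2 - \Delta_N)^{-1}\gamma^\ast u$. By \cref{coro:elliptic-W}, this $w$ belongs to $W$ and satisfies $\Delta w = (1 + \i\lambda)^2 w$ in $\Omega$ together with $\partial_\vn w = u$ on $\partial \Omega$. Since the $H^1(\partial\Omega)$-norm is equivalent to $\|\cdot\|_{L^2(\partial\Omega)} + \|\nabla_T \cdot\|_{L^2(\partial \Omega)}$, where $\nabla_T$ denotes the tangential gradient along $\partial \Omega$, it is enough to establish a pointwise (in $\lambda$) bound
\begin{equation*}
\|\nabla_T (w|_{\partial \Omega})\|_{L^2(\partial\Omega)} \leq K\bigl(\|u\|_{L^2(\partial\Omega)} + (1 + |\lambda|)\|w|_{\partial\Omega}\|_{L^2(\partial\Omega)} + \|w\|_{H^1(\Omega)} + (1+|\lambda|)\|w\|_{L^2(\Omega)}\bigr),
\end{equation*}
uniformly in $\lambda \in \R$ and $u$; adding $\|w|_{\partial \Omega}\|_{L^2(\partial \Omega)}$ to the left-hand side and dividing by $\|u\|_{L^2(\partial\Omega)}$ then yields \cref{eq:comp-H1-L2}.

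For the core step I would invoke a Rellich multiplier identity. Since $\partial \Omega$ is bounded (or $\Omega$ is a half-space), one can fix a compactly supported vector field $\alpha \in W^{1, \infty}(\overline{\Omega}; \R^d)$ that coincides with the outward unit normal $\vn$ on $\partial \Omega$. Pairing $-\Delta w + (1 + \i \lambda)^2 w = 0$ with $\alpha \cdot \overline{\nabla w}$, integrating over $\Omega$, taking the real part and integrating by parts yields, after using that $\alpha \cdot \nabla w|_{\partial \Omega} = \partial_\vn w = u$ and $|\nabla w|^2 = |\partial_\vn w|^2 + |\nabla_T w|^2$ on $\partial\Omega$, an identity of the form
\begin{equation*}
\int_{\partial \Omega} |\nabla_T w|^2 \, \d \sigma = \int_{\partial \Omega} |u|^2 \, \d \sigma + \re\bigl((1 + \i \lambda)^2\bigr) \int_{\partial \Omega} |w|^2 \, \d \sigma + \mathcal{R},
\end{equation*}
where the bulk remainder $\mathcal{R}$ is controlled by $K(\|\nabla w\|^2_{L^2(\Omega)} + (1+\lambda^2)\|w\|^2_{L^2(\Omega)})$ thanks to the uniform bounds on $\alpha$ and $\nabla \alpha$. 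Since $|\re((1 + \i \lambda)^2)| = |1-\lambda^2| \leq 1 + \lambda^2$, taking square roots and regrouping terms deliver the sought bound on the tangential gradient.

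The main obstacle is the rigorous justification of this multiplier identity in the low-regularity setting: $\Omega$ is merely Lipschitz and $w$ lies only in $W$, so the required manipulations (in particular the identity $|\nabla w|^2 = |\partial_\vn w|^2 + |\nabla_T w|^2$ on $\partial \Omega$) are not formal consequences of the classical divergence theorem. For bounded Lipschitz domains the careful execution goes back to Ne\v{c}as \cite[Chapter 5, Theorem 2.1]{Nec12book}, and the wavenumber-explicit form we need is essentially contained in Spence \cite{Spe14} (see also \cite{BasSpe15}); the idea there is to approximate by smooth subdomains and smooth solutions and to pass to the limit using the $L^2$-bound on $\partial_\vn w$. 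For an unbounded $\Omega$ with bounded boundary, or a half-space, this reduction localises harmlessly: the multiplier $\alpha$ can be chosen to vanish outside a fixed neighbourhood of $\partial \Omega$, so a standard cut-off confines all manipulations to a bounded Lipschitz region to which \cite{Nec12book,Spe14} apply directly, with no contribution from infinity.
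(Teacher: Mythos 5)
Your route is essentially the paper's: the paper proves this lemma in one step by applying its \cref{lem:helmhotz}, a wavenumber-explicit Ne\v{c}as/Spence-type a priori bound for $\Delta w + k^2 w = -f$ with $k=|\lambda|$ and $f=-(1+2\i\lambda)w$, and that appendix lemma is itself established by exactly the Rellich multiplier computation you sketch (together with the decomposition $\|\vnabla w\|^2 = |\partial_\vn w|^2 + \|\vnabla_\tau w\|^2$ on $\partial\Omega$ and a density argument in the space $W$ of \cref{coro:elliptic-W}). Your reduction to the pointwise-in-$\lambda$ tangential-gradient bound and the bookkeeping $|\re((1+\i\lambda)^2)|\leq 1+\lambda^2$ are fine.

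One concrete point in your sketch would fail as written and should be repaired. For a merely Lipschitz boundary --- and the rectangle is one of the paper's main cases --- there is in general no vector field $\alpha \in W^{1,\infty}(\overline{\Omega};\R^d)$ coinciding with $\vn$ a.e.\ on $\partial\Omega$: continuity of $\alpha$ is incompatible with the jump of the normal across a corner. The standard remedy, and the one used in the paper's proof of \cref{lem:helmhotz}, is Grisvard's construction \cite[Lemma 1.5.1.9]{Gri85book} of a smooth field $\vec{h}$ with only $\vec{h}\cdot\vn \geq \delta > 0$ a.e.\ on $\partial\Omega$; the Rellich identity \cref{eq:rellich} then no longer yields your exact boundary identity but, after Cauchy--Schwarz and Young (splitting $\vec{h}\cdot\vnabla w$ into normal and tangential parts), an inequality of precisely the form you need, with the same right-hand side. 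Since you ultimately defer the rigorous Lipschitz-domain justification to \cite{Nec12book,Spe14}, whose arguments use this corrected multiplier, the gap is repairable, but the statement ``$\alpha$ coincides with $\vn$ on $\partial\Omega$'' and the resulting equality should be replaced accordingly; your localisation remark for unbounded $\Omega$ or the half-space then matches the paper's treatment.
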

\begin{proof}
\Cref{lem:helmhotz}, which is a slightly modified version of \cite[Lemma 3.5]{Spe14},
states that there exists $K > 0$ such that for all $k \geq 1$ and $f \in L^2(\Omega)$, if $w \in H^1(\Omega)$ solves $\Delta w + k^2 w = - f$ and $\partial_\vn w \in L^2(\partial \Omega)$, then
\begin{equation}
\label{eq:apriori-helm}
\|w|_{\partial \Omega}\|_{H^1(\partial \Omega)} \leq K(\|\partial_\vn w\|_{L^2(\partial \Omega)} + k \|w|_{\partial \Omega}\|_{L^2(\partial \Omega)} + \|\vnabla w\|_{L^2(\Omega)^d} + k \|w\|_{L^2(\Omega)} + \|f\|_{L^2(\Omega)}).
\end{equation}
Here, explicit control over the parameter $k$ is crucial. Now let $u \in L^2(\partial \Omega)$, $\lambda \in \R$ with $|\lambda| \geq 1$  and $w \triangleq ((1 + \i \lambda)^2 - \Delta_N)^{-1} u$. Then $w$ solves $\Delta w + \lambda^2 w = w - 2 \i \lambda w$, $\partial_\vn w = u$, and applying \cref{eq:apriori-helm} yields
\begin{equation}
\label{eq:apriori-helm-comp}
\|w|_{\partial \Omega}\|_{H^1(\partial \Omega)} \leq K'\big( \|u\|_{L^2(\partial\Omega)} + (1 + |\lambda|)\|w|_{\partial \Omega}\|_{L^2(\partial \Omega)} + \|w\|_{H^1(\Omega)} + (1 +|\lambda|) \|w\|_{L^2(\Omega)}\big).
\end{equation}
We reformulate \cref{eq:apriori-helm-comp} in resolvent terms: for all $u \in L^2(\Omega)$ and $\lambda \in \R$ with $|\lambda| \geq 1$,
\begin{multline}
\|\gamma((1+\i \lambda)^2 - \Delta_N)^{-1}\gamma^\ast u\|_{H^1(\Omega)} \leq K'\big(\|u\|_{L^2(\partial\Omega)}+ (1 +|\lambda|) \|\gamma((1+\i \lambda)^2 - \Delta_N)^{-1}\gamma^\ast u\|_{L^2(\partial \Omega)}  \\ + \|((1 + \i \lambda)^2 - \Delta_N)^{-1}\gamma^\ast u \|_{H^1(\Omega)} + (1 +|\lambda|) \|((1 + \i \lambda)^2 - \Delta_N)^{-1}\gamma^\ast u \|_{L^2(\Omega)}\big).
\end{multline}
The vector $u$ being arbitrarily chosen in $L^2(\partial \Omega)$, \cref{eq:comp-H1-L2} follows at once.
\end{proof}
\begin{theo}[Transfer function asymptotics, $L^2(\partial \Omega) \to H^1(\partial \Omega)$-level]\label{theo:trace-TF-tang} Let $\Omega$ and $\eta$ be as in \cref{eq:reminder-loss}. We have
\begin{equation}
\label{eq:trace-TF-tang}
\|\gamma ((1 + \i \lambda)^2 - \Delta_N)^{-1}\gamma^\ast\|_{\L(L^2(\partial \Omega), H^1(\partial \Omega))} = O(|\lambda|^{2\eta}), \quad \lambda \in \R, \quad |\lambda| \to + \infty.
\end{equation}
\end{theo}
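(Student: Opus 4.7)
The plan is to derive the stated asymptotics by combining the elliptic comparison estimate of \cref{lem:comp-H1-L2} with the transfer function bound already established in \cref{coro:trace-TF} together with the interior frequency-domain consequences of $\eta$-admissibility of the Neumann control operator $\gamma^\ast$.

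First, I would apply \cref{lem:comp-H1-L2}, which reduces the target bound to estimating four quantities uniformly in $\lambda \in \R$ with $|\lambda|$ large: a constant term, the product $(1 + |\lambda|) \|\gamma ((1+\i\lambda)^2 - \Delta_N)^{-1} \gamma^\ast\|_{\L(L^2(\partial\Omega))}$, the interior norm $\|((1+\i\lambda)^2 - \Delta_N)^{-1}\gamma^\ast\|_{\L(L^2(\partial\Omega), H^1(\Omega))}$, and  $(1 + |\lambda|)\|((1+\i\lambda)^2 - \Delta_N)^{-1}\gamma^\ast\|_{\L(L^2(\partial\Omega), L^2(\Omega))}$. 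The boundary trace contribution is handled directly by \cref{coro:trace-TF}: multiplication by $(1 + |\lambda|)$ upgrades $O(|\lambda|^{2\eta-1})$ to the desired $O(|\lambda|^{2\eta})$.

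For the two interior terms I would invoke the $\eta$-admissibility of $\gamma^\ast$ as a control operator for the wave group on $H^1(\Omega) \times L^2(\Omega)$, a fact which was already established during the proof of \cref{coro:trace-TF} as a direct consequence of \cref{theo:sharp-wave}. \Cref{theo:control-SO} then yields the frequency-domain estimate $\|((1+\i\lambda)^2 - \Delta_N)^{-1}\gamma^\ast\|_{\L(L^2(\partial\Omega), L^2(\Omega))} = O(|\lambda|^{\eta-1})$, and the intermediate estimate \cref{eq:control-H1-growth} used in its proof gives the matching $H^1(\Omega)$-bound $\|((1+\i\lambda)^2 - \Delta_N)^{-1}\gamma^\ast\|_{\L(L^2(\partial\Omega), H^1(\Omega))} = O(|\lambda|^{\eta})$. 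Substituting into the right-hand side of \cref{eq:comp-H1-L2} produces the contributions $O(|\lambda|^\eta)$ and $(1 + |\lambda|) \cdot O(|\lambda|^{\eta-1}) = O(|\lambda|^\eta)$, both of which are dominated by $O(|\lambda|^{2\eta})$ since $0 \leq \eta \leq 1$ forces $\eta \leq 2\eta$.

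Summing the four contributions, each is $O(|\lambda|^{2\eta})$, proving \cref{eq:trace-TF-tang}. There is no real obstacle once the decomposition of \cref{lem:comp-H1-L2} is in hand: the entire argument reduces to a careful bookkeeping that the Spence-type $H^1(\partial\Omega)$-control by boundary and interior norms, with the explicit wavenumber-dependence, fits cleanly with the frequency-domain growth rates coming from our abstract admissibility framework. The one subtle point to emphasise is that the $H^1(\Omega)$-bound on $((1+\i\lambda)^2 - \Delta_N)^{-1}\gamma^\ast$ is \emph{not} obtained by interpolation from the $L^2(\Omega)$-bound (which would only give $O(|\lambda|^{(\eta-1)/2})$ and is irrelevant here) but rather by the pairing argument of the proof of \cref{theo:control-SO}.
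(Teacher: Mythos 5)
Your proposal is correct and follows essentially the same route as the paper: it feeds the boundary bound from \cref{coro:trace-TF} and the interior bounds $O(|\lambda|^{\eta-1})$ in $\L(L^2(\partial\Omega),L^2(\Omega))$ and $O(|\lambda|^{\eta})$ in $\L(L^2(\partial\Omega),H^1(\Omega))$ (the latter via the pairing step around \cref{eq:H1-w}, i.e.\ \cref{eq:control-H1-growth}, not interpolation) into \cref{eq:comp-H1-L2}. The paper's proof is exactly this substitution, so no further comment is needed.
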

\begin{proof}
In the proof of \cref{coro:trace-TF}, we saw that the Neumann control operator $\gamma^\ast$ is $\eta$-admissible for the wave group on $H^1(\Omega) \times L^2(\Omega)$. Thus by \cref{theo:control-SO},
\begin{equation}
\label{eq:interior-TF}
\|((1 + \i \lambda)^2 - \Delta_N)^{-1}\gamma^\ast\|_{\L(L^2(\partial \Omega),L^2(\Omega))} = O(|\lambda|^{\eta - 1}), \quad \lambda \in \R, \quad |\lambda| \to + \infty.
\end{equation}
Just as in the proof of \cref{theo:observation-SO} (see the step around \cref{eq:H1-w}), we can
 deduce from \cref{eq:interior-TF} that
\begin{equation}
\label{eq:interior-TF-H1}
\|((1 + \i \lambda)^2 - \Delta_N)^{-1}\gamma^\ast\|_{\L(L^2(\partial \Omega),H^1(\Omega))} = O(|\lambda|^{\eta}), \quad \lambda \in \R, \quad |\lambda| \to + \infty.
\end{equation}
We complete the proof by substituting \cref{eq:trace-TF,eq:interior-TF,eq:interior-TF-H1} into the inequality \cref{eq:comp-H1-L2} from \cref{lem:comp-H1-L2}.
\end{proof}

As a direct consequence of \cref{theo:control-SO,theo:trace-TF-tang}, we obtain the following improvement over \cref{theo:sharp-wave}.
\begin{coro}
\label{coro:add-tang-reg}
Let $\Omega$ and $\eta$ be as in \cref{eq:reminder-loss}. In addition to the properties listed in \cref{theo:sharp-wave}, solutions $w$ to \cref{eq:neumann-wave-IBVP} with Neumann boundary data $u \in L^2(\partial \Omega \times (0, T))$ satisfy
\begin{equation} 
\label{eq:add-tang-reg}
w|_{\partial \Omega} \in
 H^{-2\eta}(0, T; H^1(\partial \Omega)),
\end{equation}
with continuous dependence on $u$.
\end{coro}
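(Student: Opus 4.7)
The plan is to apply the frequency-to-time implication contained in the last statement of \cref{theo:transfer-function}, which, crucially, does not require admissibility of either the control or the observation operator. I recast \cref{eq:neumann-wave-IBVP} as a first-order Cauchy problem on $X = H^1(\Omega) \times L^2(\Omega)$ with generator $A$ as in \cref{sec:second-order}, control operator $B = \begin{pmatrix} 0 \\ \gamma^\ast \end{pmatrix} \in \L(L^2(\partial\Omega), X_{-1})$, and observation operator $C = \begin{pmatrix} \gamma & 0 \end{pmatrix}$ corresponding to the Dirichlet trace of the position component, with output space $Y = H^1(\partial\Omega)$. By the block resolvent formula recalled in the proof of \cref{theo:observation-SO}, the associated transfer function is $C(p-A)^{-1}B = \gamma(p^2 - \Delta_N)^{-1}\gamma^\ast$, so \cref{theo:trace-TF-tang} provides the required high-frequency bound on the vertical line $1 + \i \R$ with exponent $2\eta$.

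The main preparatory step is to verify the technical hypothesis \cref{eq:hyp-Z} of \cref{sec:TF-IO}. I take $Z \triangleq W \times L^2(\Omega)$, where $W$ is the Hilbert space introduced in \cref{coro:elliptic-W}. The continuous embeddings $X_1 \hookrightarrow Z \hookrightarrow X$ follow from $\dom(\Delta_N) \hookrightarrow W \hookrightarrow H^1(\Omega) \hookrightarrow L^2(\Omega)$. \Cref{coro:elliptic-W} yields simultaneously that $\gamma \in \L(W, H^1(\partial\Omega))$ (so $C \in \L(Z, H^1(\partial\Omega))$) and that $(p^2 - \Delta_N)^{-1}\gamma^\ast$ maps $L^2(\partial\Omega)$ into $W$ for every $p \in \Co$ with $\re p > 0$. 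Using the explicit resolvent formula, the range of $(p-A)^{-1}B$ is then contained in $W \times W \subset Z$, completing the verification.

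With the hypothesis in place, applying the frequency-to-time direction of \cref{theo:transfer-function} with $\eta_1 + \eta_2 = 2\eta$, fed by the bound of \cref{theo:trace-TF-tang}, yields, for each $T > 0$, an estimate
\begin{equation*}
\|\gamma w\|_{H^{-2\eta}(0, T; H^1(\partial\Omega))} \leq K \|u\|_{L^2(0, T; L^2(\partial\Omega))}
\end{equation*}
valid a priori for $u \in \D(0, T; L^2(\partial\Omega))$. To conclude, I extend the inequality to arbitrary $u \in L^2(\partial\Omega \times (0, T))$ by density: approximating $u$ by smooth compactly supported inputs $u_n$, the traces $\gamma w_n$ form a Cauchy sequence in $H^{-2\eta}(0, T; H^1(\partial\Omega))$, while the $L^2$-hidden regularity part of \cref{theo:sharp-wave} ensures $\gamma w_n \to \gamma w$ in the weaker space $L^2(\partial\Omega \times (0,T))$; the two limits must therefore coincide. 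This last density identification is the only routine-but-careful bookkeeping step; the substantive work has already been done in establishing the transfer-function bound \cref{theo:trace-TF-tang}, and no new partial differential equation argument is needed.
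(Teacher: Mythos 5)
Your proposal is correct and follows essentially the same route as the paper, which obtains the corollary by feeding the $\L(L^2(\partial\Omega), H^1(\partial\Omega))$-bound of \cref{theo:trace-TF-tang} into the frequency-to-time direction of \cref{theo:transfer-function} (in its second-order guise \cref{theo:TF-SO}), with the technical hypothesis supplied by the space $W$ of \cref{coro:elliptic-W}. The only cosmetic differences are your choice $Z = W \times L^2(\Omega)$ in place of $W \times W$ and your spelling out of the density passage to general $u \in L^2(\partial\Omega \times (0,T))$, which the paper leaves implicit.
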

\begin{rem}
\Cref{eq:add-tang-reg} does not directly follow from the property $w|_{\partial \Omega} \in H^{1-2\eta}(\partial \Omega \times (0, T))$. The latter merely implies that
 tangential derivatives lie in $H^{-2\eta}(\partial \Omega \times (0, T))$, which is a larger space than $H^{-2\eta}(0, T; L^2(\partial \Omega))$.
\end{rem}



\subsection{The Schr\"odinger equation on the half-space}

In this section, 
$\Omega$ is the half-space $\{ (x, y) \in \R \times \R^{d-1} : x \geq 0\}$, so that $\partial \Omega \simeq \R^{d-1}$.
We consider the Schr\"odinger equation posed in $\Omega$ with Neumann boundary data $u$:
\begin{subequations}
\label{eq:schro-neumann}
\begin{align}
&(\i \partial_t - \Delta)\Psi = 0 &&\mbox{in}~\Omega \times (0, T), \\
\label{eq:schro-neumann-BC}
&\partial_\vn \Psi = u &&\mbox{on}~\partial \Omega \times (0, T).
\end{align}
\end{subequations}
Perhaps surprisingly, regularity results pertaining to the linear Schr\"odinger equation with Neumann boundary condition appear to be rather scarce in the literature. A negative result, which we shall return to shortly,  is proved in \cite[Section 8.2]{LasTri03}. On the other hand, \cite{LasTri04} contains \emph{a priori} estimates for the $L^2(\Omega)$-energy of solutions to the Schr\"odinger equation (that is, without prescribed boundary conditions). Also, in the special case that $\Omega$ is a rectangle in $\R^2$, \cite[Proposition 3.1]{RamTak05} states that the Dirichlet trace is an admissible observation operator for the Schr\"odinger group with homogeneous Neumann boundary condition on $L^2(\Omega)$, which by duality means that $L^2(\partial \Omega \times (0, T))$-boundary data produce $L^2(\Omega)$-solutions. The goal of our analysis here is twofold:
\begin{itemize}
  \item We illustrate how our results may be used to obtain regularity properties;
  \item We provide an example of an observation operator that is admissible for the Schr\"odinger group on $L^2(\Omega)$ but \emph{not} admissible  for the wave group
 on $H^1(\Omega) \times L^2(\Omega)$. This proves that the technique of transference from waves to Schr\"odinger, as seen in, e.g., \cite[Section 6.8]{TenTuc09} and \cite{Mil12}, only goes one way.
\end{itemize}

The choice of the half-space allows us to keep the partial differential equation techniques at an elementary level, but we believe that this case study is instructive nonetheless.

\begin{theo}[Sharp regularity under $L^2$-Neumann control]
\label{theo:reg-schro} For all $u \in L^2(\partial \Omega \times (0, T))$, the corresponding solution $\Psi$ to \cref{eq:schro-neumann} satisfies
\begin{equation}
\label{eq:reg-schro}
\Psi|_{t = T} \in L^2(\Omega), \quad \Psi|_{\partial \Omega} \in L^2(0, T; L^2(\partial \Omega)) \cap H^{-1/2}(0, T; H^1(\partial \Omega)),
\end{equation}
with continuous dependence on $u$.
\end{theo}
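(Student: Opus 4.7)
The plan is to cast \cref{eq:schro-neumann} in the abstract framework of \cref{sec:main-res}: take $X = L^2(\Omega)$, $A = -\i\Delta_N$ (the skew-adjoint generator of a unitary Schrödinger group $\{S_t\}_{t \in \R}$, in particular right-invertible) and $B = -\i\gamma^\ast \in \L(L^2(\partial\Omega), X_{-1})$. With these choices, \cref{theo:reg-schro} reduces to three frequency-domain bounds on vertical lines $\sigma + \i\R$, $\sigma > 0$, namely for $(p - A)^{-1}B$ in $\L(L^2(\partial\Omega), L^2(\Omega))$, for $\gamma(p-A)^{-1}B$ in $\L(L^2(\partial\Omega))$, and for $\gamma(p-A)^{-1}B$ in $\L(L^2(\partial\Omega), H^1(\partial\Omega))$. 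Since $\Omega$ is a half-space, all three can be evaluated explicitly by combining the tangential Fourier transform in $y \in \R^{d-1}$ with separation of variables in $x \geq 0$.

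More precisely, for $p = \sigma + \i\omega$ with $\sigma > 0$ and $u \in L^2(\partial\Omega)$, writing $\Psi \triangleq (p + \i\Delta_N)^{-1}\gamma^\ast u$ and denoting by $\hat u$, $\hat\Psi$ the partial Fourier transforms in $y$ with dual variable $\eta$, we find that $\hat\Psi(\cdot, \eta)$ solves the one-dimensional boundary-value problem $\partial_x^2 \hat\Psi = (|\eta|^2 + \i p) \hat\Psi$ on $(0, +\infty)$ with $\partial_x\hat\Psi(0,\eta) = -\hat u(\eta)$, together with square-integrability in $x$. Its unique solution is $\hat\Psi(x,\eta) = \alpha^{-1}\hat u(\eta) e^{-\alpha x}$, where $\alpha = \alpha(\eta, p) \triangleq \sqrt{|\eta|^2 + \i p}$ is taken in the branch $\re\alpha > 0$. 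Consequently the three operator norms above equal, respectively, the scalar suprema over $\eta \in \R^{d-1}$ of
\begin{equation*}
[2(\re\alpha)\,|\alpha|^2]^{-1/2}, \quad |\alpha|^{-1}, \quad (1 + |\eta|^2)^{1/2} |\alpha|^{-1}.
\end{equation*}
Setting $s \triangleq |\eta|^2 - \omega$, one has $|\alpha|^2 = (s^2 + \sigma^2)^{1/2}$ and $(\re\alpha)^2 = (|\alpha|^2 + s)/2$, and a short one-variable analysis shows that the first two suprema are bounded uniformly in $\omega \in \R$ (the worst case being $s = 0$, where $|\alpha| = \sqrt{\sigma}$ and $(\re\alpha)|\alpha|^2 \geq \sigma^{3/2}/\sqrt{2}$), whereas the third grows exactly like $O(|\omega|^{1/2})$ as $|\omega| \to +\infty$ (again with the worst case $s \approx 0$, i.e.\ $|\eta|^2 \approx \omega$).

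These three estimates immediately yield \cref{theo:reg-schro}. The first, together with right-invertibility of the Schrödinger group, implies via \cref{theo:input-group} (applied with $\eta = 0$) that $B$ is admissible, whence $\Psi|_{t=T} \in L^2(\Omega)$ with continuous dependence on $u$. The second and third, together with the implication $\cref{it:FQ-IO} \Rightarrow \cref{it:L2-H-eta}$ in \cref{theo:transfer-function} (which does \emph{not} require admissibility of input or output) used with $\eta_1 + \eta_2 = 0$ and $\eta_1 + \eta_2 = 1/2$ respectively, yield $\gamma\Psi \in L^2(0, T; L^2(\partial\Omega))$ and $\gamma\Psi \in H^{-1/2}(0, T; H^1(\partial\Omega))$. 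The auxiliary hypothesis \cref{eq:hyp-Z} is satisfied by taking $Z$ to be the solution space $X_1^B$ of \cref{rem:solution-space}, on which $\gamma$ extends continuously into both $L^2(\partial\Omega)$ and $H^1(\partial\Omega)$ by an analogous Fourier-multiplier check at a fixed shift $p = \mu$. The only delicate (but still elementary) point is the resonant regime $|\eta|^2 \approx \omega$, where all three multipliers attain their extrema simultaneously; since the problem is diagonal in Fourier, the resulting estimates reduce to one-parameter scalar calculus.
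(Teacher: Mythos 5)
Your argument is correct and follows the same overall skeleton as the paper — reduce \cref{eq:schro-neumann} to frequency-domain bounds via \cref{theo:input-group} (with $\eta=0$ and invertibility of the unitary Schr\"odinger group) and the unconditional implication \cref{it:FQ-IO}$\Rightarrow$\cref{it:L2-H-eta} of \cref{theo:transfer-function}, with the key input being a tangential Fourier computation on the half-space — but the implementation of the resolvent estimates is genuinely different. You compute all three bounds (state, $L^2(\partial\Omega)$-trace, $H^1(\partial\Omega)$-trace) explicitly from the formula $\hat\Psi(x,\eta)=\alpha^{-1}\hat u(\eta)e^{-\alpha x}$ on the whole vertical line $\sigma+\i\R$, whereas the paper computes only the Neumann-to-Dirichlet $L^2$ bound explicitly (and only in the regime $\lambda\geq 0$), handles the elliptic regime $\lambda\to-\infty$ and the interior bounds by Green's-formula pairing identities (the collocated structure, as in \cref{theo:collocated-SO}), and obtains the $L^2(\partial\Omega)\to H^1(\partial\Omega)$ bound from the wavenumber-explicit Rellich/Ne\v cas estimate of \cref{lem:helmhotz}. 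Your route is more elementary and entirely self-contained for the half-space; the paper's buys robustness, since apart from the single explicit computation its steps do not use the special geometry. Two small points to tidy up. First, your claim that the worst case of the first multiplier is $s=0$ is not quite right: the infimum of $(\re\alpha)\,|\alpha|^2$ over $s=|\eta|^2-\omega$ is attained at some negative $s$ of order $-\sigma$; the conclusion is unaffected because $2(\re\alpha)(\im\alpha)=\sigma$ gives the uniform lower bound $(\re\alpha)\,|\alpha|^2\geq(\sigma/2)^{3/2}$, but state the bound this way rather than via the $s=0$ evaluation. Second, when verifying \cref{eq:hyp-Z} with $Z=X_1^B$, you should note that $\gamma$ is unambiguously defined on $X_1^B$ because your computation at fixed $p=\mu$ shows $\ran((\mu-A)^{-1}B)\subset H^1(\Omega)$ (indeed it lies in the space $W$ of \cref{coro:elliptic-W}, which could serve as $Z$ just as well, and \cref{lem:elliptic-tang} then gives $\gamma\in\L(X_1,H^1(\partial\Omega))$); with that remark the continuity of $\gamma$ on $X_1^B$ into both output spaces follows by taking the infimum over decompositions.
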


\begin{proof} 
The operator formulation for \cref{eq:schro-neumann} is given by $\dot{\Psi} = - \i \Delta_N \Psi - \i \gamma^\ast u$.
The abstract framework and validity of our technical hypotheses have already been discussed in the previous section, so we shall not go into more detail than necessary when applying the results from \cref{sec:main-res}.

The main part of the proof  consists in deriving Neumann-to-Dirichlet bounds for the ``damped'' Helmhotz equation
\begin{subequations}
\label{eq:damped-helm}
\begin{align}
&(- \i + k^2 + \Delta) w = 0 &&\mbox{in}~ (0, +\infty) \times \R^{d-1}, \\
&\partial_x w|_{x = 0} = - u &&\mbox{on}~\R^{d-1}.
\end{align}
\end{subequations}
that are uniform in $k \geq 0$ and $u \in L^2(\R^{d-1})$. Note that for all such $k$ and $u$ there exists a unique solution $w\in H^1(\Omega) = H^1((0, +\infty) \times \R^{d-1})$ to \cref{eq:damped-helm} (actually $w \in W$, where $W$ was defined in \cref{coro:elliptic-W}), which is given by $w = - (- \i + k^2 + \Delta_N)^{-1}\gamma^\ast u$. We shall temporarily deviate from our notation convention and denote by $\hat{w}$ (resp.\ $\hat{u}$) the Fourier transform of $w$ (resp.\ ${u}$) in the \emph{tangential} direction, that is, in the $y$-variable. Then $\hat{w} \in H^1((0, +\infty) \times \R^{d-1})$ as well and in particular the function $\tau \mapsto \hat{w}(\cdot, \tau)$ lies in $L^2(\R^{d-1}, H^1(0, +\infty))$. For a.e.\ $\tau \in \R^{d-1}$, $\hat{w}(\cdot, \tau)$ satisfies the following ordinary differential equation and initial condition:
\begin{equation}
  \label{eq:ode-w}
  \partial_x^2 \hat{w}(x, \tau) + (k^2  - \|\tau\|^2 - \i ) \hat{w}(x, \tau) = 0, \quad \partial_x \hat{w}(0, \tau) = - \hat{u}(\tau).
\end{equation}
The characteristic equation $r^2 + k^2 - \|\tau\|^2 - \i = 0$  has two distinct roots $r = \pm r(k, \tau)$, with $\re r(k, \tau) > 0$ and $|r(k, \tau)| = |\i + \|\tau\|^2 - k^2|^{1/2}$. Using \cref{eq:ode-w} together with the property that $\hat{w}(\cdot, \tau) \in L^2(0, +\infty)$ for a.e.\  $\tau \in \R^{d-1}$,
 we find that $\hat{w}$ is  given explicitely by
\begin{equation}
\hat{w}(x, \tau) =   \frac{\hat{u}(\tau)}{r(k, \tau)} e^{- xr(k, \tau)}, \quad x \geq 0.
\end{equation}
Therefore,
\begin{equation}
\int_{\R^{d-1}}|\hat{w}(0, \tau)|^2 \, \d \tau = \int_{\R^{d-1}}\frac{|\hat{u}(\tau)|^2}{1 + |\|\tau\|^2 - k^2|^2} \, \d \tau \leq \int_{\R^{d-1}} |\hat{u}(\tau)|^2 \, \d \tau.
\end{equation}
Finally, using Plancherel's theorem and reformulating the result in resolvent terms, we obtain that
\begin{equation}
\label{eq:res-schro-PDE}
\|\gamma (-\i + k^2 + \Delta_N)^{-1}\gamma^\ast\|^2_{\L(L^2(\partial \Omega))} \leq \frac{1}{(2\pi)^{d - 1}}, \quad k \geq 0.
\end{equation}

The rest of the proof relies primarily on our abstract results; in particular the special geometry does not play a role. Let $\lambda \in \R$,  $u \in L^2(\partial \Omega)$ and $w \triangleq - \i (1 + \i \lambda + \i \Delta_N)^{-1} \gamma^\ast u$.  Then, multiplying $(1 + \i \lambda + \i \Delta)w$ by $\overline{w}$ and using Green's formula give
\begin{equation}
\label{eq:pairing-schro-ell}
\int_\Omega |w|^2 + \i \lambda |w|^2 - \i \|\vnabla w\|^2 \, \d x + \i \int_{\partial \Omega} u \overline{w} \, \d \sigma = 0.
\end{equation}
After  taking the imaginary part of \cref{eq:pairing-schro-ell} and making use of the Cauchy--Schwarz and Young inequalities, we obtain
\begin{equation}
\label{eq:HF-schro-elliptic}
\|(1 + \i \lambda + \i \Delta_N)^{-1}\gamma^\ast\|_{\L(L^2(\partial \Omega), H^1( \Omega))} = O(1), \quad \lambda \to - \infty
\end{equation}
(the case $\lambda < 0$ can be seen as an elliptic regime). As a result, by continuity $H^1(\Omega) \to L^2(\partial \Omega)$ of the trace,
\begin{equation}
\label{eq:HF-schro-elliptic-trace}
\|\gamma(1 + \i \lambda + \i \Delta_N)^{-1}\gamma^\ast\|_{\L(L^2(\partial \Omega)))} = O(1), \quad \lambda \to - \infty
\end{equation}
\Cref{eq:res-schro-PDE} implies that the bound \cref{eq:HF-schro-elliptic} is also valid when $\lambda \to + \infty$. Taking the real part of the pairing identity \cref{eq:pairing-schro-ell} (much as in the proof of \cref{theo:collocated-SO}), we see that, as $|\lambda| \to +\infty$, $\lambda \in \R$,
\begin{equation}
\label{eq:adm-schro}
\|(1 + \i \lambda + \i \Delta_N)^{-1}\gamma^\ast\|^2_{\L(L^2(\partial \Omega), L^2(\Omega))} \leq \|\gamma(1 + \i \lambda + \i \Delta_N)^{-1}\gamma^\ast\|_{\L(L^2(\partial \Omega))} = O(1).
\end{equation}
\Cref{theo:input-group,theo:transfer-function} now directly yield the $L^2$-parts of the desired regularity properties \cref{eq:reg-schro}.

It remains to prove the $H^{-1/2}(0, T; H^1(\partial \Omega))$-regularity in \cref{eq:reg-schro}. To this end we shall establish the bound
\begin{equation}
\label{eq:schro-tang-TF}
\|\gamma(1 + \i \lambda + \i \Delta_N)^{-1}\gamma^\ast\|_{\L(L^2(\partial \Omega), H^1(\partial \Omega))} = O(|\lambda|^{1/2}), \quad \lambda \in \R, \quad |\lambda| \to + \infty.
\end{equation}
Again, in the case $\lambda \to - \infty$ we may use elliptic estimates. Let $\lambda < 0$, $u \in L^2(\partial \Omega)$ and $w \triangleq (1 + \i \lambda + \i \Delta_N)^{-1}\gamma^\ast u$. Multiply $(1 + \i \lambda + \i \Delta)w$ this time by $\Delta \overline{w}$ to obtain
\begin{equation}
\label{eq:elliptic-bis}
- \int_\Omega \|\vnabla w\|^2 \, \d x + \int_{\partial \Omega} w \overline{u} \, \d \sigma + \i  \int_\Omega |\lambda| \|\vnabla w\|^2 + |\Delta w|^2 \, \d x + \i \int_{\partial \Omega} w \overline{u} \, \d \sigma = 0.
\end{equation}
By taking the imaginary parts of \cref{eq:pairing-schro-ell,eq:elliptic-bis}, we deduce that the $W$-norm of $w$ is controlled by the $L^2(\partial\Omega)$-norm of $u$  uniformly in $\lambda \leq -1$. By \cref{coro:elliptic-W}, $\gamma$ is continuous from $W$ into $H^1(\partial \Omega)$, and thus \cref{eq:schro-tang-TF} is indeed valid for $\lambda \to - \infty$. For $\lambda \to + \infty$ we use the same technique as in the proof of \cref{lem:comp-H1-L2}. If $\lambda \geq 1$ (say) we let $k \triangleq \lambda^{1/2}$ so that, given $u \in L^2(\partial \Omega)$, $w$ defined just as above solves $(k^2 + \Delta)w = \i w$, $\partial_\vn w = u$, and by the \emph{a priori} estimate \cref{eq:apriori-helm} provided by \cref{lem:helmhotz} we obtain
\begin{equation}
\|w|_{\partial \Omega}\|_{H^1(\partial \Omega)} \leq K(\|w\|_{H^1(\Omega)} + k \|w\|_{L^2(\Omega)} + \|u\|_{L^2(\partial \Omega)} + k \|w|_{\partial \Omega}\|_{L^2(\partial \Omega)}).
\end{equation}
Thus, for all $\lambda \geq 1$,
\begin{multline}
\|\gamma (1 + \i \lambda + \i \Delta_N)^{-1}\gamma^\ast\|_{\L(L^2(\partial \Omega), H^1(\partial \Omega))} \leq K\big(1 + \lambda^{1/2} \|\gamma (1 + \i \lambda + \i \Delta_N)^{-1}\gamma^\ast\|_{\L(L^2(\partial \Omega))}  \\ + \lambda^{1/2} \|(1+\i \lambda +  \i \Delta_N)^{-1}\gamma^\ast\|_{\L(L^2(\partial \Omega), L^2(\Omega))}+ \|(1+\i \lambda +  \i \Delta_N)^{-1}\gamma^\ast\|_{\L(L^2(\partial \Omega), H^1(\Omega))}\big).
\end{multline}
With \cref{eq:adm-schro} already in hand, to get \cref{eq:schro-tang-TF} it suffices to prove that
\begin{equation}
\label{eq:square-lambda}
\|(1+\i \lambda +  \i \Delta_N)^{-1}\gamma^\ast\|_{\L(L^2(\partial \Omega), H^1(\Omega))} = O(\lambda^{1/2}), \quad \lambda \to +\infty.
\end{equation}
To do so, we return to the pairing identity \cref{eq:pairing-schro-ell}, from which we readily deduce that
\begin{equation}
\|(1 + \i \lambda + \i \Delta_N)^{-1}\gamma^\ast\|^2_{\L(L^2(\partial \Omega), H^1(\Omega))} \leq K(1 + \lambda \|(1+\i \lambda +  \i \Delta_N)^{-1}\gamma^\ast\|_{\L(L^2(\partial \Omega), L^2(\Omega))}^2), \quad \lambda \geq 1.
\end{equation}
This gives \cref{eq:square-lambda} and completes the proof of \cref{{eq:schro-tang-TF}}, which in turn gives the final part of \cref{eq:reg-schro} via \cref{theo:transfer-function}.
\end{proof}
\begin{rem}
Using interpolation between $L^2(\partial \Omega) \to L^2(\partial \Omega)$ and $L^2(\partial \Omega) \to H^1(\partial \Omega)$ at the transfer function level (or, alternatively, some variant of the intermediate derivative theorem \cite[Theorem 2.3, Chapter 1]{LioMag68book}), we also obtain $\Psi|_{\partial \Omega} \in H^{-\theta/2}(0, T; H^\theta(\partial \Omega))$ for all $0 < \theta < 1$.
\end{rem}

\begin{rem}
In \cite[Section 8.2]{LasTri03} it is shown that, for arbitrary $\eps > 0$, there exists $u \in L^2(\partial \Omega \times (0, T))$ such that $\Psi \not \in L^2(0, T; H^{\eps}(\Omega))$. With a slight modification of their argument, we also see that $\Psi|_{\partial \Omega} \not \in L^2(0, T; H^{\eps}(\partial \Omega))$ in general. Therefore the space regularity in \cref{theo:reg-schro} is optimal, at least in terms of the Sobolev exponent.
\end{rem}

\section{Energy decay rates under non-uniform Hautus test}

\label{sec:decay}

\subsection{Energy decay rates in the abstract setting}

\label{sec:stability}
In this section we study energy decay rates for second-order damped systems of the form
\begin{equation}
\label{eq:SO-feedback}
\ddot{w} + DD^\ast \dot{w} + Lw  = 0, \quad w(0) = w_0,  \quad \dot{w}(0) = w_1,
\end{equation}
{where $L$ is as in \cref{sec:second-order-adm} and $D \in \L(U, H_{-1})$.
 \Cref{eq:SO-feedback} gives rises to a strongly continuous semigroup $\sg{S^D}$ on the phase space $H_1 \times H$. Its infinitesimal generator $A_D$ is given by
\begin{equation}
\dom(A_D) = \{ (w_0, w_1) \in H_1 \times H_1 : DD^\ast w_1 + Lw_0 \in H \}, \quad A_D = \begin{pmatrix}
0 & 1 \\ -L & -DD^\ast 
\end{pmatrix}.
\end{equation}
Following \cite{AmmTuc01,ChiPau23,KleWan23arxiv}, we are interested in sufficient conditions for quantified decay of the \emph{energy}
\begin{equation}
\label{eq:energy-semi}
E(t; w) \triangleq \frac{1}{2}\|L^{1/2}w(t)\|^2_H + \frac{1}{2}\|\dot{w}(t)\|^2_H, \quad t \geq 0, \quad w~\mbox{solves \cref{eq:SO-feedback}},
\end{equation}
in terms of \emph{observability} properties of the uncontrolled dynamics \cref{eq:second-order-cauchy} with output $D^\ast \dot{w}$. 
In the case that $0 \in \rho(L)$, as in \cite[Section 2B]{ChiPau23},  the feedback semigroup $\sg{S^D}$ is contractive and $2E(t; w)$ is the squared norm of $(w(t), \dot{w}(t))$ in $H_1 \times H$.
We consider a non-uniform Hautus-type condition similar to that of \cite[Proposition C.2]{JolLau20} or \cite[Proposition 3.10]{ChiPau23}; see also \cite[Definition 1.1]{KleWan23arxiv}.

\begin{defi}[Non-uniform Hautus test] {The pair $(\i L, D^\ast)$} satisfies the non-uniform Hautus test if there exist functions $m, M : \R \to [1, +\infty)$ and a real number $\lambda_0 \geq 0$ such that 
\begin{equation}
\label{eq:hautus}
\|w\|_H  \leq {M(\lambda)}\|(L - \lambda^2)w\|_H + m(\lambda)\|D^\ast w\|_U, \quad \lambda \in \R,  \quad |\lambda| \geq \lambda_0, \quad w \in H_2.
\end{equation}
\end{defi}
\begin{rem}[High-frequency unique continuation]
An immediate consequence of the non-uniform Hautus test is the following unique continuation property: for all $\lambda \in \R$ with $|\lambda| \geq \lambda_0$, if $w \in H_1$, $w \not = 0$, satisfies $(L - \lambda^2)w = 0$, then $D^\ast w \not = 0$.
\end{rem}
\begin{rem}[Schrödinger group observability]
\label{rem:sch-obs}
When $D^\ast$ is an admissible observation operator for the Schr\"odinger group $\{e^{\i t L}\}_{t \in \R}$ on $H$, the non-uniform Hautus test holding with constant parameters $m, M$ for all frequencies $\lambda \in \R$ is \emph{equivalent} to exact observability of $\{e^{\i t L}\}_{t \in \R}$  via $D^\ast$; see \cite[Theorem 5.1]{Mil05}.
\end{rem}


The following proposition can be seen as a second-order version of \cite[Theorem 3.2]{ChiPau23} and will allow us to go from non-uniform Hautus test to a high-frequency second-order resolvent estimate for \cref{eq:SO-feedback}. Just as in \cite{ChiPau23}, in the absence of input-output well-posedness for \cref{eq:w-u-cauchy},
 this procedure comes at the cost of multiplicative  terms penalising transfer function growth.

\begin{prop}[Second-order resolvent estimate]\label{prop:SO-res} Suppose that
\begin{enumerate}[label=(\arabic*),series=energy]
  \item \label{it:hautus} $(\i L, D^\ast)$ satisfies the non-uniform Hautus test with parameters $m$, $M$ and $\lambda_0$;
  \item \emph{(Transfer function growth.)} \label{it:reg-eta-SO} There exists $0 \leq \eta \leq 1$ such that
\begin{equation}
  \label{eq:reg-eta-SO}
  \|D^\ast((1+\i \lambda)^2 + L)^{-1}D\|_{\L(U)} = O(|\lambda|^{2\eta -1}), \quad \lambda \in \R, \quad |\lambda| \to + \infty.
\end{equation}
\end{enumerate}
Then, for $\lambda \in \R$ with $|\lambda|$ sufficiently large, $(L + \i \lambda DD^\ast - \lambda^2)^{-1}$ is well-defined as a map on $H$ and
\begin{equation}
\|(L+\i \lambda DD^\ast - \lambda^2)^{-1}\|_{\L(H)} = O(|\lambda|^{1 + 4\eta}(M(\lambda)^2 + m(\lambda)^2)), \quad \lambda \in \R, \quad |\lambda|\to +\infty.
\end{equation}
\end{prop}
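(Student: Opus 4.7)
The plan is to combine the non-uniform Hautus test \cref{eq:hautus} with a splitting of $w$ that isolates the contribution of the damping term. Given $\lambda \in \R$ with $|\lambda|$ sufficiently large and $f \in H$, I suppose $w$ is such that $(L - \lambda^2 + \i\lambda DD^\ast)w = f$, and aim for the a priori estimate $\|w\|_H \leq K|\lambda|^{1 + 4\eta}(M(\lambda)^2 + m(\lambda)^2)\|f\|_H$. Pairing this equation with $w$ in $H$ and taking imaginary parts yields $\lambda \|D^\ast w\|^2_U = \im \langle f, w\rangle_H$, so setting $g := D^\ast w$ I obtain the crucial energy inequality $\|g\|_U^2 \leq |\lambda|^{-1} \|f\|_H \|w\|_H$.

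The main obstacle is that the identity $(L - \lambda^2)w = f - \i\lambda Dg$ only makes sense in $H_{-1}$, so \cref{eq:hautus} cannot be applied to $w$ itself. To remedy this, I introduce the shifted resolvent $R_\lambda := ((1 + \i\lambda)^2 + L)^{-1}$, the auxiliary function $\phi := -\i\lambda R_\lambda D g \in H_1$, and set $v := w - \phi$. A direct computation exploiting the identity $(1 + \i\lambda)^2 + L = (L - \lambda^2) + (1 + 2\i\lambda)$ then shows that $(L - \lambda^2)v = f + (1 + 2\i\lambda)\phi$, and this equality now lives in $H$, so the Hautus test applies cleanly to $v$.

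The two ingredients of the splitting can be estimated via the abstract resolvent bounds already proved. The hypothesis \cref{eq:reg-eta-SO} combined with \cref{theo:collocated-SO} shows that $D$ is $\eta$-admissible as a control operator and $D^\ast$ is $\eta$-admissible as a velocity observation operator, so \cref{theo:control-SO,theo:observation-SO} yield $\|R_\lambda D\|_{\L(U, H)} = O(|\lambda|^{\eta - 1})$ and $\|D^\ast R_\lambda D\|_{\L(U)} = O(|\lambda|^{2\eta - 1})$ as $|\lambda| \to +\infty$. Consequently $\|\phi\|_H \leq K|\lambda|^\eta \|g\|_U$, $\|D^\ast v\|_U \leq K|\lambda|^{2\eta}\|g\|_U$, and $\|(L - \lambda^2)v\|_H \leq \|f\|_H + K|\lambda|^{1 + \eta}\|g\|_U$. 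Feeding these into \cref{eq:hautus} applied to $v$ and using $\|w\|_H \leq \|v\|_H + \|\phi\|_H$ will yield
\begin{equation*}
\|w\|_H \leq M(\lambda)\|f\|_H + K\bigl(M(\lambda)|\lambda|^{1 + \eta} + m(\lambda)|\lambda|^{2\eta}\bigr)\|g\|_U.
\end{equation*}

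Substituting $\|g\|_U \leq |\lambda|^{-1/2}\|f\|_H^{1/2}\|w\|_H^{1/2}$ and absorbing the $\|w\|_H^{1/2}$ factor via Young's inequality then produces the desired a priori bound, noting that $M(\lambda)^2|\lambda|^{1 + 2\eta} + m(\lambda)^2|\lambda|^{4\eta - 1}$ is dominated by $|\lambda|^{1 + 4\eta}(M(\lambda)^2 + m(\lambda)^2)$ since $M, m \geq 1$ and $|\lambda|$ is large. To justify that the resolvent $(L - \lambda^2 + \i\lambda DD^\ast)^{-1}$ is actually well-defined on $H$ for all sufficiently large $|\lambda|$, uniqueness will follow from the Hautus test applied to a homogeneous solution (the pairing identity forces $D^\ast w = 0$, hence $(L - \lambda^2)w = 0$ in $H$, so \cref{eq:hautus} gives $w = 0$), while existence will follow from the relation between $(L - \lambda^2 + \i\lambda DD^\ast)^{-1}$ and the semigroup resolvent $(\i\lambda - A_D)^{-1}$ via the standard first-order reduction. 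I expect the main technical care to lie in keeping the splitting meaningful across the scales $H_1 \hookrightarrow H \hookrightarrow H_{-1}$ and matching the regularity of $\phi$ against that of $v$ when invoking the Hautus bound.
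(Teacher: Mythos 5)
Your \emph{a priori} estimate is essentially the paper's own argument: the auxiliary variable $v = w + \i\lambda R_\lambda DD^\ast w$ (your $w - \phi$), the observation that $(L-\lambda^2)v$ lands in $H$ so that the Hautus test \cref{eq:hautus} applies to $v$, the imaginary-part pairing giving $\|D^\ast w\|_U \leq |\lambda|^{-1/2}\|f\|_H^{1/2}\|w\|_H^{1/2}$, the collocated bounds $\|R_\lambda D\|_{\L(U,H)} = O(|\lambda|^{\eta-1})$ and $\|D^\ast R_\lambda D\|_{\L(U)}=O(|\lambda|^{2\eta-1})$, and the final Young absorption all coincide with the proof of \cref{prop:SO-res} in the paper, and your exponent bookkeeping is correct.

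The genuine gap is in the existence half of invertibility. Your uniqueness argument is fine (indeed the a priori bound itself gives injectivity), but deducing surjectivity of $L(\lambda) = L + \i\lambda DD^\ast - \lambda^2$ ``from the relation with the semigroup resolvent $(\i\lambda - A_D)^{-1}$ via the standard first-order reduction'' is circular: for real $\lambda$ it is not known a priori that $\i\lambda \in \rho(A_D)$ — the semigroup $\sg{S^D}$ is merely bounded (contractive when $0\in\rho(L)$), so only the open right half-plane is automatically in $\rho(A_D)$, and the statement $\i\lambda \in \rho(A_D)$ for large $|\lambda|$ is essentially equivalent to the boundedly invertible $L(\lambda)$ you are trying to construct. (Think of the undamped wave equation, or of a damping that fails an observability condition: the spectrum can sit on the imaginary axis at arbitrarily high frequencies, so existence really does require the Hautus hypothesis and cannot be imported for free from the first-order picture.) Note also that no compactness of $(L-\mu)^{-1}$ is assumed in \cref{prop:SO-res}, so a Fredholm shortcut is unavailable. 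The paper closes this gap by viewing $L(\lambda)$ as a closed, densely defined operator on $H$, observing that its $H$-adjoint is the $H$-realisation of $L(\lambda)^\ast = L(-\lambda)$, and applying the same a priori estimate at $-\lambda$ to conclude that both $L(\lambda)$ and its adjoint are bounded below; a closed-range argument (e.g.\ \cite[Theorem 2.20]{Bre11book}) then gives surjectivity, and injectivity plus the lower bound yields $\|L(\lambda)^{-1}\|_{\L(H)} \leq F(\lambda)$. Either adopt this duality argument, or, alternatively, prove your estimate uniformly for $p = \eps + \i\lambda$ with $\eps>0$ small and pass to the limit using $(p-A_D)^{-1}$ for $\re p > 0$; as written, the existence step is missing.
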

\begin{proof}
For brevity we will write
\begin{equation}
L(\lambda) \triangleq L + \i \lambda DD^\ast - \lambda^2, \quad
R(\lambda) \triangleq  ((1 + \i \lambda)^2 + L)^{-1}, \quad  \lambda \in \R.
\end{equation}
Observe that  $L(\lambda) \in \L(H_1, H_{-1})$, $L(\lambda)^\ast = L(- \lambda)$ and
 $R(\lambda) \in \L(H_{-1}, H_1)$ for all $\lambda \in \R$.
The first step of the proof consists in deriving an \emph{a priori} estimate.
Let $\lambda \in \R$ with $|\lambda| \geq \lambda_0$, and let $w \in H_1$ be such that $L(\lambda)w \in H$.
Define $v \triangleq w + \i \lambda R(\lambda)DD^\ast w$. Then
\begin{equation}
\begin{aligned}
\label{eq:L-lambda-v}
(L-\lambda^2)v &= (L-\lambda^2)w + \i \lambda (L + 1 + 2\i\lambda - \lambda^2)R(\lambda)DD^\ast w - \i \lambda (1 + 2 \i\lambda) R(\lambda)DD^\ast w \\ 
&= (L - \lambda^2)w + \i \lambda DD^\ast w - \i (1 + 2\i \lambda)\lambda R(\lambda) DD^\ast w = L(\lambda)w - \i (1 + 2\i\lambda) \lambda R(\lambda) DD^\ast w.
\end{aligned}
\end{equation}
A first consequence of \cref{eq:L-lambda-v} is that $Lv \in H$, i.e., $v \in H_2$.
Therefore, we may apply the Hautus test estimate \cref{eq:hautus} to $v$ and obtain, making further use of \cref{eq:L-lambda-v},
\begin{multline}
\label{eq:long-est-v}
\|v\|_H \leq M(\lambda) \|L(\lambda)w\|_H + M(\lambda)(1 + 2|\lambda|) \|\lambda R(\lambda)D\|_{\L(U, H)} \|D^\ast w\|_U \\ 
 + m(\lambda)\|D^\ast w\|_U + m(\lambda) \|\lambda D^\ast R(\lambda)D\|_{\L(U)} \|D^\ast w\|_{U}.
\end{multline}
Suppose now that $|\lambda| \geq 1$. We also have
$
\|w\|_H \leq \|v\|_H + \|\lambda R(\lambda)D\|_{\L(U, H)} \|D^\ast w\|_U
$,
thus
\begin{multline}
\label{eq:long-est-w}
\|w\|_H \leq M(\lambda) \|L(\lambda)w\|_H +  3|\lambda| M(\lambda) \|\lambda R(\lambda)D\|_{\L(U, H)} \|D^\ast w\|_U  \\ + m(\lambda)\big(1 + \|\lambda D^\ast R(\lambda)D\|_{\L(U)}\big) \|D^\ast w\|_U.
\end{multline}
On the other hand, 
taking the imaginary part of the identity
$
\langle L(\lambda) w, w \rangle_H = \|L^{1/2}w\|^2_H + \i \lambda \|D^\ast w\|^2_U - \lambda^2 \|w\|^2_H
$
and using the Cauchy--Schwarz inequality give
\begin{equation}
\label{eq:est-P-w}
|\lambda| \|D^\ast w\|_U^2\leq \|L(\lambda)w\|_{H}\|w\|_H,
 \qquad 
\|D^\ast w\|_U \leq |\lambda|^{-1/2} \|L(\lambda)w\|_H^{1/2} \|w\|_H^{1/2}. 
\end{equation}
Substituting \cref{eq:est-P-w} into \cref{eq:long-est-w} yields
\begin{multline}
\label{eq:long-est-w-bis}
\|w\|_H \leq M(\lambda) \|L(\lambda)w\|_H +3|\lambda|^{1/2}M(\lambda) \|\lambda R(\lambda)D\|_{\L(U, H)} \|L(w)\|^{1/2}_H \|w\|^{1/2}_H \\ + m(\lambda) \big(1 + \|\lambda D^\ast R(\lambda)D\|_{\L(U)}\big) \|L(\lambda)w\|_H^{1/2} \|w\|_H^{1/2}.
\end{multline}
We may use Young's inequality and deduce from \cref{eq:long-est-w-bis} that
\begin{equation}
\label{eq:lambda-apriori-bF}
 \|w\|_H \leq \left(
4M(\lambda) + 18 |\lambda| M(\lambda)^2 \|\lambda R(\lambda)D\|_{\L(U, H)}^2 + 2 m(\lambda)^2  \big(1 + \|\lambda D^\ast R(\lambda)D\|_{\L(U)}\big)^2
\right) \|L(w)\|_H.
\end{equation}
Now we shall prove that $L(\lambda)^{-1}$ is well-defined in $\L(H)$ for large $|\lambda|$ and give an estimate of its operator norm. Denote by $F(\lambda)$ the sum of terms in the outermost parentheses in \cref{eq:lambda-apriori-bF}. At this point we have proved that,
 for arbitrary $\lambda \in \R$ with $|\lambda| \geq \max\{\lambda_0, 1\}$ and $w \in H_1$ such that  $L(\lambda)w \in H$, $
\|w\|_H \leq F(\lambda) \|L(\lambda) w\|_H$. In particular,
 $\|L(-\lambda)w\|_H \geq  F(-\lambda)\|w\|_H$ for all $w \in H_1$ such that $L(-\lambda)w \in H$. Consider $L(\lambda)$ as an unbounded operator on $H$. One may check
that the $H$-realisation of $L(\lambda)$ is a (densely defined) closed operator; furthermore, its adjoint is the $H$-realisation of $L(\lambda)^\ast$. Since $L(-\lambda) = L(\lambda)^\ast$ and $F(- \lambda) > 0$,
we have also proved that $L(\lambda)^\ast$ is bounded below. Therefore, by \cite[Theorem 2.20]{Bre11book}, $L(\lambda) : \dom(L(\lambda))\to H$ is surjective. We moreover have $\|L(\lambda)w\|_H \geq  F(\lambda) \|w\|_H$ for all $w \in \dom(L(\lambda))$, which means that $L(\lambda)$ is also injective (as $F(\lambda) > 0$) and $\|L(\lambda)^{-1}\|_{\L(H)} \leq F(\lambda)$.
Finally we use \cref{it:reg-eta-SO} to give a more explicit estimate. Recalling \cref{eq:norm-P-Past,eq:passive-input-TF} from the proof of \cref{theo:collocated-SO}, we have
\begin{equation}
\| \lambda R(\lambda)D\|^2_{\L(U, H)} = O(|\lambda|^{2\eta}), \quad  \|\lambda D^\ast R(\lambda)D\|^2_{\L(U)} = O(|\lambda|^{4\eta}), \quad \lambda \in \R, \quad |\lambda| \to +\infty,
\end{equation}
which, with crude estimates, leads to $F(\lambda) = O(|\lambda|^{1 + 4\eta}(M(\lambda)^2 + m(\lambda)^2))$ and completes the proof.
\end{proof}


The second-order resolvent estimate of \cref{prop:SO-res} is the main ingredient for obtening quantified energy decay of solutions to \cref{eq:SO-feedback}, as shown in the next theorem.
At this stage, the technical machinery that we need in order to conclude is already contained in the literature \cite{Leb96proceedings,BorTom10,AnaLea14,ChiPau23,KleWan23arxiv},
so we will just sketch the arguments.
\begin{theo}[Non-uniform energy decay rates]
\label{theo:decay-rate-w}
In addition to the hypotheses \cref{it:hautus,it:reg-eta-SO} of \cref{prop:SO-res}, suppose that
\begin{enumerate}[label=(\arabic*),resume=energy]
   \item \label{it:compact}\emph{(Compact resolvent.)} For some (hence all) $\mu \in \rho(L)$, the operator $(L - \mu)^{-1}$ is compact on $H$;
   \item \emph{(Low-frequency unique continuation.)} \label{it:LFUCP} For all $\lambda \in \R$ with $0 < |\lambda| < \lambda_0$, if $w \in H_1$ satisfies $w \not = 0$ and $(L - \lambda^2)w = 0$, then $D^\ast w \not = 0$;
   \item \emph{(Polynomial growth.)} \label{it:pol-mm} The functions $m$ and $M$ are of the form $M(\lambda) = M_0(1 + |\lambda|^\alpha)$ and $m(\lambda) = m_0(1 + |\lambda|^\beta)$ for some $\alpha, \beta \geq 0$ and $m_0, M_0 \geq 1$.
\end{enumerate}
Let $(w_0, w_1) \in H_1 \times H_1$ be initial data such that $DD^\ast w_1 + Lw_0 \in H$, i.e., $(w_0, w_1) \in \dom(A_D)$.
Then, the corresponding (classical) solution $w$ to \cref{eq:SO-feedback} satisfies
\begin{equation}
\label{eq:decay-rate-w}
E(t; w) =
o(t^{-1/(1 +2\eta +\alpha + \beta)}), \quad t \to + \infty.
\end{equation}
Without assumptions \cref{it:compact,it:LFUCP}, the same conclusion is valid if we assume instead that $0 \in \rho(L)$ and the non-uniform Hautus test \cref{it:hautus} holds for all frequencies $\lambda \in \R$.
\end{theo}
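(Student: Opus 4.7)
The plan is to combine the abstract second-order resolvent estimate of \cref{prop:SO-res} with the Borichev--Tomilov theorem on polynomial decay of operator semigroups. Two ingredients are required: (i) the inclusion $\i\R \subset \rho(A_D)$, and (ii) a polynomial bound on $\|(\i\lambda - A_D)^{-1}\|_{\L(H_1\times H)}$ at high frequencies.

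I would first verify (i). In the restricted setting ($0 \in \rho(L)$ and the Hautus test holding on all of $\R$), the semigroup $\sg{S^D}$ is contractive on $H_1 \times H$ so $A_D$ is dissipative; the Hautus test directly prevents purely imaginary eigenvalues, and surjectivity of $\i\lambda - A_D$ follows from the existence of $L(\lambda)^{-1}$ granted by \cref{prop:SO-res}. In the general setting, hypothesis \cref{it:compact} ensures compactness of the resolvent of $A_D$, so that $\sigma(A_D)$ is purely point, and \cref{it:hautus,it:LFUCP} jointly rule out all purely imaginary eigenvalues (the case $\lambda = 0$ being handled separately via \cref{it:LFUCP} and the structure of $\ker A_D$).

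For (ii), given $(f,g) \in H_1 \times H$, I would write $(w_0, w_1) \triangleq (\i\lambda - A_D)^{-1}(f, g)$ in the form $w_1 = \i\lambda w_0 - f$ and
\begin{equation}
\label{eq:plan-Lresolvent}
L(\lambda)\, w_0 = g + \i\lambda f + DD^\ast f, \qquad L(\lambda) \triangleq L + \i \lambda DD^\ast - \lambda^2.
\end{equation}
Under \cref{it:pol-mm}, \cref{prop:SO-res} provides $\|L(\lambda)^{-1}\|_{\L(H)} = O(|\lambda|^{1+4\eta}(|\lambda|^{2\alpha} + |\lambda|^{2\beta}))$. To convert this into a bound on $\|(w_0, w_1)\|_{H_1\times H}$ with the correct exponent, I would exploit the dissipation identities
\begin{equation}
\label{eq:plan-diss}
\re \langle L(\lambda) w_0, w_0 \rangle_H = \|L^{1/2} w_0\|_H^2 - \lambda^2 \|w_0\|^2_H, \qquad \im \langle L(\lambda) w_0, w_0 \rangle_H = \lambda \|D^\ast w_0\|_U^2,
\end{equation}
to trade surplus powers of $|\lambda|$ against factors of $\|D^\ast w_0\|_U$, along the lines of the resolvent manipulations of \cite{ChiPau23,KleWan23arxiv}. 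A careful balancing then yields
\begin{equation}
\label{eq:plan-final-res}
\|(\i\lambda - A_D)^{-1}\|_{\L(H_1 \times H)} = O\big(|\lambda|^{2(1 + 2\eta + \alpha + \beta)}\big), \quad |\lambda| \to + \infty.
\end{equation}

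With (i) and \cref{eq:plan-final-res} in hand, the Borichev--Tomilov theorem \cite{BorTom10}, together with its little-$o$ refinements available in the Hilbert-space setting, gives $\|S^D_t A_D^{-1}\|_{\L(H_1 \times H)} = o(t^{-1/(2(1+2\eta+\alpha+\beta))})$ as $t \to + \infty$. Squaring this rate and observing that, for classical data $(w_0, w_1) \in \dom(A_D)$, $2 E(t;w)$ equals (up to norm equivalence) $\|S^D_t A_D^{-1} A_D(w_0, w_1)\|^2_{H_1 \times H}$, produces the decay estimate announced in \cref{eq:decay-rate-w}. The main obstacle is the passage from $\|L(\lambda)^{-1}\|_{\L(H)}$ to the first-order bound \cref{eq:plan-final-res}: a naive substitution in \cref{eq:plan-Lresolvent} loses several factors of $|\lambda|$, and both the surplus $|\lambda|$ arising from $w_1 = \i\lambda w_0 - f$ and the fact that $DD^\ast f \in H_{-1} \setminus H$ in general must be reabsorbed via \cref{eq:plan-diss} to reach the sharp exponent.
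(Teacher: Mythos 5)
Your overall strategy (second-order resolvent bound from \cref{prop:SO-res} $\Rightarrow$ first-order resolvent bound with one extra power of $|\lambda|$ $\Rightarrow$ Borichev--Tomilov $\Rightarrow$ squared semigroup rate for classical data) is the same as the paper's, and in the restricted case where $0 \in \rho(L)$ and the Hautus test holds on all of $\R$ your argument essentially coincides with it. The conversion step you worry about is handled in the paper by the inequality $\|(\i \lambda - A_D)^{-1}\|_{\L(X)} \leq K' + K'|\lambda|\,\|(L + \i\lambda DD^\ast - \lambda^2)^{-1}\|_{\L(H)}$ (quoted from \cite{KleWan23arxiv}), which already yields the exponent $2(1+2\eta+\alpha+\beta)$ you aim for, so that part of your plan is sound in spirit.

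The genuine gap is in your step (i) for the general case. When $0 \in \sigma(L)$ — which is exactly the situation the general hypotheses are designed for, e.g.\ the Neumann wave equation where constants lie in $\ker L$ — the operator $A_D$ has a nontrivial kernel (take $(c,0)$ with $Lc=0$), so $\i\R \subset \rho(A_D)$ is simply false, and \cref{it:LFUCP} cannot rescue you: it deliberately excludes $\lambda = 0$. Consequently you cannot apply Borichev--Tomilov to $A_D$ on $H_1 \times H$, and your final identification of $2E(t;w)$ with the squared norm of $S^D_t(w_0,w_1)$ also fails, since the energy is only a seminorm vanishing on $\ker(A_D)$ and is not equivalent to the $H_1\times H$ norm when $0 \in \sigma(L)$. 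The missing ingredient is a spectral decomposition: one must show (using compactness of $(L-\mu)^{-1}$; this is delicate when $D \in \L(U,H_{-1})$ is maximally unbounded, cf.\ \cite[Lemma 3.4]{KleWan23arxiv}) that $0$ is an isolated eigenvalue of $A_D$, split $X = \ker(A_D) \oplus \mathring{X}$ via the Riesz projection, renormalise $\mathring{X}$ with the energy seminorm so that the restricted semigroup is contractive, verify $\i\R \subset \rho(\mathring{A}_D)$ via the Hautus test and unique continuation, and only then apply Borichev--Tomilov to $\mathring{A}_D$. Relatedly, your assertion that compactness of $(L-\mu)^{-1}$ "ensures compactness of the resolvent of $A_D$" is unjustified in the abstract setting with unbounded $D$ and is not what is needed: the paper uses compactness of the resolvent of $L$ only for the isolatedness of $0$ in $\sigma(A_D)$, and excludes nonzero imaginary spectrum of $\mathring{A}_D$ through the unique continuation hypotheses rather than through discreteness of the spectrum.
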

\begin{proof}[Proof (sketch)]
If $0$ lies in the spectrum of $L$, assuming that $L$ has compact resolvent, one can show that $0$ is an isolated eigenvalue of $A_D$; see \cite[Lemma 3.4]{KleWan23arxiv}. Note that the argument is particularly delicate when $D$ has maximal unboundedness, i.e., when $D \not \in \L(U, H_{\eps-1})$ for any $\eps > 0$. This allows us to decompose, via Riesz projections, the phase space $X = H_1 \times H$ into the (in general, non-orthogonal) direct sum of $\ker(A_D)$ and a suitable (topological) complement $\mathring{X}$. The subspace $\mathring{X}$ can be renormalised with the energy seminorm (that is, the square root of the expression in \cref{eq:energy-semi}) and then
$\sg{S^D}$ restricted to $\mathring{X}$ becomes a contraction semigroup; see \cite[Lemma 3.6]{KleWan23arxiv} for more details. This decomposition is standard in the context of bounded control operator; see, e.g., \cite[Section II.4]{AnaLea14}. In short, polynomial energy decay of classical solutions to \cref{eq:SO-feedback} is  equivalent to polynomial stability in $\mathring{X}$ of the restricted semigroup. By \cite[Theorem 2.4]{BorTom10}, the latter is characterised by the growth in $\L(\mathring{X})$-norm of $(\i \lambda - \mathring{A}_D)^{-1}$  as $\lambda \in \R$, $\lambda \to + \infty$. Here the unique continuation property guarantees that $\i\lambda$ lies in the resolvent set of $\mathring{A}_D$ for all $\lambda \in \R$, where $\mathring{A}_D$ is the restriction of $A_D$ on $\mathring{X}$; see \cite[Lemma 3.11]{KleWan23arxiv}. Furthermore, there exist $\lambda_1, K, K' > 0$ such that, for all $\lambda \in \R$ with $|\lambda| \geq \lambda_1$,
\begin{equation}
\|(\i \lambda - \mathring{A}_D)^{-1}\|_{\L(\mathring{X})}  \leq K  \|(\i \lambda - {A}_D)^{-1}\|_{\L({X})} \leq K' + K' |\lambda|\|(L+\i \lambda DD^\ast - \lambda^2)^{-1}\|_{\L(H)};
\end{equation}
see for instance \cite[Lemmas 3.6 and 3.9]{KleWan23arxiv}.
In particular, by \cref{prop:SO-res},
\begin{equation}
\label{eq:res-growth-AP}
\|(\i \lambda - \mathring{A}_D)^{-1}\|_{\L(\mathring{X})} = O(|\lambda|^{2 + 4\eta + 2\alpha + 2\beta}), \quad \lambda \in \R, \quad |\lambda| \to + \infty,
\end{equation}
which in turn leads to the energy decay rate \cref{eq:decay-rate-w} for initial data $(w_0, w_1)$ in $\dom(A_D)$. 
In the case that $0 \in \rho(L)$, the energy and the squared norm in $X$ are proportional, and $0 \in \rho(A_D)$. When $L$ has compact resolvent, the above arguments directly apply to $A_D$ and the result follows. In the absence of compact resolvent for $L$ but under the additional assumption that the non-uniform Hautus test \cref{eq:hautus} holds for all frequencies $\lambda \in \R$,
\cite[Proposition 3.10]{ChiPau23}  gives $\i \R \subset \rho(A_D)$ and the resolvent growth \cref
{eq:res-growth-AP} for $A_D$.
Again, the energy decay rate \cref{eq:decay-rate-w}  follows from \cite[Theorem 2.4]{BorTom10}.
\end{proof}
\begin{rem}
The assumption \cref{it:pol-mm} that $m$ and $M$ are polynomials is included mostly for the sake of simplicity. 
Using \cite[Theorem 3.2]{RozSei19} instead of \cite[Theorem 2.4]{BorTom10}, one can consider functions $m$ and $M$ that are continuous, non-decreasing and \emph{of positive increase} (and obtain an energy decay rate involving suitable inverses of $m$ and $M$). Note however that, in this more general setting, decay rates are given in ``big-O'', as opposed to the ``small-o'' improvement of \cite{BorTom10} for the scale of polynomial growths.
\end{rem}


\subsection{The wave equation with Neumann boundary damping}

Let $\Omega$ be a bounded Lipschitz domain of $\R^d$, $d \geq 2$.
Fix some function  $b \in L^\infty(\partial \Omega)$ with $b \geq 0$.
We now consider the feedback problem
\begin{subequations}
\label{eq:neumann-wave-feedback}
\begin{align}
&(\partial^2_t - \Delta)w = 0&&\mbox{in}~\Omega \times (0, +\infty), \\
&\partial_{\vec{n}} w = - b^2 \partial_t w&&\mbox{on}~\partial \Omega \times (0, +\infty), \\
&(w, \partial_t w)|_{t =0} = (w_0, w_1) &&\mbox{in}~\Omega.
\end{align}
\end{subequations}
In what follows, the (continuous, self-adjoint) multiplication operator on $L^2(\partial \Omega)$ associated with $b$ is denoted by the same letter. We may  recast \cref{eq:neumann-wave-feedback} in the abstract framework of \cref{sec:stability}:
\begin{equation}
\ddot{w} + \gamma^\ast b^2 \gamma \dot{w} - \Delta_N w = 0, \quad w(0) = w_0, \quad \dot{w}(0) = w_1.
\end{equation}
 Solutions to \cref{eq:neumann-wave-feedback} give rise to a strongly continuous semigroup on the phase space $H^1(\Omega) \times L^2(\Omega)$. The energy is given by
\begin{equation}
E(t; w) =  \frac{1}{2} \int_\Omega |\partial_t w(x, t)|^2 + \|\vec{\nabla} w(x, t)\|^2 \, \d x, \quad \mbox{$w$ solves \cref{eq:neumann-wave-feedback}}.
\end{equation}

\begin{prop}[\emph{A priori}
 decay rates under non-uniform Hautus test] Let $\Omega$ and $\eta$ be as in \cref{eq:reminder-loss}.
\label{theo:apriori-decay-waves} Assume that there exist $m_0, M_0 > 0$ and $\alpha,\beta,\lambda_0 \geq 0$ such that
\begin{equation}
\label{eq:hautus-waves}
\|w\|_{L^2(\Omega)} \leq M_0(1 + |\lambda|^\alpha) \|(\Delta - \lambda^2)w\|_{L^2(\Omega)} + m_0(1 + |\lambda|^\beta) \|b w|_{\partial \Omega}\|_{L^2(\partial\Omega)}
\end{equation}
for all $w \in H^2(\Omega)$ and $\lambda \in \R$ with $|\lambda| \geq \lambda_0$. Then, for all initial data $(w_0, w_1) \in H^1(\Omega) \times H^1(\Omega)$ such that $\Delta w_0 \in L^2(\Omega)$ and $\partial_{\vec{n}}w_0 = - b w_1$ on $\partial \Omega$, the corresponding solution $w$ to \cref{eq:neumann-wave-feedback} satisfies
\begin{equation}
E(t; w) = o(t^{-1/(1 +2\eta +\alpha + \beta)}), \quad t \to + \infty.
\end{equation}
\end{prop}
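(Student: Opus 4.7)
The strategy is to cast \cref{eq:neumann-wave-feedback} into the abstract second-order form \cref{eq:SO-feedback} and verify the hypotheses of \cref{theo:decay-rate-w}. Concretely, set $H = L^2(\Omega)$, $L = -\Delta_N$, $U = L^2(\partial\Omega)$, and let $b$ also denote the (bounded, self-adjoint) multiplication operator it induces on $L^2(\partial\Omega)$. Define $D \triangleq \gamma^\ast b \in \L(U, H_{-1})$, so that $D^\ast = b\gamma \in \L(H_1, U)$ and $DD^\ast = \gamma^\ast b^2 \gamma$; the variational formulation of \cref{eq:neumann-wave-feedback} then reads $\ddot{w} + \gamma^\ast b^2\gamma \dot{w} - \Delta_N w = 0$, which is exactly \cref{eq:SO-feedback} in this setting, and the energy $E(t;w)$ coincides (up to an equivalent norm on $H_1 = H^1(\Omega)$) with \cref{eq:energy-semi}.

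I would then verify \cref{it:hautus,it:reg-eta-SO,it:compact,it:LFUCP,it:pol-mm} of \cref{theo:decay-rate-w}. Hypothesis \cref{it:hautus}, together with the polynomial growth \cref{it:pol-mm}, is exactly the assumption \cref{eq:hautus-waves} since $D^\ast w = bw|_{\partial\Omega}$ and $L - \lambda^2 = -\Delta - \lambda^2$ on $\dom(\Delta_N)$. For \cref{it:reg-eta-SO}, observe that
\begin{equation*}
\|D^\ast((1+\i\lambda)^2 + L)^{-1}D\|_{\L(U)} \leq \|b\|_\infty^2 \,\|\gamma((1+\i\lambda)^2 - \Delta_N)^{-1}\gamma^\ast\|_{\L(L^2(\partial\Omega))},
\end{equation*}
which is $O(|\lambda|^{2\eta-1})$ as $|\lambda|\to+\infty$ by \cref{coro:trace-TF} with $\eta$ chosen according to \cref{eq:reminder-loss}. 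Hypothesis \cref{it:compact} is immediate: the Neumann Laplacian on a bounded Lipschitz domain has compact resolvent by the Rellich--Kondrachov embedding $H^1(\Omega) \hookrightarrow L^2(\Omega)$.

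Hypothesis \cref{it:LFUCP} is the only nontrivial classical step. Suppose $w \in H^1(\Omega)\setminus \{0\}$ satisfies $-\Delta w = \lambda^2 w$ with $0 < |\lambda| < \lambda_0$, $\partial_\vn w = 0$ and $bw|_{\partial\Omega}=0$. Note that $\{b > 0\}$ has positive measure in $\partial \Omega$; otherwise the Hautus inequality \cref{eq:hautus-waves} would reduce to an \emph{a priori} bound on the Helmholtz operator that fails near any Neumann eigenvalue $\geq \lambda_0^2$, contradicting compactness of the resolvent. Since $w$ is real-analytic up to $\partial \Omega$ (by elliptic regularity with constant coefficients) and has vanishing Cauchy data on the nontrivial open set $\{b > 0\}$, Holmgren's uniqueness theorem forces $w \equiv 0$ in a neighbourhood of $\{b>0\}$ and then globally by unique continuation, a contradiction.

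All five hypotheses being verified, \cref{theo:decay-rate-w} yields the claimed decay rate for all initial data in $\dom(A_D)$. It remains to match the domain: from the definition of $A_D$ in \cref{sec:stability}, $(w_0,w_1)\in \dom(A_D)$ iff $(w_0,w_1)\in H^1(\Omega)\times H^1(\Omega)$ and $\gamma^\ast b^2\gamma w_1 - \Delta_N w_0 \in L^2(\Omega)$, which (by testing against $H^1(\Omega)$ and using Green's formula) is equivalent to $\Delta w_0 \in L^2(\Omega)$ together with the compatibility condition $\partial_\vn w_0 = -b^2 w_1$ in $H^{-1/2}(\partial\Omega)$ (hence in $L^2(\partial\Omega)$ by the regularity of $b^2 w_1$). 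This is precisely the class of initial data in the statement. The main obstacle is the verification of \cref{it:LFUCP}; everything else is a direct translation between the concrete PDE formulation and the abstract framework.
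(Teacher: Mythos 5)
Your proposal follows the paper's route almost verbatim: recast \cref{eq:neumann-wave-feedback} with $D=\gamma^\ast b$, invoke \cref{theo:decay-rate-w}, get compact resolvent from the Rellich embedding on the bounded Lipschitz domain, get the transfer-function hypothesis from \cref{coro:trace-TF} together with $\|b\|_\infty<\infty$ (your exponent $O(|\lambda|^{2\eta-1})$ is the right one), and read off the Hautus and polynomial-growth hypotheses from \cref{eq:hautus-waves}. The explicit identification of the stated class of initial data with $\dom(A_D)$ via Green's formula is a welcome addition that the paper leaves implicit.

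The one place where you go beyond the paper -- trying to \emph{derive} the low-frequency unique continuation hypothesis \cref{it:LFUCP} from the assumptions -- is where your argument breaks. First, your contradiction argument only yields that $\{b>0\}$ has positive surface measure, and a set of positive measure need not contain any relatively open subset of $\partial\Omega$; Holmgren's theorem requires vanishing Cauchy data on a relatively open, non-characteristic piece of a hypersurface, so the step ``positive measure $\Rightarrow$ Holmgren applies'' does not go through (unique continuation from boundary sets of merely positive measure is a genuinely deeper matter). Second, the claim that a Neumann eigenfunction is ``real-analytic up to $\partial\Omega$'' is false for a Lipschitz, or even smooth non-analytic, boundary; only interior analyticity is available. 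The standard repair, and what makes the paper's one-line appeal to John--Holmgren legitimate in the intended application \cref{theo:wave-neumann-stab}, is to use that $b\geq b_0>0$ a.e.\ on a nonempty \emph{open} subset $\Op$ of $\partial\Omega$: then $w=\partial_\vn w=0$ a.e.\ on $\Op$, one extends $w$ by zero across $\Op$ to a slightly larger domain, checks via Green's formula that the extension still solves $\Delta w+\lambda^2 w=0$ near $\Op$, and concludes $w\equiv 0$ from interior analyticity (or classical unique continuation) and connectedness. If you want the proposition at the stated level of generality (arbitrary $b\in L^\infty$, $b\geq 0$), the honest course is to treat \cref{it:LFUCP} as an assumption to be verified in each application rather than a consequence of \cref{eq:hautus-waves}. (A cosmetic point, inherited from the paper: the operator in \cref{eq:hautus-waves} should be read as the Helmholtz operator $\Delta+\lambda^2$, i.e.\ $-(L-\lambda^2)$, for the identification with the abstract test \cref{eq:hautus} to be literal.)
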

\begin{proof}
This is mostly a direct application of \cref{theo:decay-rate-w} but we have some hypotheses to check.  Here, $\Omega$ is assumed to be bounded:
by, e.g., \cite[Theorem 1.4.3.2]{Gri85book}, $H^1(\Omega)$ is compactly embeddeded in $L^2(\Omega)$, which implies that
 the Neumann Laplacian $\Delta_N$ has  compact resolvent.
\Cref{it:LFUCP} in the theorem boils down to a standard application of the John--Holmgrem theorem on unique continuation across non-characteristic hypersurfaces for differential operators with analytic coefficients; see, e.g. \cite[Uniqueness Theorem, Section 5]{Joh82book}. Now, condition \cref{it:reg-eta-SO} in \cref{prop:SO-res} requires an estimate of the high-frequency growth of $\lambda \to b \gamma ((1 + \i \lambda)^2 - \Delta_N)^{-1}\gamma^\ast b$ in $\L(L^2(\partial\Omega))$-norm, but it is clear that, for all  $\lambda \in \R$,
\begin{equation}
\label{eq:bTF-TF}
\|b \gamma ((1 + \i \lambda)^2 - \Delta_N)^{-1}\gamma^\ast b\|_{\L(L^2(\partial \Omega))} \leq \|b\|^2_{\L(L^2(\partial \Omega))} \|\gamma ((1 + \i \lambda)^2 - \Delta_N)^{-1}\gamma^\ast\|_{\L(L^2(\partial \Omega))},
\end{equation}
and by \cref{coro:trace-TF} the right-hand side of \cref{eq:bTF-TF} grows like $O(|\lambda|^{\eta-1})$ as $|\lambda| \to + \infty$.
\end{proof}

The remainder of the section is devoted to the special case where $\Omega$ is a rectangle in $\R^2$. In that case we are able to derive explicit energy decay rates by combining boundary observability properties already established in \cite{RamTak05,TenTuc09}  and our transfer function estimate \cref{eq:trace-TF}.
\begin{lemma}[Non-uniform Hautus test]
\label{lem:hautus-rectangle} Assume that $\Omega$ is a rectangle of $\R^2$ and let $\Op$ be a nonempty open subset of $\partial \Omega$.
There exist $m_0, M_0 > 0$ such that
\begin{equation}
\label{eq:hautus-rectangle}
\|w\|_{L^2(\Omega)} \leq M_0\|(\Delta - \lambda^2)w\|_{L^2(\Omega)} + m_0 \|w|_{\partial \Omega}\|_{L^2(\Op)}, \quad \lambda \in \R.
\end{equation}
\end{lemma}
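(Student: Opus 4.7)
The strategy is to identify \cref{eq:hautus-rectangle}---read in the sign convention of the abstract Hautus test \cref{it:hautus}, that is, as an estimate involving $(L - \lambda^2)w$ with $L = -\Delta_N$---with an exact boundary observability inequality for the Neumann Schrödinger group on the rectangle, and then to invoke Miller's equivalence theorem recalled in \cref{rem:sch-obs}. Set $L = -\Delta_N$ on $H = L^2(\Omega)$, and let $D^\ast : w \mapsto w|_\Op$ map into $U = L^2(\Op)$. It then suffices to prove (i) admissibility of $D^\ast$ as an observation operator for the Schrödinger group $\{e^{\i t L}\}_{t \in \R}$; and (ii) exact observability of this group through $D^\ast$.

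For (i), I would rely on \cite[Proposition~3.1]{RamTak05}. That result is stated there for the Dirichlet realisation of $-\Delta$ on a rectangle, but its proof---which expands solutions in the appropriate eigenbasis, applies Parseval's identity, and uses uniform $\ell^2$-boundedness of the boundary traces of the eigenbasis---transfers verbatim to the Neumann realisation, with the cosine eigenfunctions $\phi_{m,n}(x,y) \propto \cos(m\pi x/a)\cos(n\pi y/b)$ of $-\Delta_N$ replacing the sines.

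For (ii), I would use the boundary observability technology of \cite{RamTak05,TenTuc09}. Expanding the Schrödinger solution as $u(t)|_\Op = \sum_{m,n} c_{m,n} e^{-\i t \mu_{m,n}^2}\,\phi_{m,n}|_\Op$ with $\mu_{m,n}^2 = (m\pi/a)^2 + (n\pi/b)^2$, the eigenvalue sequence $\{\mu_{m,n}^2\}$ exhibits a cluster-gap structure to which a suitable version of Jaffard's theorem on non-harmonic Fourier series applies---or, more efficiently, the refined Ingham-type inequality of \cite[Theorem~1.4]{TenTuc09}. Combined with non-degeneracy of the traces $\phi_{m,n}|_\Op$ on any nonempty open subset $\Op$ of $\partial\Omega$, this produces the exact observability estimate
\begin{equation*}
\int_0^T \|u(t)|_\Op\|^2_{L^2(\Op)} \, \d t \geq c \, \|u_0\|^2_{L^2(\Omega)}
\end{equation*}
for some $T, c > 0$ and every $u_0 \in L^2(\Omega)$. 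Invoking \cite[Theorem~5.1]{Mil05} through (i) and (ii) then yields \cref{eq:hautus-rectangle} with constants $M_0, m_0$ independent of $\lambda \in \R$ (i.e., with $\alpha = \beta = 0$).

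The main technical point is the non-degeneracy of the eigenfunction traces on a ``one-sided'' open observation region: since the Neumann eigenbasis consists of products of cosines, restriction to a single side of the rectangle collapses one Fourier index and the resulting spatial traces fail to be linearly independent across eigenvalues. This is circumvented precisely by the time-frequency separation afforded by the Jaffard/Ingham-type inequality, which resolves modes within each collapsed cluster via their distinct Schrödinger frequencies $e^{-\i t \mu_{m,n}^2}$, and by a low-frequency unique continuation argument for finite cosine sums on $\Op$ to close the bottom of the spectrum---both of which are established in \cite{RamTak05,TenTuc09}.
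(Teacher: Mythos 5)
Your proposal is correct and follows essentially the same route as the paper: admissibility of the boundary trace for the Neumann Schrödinger group plus exact observability of that group through $\mathds{1}_\Op \gamma$, combined via Miller's equivalence \cite[Theorem 5.1]{Mil05} as recalled in \cref{rem:sch-obs}, yielding constant parameters $m_0, M_0$. The only difference is that the paper simply cites \cite[Theorem 1.1]{TenTuc09}, which already states the required exact observability on the rectangle, and \cite[Proposition 3.1]{RamTak05}, which is already formulated for the Neumann realisation, so neither your Ingham/Jaffard re-derivation of observability nor the Dirichlet-to-Neumann transfer of the admissibility argument is actually needed.
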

\begin{proof}
First, \cite[Theorem 1.1]{TenTuc09} states that the Schr\"odinger group generated by $- \i \Delta_N$ on $L^2(\Omega)$ is exactly observable with respect to the observation operator $\mathds{1}_\Op \gamma$: there exist $K, T > 0$ such that, letting $\Psi(t) \triangleq e^{-\i t \Delta_N} w_0$,
\begin{equation}
\label{eq:obs-shc-rect}
\int_\Omega |w_0(x)|^2 \, \d x  \leq K
\iint_{\Op \times (0, T)} | \Psi(x, t)|^2 \, \d \sigma \, \d t, \quad w_0 \in \dom(\Delta_N).
\end{equation}
Furthermore, according to \cite[Proposition 3.1]{RamTak05}, the observation operator $\mathds{1}_\Op \gamma$ is admissible for  $\{e^{-\i t\Delta_N}\}_{t \in \R}$ on $L^2(\Omega)$. Thus, as mentioned in \cref{rem:sch-obs}, we may apply \cite[Theorem 5.1]{Mil05} and directly deduce \cref{eq:hautus-rectangle} from 
 \cref{eq:obs-shc-rect}. 
\end{proof}

\begin{theo}[Energy decay rates for the rectangle] Assume that $\Omega$ is a rectangle in $\R^2$. Suppose also that there exist a positive constant $b_0$ and an open subset $\Op$ of $\partial \Omega$ such that $b \geq b_0$ a.e.\  on $\Op$.
Then, for all initial data $(w_0, w_1) \in H^1(\Omega) \times H^1(\Omega)$ such that $\Delta w_0 \in L^2(\Omega)$ and $\partial_{\vec{n}}w_0 = - b w_1$ on $\partial \Omega$, the corresponding solution $w$ to \cref{eq:neumann-wave-feedback} satisfies, for any fixed $\eps > 0$,
\begin{equation}
E(t; w) = o(t^{-2/3 + \eps}), \quad t \to + \infty.
\end{equation}
\end{theo}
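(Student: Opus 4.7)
The plan is to reduce the statement to a direct application of Proposition~\ref{theo:apriori-decay-waves}, after an elementary manipulation turning the observability estimate of \cref{lem:hautus-rectangle} into the damping-weighted non-uniform Hautus test required there.

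First, I would verify the non-uniform Hautus test for the pair $(\i(-\Delta_N), b\gamma)$. Since $b \in L^\infty(\partial\Omega)$ is nonnegative and satisfies $b \geq b_0 > 0$ a.e.\ on $\Op$, for any $w \in H^2(\Omega)$ one has the pointwise (hence integrated) control
\begin{equation}
\|w|_{\partial \Omega}\|^2_{L^2(\Op)} = \int_\Op |w|^2 \, \d \sigma \leq \frac{1}{b_0^2} \int_\Op b^2 |w|^2 \, \d \sigma \leq \frac{1}{b_0^2} \int_{\partial \Omega} b^2 |w|^2 \, \d \sigma = \frac{1}{b_0^2} \|b w|_{\partial \Omega}\|^2_{L^2(\partial \Omega)}.
\end{equation}
Plugging this into the estimate \cref{eq:hautus-rectangle} provided by \cref{lem:hautus-rectangle} gives
\begin{equation}
\|w\|_{L^2(\Omega)} \leq M_0 \|(\Delta - \lambda^2)w\|_{L^2(\Omega)} + \frac{m_0}{b_0} \|b w|_{\partial \Omega}\|_{L^2(\partial \Omega)}, \quad \lambda \in \R, \quad w \in H^2(\Omega).
\end{equation}
This is precisely the hypothesis \cref{eq:hautus-waves} of \cref{theo:apriori-decay-waves} with parameters $\alpha = \beta = 0$, valid for all $\lambda \in \R$ (so that any $\lambda_0 \geq 0$ works).

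Next, I would apply \cref{theo:apriori-decay-waves} itself. For a rectangle $\Omega \subset \R^2$, \cref{eq:reminder-loss} allows us to take $\eta = 1/4 + \delta$ for any (small) $\delta > 0$. The theorem then delivers
\begin{equation}
E(t; w) = o\!\left(t^{-1/(1 + 2\eta + \alpha + \beta)}\right) = o\!\left(t^{-1/(3/2 + 2\delta)}\right) = o\!\left(t^{-2/(3 + 4\delta)}\right), \quad t \to + \infty,
\end{equation}
for classical solutions $w$ of \cref{eq:neumann-wave-feedback}; the domain condition $(w_0, w_1) \in \dom(A_D)$ translates exactly into $w_0, w_1 \in H^1(\Omega)$, $\Delta w_0 \in L^2(\Omega)$ together with the compatibility condition $\partial_\vn w_0 = -b^2 w_1$ on $\partial \Omega$, as assumed.

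Finally, given any $\eps > 0$ in the statement, I would pick $\delta > 0$ small enough so that $2/(3 + 4\delta) \geq 2/3 - \eps$, which yields the announced bound $E(t; w) = o(t^{-2/3 + \eps})$. The only non-trivial ingredients in this chain are external to the present section, namely the Schr\"odinger observability on the rectangle from \cite{TenTuc09} (used in \cref{lem:hautus-rectangle}) and the sharp loss-of-derivative exponent $\eta = 1/4 + \delta$ from \cite{LasTri91} (which feeds the transfer function estimate \cref{coro:trace-TF}); in the present argument everything else is a short bookkeeping step.
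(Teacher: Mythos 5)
Your proposal is correct and follows essentially the same route as the paper: combine the observability estimate of \cref{lem:hautus-rectangle} (absorbing the lower bound $b \geq b_0$ on $\Op$ to get \cref{eq:hautus-waves} with $\alpha = \beta = 0$) with \cref{theo:apriori-decay-waves} for $\eta = 1/4 + \delta$, and let $\delta \to 0$ to reach any exponent $2/3 - \eps$. The paper's proof is just a terser version of the same bookkeeping.
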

\begin{proof}
 We  apply \cref{lem:hautus-rectangle} to $\Op$ and get \cref{eq:hautus-rectangle}, which we may rewrite in the form of \cref{eq:hautus-waves} since $b$ is bounded from below  on $\Op$. In the rectangle case, we may choose $\eta = 1/4 + \eps$, and the result  follows from \cref{theo:apriori-decay-waves}.
\end{proof}







\section{Concluding remarks}
\label{sec:conc}

In this section we give some comments and perspectives.

\begin{itemize}
  \item Our frequency-domain estimates \cref{eq:FQ-cond-C,eq:input-FQ,eq:FQ-IO} can  be refined with  explicit control over the real part of the Laplace variable. For instance, under the hypotheses of \cref{theo:inter-group} we actually have, for fixed (small) $\eps > 0$,
  \begin{equation}
  \label{eq:weiss-type}
  \|C(\sigma + \i \omega - A)^{-1}\|_{\L(X, Y)} \leq \frac{K(1 + |\omega|^\eta)}{(\sigma - \sigma_0 - \eps)^{1/2}}, \quad \sigma > \sigma_0 + \eps, \quad \omega \in \R.
  \end{equation}
  \Cref{eq:weiss-type} may be seen as a Weiss-type resolvent estimate with growth along vertical lines. On the other hand,
  via lifting to the space $X_\eta^\fra$ or $X_\eta^\inte$, techniques and problems related to the so-called Weiss property (see, e.g., \cite[Section 3.2]{JacPar04survey} or the literature review in \cite{PreSch24arxiv}) for observation operators extend to our framework.

  \item The question of admissibility is of course also relevant in the Banach space setting, and the exponent $p \geq 1$ when considering $L^p$-based control spaces constitutes an additional tuning parameter to obtain smoother solutions. For instance, for the heat equation with Neumann boundary condition, the space of solutions produced by boundary data $u \in L^p(0, T; L^2(\partial \Omega))$ lies in $L^2(\Omega)$ if and only if $p \geq 3/4$; see \cite{HaaKun07}. Extending our results to semigroups on Banach spaces would be an interesting but delicate endeavour. In the absence of a vector-valued Plancherel's theorem, many of the arguments in our proofs break down. What is more, in contrast with the quadratic, Hilbert space case, different interpolation techniques (e.g., real and complex interpolation) may give different (non-isomorphic) spaces, which adds a layer of complexity when building Sobolev scales for the semigroup data and inputs or outputs.

\end{itemize}


\appendix

\section{On Hilbert-valued Sobolev spaces}
\label{sec:add-hilb}

This section contains technical results which are, for the most part,
 generalisation to the vector-valued case of more or less classical facts about scalar-valued Sobolev spaces, as found for instance in \cite[Chapter 1]{LioMag68book}.
We do not claim any novelty here: closely related results may be found scattered across the literature, however, often in forms which do not meet our exact requirements. For example, \cite{Ama00,Ama19book} deal with more general (in particular, $L^p$-based, Banach-valued) function spaces but avoid the ``critical'' parameters for fractional order Sobolev spaces (in our case, $s \in 1/2 + \N$), which we need.
Note also that a number of simplifications are made owing to the one-dimensional variable in our setting. In what follows, given a Hilbert space $E$, we shall use without further comment that the trace map $u \mapsto u(0)$ is well-defined and continuous from $H^{1/2+\eps}(0, +\infty; E)$ into $E$ for any $\eps > 0$; see, e.g., \cite[Corollary 27]{Sim90}.
The first lemma is an implementation of the method of extension by reflection.
\begin{lemma}[Extension by reflection]
\label{lem:reflec-plus}
Let $E$ be a Hilbert space and $m \geq 1$ be an integer. There exists a linear map $\pi_m : L^2\hl{E} \to L^2(\R, E)$ that satisfies the following properties:
\begin{enumerate}[series=pi]
  \item\label{it:pi-ext} $\pi_m u = u$ a.e.\ on $(0, +\infty)$ for all $u \in L^2\hl{E}$;
  \item\label{it:pi-cont} $\pi_m \in \L(H^{s}\hl{E}, H^s(\R, E))$ for all $0 \leq s \leq m$;
  \item\label{it:pi-diff}There exists another linear map $\pi'_m : L^2\hl{E} \to L^2(\R, E)$ such that 
  \begin{subequations}
  \label{eq:pi-prime}
  \begin{align}
  \label{eq:cont-pi-prime}
  &\pi'_m \in  \L(H^{s}\hl{E}, H^s(\R, E)), \quad 0 \leq s \leq m-1, \\
  \label{eq:commut-ext}
  & (\d / \d t) \pi_m = \pi_m' (\d / \d t).
  \end{align}
  \end{subequations}
\end{enumerate}
\end{lemma}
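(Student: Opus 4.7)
The plan is to construct $\pi_m$ by a classical Seeley-type weighted reflection.  Fix distinct positive real numbers $\mu_1, \dots, \mu_{m+1}$ and let $\lambda_1, \dots, \lambda_{m+1}$ be the (unique) solution of the $(m+1) \times (m+1)$ Vandermonde system
\begin{equation}
\sum_{k=1}^{m+1} \lambda_k (-\mu_k)^j = 1, \quad j = 0, 1, \dots, m,
\end{equation}
which is solvable because the matrix $((-\mu_k)^j)_{j, k}$ is invertible.  For $u \in L^2\hl{E}$, set $\pi_m u(t) \triangleq u(t)$ for $t > 0$ and $\pi_m u(t) \triangleq \sum_{k=1}^{m+1} \lambda_k u(-\mu_k t)$ for $t < 0$.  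Property \cref{it:pi-ext} is immediate, and continuity $L^2\hl{E} \to L^2(\R, E)$ follows by applying the change of variable $s = -\mu_k t$ to each term of the sum and using the triangle inequality.

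Next I would establish continuity $H^m\hl{E} \to H^m(\R, E)$.  It is enough to argue on $u \in \C^\infty_c([0, +\infty), E)$ which is dense in $H^m\hl{E}$ (see \cite[Theorem 1.4.2.4]{Gri85book} applied coordinate-wise after reduction to the scalar case).  For such smooth $u$, each piece of $\pi_m u$ is smooth and compactly supported, and the one-sided derivatives at $t = 0$ satisfy, for $0 \leq j \leq m$,
\begin{equation}
\lim_{t \to 0^-} \dt[j]{\pi_m u}(t) = \sum_{k=1}^{m+1} \lambda_k (-\mu_k)^j u^{(j)}(0) = u^{(j)}(0) = \lim_{t \to 0^+} \dt[j]{\pi_m u}(t)
\end{equation}
by the matching condition.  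Thus $\pi_m u \in \C^m(\R, E)$ and its classical derivatives, which are compactly supported, coincide with its distributional ones up to order $m$; their $L^2$-norms are uniformly controlled by $\|u\|_{H^m\hl{E}}$ using the same change-of-variable argument as in the $L^2$-case applied to each $u^{(j)}$.  Boundedness for non-integer $0 < s < m$ is then obtained by interpolation between the pairs $L^2\hl{E} \to L^2(\R, E)$ and $H^m\hl{E} \to H^m(\R, E)$, using the definition of fractional order spaces by interpolation given in \cref{eq:def-inter} and the reiteration theorem.

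For \cref{it:pi-diff} I would define $\pi_m'$ by the same formula as $\pi_m$ but with coefficients $\tilde \lambda_k \triangleq - \lambda_k \mu_k$ in place of $\lambda_k$.  A direct computation on test functions $u \in \C^\infty_c([0, +\infty), E)$ shows that $(\d / \d t)\pi_m u = \pi_m'[(\d / \d t)u]$ pointwise, and this identity persists for $u \in H^1\hl{E}$ by density and continuity.  Finally, the key observation is that the new coefficients satisfy
\begin{equation}
\sum_{k=1}^{m+1} \tilde\lambda_k (-\mu_k)^j = \sum_{k=1}^{m+1} \lambda_k (-\mu_k)^{j+1} = 1, \quad j = 0, 1, \dots, m-1,
\end{equation}
i.e., $\pi_m'$ satisfies matching conditions up to order $m-1$.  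Repeating the argument of the previous paragraph with $m$ replaced by $m-1$ yields continuity of $\pi_m'$ from $H^s\hl{E}$ into $H^s(\R, E)$ for all $0 \leq s \leq m-1$.

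The main technical nuisance is the interpolation step in the intermediate paragraph: one must ensure that the scales appearing in \cref{eq:def-inter} on the half-line and on the full line interact correctly under the operator $\pi_m$, which is handled by the geometric interpolation inequality \cref{eq:geo-inter}.  The rest of the argument is essentially algebraic bookkeeping around the Vandermonde condition.
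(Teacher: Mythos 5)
Your construction is essentially the paper's: a weighted reflection whose coefficients solve a Vandermonde-type system, matching of one-sided derivatives at $t=0$ to get integer-order boundedness, interpolation for fractional $s$, and a companion map $\pi_m'$ obtained by multiplying the coefficients by the dilation factors so that differentiation commutes with the extension. The only structural difference is that you use $m+1$ coefficients with the conditions $\sum_k\lambda_k(-\mu_k)^j=1$ for $j=0,\dots,m$, whereas the paper takes $2m$ coefficients and imposes the conditions for $j=-m,\dots,m-1$; the extra negative exponents are not needed for this lemma but are what allows the \emph{same} map to be reused in \cref{lem:reflec-gen}, where the adjoint $\pi_m^\ast$ involves the coefficients $\alpha_k\beta_k^{-1}$ and continuity in negative-order norms is required. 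So your map proves the stated lemma, but would have to be modified if you later want \cref{it:pi-cont-neg,it:pi-diff-neg}.

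One step should be rerouted: you interpolate between the endpoints $L^2\hl{E}\to L^2(\R,E)$ and $H^m\hl{E}\to H^m(\R,E)$ and invoke reiteration to identify $[H^m\hl{E},L^2\hl{E}]_{1-s/m}$ with $H^s\hl{E}$. With the paper's definition \cref{eq:def-inter}, that identification on the half-line is not free: its usual proof exhibits the half-line scale as a retract of the full-line scale via an extension operator, i.e., via the very lemma you are proving, so as written the argument has a circular flavour. The fix is immediate and is what the paper does: your matching conditions already give $\pi_m\in\L(H^j\hl{E},H^j(\R,E))$ for \emph{every} integer $0\leq j\leq m$ (derivatives up to order $j-1$ match), and then you interpolate between consecutive integer orders, which lands exactly in the spaces defined by \cref{eq:def-inter} with no reiteration needed; the same remark applies to the fractional-order continuity of $\pi_m'$. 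The remaining points (density of smooth functions up to the boundary, the commutation identity extended from smooth $u$ by density) are fine.
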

\begin{proof}
The construction is similar to that of \cite[Lemma 12.2, Chapter 1]{LioMag68book}.
First, we claim that there exist real numbers $\alpha_1, \dots, \alpha_{2m}$ and $\beta_1, \dots, \beta_{2m}$ such that the $\beta_k$ are negative and
\begin{equation}
\label{eq:algebraic-alpha}
\sum_{k=1}^{2m} \alpha_k \beta_k^j = 1, \quad j = -m, \dots, m -1.
\end{equation}
{This follows from a simple linear algebra argument.} Then, for any $u \in L^2\hl{E}$, we define $\pi_m u$ a.e.\  in $\R$ by
\begin{equation}
[\pi_mu](t) \triangleq u(t) \quad \mbox{if}~t > 0, \quad [\pi_m u](t) \triangleq \sum_{k = 1}^{2m} \alpha_k u(\beta_k t) \quad \mbox{if}~t < 0.
\end{equation}
Clearly, $\pi_m$ is linear and continuous $L^2\hl{E} \to L^2(\R, E)$. Furthermore, when $u \in H^m\hl{E}$, we have
\begin{equation}
\frac{\d^j}{\d t^j}[\pi_mu](t) \triangleq \sum_{k=1}^{2m} \alpha_k \beta^j_k \frac{\d^j u}{\d t^j}(\beta_k t) \quad \mbox{for a.e.\ } t < 0, \quad j = 0, \dots, m.
\end{equation}
It follows from \cref{eq:algebraic-alpha} that
\begin{equation}
\frac{\d^j u}{\d t^j}(0) = \frac{\d^j}{\d t^j}[\pi_m u](0), \quad j = 0, \dots, m-1,
\end{equation}
which in turn allows us to prove that $\pi_m$ is in fact continuous $H^m\hl{E} \to H^m(\R, E)$. By interpolation, it must also be continuous $H^s\hl{E} \to H^s(\R, E)$ for all $0 \leq s \leq m$. Finally, for $u \in L^2\hl{E}$,  $\pi'_m u$ is defined a.e.\  by
\begin{equation}
\label{eq:def-pi-prime}
[\pi'_mu](t) \triangleq u(t) \quad \mbox{if}~t > 0, \quad [\pi'_m u](t) \triangleq  \sum_{k = 1}^{2m} \alpha_k \beta_k u(\beta_k t) \quad \mbox{if}~t < 0,
\end{equation}
so that the identity \cref{eq:commut-ext} holds. Again, continuity of $\pi_m' : L^2\hl{E}\to L^2(\R, E)$  is immediate, and
continuity $H^{m-1}\hl{E} \to H^{m-1}(\R, E)$
is obtained by making use of the algebraic condition \cref{eq:algebraic-alpha}, and finally \cref{eq:cont-pi-prime} follows by interpolation.
\end{proof}
In preparation for the characterisation of $H^s_0$-spaces, we establish the following lemma, which is a somewhat simplified version of \cite[Lemmas 11.1 and 11.2, Chapter 1]{LioMag68book}.
\begin{lemma}
\label{lem:D0-V}
Let $E$ be a Hilbert space and $s \geq 0$. If $0 \leq s \leq 1/2$, let $V \triangleq H^s(\R, E)$. If $s > 1/2$,
define $V$ as follows:
\begin{equation}
\label{eq:def-V-0}
V \triangleq \{ u \in H^{s}(\R, E) : u^{(j)}(0) = 0,~ j \in \N,~ j < s - 1/2\},
\end{equation}
where $u^{(j)} = (\d^j/ \d t^j) u$, equipped with the norm of $H^{s}(\R)$.  Denote by $\D_0$ the subspace of $\D(\R, E)$ comprised of all those $\varphi$ which vanish in a  neighbourhood of $0$. Then, $\D_0$ is dense in $V$.
\end{lemma}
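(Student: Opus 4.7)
I would employ a duality argument, which sidesteps the distinction between the regularity ranges $s \leq 1/2$ and $s > 1/2$ and which handles the critical values $s \in 1/2 + \N$ seamlessly. Since $\D_0 \subset V \subset H^s(\R, E)$ and $V$ is closed in $H^s(\R, E)$ (trivially for $s \leq 1/2$, and for $s > 1/2$ as the intersection of the kernels of the continuous trace maps $u \mapsto u^{(j)}(0)$ for integer $j < s - 1/2$), the Hahn--Banach theorem reduces the claim to the following: every continuous antilinear form on $H^s(\R, E)$ that annihilates $\D_0$ also annihilates $V$.

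By the Fourier--Plancherel identification \cref{eq:bessel-spaces}, the antidual of $H^s(\R, E)$ may be realised, up to equivalence of norms, as $H^{-s}(\R, E)$, itself a space of $E$-valued tempered distributions on $\R$. An element $T \in H^{-s}(\R, E)$ that annihilates $\D_0$ is an $E$-valued tempered distribution supported in the singleton $\{0\}$. The vector-valued Schwartz structure theorem for distributions supported at a point then yields a finite representation $T = \sum_{j = 0}^{N} \delta^{(j)} \otimes \alpha_j$ with coefficients $\alpha_j \in E^*$ and some $N \in \N$.

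The Bessel characterisation \cref{eq:bessel-spaces} further restricts which such combinations can actually live in $H^{-s}(\R, E)$. Because the Fourier transform of $\delta^{(j)}$ is proportional to $\omega \mapsto (\i \omega)^j$, the distribution $\delta^{(j)}$ belongs to $H^{-s}(\R)$ if and only if the integral $\int_\R (1 + \omega^2)^{-s} \omega^{2j} \, \d \omega$ converges, that is, if and only if $j < s - 1/2$. Only terms with integer index $j < s - 1/2$ may therefore appear in $T$, and its pairing against any $u \in V$ evaluates to a linear combination of the traces $u^{(j)}(0)$, $j < s - 1/2$, all of which vanish by the very definition of $V$. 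This completes the argument.

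The main subtlety will be the careful justification of the vector-valued Schwartz structure theorem for $E$-valued distributions, together with the precise identification of the antidual of $H^s(\R, E)$ with $H^{-s}(\R, E)$ and the explicit formula for how the tensor $\delta^{(j)} \otimes \alpha_j$ acts on smooth $E$-valued test functions. Once these two ingredients are in place, the rest of the proof reduces to the elementary Fourier moment count given above.
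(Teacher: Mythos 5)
Your proposal is correct and follows essentially the same route as the paper's proof: a Hahn--Banach duality reduction, the structure theorem for $E$-valued distributions supported at the origin, and the Fourier/Bessel moment count showing that only $\delta^{(j)} \otimes \alpha_j$ with $j < s - 1/2$ can occur in $H^{-s}$, whose pairing with elements of $V$ vanishes by the definition of $V$. The only cosmetic difference is that the paper represents the annihilating functional concretely by embedding $V$ into $L^2(\R, E) \times L^2(\R, E)$ via $u \mapsto (\F[u], |\cdot|^s \F[u])$ and applying Hahn--Banach there, which avoids the identification of the antidual of $H^s(\R, E)$ with $H^{-s}(\R, E)$ that you flag as the main subtlety of your version.
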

\begin{proof}
Using continuity $H^{1/2+\eps}(\R, E) \to E$ of the trace map, we see that $V$ is well-defined and in fact a closed subspace of $H^s(\R, E)$, making it a Hilbert space. Recalling the Fourier characterisation \cref{eq:bessel-spaces} of $H^{s}(\R, E)$,  the linear map $u \mapsto (\F[u], |\cdot|^{s}\F[u])$ is continuous $H \to L^2(\R) \times L^2(\R)$ and injective. This allows us to regard $V$ as a closed subspace of $L^2(\R, E) \times L^2(\R, E)$. In view of that embedding and by the Hahn--Banach theorem, any 
$u \in V^\ast$ extends to a continuous antilinear form on $L^2(\R, E) \times L^2(\R, E)$. Having identified $L^2(\R, E) \times L^2(\R, E)$ with its antidual, this means that, for every $u \in V^\ast$, there exists a (not unique, in general) pair $(f, g) \in L^2(\R, E) \times L^2(\R, E)$ such that
\begin{equation}
\label{eq:id-V-HB}
\langle u, v \rangle_{V^\ast, V} = \int_\R \langle f(\omega) + |\omega|^{s}g(\omega), \F[v](\omega)\rangle_E \, \d \omega, \quad v \in V.
\end{equation}
To show that the subspace $\D_0$ is dense in $V$, we shall prove that its orthogonal is trivial. Let $u \in V^\ast$ be such that 
$\langle u, \varphi\rangle_{V^\ast, V} = 0$. We find $(f, g)$ so that $u$ can be written as in \cref{eq:id-V-HB} and let $h \triangleq \F^{-1}[f +  |\cdot|^s g]$. Note that \emph{a priori} $h$ is defined as a Schwartz distribution in  $\S'(\R, E)$. Moreover $\D_0 \subset \S(\R, E)$. It follows from \cref{eq:id-V-HB} together with the distributional Parseval--Plancherel formula \cite[Proposition 1.7.4, Appendix]{Ama19book} that 
$
0 = \langle u, \varphi\rangle_{V^\ast, V} = \langle h, \varphi\rangle_{\S'(\R, E), \S(\R, E)}$
 for all $\varphi \in \D_0$, i.e., for all Schwartz functions $\varphi$ that vanish in a neighbourhood of $0$.
It follows that the support of $h$ (as a distribution) is contained in $\{0\}$, and there must therefore exist a finite number of vectors $h_0, \dots, h_k \in E$ such that 
\begin{equation}
\label{eq:vec-h-sum}
h = \sum_{j=0}^{k} \delta^{(j)} \otimes h_j,
\end{equation}
where $\delta^{(0)} = \delta$ is the Dirac mass at $0$, $\delta^{(j)} = (\d / \d t)\delta^{(j - 1)}$ for $j \geq 1$, and $\otimes$ denotes the tensor product; see \cite[Example 1.8.6(d), Appendix]{Ama19book}. Taking the Fourier transform of \cref{eq:vec-h-sum} yields $f(\omega) + |\omega|^{s}g(\omega) = \sum_{j=0}^k(\i \omega)^j h_j$ for a.e.\  $\omega \in \R$. Now, by construction of $h$, we also have
\begin{equation}
\label{eq:L2-weight-cond}
\frac{1}{1 + |\cdot|^s} \F[h]   \in L^2(\R, E), \quad \mbox{i.e.}, \quad \int_{\omega \in \R} \left\|\sum_{j=0}^k \frac{(\i \omega)^j}{(1 + |\omega|)^{s}} h_j \right\|^2 \d \omega < + \infty.
\end{equation}
The $L^2$-condition of \cref{eq:L2-weight-cond} implies that  $h_j = 0$  whenever $j \geq s - 1/2$. In particular, in the case $0 \leq s \leq 1/2$, we immediately obtain $h = 0$, hence $u = 0$ as required. As for the case $s > 1/2$, we let $v \in V$ and write, using \cref{eq:id-V-HB,eq:vec-h-sum},
\begin{equation}
\label{eq:zero-u-v}
\langle u, v \rangle_{V^\ast, V} = \sum_{\substack{j \in \N \\ j < s - 1/2}} (-1)^j \langle v^{(j)}(0), h_k\rangle_E = 0
\end{equation}
by \cref{eq:def-V-0}. \Cref{eq:zero-u-v} holds for arbitrary $v \in V$,  thus $u = 0$ and the proof is complete. 
\end{proof}

The next proposition comes as a straightforward consequence of \cref{lem:D0-V}.
\begin{prop}[Characterisation of $H^s_0$-spaces]\label{coro:charac-Hs-0}
Let $E$ be a Hilbert space. Then
\begin{subequations}
\label{eq:charac-Hs-0}
\begin{align}
&H^{s}_0\hl{E} = H^s\hl{E}, && 0 \leq s \leq 1/2, \\
&H^{s}_0\hl{E} = \{u \in H^{s}\hl{E}: u^{(j)}(0) = 0,~j \in \N,~ j < s - 1/2\}, && s > 1/2.
\end{align}
\end{subequations}
\end{prop}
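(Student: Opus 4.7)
The plan is to combine the extension-by-reflection operator $\pi_m$ from \cref{lem:reflec-plus} with the density of $\D_0$ in $V$ from \cref{lem:D0-V}, thereby reducing the question on the half-line to the whole-line setting already treated.

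For the necessity of the trace conditions when $s > 1/2$, I would take $u \in H^s_0\hl{E}$ and an approximating sequence $\varphi_n \in \D\hl{E}$. Given an integer $j$ with $j < s - 1/2$, the map $v \mapsto v^{(j)}(0)$ is continuous from $H^s\hl{E}$ into $E$, since differentiation is continuous $H^s\hl{E} \to H^{s-j}\hl{E}$ and the trace at $0$ is continuous $H^{s-j}\hl{E} \to E$ (as $s - j > 1/2$). Since each $\varphi_n$ vanishes in a neighbourhood of $0$, passage to the limit yields $u^{(j)}(0) = 0$.

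For the converse, and for the full statement when $s \leq 1/2$, I would fix an integer $m \geq s$ and let $u \in H^s\hl{E}$ satisfy the trace conditions of \cref{eq:charac-Hs-0} (vacuously if $s \leq 1/2$). By \cref{lem:reflec-plus}\,\cref{it:pi-cont}, $\pi_m u$ belongs to $H^s(\R, E)$; for each integer $j$ with $j < s - 1/2 \leq m - 1/2$, the derivative $(\pi_m u)^{(j)}$ lies in $H^{s-j}(\R, E)$ with $s - j > 1/2$, so its trace at $0$ is a well-defined element of $E$ that coincides with the one-sided limit from the right, namely $u^{(j)}(0) = 0$; hence $\pi_m u \in V$. \Cref{lem:D0-V} then supplies a sequence $\varphi_n \in \D_0$ with $\varphi_n \to \pi_m u$ in $H^s(\R, E)$. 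The restrictions $\varphi_n|_{(0, +\infty)}$ belong to $\D\hl{E}$ (they are compactly supported smooth functions vanishing near $0$) and converge to $u = \pi_m u|_{(0, +\infty)}$ in $H^s\hl{E}$ by the restriction property, proving $u \in H^s_0\hl{E}$.

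The only point that requires a brief independent justification is the identification, for $u \in H^s\hl{E}$, of the trace at $0$ of $(\pi_m u)^{(j)}$ on $\R$ with the one-sided trace of $u^{(j)}$ on $(0, +\infty)$. I would handle this by approximating $u$ by smooth functions that extend smoothly across $0$ (for which the identification is obvious) and using joint continuity of the relevant trace maps on $H^{s-j}(\R, E)$ and $H^{s-j}\hl{E}$. This is the main technical step, but it is short.
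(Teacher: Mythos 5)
Your proposal is correct and follows essentially the same route as the paper: reduce to the whole line via the reflection operator $\pi_m$ of \cref{lem:reflec-plus}, check that $\pi_m u$ lies in the space $V$ of \cref{lem:D0-V}, and restrict the approximating functions from $\D_0$ back to $(0,+\infty)$. You merely spell out two points the paper treats as clear, namely the continuity of $v \mapsto v^{(j)}(0)$ giving the necessity of the trace conditions, and the identification of the trace of $(\pi_m u)^{(j)}$ at $0$ with the one-sided trace of $u^{(j)}$ (which also follows from the derivative-matching property built into the construction of $\pi_m$).
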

\begin{proof}
Let $s \geq 0$ and $m \in \N$ with $s \leq m$. The inclusions from left to right in \cref{eq:charac-Hs-0} are clear and the inclusions from right to left amount to density of $\D\hl{E}$ in the sets on the right-hand side (with respect to the $H^s\hl{E}$-norm).  We will use an  extension map $\pi_m$ from \cref{lem:reflec-plus}. Let $u \in H^{s}\hl{E}$ if $0 \leq s \leq 1/2$ or, if $s > 1/2$, $u \in H^{s}\hl{E}$ such that $u^{(j)}(0) = 0$ for $j \in \N$ with $j < s - 1/2$.
 By \cref{lem:reflec-plus}, $\pi_m u \in H^{s}(\R, E)$, and by \cref{lem:D0-V}, there exists a sequence of test functions $\varphi_n \in \D(\R, E)$, with each $\varphi_n$ vanishing in a neighborhood of $0$, such that $\varphi_n \to \pi_m u$ in $H^{s}(\R, E)$. Then, for all $n$,  $\varphi_n$ also belongs (after restriction) to $\D\hl{E}$ and
\begin{equation}
\|u - \varphi_n\|_{H^{s}\hl{E}} = \|\pi_m u - \varphi_n\|_{H^{s}\hl{E}} \leq K \|\pi_m u - \varphi_n\|_{H^{s}(\R, E)},
\end{equation}
which proves that $\varphi_n \to u$ in $H^s\hl{E}$, as required.
\end{proof}
\begin{rem}
In the case  $s \not \in \N + 1/2$ we refer the reader to \cite[Section 1.4, Chapter VIII]{Ama19book} for related results.
\end{rem}
With \cref{coro:charac-Hs-0} in hand, we can establish a stronger version of \cref{lem:reflec-plus} which guarantees that the extension map is also continuous in negative order Sobolev norms.

\begin{lemma}[Extension by reflection, continued]
\label{lem:reflec-gen}
Let $E$ be a Hilbert space and $m \geq 1$ be an integer. There exists a linear map $\pi_m : L^2\hl{E} \to L^2(\R, E)$ that satisfies, in addition to the properties \cref{it:pi-ext,it:pi-cont,it:pi-diff} of \cref{lem:reflec-plus}:
\begin{enumerate}[resume=pi]
  \item\label{it:pi-cont-neg} $\pi_m \in \L(H^{-s}\hl{E}, H^{-s}(\R, E))$ for all $0 \leq s \leq m$;
  \item\label{it:pi-diff-neg} The map $\pi'_m$ of \cref{it:pi-diff} can be chosen so that $\pi_m' \in \L(H^{-s}\hl{E}, H^{-s}(\R, E))$ for all $0 \leq s \leq m+1$.
\end{enumerate}
\end{lemma}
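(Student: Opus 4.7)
The plan is to derive the negative-order continuity of $\pi_m$ from positive-order continuity of its formal adjoint in the $L^2$-pairing, and then to repeat the argument for $\pi'_m$. Given $u \in L^2(0,+\infty;E)$, which is dense in $H^{-s}(0,+\infty;E)$ for every $s \ge 0$, and a test function $\varphi \in \D(\R,E)$, a change of variable $\sigma = \beta_k t$ (using $\beta_k < 0$, so that $1/|\beta_k| = -1/\beta_k$) in the reflection part of the definition of $\pi_m$ yields
\begin{equation*}
\langle \pi_m u, \varphi\rangle_{L^2(\R,E)} = \langle u, \rho_m\varphi\rangle_{L^2(0,+\infty;E)}, \quad \rho_m\varphi(\sigma) := \varphi(\sigma) + \sum_{k=1}^{2m} \frac{\alpha_k}{|\beta_k|}\,\varphi(\sigma/\beta_k), \quad \sigma > 0.
\end{equation*}
Since $\sigma/\beta_k < 0$ for $\sigma > 0$, the map $\rho_m$ sends $\D(\R,E)$ into $\C^\infty([0,+\infty),E) \cap L^2(0,+\infty;E)$.

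The core step is to prove that $\rho_m \colon H^s(\R,E) \to H^s_0(0,+\infty;E)$ is continuous for every $0 \le s \le m$. Continuity in the $L^2$-scale is immediate from the triangle inequality and a change of variable; differentiating the formula $j$ times shows that $\rho_m$ is also continuous $H^m(\R,E) \to H^m(0,+\infty;E)$, hence $H^s(\R,E) \to H^s(0,+\infty;E)$ for $0 \le s \le m$ by interpolation. The crucial point is the trace computation at $\sigma = 0^+$: for each $j \in \{0,\dots,m-1\}$,
\begin{equation*}
(\rho_m\varphi)^{(j)}(0^+) = \varphi^{(j)}(0)\Bigl[1 + \sum_k \frac{\alpha_k}{|\beta_k|\beta_k^{j}}\Bigr] = \varphi^{(j)}(0)\Bigl[1 - \sum_k \alpha_k \beta_k^{-1-j}\Bigr] = 0,
\end{equation*}
the final cancellation being a consequence of the algebraic identity \eqref{eq:algebraic-alpha} applied with exponent $-1-j \in \{-m,\dots,-1\}$. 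By \cref{coro:charac-Hs-0}, we conclude that $\rho_m\varphi \in H^s_0(0,+\infty;E)$ with continuous dependence on $\varphi$ for every $0 \le s \le m$ (the trace constraint being vacuous for $s \le 1/2$), and density of $\D(\R,E)$ in $H^s(\R,E)$ extends $\rho_m$ to the asserted bounded operator.

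Duality then closes the argument: for every $u \in L^2(0,+\infty;E)$ and $\varphi \in \D(\R,E)$,
\begin{equation*}
|\langle \pi_m u, \varphi\rangle_{L^2(\R,E)}| \le \|u\|_{H^{-s}(0,+\infty;E)}\|\rho_m\varphi\|_{H^s_0(0,+\infty;E)} \le K\|u\|_{H^{-s}(0,+\infty;E)}\|\varphi\|_{H^s(\R,E)}.
\end{equation*}
Since $H^{-s}(\R,E)$ coincides with the antidual of $\overline{\D(\R,E)}^{H^s(\R,E)} = H^s(\R,E)$, this yields $\|\pi_m u\|_{H^{-s}(\R,E)} \le K\|u\|_{H^{-s}(0,+\infty;E)}$, and density of $L^2(0,+\infty;E)$ in $H^{-s}(0,+\infty;E)$ produces the unique continuous extension, proving \cref{it:pi-cont-neg}. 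Property \cref{it:pi-diff-neg} follows from an identical argument for $\pi'_m$: the analogous change of variable gives the adjoint $\rho'_m\varphi(\sigma) = \varphi(\sigma) - \sum_k \alpha_k\varphi(\sigma/\beta_k)$, and the trace computation now reads $(\rho'_m\varphi)^{(j)}(0^+) = \varphi^{(j)}(0)[1 - \sum_k \alpha_k \beta_k^{-j}] = 0$ for $j \in \{0,\dots,m\}$ by \eqref{eq:algebraic-alpha} applied with exponent $-j \in \{-m,\dots,0\}$, giving one extra vanishing trace and therefore continuity $\rho'_m \colon H^s(\R,E) \to H^s_0(0,+\infty;E)$ for $0 \le s \le m+1$. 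The main subtle point is the bookkeeping of exponents: one must check that the identities \eqref{eq:algebraic-alpha}, originally tailored to match derivatives for the \emph{forward} extensions $\pi_m$ and $\pi'_m$, translate under the sign flip from $\beta_k < 0$ into precisely the trace-vanishing conditions required by \cref{coro:charac-Hs-0} on the \emph{adjoint} side, with the correct ranges of Sobolev exponents.
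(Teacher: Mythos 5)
Your proof is correct and follows essentially the same route as the paper: compute the $L^2$-adjoint of the reflection operator (your $\rho_m$, resp.\ $\rho'_m$, is exactly the paper's $\pi_m^\ast$, resp.\ $(\pi_m')^\ast$, since $\alpha_k/|\beta_k| = -\alpha_k\beta_k^{-1}$), show via the algebraic identities that its traces at $0$ vanish up to order $m-1$ (resp.\ $m$) so that it maps $H^s(\R,E)$ into $H^s_0(0,+\infty;E)$ by the characterisation of $H^s_0$-spaces, and dualise. The exponent bookkeeping you flag is exactly the paper's ($\beta_k^{-j-1}$ in the range $-1,\dots,-m$ and $\beta_k^{-j}$ in the range $0,\dots,-m$), so nothing further is needed.
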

\begin{proof}
We construct $\pi_m \in \L(L^2\hl{E}, L^2(\R, E))$ exactly as in \cref{lem:reflec-plus} and we observe that its adjoint $\pi_m^\ast \in \L(L^2(\R, E), L^2\hl{E}$ is given by
\begin{equation}
\label{eq:pi-m-ast}
[\pi^\ast_m u](t) = u(t) - \sum_{k = 1}^{2m}  \alpha_k \beta_k^{-1} u(\beta_k^{-1} t)\quad \mbox{for a.e.\ }t > 0, \quad u \in L^2(\R, E).
\end{equation}
Let $0 \leq s \leq m$.
Using duality, we see that  $\pi_m \in \L(H^{-s}\hl{E}, H^{-s}(\R, E))$ is equivalent to $\pi_m^\ast \in \L(H^{s}(\R, E), H^s_0\hl{E})$. It is clear from \cref{eq:pi-m-ast} that $\pi_m^\ast$ is continuous $H^s(\R, E) \to H^s\hl{E}$ and it remains to prove that it maps $H^s(\R, E)$ into $H^s_0\hl{E}$. By \cref{coro:charac-Hs-0}, if $0 \leq s \leq 1/2$, there is nothing else to prove, while if $s > 1/2$, it suffices to show that $(\d^j / \d t^j) [\pi^\ast_m u](0) = 0$ for all $j \in \N$ with $j < s - 1/2$ and all $u \in H^{s}(\R, E)$. Assuming that $s > 1/2$, for all such $j$ we have
\begin{equation}
\label{eq:diff-star-pi}
\frac{\d^j}{\d t^j} [\pi^\ast_m u](t) = \frac{\d^j u}{\d t^j}(t)- \sum_{k=1}^{2m} \alpha_k \beta_k^{-j - 1} \frac{\d^j u}{\d t^j}(\beta_k^{-1}t) \quad \mbox{for a.e.\ }t > 0, \quad u \in H^{s}(\R, E),
\end{equation}
and the desired property follows again from the algebraic condition \cref{eq:algebraic-alpha}, concluding the proof of \cref{it:pi-cont-neg}. As for \cref{it:pi-diff-neg}, we define $\pi'_m$ as in \cref{eq:def-pi-prime} and notice that, for any $j = 0, \dots, m$, 
\begin{equation}
\frac{\d^j}{\d t^j}[(\pi_m')^\ast u](t) = \frac{\d^j u}{\d t^j}(t) - \sum_{k=0}^{2m} \alpha_k \beta_k^{-j} \frac{\d^j u}{\d t^j}(\beta_k^{-1}t) \quad\mbox{for a.e.\ }t > 0, \quad u \in H^{m}(\R, E)
\end{equation}
(compare with \cref{eq:diff-star-pi}). This allows us to prove, with the same arguments, that $\pi'$ is actually continous $H^{-s}\hl{E} \to H^{-s}(\R, E)$ for all $0 \leq s \leq m + 1$, and the proof is complete.
\end{proof}

We are finally ready to establish the following simple-looking but useful result.
\begin{prop} 
\label{lem:diff-shift}
 Let $E$ be a Hilbert space and $s \in \R$.
 There exists $K > 0$ such that
\begin{equation}
\label{eq:sandwich-Hs}
K^{-1} \|u\|_{H^{s}\hl{E}} \leq \|u\|_{H^{s-1}\hl{E}} + \|\dot{u}\|_{H^{s - 1}\hl{E}} \leq K \|u\|_{H^s\hl{E}}
\end{equation}
for all $u \in H^{s}\hl{E}$.
\end{prop}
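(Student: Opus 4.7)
The key observation is that on the whole real line the analogous equivalence is nearly immediate: using the Bessel potential characterisation \cref{eq:bessel-spaces} together with the pointwise identity $(1 + \omega^2)^s = (1+\omega^2)^{s-1} + \omega^2(1+\omega^2)^{s-1}$ and Plancherel's theorem, one obtains
\begin{equation*}
K^{-1} \|v\|^2_{H^s(\R, E)} \leq \|v\|^2_{H^{s-1}(\R, E)} + \|\dot v\|^2_{H^{s-1}(\R, E)} \leq K \|v\|^2_{H^s(\R, E)}, \quad v \in H^s(\R, E),
\end{equation*}
for every $s \in \R$ with a constant depending only on $s$. The strategy is to transfer this whole-line statement to the half-line by means of the extension operators constructed in \cref{lem:reflec-gen}. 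Fix an integer $m$ such that $m \geq |s| + 1$ and $m \geq |s-1| + 1$, and let $\pi_m$ and $\pi_m'$ be the corresponding extension maps.

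For the right-hand inequality, let $u \in H^s\hl{E}$ and set $v \triangleq \pi_m u \in H^s(\R, E)$, so that $\|v\|_{H^s(\R, E)} \leq K \|u\|_{H^s\hl{E}}$. Differentiation is bounded $H^s(\R, E) \to H^{s-1}(\R, E)$, hence $\|v\|_{H^{s-1}(\R, E)} + \|\dot v\|_{H^{s-1}(\R, E)} \leq K' \|u\|_{H^s\hl{E}}$. Since $v = u$ on $(0, +\infty)$, the distributional derivative of $v|_{(0, +\infty)}$ coincides with $\dot u$; the restriction property of Sobolev spaces (recalled in \cref{sec:def}) then yields $\|u\|_{H^{s-1}\hl{E}} + \|\dot u\|_{H^{s-1}\hl{E}} \leq K'' \|u\|_{H^s\hl{E}}$.

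For the left-hand (substantive) inequality, set again $v \triangleq \pi_m u$, now viewed as an element of $H^{s-1}(\R, E)$ obtained from $u \in H^{s-1}\hl{E}$; by \cref{lem:reflec-gen}, $\|v\|_{H^{s-1}(\R, E)} \leq K \|u\|_{H^{s-1}\hl{E}}$. Crucially, the commutation identity \cref{eq:commut-ext} gives $\dot v = \pi_m' \dot u$ in $\D'(\R, E)$, and since $\pi_m'$ is bounded $H^{s-1}\hl{E} \to H^{s-1}(\R, E)$ by our choice of $m$, we also obtain $\|\dot v\|_{H^{s-1}(\R, E)} \leq K \|\dot u\|_{H^{s-1}\hl{E}}$. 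Applying the whole-line equivalence to $v$ then yields $\|v\|_{H^s(\R, E)} \leq K'(\|u\|_{H^{s-1}\hl{E}} + \|\dot u\|_{H^{s-1}\hl{E}})$, and restricting back to $(0, +\infty)$ via the restriction property gives the desired bound on $\|u\|_{H^s\hl{E}}$.

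The only nontrivial ingredient is the existence of an extension operator that is simultaneously continuous on the relevant positive- and negative-order Sobolev spaces and that intertwines the derivative with a suitably tame partner; both requirements are exactly what \cref{lem:reflec-gen} provides, so no further obstacle arises.
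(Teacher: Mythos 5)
Your argument is correct and matches the paper's own proof in all essentials: both establish the whole-line equivalence via the Bessel potential characterisation \cref{eq:bessel-spaces}, transfer it to the half-line using the reflection operators $\pi_m$, $\pi_m'$ of \cref{lem:reflec-gen} together with the commutation identity \cref{eq:commut-ext}, and conclude by the restriction property. No substantive difference in method or missing step.
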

\begin{proof}
First, pick $m \in \N$ such that $-m \leq s \leq m$ and consider an extension map $\pi_m$ as provided by \cref{lem:reflec-gen}.
 Using the Fourier transform and the Bessel potential characterisation \cref{eq:bessel-spaces} of Sobolev spaces on $\R$, it is easy to see that
\begin{equation}
\label{eq:full-line-Hs-pi}
K^{-1} \|\pi_m u \|_{H^s(\R, E)} \leq  \| \pi_m u \|_{H^{s-1}(\R, E)} +  \|(\d/ \d t) \pi_m u\|_{H^{s-1}(\R, E)} \leq \|\pi_m u\|_{H^{s}(\R, E)}
\end{equation}
for all $u \in H^s(\R, E)$. For such $u$, we also have $\|\pi_mu\|_{H^s(\R, E)} \leq K \|\pi_m u\|_{H^s\hl{E}} = K \|u\|_{H^s\hl{E}}$; on the other hand, making use of the various properties of \cref{lem:reflec-plus,lem:reflec-gen} depending on the value of $s$, we also have $\|\pi_m u\|_{H^{s-1}(\R, E)} \leq K \|u\|_{H^{s-1}\hl{E}}$, $\|\pi_m u\|_{H^{s}(\R, E)} \leq K \|u\|_{H^{s}\hl{E}}$, and finally
\begin{equation}
\|(\d/ \d t) \pi_m u\|_{H^{s-1}(\R, E)} = \|\pi'_m \dot{u}\|_{H^{s-1}(\R, E)} \leq \|\dot{u}\|_{H^{s-1}\hl{E}}.
\end{equation}
We may now deduce \cref{eq:sandwich-Hs} from \cref{eq:full-line-Hs-pi} to complete the proof. 
\end{proof}

In our main proofs, we make repeated use of the extension by zero operator. The following proposition gives some of its properties.
\begin{prop}[Extension by zero]
\label{coro:ext-zero}
Let $E$ be a Hilbert space. Let $\pi : L^2\hl{E} \to L^2(\R, E)$ be the map defined as follows: for $u \in L^2\hl{E}$, $[\pi u](t) = u(t)$ for a.e.\  $t > 0$, $[\pi u](t) = 0$ for a.e.\  $t < 0$.
It is continuous
\begin{subequations}
\begin{align}
&H^{s}\hl{E} \to H^s(\R, E), && -1/2 \leq s < 1/2; \\
&H^s_0\hl{E} \to H^{s}(\R, E), && 1/2 < s < 3/2.
\end{align}
\end{subequations}
\end{prop}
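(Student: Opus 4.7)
The plan is to treat three sub-ranges separately: $0 \leq s < 1/2$ by interpolation, $-1/2 \leq s < 0$ by duality, and $1/2 < s < 3/2$ by a differentiation bootstrap.

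First, for $0 \leq s < 1/2$, the case $s = 0$ is trivial since $\|\pi u\|_{L^2(\R,E)} = \|u\|_{L^2\hl{E}}$. For $0 < s < 1/2$ I would interpolate between two elementary mappings: (i) $\pi : L^2\hl{E} \to L^2(\R, E)$ with norm one; and (ii) $\pi : H^1_0\hl{E} \to H^1(\R, E)$, which is continuous because by \cref{coro:charac-Hs-0} any $u \in H^1_0\hl{E}$ satisfies $u(0) = 0$, so $(\pi u)' = \pi \dot{u}$ holds on $\R$ in the distributional sense (no Dirac contribution at the origin). Interpolating at exponent $1-s$ yields continuity of $\pi$ from $[H^1_0\hl{E}, L^2\hl{E}]_{1-s}$ into $[H^1(\R, E), L^2(\R, E)]_{1-s} = H^s(\R, E)$, the latter identification being the standard Fourier/Bessel characterisation \cref{eq:bessel-spaces}. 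The source is identified with $H^s\hl{E}$ by combining a Lions--Magenes interpolation identity of the same flavour as those invoked in the proof of \cref{prop:h-eta-est-io} — namely $[H^1_0\hl{E}, L^2\hl{E}]_{1-s} = H^s_0\hl{E}$ for $0 \leq s < 1/2$ — with the subcritical coincidence $H^s_0\hl{E} = H^s\hl{E}$ from \cref{coro:charac-Hs-0}.

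Second, for $-1/2 \leq s < 0$, I would argue by duality. Setting $s' \triangleq -s \in (0, 1/2]$, \cref{coro:charac-Hs-0} gives $H^{s'}_0\hl{E} = H^{s'}\hl{E}$, so that $H^s\hl{E} = (H^{s'}\hl{E})^\ast$ via the $L^2$-pivot. For $u \in L^2\hl{E}$ and $\varphi \in \D(\R, E)$,
\[
\langle \pi u, \varphi\rangle_{L^2(\R, E)} = \langle u, \varphi|_{(0, +\infty)}\rangle_{L^2\hl{E}}.
\]
The restriction property noted after \cref{eq:chain-H-spaces}, together with the identification just mentioned, bounds $\|\varphi|_{(0,+\infty)}\|_{H^{s'}\hl{E}}$ by $K\|\varphi\|_{H^{s'}(\R, E)}$. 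Hence $|\langle \pi u, \varphi\rangle_{L^2(\R, E)}| \leq K \|u\|_{H^s\hl{E}} \|\varphi\|_{H^{s'}(\R, E)}$, and taking the supremum over $\varphi \in \D(\R, E)$ yields $\|\pi u\|_{H^s(\R, E)} \leq K \|u\|_{H^s\hl{E}}$.

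Third, for $1/2 < s < 3/2$ and $u \in H^s_0\hl{E}$, \cref{coro:charac-Hs-0} again forces $u(0) = 0$ (since $0$ is the only nonnegative integer strictly below $s - 1/2 < 1$), so $(\pi u)' = \pi \dot{u}$ on $\R$ in the sense of distributions. Now $\dot u \in H^{s-1}\hl{E}$ with $s-1 \in (-1/2, 1/2)$, so the two previous paragraphs give $\pi \dot u \in H^{s-1}(\R, E)$ with the requisite estimate, while $\pi u \in L^2(\R, E) \hookrightarrow H^{s-1}(\R, E)$. The real-line analogue of \cref{lem:diff-shift} — which its proof in fact passes through via the Bessel characterisation on $\R$ — then yields $\pi u \in H^s(\R, E)$ with the desired bound. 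The main technical obstacle is the interpolation identification of the first paragraph, which hinges on the delicate Lions--Magenes coincidence below the critical exponent; the duality step and the differentiation bootstrap are comparatively routine once the $s \in [0, 1/2)$ case is in hand.
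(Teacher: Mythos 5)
Your treatment of the core range $0 < s < 1/2$ is circular relative to this paper. The identity $[H^1_0\hl{E}, L^2\hl{E}]_{1-s} = H^s_0\hl{E}$ that you invoke is exactly \cref{lem:int-H10-L2}, and the inclusion you actually need, $H^s\hl{E} = H^s_0\hl{E} \hookrightarrow [H^1_0\hl{E}, L^2\hl{E}]_{1-s}$, is proved there by writing $u = R\pi u$ and using that $\pi$ maps $H^s_0\hl{E}$ continuously into $H^s(\R, E)$ --- which is precisely the statement of \cref{coro:ext-zero} for this value of $s$, i.e.\ the statement you are proving. Citing Lions--Magenes directly does not repair this in substance: their Theorem 11.6 is itself obtained from their extension-by-zero theorem (Theorem 11.4), whose proof is the Hardy-type estimate \cref{eq:weight-Hs}. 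So the one genuinely hard step of the proposition is outsourced to a result that, in this paper and in the standard reference, sits downstream of it. The paper instead handles $0 < s < 1/2$ directly: by the density statement in \cref{coro:charac-Hs-0} it suffices to bound $\|\varphi\|_{H^s(\R, E)}$ for $\varphi \in \D\hl{E}$, and via the translation (Besov-type) seminorm this reduces to the weighted inequality \cref{eq:weight-Hs}. A non-circular version of your interpolation route would require an independent proof of the subcritical identity (e.g.\ via the $K$-functional or the Dirichlet realisation on the half-line), which amounts to the same Hardy-inequality work.

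The remaining ranges of your argument are sound, and in one place nicer than the paper's. The duality argument for $-1/2 \leq s < 0$ is essentially the paper's own first case (it uses only $H^{s'}_0\hl{E} = H^{s'}\hl{E}$ for $s' \leq 1/2$ and the restriction property). The differentiation bootstrap for $1/2 < s < 3/2$ is valid once $(\d/\d t)\pi u = \pi \dot{u}$ is justified by approximating $u$ in $H^s_0\hl{E}$ by elements of $\D\hl{E}$ (routine, and the paper's fifth case works directly with such test functions); notably, for $1/2 < s < 1$ your reduction lands in $s - 1 < 0$ and so needs only the duality case, bypassing the Hardy inequality that the paper uses for that sub-range --- a genuinely more elementary treatment there. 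But for $1 < s < 3/2$ the bootstrap feeds back into the case $0 < s - 1 < 1/2$, so the gap identified above still blocks the full statement.
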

\begin{proof} The argument depends on the value of $s$.

\begin{itemize}[wide]
  \item \emph{First case: $-1/2 \leq s \leq 0$.} For the ease of notation we take $0 \leq s \leq 1/2$ and prove that $\pi$ is continuous $H^{-s}\hl{E} \to H^{-s}(\R, E)$. Recalling \cref{eq:chain-H-spaces,eq:dual-norm}, we let $u \in L^2\hl{E}$ and pair $\pi u$ with an arbitrary test function $\varphi \in \D(\R, E)$. Since $\varphi$ (restricted to $(0, +\infty)$) belongs to $H^s\hl{E}$, which coincides with $H^s_0\hl{E}$ by \cref{coro:charac-Hs-0}, there exists a sequence of elements $\varphi_n \in \D\hl{E}$ such that $\varphi_n \to \varphi$ in $H^s\hl{E}$. With this in mind, we can write
  \begin{equation}
  \label{eq:pair-piu-phi}
  |\langle\pi u, \varphi_n \rangle_{L^2(\R, E)}| = |\langle u, \varphi_n \rangle_{L^2\hl{E}}| \leq \|u\|_{H^{-s}\hl{E}} \|\varphi_n\|_{H^s\hl{E}}
  \end{equation}
  for all $n$, where we used that $\varphi_n \in \D\hl{E}$. Letting $n \to +\infty$ in \cref{eq:pair-piu-phi} leads to
  \begin{equation}
  |\langle\pi u, \varphi \rangle_{L^2(\R, E)}| \leq \|u\|_{H^{-s}\hl{E}} \|\varphi\|_{H^s\hl{E}} \leq K \|u\|_{H^{-s}\hl{E}}  \|\varphi\|_{H^s(\R, E)}, 
  \end{equation}
  which gives the desired continuity property.
  \item \emph{Second case: $0 < s < 1/2$.} Here (and in the subsequent cases) we mostly follow the proof of \cite[Theorem 11.4, Chapter 1]{LioMag68book}, which carries over to the vector-valued case. By \cref{coro:charac-Hs-0}, $\D\hl{E}$ is dense in $H^s\hl{E}$ and it suffices to show that there exists $K > 0$ such that
  $\|\varphi\|_{H^{s}(\R, E)} \leq K \|\varphi\|_{H^s\hl{E}}$ for all $\varphi \in \D\hl{E}$. To do so,  we use the Besov-type characterisation of $H^s$-spaces and observe that the required property will follow from the estimate
  \begin{equation}
  \int_0^{+\infty} \frac{1}{\tau^{2s+1}} \int_\R \|\varphi(t + \tau) - \varphi(t)\|^2_E \, \d t \, \d \tau \leq K \int_0^{+\infty} \frac{1}{\tau^{2s+1}} \int_0^{+\infty} \|\varphi(t + \tau) - \varphi(t)\|^2_E \, \d t \, \d \tau
  \end{equation}
  holding for all $\varphi \in \D\hl{E}$, or equivalently,
  \begin{equation}
  \label{eq:weight-Hs}
  \frac{1}{2s}
\int_0^{+\infty} \frac{1}{t^{2s}} \|\varphi(t)\|^2_E \, \d t \leq K' \int_0^{+\infty} \frac{1}{\tau^{2s+1}} \int_0^{+\infty} \|\varphi(t + \tau) - \varphi(t)\|^2_E \, \d t \, \d \tau,
  \end{equation}
  which is established in \cite[``Proof of (11.27)'', page 59]{LioMag68book}.
  \item \emph{Third case: $1/2 < s < 1$.} By definition, $\D\hl{E}$ is dense in $H_0^s\hl{E}$ and again the result follows if \cref{eq:weight-Hs} holds for all $\varphi \in \D\hl{E}$, which is established in the proof of 
 \cite[Theorem 11.3]{LioMag68book}.
  \item \emph{Fourth case: $s = 1$.} This case is immediate: use that $\varphi(0) = 0$ for all $\varphi \in H^1_0\hl{E}$.
  \item \emph{Fifth case: $1 < s < 3/2$.} We must show that $\|\varphi\|_{H^{s}(\R, E)} \leq K \| \varphi\|_{H^{s}\hl{E}}$ for all $\varphi \in \D\hl{E}$, or equivalently by \cref{lem:diff-shift},
  \begin{equation}
  \label{eq:shift-ineq-ext-0}
  \|\varphi\|_{H^{s-1}(\R, E)} + \|\dot{\varphi}\|_{H^{s-1}(\R, E)} \leq K \|\varphi\|_{H^{s-1}\hl{E}} + K \|\dot{\varphi}\|_{H^{s-1}\hl{E}}.
  \end{equation}
  But $\dot{\varphi} \in \D\hl{E}$ and $0 < s - 1 < 1/2$, so \cref{eq:shift-ineq-ext-0} follows from our previous analysis. \qedhere
\end{itemize}

\end{proof}
\begin{rem}
The extension by zero map is \emph{not} continuous $H^{1/2}(0, +\infty) \to H^{1/2}(\R)$; see \cite[Theorem 11.4, Chapter 1]{LioMag68book}.
\end{rem}

\begin{rem}[Finite intervals]
\label{rem:finite-interval}
The results of \cref{coro:charac-Hs-0,coro:ext-zero,lem:diff-shift} generalise without difficulty to the case where the half-line $(0, +\infty)$ is replaced by a finite interval of the form $(0, T)$, $T > 0$. To see this, build for instance smooth functions $\alpha$ and $\beta$ defined on $[0, T]$ such that $\alpha + \beta = 1$, $\alpha$ vanishes in a neighborhood of $T$ and $\beta$ vanishes in a neighborhood of $0$; see, e.g., the proof of \cite[Theorem 2.1, Chapter 1]{LioMag68book}. Then any vector-valued function $u$ defined on $(0, T)$ can be written as the sum of $\alpha u$ defined (after extension by zero) on $(0, +\infty)$ and $\beta u$ defined on $(- \infty, T)$. The properties of $\alpha$ and $\beta$ guarantee that the maps $u\mapsto \alpha u$ and $u \mapsto \beta u$ are bounded in suitable Sobolev norms. 
\end{rem}

The remainder of this section is devoted to characterising certain interpolation spaces. The first lemma in the series pertains to interpolation between $H^1_0$- and $L^2$-spaces.
\begin{lemma}
\label{lem:int-H10-L2}
Let $E$ be a Hilbert space. Then, for any $0 \leq s \leq 1$ with $s \not = 1/2$,
\begin{equation}
\label{eq:int-Hs0}
[H^1_0\hl{E}, L^2\hl{E}]_{1-s} = H^s_0\hl{E}.
\end{equation}
\end{lemma}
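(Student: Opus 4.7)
The plan is to argue by double inclusion, reducing to the case $0 < s < 1$ with $s \neq 1/2$, since the endpoint cases $s = 0, 1$ are immediate from the definitions.

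For the forward inclusion $[H^1_0\hl{E}, L^2\hl{E}]_{1-s} \subseteq H^s_0\hl{E}$, I would use the extension-by-zero operator $\pi$ from \cref{coro:ext-zero}. It is trivially continuous $L^2\hl{E} \to L^2(\R, E)$ and also continuous $H^1_0\hl{E} \to H^1(\R, E)$, as the zero extension of an $H^1$-function vanishing at the origin is globally $H^1$. Interpolating yields $\pi \in \L([H^1_0\hl{E}, L^2\hl{E}]_{1-s}, [H^1(\R, E), L^2(\R, E)]_{1-s})$, where the target can be identified with $H^s(\R, E)$ via the Bessel-potential characterisation \cref{eq:bessel-spaces}. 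Composing with the restriction back to $(0, +\infty)$, which satisfies $R\pi = \id$ and is continuous $H^s(\R, E) \to H^s\hl{E}$, one obtains $[H^1_0, L^2]_{1-s} \hookrightarrow H^s\hl{E}$. For $0 < s < 1/2$, \cref{coro:charac-Hs-0} identifies $H^s\hl{E}$ with $H^s_0\hl{E}$ directly; for $1/2 < s \leq 1$, the image $\pi u$ lies in $H^s(\R, E) \hookrightarrow \C_b(\R, E)$ and vanishes on $(-\infty, 0)$, which forces $u(0) = 0$ and thus $u \in H^s_0\hl{E}$ via \cref{coro:charac-Hs-0}.

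For the reverse inclusion $H^s_0\hl{E} \subseteq [H^1_0\hl{E}, L^2\hl{E}]_{1-s}$, I would again use the zero extension: for $u \in H^s_0\hl{E}$, \cref{coro:ext-zero} gives $\pi u \in H^s(\R, E) = [H^1(\R, E), L^2(\R, E)]_{1-s}$. I plan to work with the $K$-functional characterisation of geometric Hilbert-space interpolation: for each $t > 0$, pick a near-optimal decomposition $\pi u = V_t + W_t$ with $V_t \in H^1(\R, E)$, $W_t \in L^2(\R, E)$ and $\|V_t\|_{H^1(\R, E)} + t\|W_t\|_{L^2(\R, E)} \leq 2K(t; \pi u; H^1(\R, E), L^2(\R, E))$. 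Restriction to $(0, +\infty)$ gives $v_t, w_t$, but in general $V_t(0) \neq 0$, so $v_t$ need not lie in $H^1_0\hl{E}$. I would correct this by subtracting a boundary bump: set $v'_t = v_t - V_t(0)\phi_{\ell(t)}$ and $w'_t = w_t + V_t(0)\phi_{\ell(t)}$, where $\phi_\ell(\tau) \triangleq \phi(\tau/\ell)$ for a fixed $\phi \in \D(0, 1)$ with $\phi(0) = 1$, and $\ell(t) = \min(1, 1/t)$. Using the trace inequality $|V_t(0)| \leq K\|V_t\|_{H^1(\R, E)}^{1/2}\|V_t\|_{L^2(\R, E)}^{1/2}$ together with the norm scalings $\|\phi_\ell\|_{L^2(\R)} = O(\sqrt{\ell})$ and $\|\phi_\ell\|_{H^1(\R)} = O(1/\sqrt{\ell})$, one checks $\|v'_t\|_{H^1_0\hl{E}} + t\|w'_t\|_{L^2\hl{E}} \leq CK(t; \pi u)$ uniformly in $t$. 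Integration against the weight $t^{-2(1-s)} \, \d t / t$ produces $u \in [H^1_0, L^2]_{1-s}$ with the expected norm bound.

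The hardest step will be the boundary correction in the reverse inclusion. The scale $\ell(t)$ must be tuned so that neither the $H^1$-norm of $\phi_{\ell(t)}$ (which blows up as $\ell \to 0$) nor its $t$-weighted $L^2$-norm (which blows up as $\ell \to \infty$) overwhelms the comparison with $K(t; \pi u)$. The Gagliardo--Nirenberg split of $|V_t(0)|$ between the $H^1$ and $L^2$ norms of $V_t$, combined with the choice $\ell(t) = \min(1, 1/t)$, is precisely what equalises the two contributions; a case distinction at $t = 1$ then yields a $t$-uniform pointwise comparison of $K$-functionals, after which the two inclusions and the closed-graph theorem close the argument with equivalent norms.
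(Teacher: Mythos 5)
Your forward inclusion is exactly the paper's argument (extend by zero, interpolate, restrict, then use \cref{coro:charac-Hs-0} and the vanishing trace for $s>1/2$), so that half is fine, modulo the slip that a $\phi\in\D(0,1)$ cannot satisfy $\phi(0)=1$ — you mean a smooth cutoff on $[0,\infty)$ equal to $1$ at the origin and supported in $[0,1)$. The reverse inclusion is where you diverge from the paper, and as written it has a genuine gap in the key estimate. For $t\geq 1$ with $\ell(t)=1/t$ the correction term is of size $t^{1/2}\|V_t(0)\|_E$, so the claimed $t$-uniform comparison requires $\|V_t(0)\|_E\lesssim t^{-1/2}K(t;\pi u)$. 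The trace inequality you invoke, $\|V_t(0)\|_E\leq K\|V_t\|_{H^1(\R,E)}^{1/2}\|V_t\|_{L^2(\R,E)}^{1/2}$, only gives $\|V_t(0)\|_E\lesssim K(t;\pi u)$ (since all you control is $\|V_t\|_{L^2}\leq\|V_t\|_{H^1}\leq 2K(t;\pi u)$, or $\|V_t\|_{L^2}\leq\|u\|_{L^2\hl{E}}+2K(t;\pi u)/t$, and $t\|u\|_{L^2}$ is not dominated by $K(t;\pi u)$ for large $t$). You are missing a factor $t^{-1/2}$, so the "case distinction at $t=1$" does not close the pointwise comparison.

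The gap is repairable, in two ways. The efficient repair is to exploit the support of $\pi u$: on $(-\infty,0)$ one has $\pi u=0$, hence $V_t=-W_t$ a.e.\ there, and the one-sided trace inequality on $(-\infty,0)$ gives $\|V_t(0)\|^2_E\leq 2\|W_t\|_{L^2(-\infty,0;E)}\|\dot V_t\|_{L^2(-\infty,0;E)}\leq 8K(t;\pi u)^2/t$, which is exactly the missing $t^{-1/2}$ and makes your correction uniformly comparable to $K(t;\pi u)$ (you may then take $\ell(t)=1/t$ for all $t$). Alternatively, abandon the pointwise claim and check directly that the corrected decomposition has square-integrable weighted $K$-functional: the troublesome cross term $\int_1^\infty t^{2s-1}\|u\|_{L^2}K(t;\pi u)\,\d t/t$ is finite by Cauchy--Schwarz because $\int_1^\infty t^{2(s-1)}\,\d t/t<\infty$ for $s<1$. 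That said, note how much shorter the paper's route is: it uses the reflection $[Rv](t)=v(t)-v(-t)$, which is continuous $L^2(\R,E)\to L^2\hl{E}$ and $H^1(\R,E)\to H^1_0\hl{E}$ and satisfies $R\pi=\id$, so a single interpolation of $R$ (together with \cref{coro:ext-zero}, which is where $s\neq 1/2$ enters) yields the reverse inclusion with no $K$-functionals and no boundary correction; your approach buys nothing here except independence from the retraction trick, at the price of the delicate scaling analysis above. Finally, your argument implicitly identifies the paper's geometric interpolation spaces with the $K$-method with $q=2$; this is legitimate since the paper records that identification in \cref{sec:def}, but it should be said explicitly.
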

\begin{proof}
The proof is largely inspired by that of \cite[Theorem 11.6, Chapter 1]{LioMag68book}.  To obtain the inclusion from left to right in \cref{eq:int-Hs0}, we recall from \cref{coro:ext-zero} that the extension by zero map $\pi$ is continuous $L^2\hl{E} \to L^2(\R, E)$ and also $H^1_0\hl{E} \to H^1(\R, E)$. By interpolation, 
\begin{equation}
\label{eq:cont-pi-H0-int}
\pi ~\mbox{is continuous}~ [H^1_0\hl{E}, L^2\hl{E}]_{1 -s} \to H^{s}(\R, E).
\end{equation}
Let $u \in [H^1_0\hl{E}, L^2\hl{E}]_{1-s}$. By viewing $u$ as the restriction to $(0, +\infty)$ of $\pi u \in H^{s}(\R, E)$ by \cref{eq:cont-pi-H0-int}, we see that $u \in H^{s}\hl{E}$. If $0 \leq s < 1/2$, $H^{s}\hl{E} = H^s_0\hl{E}$ by \cref{coro:charac-Hs-0} and the desired inclusion is proved. If $1/2 < s \leq 1$, according to \cref{coro:charac-Hs-0}, it suffices to check that $u(0) = 0$, which holds because $u = [\pi u]|_{(0, +\infty)}$ with $\pi u \in H^s(\R, E)$, $s > 1/2$, and $\pi u = 0$ a.e.\  in $(-\infty, 0)$. For the inclusion from right to left, consider the map $R : L^2(\R, E) \to L^2\hl{E}$ defined by
\begin{equation}
[R u](t) = u(t) - u(-t) \quad \mbox{for a.e.\ }t > 0, \quad u \in L^2(\R, E).
\end{equation}
Then, $R$ is continuous $L^2(\R, E) \to L^2\hl{E}$ but also $H^1(\R, E) \to H^1_0\hl{E}$. By interpolation,
\begin{equation}
\label{eq:cont-int-r}
R~\mbox{is continuous}~H^{s}(\R, E) \to [H^1_0\hl{E}, L^2\hl{E}]_{1-s}.
\end{equation}
Furthermore, we see that $R \pi u = u$ for any $u \in L^2\hl{E}$.
Keeping in mind that $s \not = 1/2$,
\cref{coro:charac-Hs-0,coro:ext-zero} guarantee that $\pi$ (continuously) maps $H^s_0\hl{E}$ into $H^s(\R, E)$; thus, by \cref{eq:cont-int-r}, writing any $u \in H^s_0\hl{E}$ as $R \pi u$ reveals that $H^s_0\hl{E}$ is indeed contained in $[H^1_0\hl{E}, L^2\hl{E}]_{1-s}$. The proof is now complete.
\end{proof}
\begin{rem}
\Cref{lem:yet-another-interpolation-lemma} is false when $s = 1/2$; see \cite[Theorem 11.7, Chapter 1]{LioMag68book}. 
\end{rem}
With a similar proof, we also obtain the following result, which we need for the \emph{ad hoc} ``elliptic regularity'' argument of \cref{prop:h-eta-est-io}.
\begin{lemma}
\label{lem:yet-another-interpolation-lemma}
Let $E$ be a Hilbert space.
Then, for any $0 \leq s < 1/2$,
\begin{equation}
\label{eq:YAIL}
\begin{aligned}
[H^2\hl{E}\cap H^1_0\hl{E}, H^1_0\hl{E}]_{1-s} &= H^{1+s}\hl{E} \cap H^1_0\hl{E} \\& =H^{1+s}_0\hl{E}.
\end{aligned}
\end{equation}
\end{lemma}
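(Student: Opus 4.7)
The second equality in \cref{eq:YAIL} follows directly from \cref{coro:charac-Hs-0} (adapted to $H^{1+s}$): when $0 \leq s < 1/2$, we have $1 \leq 1 + s < 3/2$, so the only boundary condition defining $H^{1+s}_0\hl{E}$ is vanishing at $0$, which gives exactly $H^{1+s}\hl{E} \cap H^1_0\hl{E}$.

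For the nontrivial first equality, the plan is to mimic the proof of \cref{lem:int-H10-L2} with extension by zero replaced by \emph{extension by odd reflection}. Define $\pi_o : L^2\hl{E} \to L^2(\R, E)$ by $[\pi_o u](t) \triangleq u(t)$ for $t > 0$ and $[\pi_o u](t) \triangleq -u(-t)$ for $t < 0$. The key observation is that $\pi_o$ is continuous in both of the following senses:
\begin{itemize}
  \item $\pi_o : H^1_0\hl{E} \to H^1(\R, E)$, because the boundary condition $u(0) = 0$ makes the odd extension continuous and its distributional derivative is the \emph{even} reflection of $\dot u$, which lies in $L^2(\R, E)$.
  \item $\pi_o : H^2\hl{E} \cap H^1_0\hl{E} \to H^2(\R, E)$, because for such $u$ the first derivative of $\pi_o u$ is continuous at $0$ (being the even reflection of $\dot u \in H^1\hl{E}$, hence continuous) and the second distributional derivative picks up no Dirac mass; thus it is the (odd) reflection of $\ddot u \in L^2\hl{E}$.
\end{itemize}
Interpolating, $\pi_o$ is continuous $[H^2\hl{E} \cap H^1_0\hl{E}, H^1_0\hl{E}]_{1-s} \to H^{1+s}(\R, E)$. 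Correspondingly, define the retraction $R : L^2(\R, E) \to L^2\hl{E}$ by $[Rv](t) \triangleq (v(t) - v(-t))/2$ for $t > 0$. Then $R\pi_o = \mathrm{id}$, and one checks $R : H^k(\R, E) \to H^k\hl{E} \cap H^1_0\hl{E}$ is continuous for $k = 1, 2$ (the boundary condition $[Rv](0) = 0$ being automatic once $v$ is continuous). By interpolation, $R : H^{1+s}(\R, E) \to [H^2\hl{E} \cap H^1_0\hl{E}, H^1_0\hl{E}]_{1-s}$.

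The forward inclusion in \cref{eq:YAIL} then follows: given $u$ in the interpolation space, it already belongs to $H^1_0\hl{E}$, and $\pi_o u \in H^{1+s}(\R, E)$ restricts to $u$ on $(0, +\infty)$, giving $u \in H^{1+s}\hl{E}$. For the reverse inclusion, the main step is to verify that $\pi_o : H^{1+s}_0\hl{E} \to H^{1+s}(\R, E)$ is continuous; \emph{this is the main obstacle}, as $1 + s$ is not an integer. One argues as follows: for $u \in H^{1+s}_0\hl{E}$, the continuous odd extension $\pi_o u$ has distributional derivative equal to the even reflection of $\dot u \in H^s\hl{E}$. Since $0 \leq s < 1/2$, \cref{coro:charac-Hs-0} gives $H^s\hl{E} = H^s_0\hl{E}$, and by a Besov-norm argument (or by interpolating even reflection between $L^2 \to L^2$ and $H^1 \to H^1$), even reflection is continuous $H^s\hl{E} \to H^s(\R, E)$. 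Hence $(d/dt)\pi_o u \in H^s(\R, E)$, and \cref{lem:diff-shift} (applied on the full line, which is straightforward from the Bessel potential characterisation) yields $\pi_o u \in H^{1+s}(\R, E)$ with continuous dependence. The identity $u = R\pi_o u$ then places $u$ in $[H^2\hl{E} \cap H^1_0\hl{E}, H^1_0\hl{E}]_{1-s}$, completing the proof.
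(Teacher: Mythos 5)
Your proof is correct, and it follows the same retraction--coretraction-plus-interpolation skeleton as the paper's, but with a different choice of extension operator at the key points. The paper handles the forward inclusion by restricting the general reflection extension $\pi_2$ of \cref{lem:reflec-plus} (whose continuity $H^r\hl{E}\to H^r(\R,E)$, $0\leq r\leq 2$, needs no boundary condition) to the closed subspaces $H^2\hl{E}\cap H^1_0\hl{E}$ and $H^1_0\hl{E}$, and handles the reverse inclusion via the identity $R\pi=\id$ with $\pi$ the extension by zero, quoting \cref{coro:ext-zero} for continuity of $\pi$ on $H^{1+s}_0\hl{E}$ (which is where the restriction $1<1+s<3/2$ enters, and whose proof ultimately rests on the Hardy-type estimate behind \cref{coro:ext-zero}). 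You instead use the odd reflection $\pi_o$ in both directions, verifying its continuity on $H^2\hl{E}\cap H^1_0\hl{E}$, $H^1_0\hl{E}$ and $H^{1+s}_0\hl{E}$ by hand; the last verification (differentiate, note the derivative is the even reflection of $\dot u$, interpolate the even reflection between $L^2\to L^2$ and $H^1\to H^1$, and conclude with the full-line analogue of \cref{lem:diff-shift}) is more elementary and self-contained than invoking \cref{coro:ext-zero} -- it avoids the zero-extension machinery entirely, and in fact establishes the first equality in \cref{eq:YAIL} for all $0\leq s<1$, the hypothesis $s<1/2$ being needed only for the second equality via \cref{coro:charac-Hs-0}. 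What the paper's route buys is brevity in context, since $\pi_2$, $R$ and \cref{coro:ext-zero} are already available; what yours buys is a single unified extension operator and independence from the zero-extension result. Two cosmetic remarks: the appeal to $H^s\hl{E}=H^s_0\hl{E}$ in your last step is not actually needed (the interpolation argument for the even reflection works regardless), and for the full-line shift estimate you should also record that $\|\pi_o u\|_{H^s(\R,E)}\leq K\|u\|_{H^1_0\hl{E}}$, which is immediate since $\pi_o u\in H^1(\R,E)$.
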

\begin{proof}
First, recall that, given $1 \leq r \leq 2$, $H^r\hl{E} \cap H^1_0\hl{E}$ equipped with the $H^r$-norm is a closed subspace of $H^r\hl{E}$; in particular, it is also a Hilbert space. Thus, the interpolation spaces in \cref{eq:YAIL} are all well-defined. Moreover, the second equality in \cref{eq:YAIL} follows from \cref{coro:charac-Hs-0}.
 Now \cref{lem:reflec-plus} provides an extension map $\pi_2$ that is continuous $H^r\hl{E} \to H^r(\R, E)$ for all $0 \leq r \leq 2$. Since the norms on $H^2\hl{E} \cap H^1_0\hl{E}$ and $H^1_0\hl{E}$ are those induced by $H^2\hl{E}$ and $H^1\hl{E}$, respectively, it follows that $\pi_2$ is also continuous from the former spaces into $H^2(\R, E)$ and $H^1(\R, E)$, respectively. By interpolation,
\begin{equation}
\pi_2 ~\mbox{is continuous}~[H^2\hl{E}\cap H^1_0\hl{E}, H^1_0\hl{E}]_{1-s} \to H^{1+s}(\R, E).
\end{equation}
Therefore, any $u$ in $[H^2\hl{E}\cap H^1_0\hl{E}, H^1_0\hl{E}]_{1-s}$ is the restriction to $(0, +\infty)$ of some element of $H^{1+s}(\R, E)$ and, in particular, belongs to $H^{1+s}\hl{E}$. Of course, such functions $u$ also belong to $H^1_0\hl{E}$ and the inclusion from left to right in \cref{eq:YAIL} is now proved. As for the opposite inclusion, consider again the map $R$ introduced in the proof of \cref{lem:int-H10-L2}. It is clear that $R$ also continuously maps $H^2(\R, E)$ into $H^2\hl{E}\cap H^1_0\hl{E}$ and, by interpolation,
\begin{equation}
R ~\mbox{is continuous}~ H^{1+s}(\R, E) \to [H^2\hl{E}\cap H^1_0\hl{E}, H^1_0\hl{E}]_{1-s}.
\end{equation}
Recall that $R$ is a left inverse for the extension by zero map $\pi$. Furthermore, by \cref{coro:ext-zero}, $\pi$ is continuous $H^{1+s}_0\hl{E} \to H^{1+s}(\R, E)$. By writing $u = R \pi u$, we obtain that any $u \in H^{1+s}_0\hl{E}$ must belong to $[H^2\hl{E}\cap H^1_0\hl{E}, H^1_0\hl{E}]_{1-s}$, as required.
\end{proof}

 Our next lemma is essentially \cite[Proposition 2.1, Chapter 1]{LioMag68book}.
\begin{lemma}
\label{lem:int-V-V-ast}
Let $H$ and $V$ be Hilbert spaces such that $V \hookrightarrow H$ with continuous and dense embedding. Identifying  $H$ with its antidual, we have $[V, V^\ast]_{1/2} = H$ with equivalence of norms.
\begin{proof}
Replacing if the scalar product of $V$ by an equivalent one if necessary, we can assume that $\|u\|_V \geq \|u\|_H$ for all $u \in V$. As a first step, define  $L \in \L(V, V^\ast)$ by $\langle Lu, v\rangle_{V^\ast, V} = \langle u, v \rangle_V$ for all $u, v \in V$.
In view of the embedding chain $V \hookrightarrow H \hookrightarrow V^\ast$, we may also regard $L$ as an unbounded operator on $H$, with $\dom(L) = \{ u \in V : Lu \in H \}$. Then, $L$ is a strictly positive (i.e., coercive) self-adjoint operator on $H$. The spectral theorem and functional calculus of self-adjoint operators give fractional powers $L^s$ for all $s \in \R$. Then $\dom(L^{1/2}) = V$, moreover $V^\ast$ coincides with the completion of $H$ with respect to the norm $\|L^{-1/2}\cdot\|_H$. In particular, $L^{-1/2}$ extends as an isomorphism $V^\ast \to H$. Recalling that $L^{-1}$ to an isomorphism $V^\ast \to V$, this gives $[V, V^\ast]_{1/2} = H$ by virtue of the ``diagonalisation'' characterisation of quadratic interpolation spaces; see the discussion in \cref{sec:def} and also, more specifically, \cite{ChaHew15}.
\end{proof}
\end{lemma}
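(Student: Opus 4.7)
The plan is to realise the abstract Hilbert-space interpolation assertion via the ``diagonalisation'' characterisation of quadratic interpolation recalled in \cref{sec:def}, which identifies $[E_1,E_0]_\theta$ with the domain of the appropriate fractional power of a positive self-adjoint operator $\Lambda$ on $E_0$ with $\dom(\Lambda) = E_1$. Concretely, I intend to construct a natural positive operator $L$ from $V$ to $V^\ast$, reinterpret it as an unbounded self-adjoint operator on $H$, and then show that the pair $(V, V^\ast)$ is exactly the ladder generated by $L$, so that applying $L^{-1/2}$ identifies the middle interpolation space with $H$.

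First I would replace the inner product on $V$ by the equivalent one that adds a constant multiple of $\|\cdot\|_H^2$, if needed, to arrange $\|u\|_V \geq \|u\|_H$ for all $u \in V$ (this changes $V^\ast$ only up to equivalence of norms, which is consistent with the statement of the lemma). Then I would define $L\in \L(V,V^\ast)$ through the Riesz representation of the inner product on $V$, namely $\langle Lu,v\rangle_{V^\ast,V} \triangleq \langle u,v\rangle_V$ for $u,v\in V$. By Riesz' theorem $L$ is an isometric isomorphism from $V$ onto $V^\ast$. Using the continuous and dense embedding $V \hookrightarrow H \hookrightarrow V^\ast$ (after identification of $H$ with its antidual), I can regard $L$ as an unbounded operator on $H$ with $\dom(L) \triangleq \{u\in V : Lu \in H\}$; a short check using the symmetry of the $V$-inner product and the standard characterisation of self-adjoint operators via coercive sesquilinear forms (Friedrichs' or Lax--Milgram) shows that this $H$-realisation is self-adjoint and strictly positive.

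Once $L$ is known to be positive self-adjoint on $H$, the functional calculus yields fractional powers $L^s$, $s\in\R$, and it remains to verify two identifications: $\dom(L^{1/2}) = V$ with equivalent norms, and $V^\ast$ coincides with the completion of $H$ under $\|L^{-1/2}\cdot\|_H$. The first follows from the identity $\|L^{1/2}u\|_H^2 = \langle Lu,u\rangle_{V^\ast,V} = \|u\|_V^2$ for $u \in \dom(L)$, extended by density; the second follows by duality and the fact that $L^{1/2}:V \to H$ is an isometric isomorphism, so its dual $L^{1/2}:H\to V^\ast$ is too, giving the norm identity $\|w\|_{V^\ast} = \|L^{-1/2}w\|_H$ for $w\in H$. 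At this point the diagonalisation definition of the geometric interpolation space of exponent $1/2$ between $V^\ast$ (bottom) and $V$ (top) applies with $\Lambda = L$: the space $[V,V^\ast]_{1/2}$ equals $\dom(L^{0})$ shifted appropriately, which is $H$, with norm $\|\cdot\|_H$ up to the normalisation from the rescaled inner product.

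The main obstacle, modest as it is, will be handling the self-adjointness of the $H$-realisation of $L$ cleanly and confirming that the diagonalisation recipe is applied with the correct exponent and direction on the scale; in particular, since the geometric interpolation method is taken in the convention $[E_1,E_0]_\theta$ (with $\theta=1$ yielding $E_0$), I need to keep track of which endpoint plays the role of $E_0$ and which of $E_1$. Once this bookkeeping is settled, the equivalence of norms comes from the fact that the initial rescaling of the $V$-inner product changes all three norms only by bounded factors, so the final identification $[V,V^\ast]_{1/2} = H$ holds with equivalence (but in general not equality) of norms.
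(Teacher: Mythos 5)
Your proposal is correct and follows essentially the same route as the paper's proof: the Riesz operator $L$ of the $V$-inner product, its positive self-adjoint $H$-realisation, the identifications $\dom(L^{1/2}) = V$ and $V^\ast = $ completion of $H$ under $\|L^{-1/2}\cdot\|_H$, and then the diagonalisation characterisation of quadratic interpolation. The extra care you flag (self-adjointness via the form method and the $E_0$/$E_1$ bookkeeping) is exactly the routine verification the paper leaves implicit, so no gap remains.
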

The final result of this section shows that we can recover $L^2$-spaces by interpolation between Sobolev spaces of positive and negative orders.
\begin{prop}
\label{lem:L2-rec}
Let $E$ be a Hilbert space and $0 \leq s \leq 1$. Then, up to equivalence of norms,
\begin{equation}
\label{eq:rec-L2}
[H^{1 - s}(0, +\infty; E), H^{-s}(0, +\infty; E)]_{1 - s} = L^2(0, +\infty; E).
\end{equation}
\end{prop}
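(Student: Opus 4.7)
The plan is to first establish the full-line analogue
\begin{equation*}
[H^{1-s}(\R, E), H^{-s}(\R, E)]_{1-s} = L^2(\R, E)
\end{equation*}
with equivalence of norms, and then transfer the identity to the half-line using the extension and restriction machinery introduced in \cref{lem:reflec-gen} and \cref{sec:def}. Before doing so, it is worth pointing out that the interpolation pair on the left-hand side of \cref{eq:rec-L2} is admissible: by \cref{eq:chain-H-spaces}, $H^{1-s}(0, +\infty; E) \hookrightarrow L^2(0, +\infty; E) \hookrightarrow H^{-s}(0, +\infty; E)$ with continuous and dense embeddings.

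For the full-line identity, I would appeal to the Bessel potential characterisation \cref{eq:bessel-spaces}. The Fourier multiplier $\Lambda$ defined on $\S'(\R, E)$ by $\F[\Lambda u](\omega) = (1 + \omega^2)^{1/2} \F[u](\omega)$ is an isometric isomorphism from $H^r(\R, E)$ onto $H^{r-1}(\R, E)$ for every $r \in \R$ (once the equivalent Bessel potential norm is used). Viewed as an unbounded operator on $H^{-s}(\R, E)$, $\Lambda$ is strictly positive and self-adjoint with domain precisely $H^{1-s}(\R, E)$. The diagonalisation characterisation of quadratic interpolation spaces recalled in \cref{sec:def} then yields $[H^{1-s}(\R, E), H^{-s}(\R, E)]_\theta = \dom(\Lambda^{1-\theta}) = H^{-s + (1-\theta)}(\R, E)$ with equivalent norms for $0 \leq \theta \leq 1$; choosing $\theta = 1-s$ gives the desired identity.

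To pass to the half-line, I would next invoke \cref{lem:reflec-gen} with $m = 1$, which supplies an extension operator $\pi_1$ that is simultaneously continuous from $H^r(0, +\infty; E)$ into $H^r(\R, E)$ for every $-1 \leq r \leq 1$, and in particular for $r \in \{1-s, -s\}$. Interpolation of operators \cref{eq:geo-inter} then makes $\pi_1$ continuous from the left-hand side of \cref{eq:rec-L2} into $L^2(\R, E)$. Composing with the restriction map $R$ and using $R \pi_1 = \id$ on $L^2(0, +\infty; E)$ yields one of the two continuous embeddings sought. For the reverse direction, I would likewise interpolate the restriction maps $R : H^r(\R, E) \to H^r(0, +\infty; E)$ (continuous for all $r$, by the restriction property recalled in \cref{sec:def}) to get a continuous map from $L^2(\R, E)$ into the interpolation space of \cref{eq:rec-L2}, and conclude by writing any $u \in L^2(0, +\infty; E)$ as the restriction of its trivial extension by zero to $\R$.

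I do not anticipate any serious obstacle: the core ingredient is the full-line identity, which is classical and follows directly from the functional-calculus description of $\{H^r(\R, E)\}_{r \in \R}$, and the remainder is a routine transfer step using tools already assembled in \cref{sec:add-hilb}. The only subtlety worth highlighting is that $\pi_1$ must handle both a positive-order and a negative-order Sobolev norm at once, which is exactly the reason we appeal to \cref{lem:reflec-gen} rather than \cref{lem:reflec-plus}.
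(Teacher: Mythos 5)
Your full-line identity and the forward inclusion are sound: the Bessel-potential/diagonalisation argument for $[H^{1-s}(\R,E),H^{-s}(\R,E)]_{1-s}=L^2(\R,E)$ is correct, and interpolating $\pi_1$ from \cref{lem:reflec-gen} (plus a small density argument identifying $R\pi_1 u$ with $u$ in $H^{-s}(0,+\infty;E)$, which you should spell out) does give the continuous embedding of the interpolation space into $L^2(0,+\infty;E)$. This is genuinely different from the paper, which argues intrinsically on the half-line by reiteration together with \cref{lem:int-H10-L2} and the duality lemma \cref{lem:int-V-V-ast}, splitting into the cases $s<1/2$, $s=1/2$, $s>1/2$.

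The gap is in your reverse inclusion, and it sits exactly at $s=1/2$. You interpolate the restriction map $R:H^{r}(\R,E)\to H^{r}(0,+\infty;E)$ at the two levels $r=1-s$ and $r=-s$, citing the restriction property of \cref{sec:def}. But with the paper's definition $H^{-s}(0,+\infty;E)=(H^{s}_0(0,+\infty;E))^\ast$, continuity of restriction at order $-s$ is \emph{dual} to continuity of extension by zero $H^{s}_0(0,+\infty;E)\to H^{s}(\R,E)$ (test on $u\in L^2(\R,E)$: $\sup_{\varphi\in\D(0,+\infty;E)}|\langle u,\varphi\rangle|/\|\varphi\|_{H^{s}(0,+\infty;E)}\leq K\|u\|_{H^{-s}(\R,E)}$ for all such $u$ forces $\|\tilde\varphi\|_{H^{s}(\R,E)}\leq K\|\varphi\|_{H^{s}(0,+\infty;E)}$, where $\tilde\varphi$ is the zero extension). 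At $s=1/2$ one has $H^{1/2}_0(0,+\infty;E)=H^{1/2}(0,+\infty;E)$ by \cref{coro:charac-Hs-0}, and extension by zero is \emph{not} bounded there (see the remark after \cref{coro:ext-zero}); consequently restriction is not bounded from $H^{-1/2}(\R,E)$ into the antidual space $H^{-1/2}(0,+\infty;E)$ used in this paper, and the interpolation of $R$ you invoke is not available at that endpoint. This is the $H^{1/2}_{00}$ phenomenon, and it is precisely why the paper isolates $s=1/2$ and proves it via $[V,V^\ast]_{1/2}=H$ (\cref{lem:int-V-V-ast}) rather than by transfer from the line. For $s\neq 1/2$ your reverse direction is fine, since there the negative-order restriction is the adjoint of the bounded zero-extension map of \cref{coro:ext-zero}.

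If you want to keep your transfer strategy uniformly in $s$, replace plain restriction by the retraction $\pi_1^\ast$ from the proof of \cref{lem:reflec-gen}: it is bounded $H^{r}(\R,E)\to H^{r}_0(0,+\infty;E)$ for $0\leq r\leq 1$ and, by duality with $\pi_1$ acting on $H^{r}_0(0,+\infty;E)$, also bounded $H^{-r}(\R,E)\to H^{-r}(0,+\infty;E)$ for all $0\leq r\leq 1$, \emph{including} $r=1/2$; moreover $\pi_1^\ast$ applied to the extension by zero of $u\in L^2(0,+\infty;E)$ returns $u$, because the reflected terms are supported in $(-\infty,0)$. Interpolating $\pi_1^\ast$ instead of $R$ closes the gap and yields the reverse inclusion for every $0\leq s\leq 1$.
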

\begin{proof}
The result is trivial when $s = 0$ or $s = 1$. The proof is split into different cases depending on the value of $s \in (0, 1)$.
\begin{itemize}[wide]
  \item 
 \emph{First case: $s = 1/2$.} This special case readily follows from \cref{lem:int-V-V-ast}, where we set $H = L^2\hl{E}$ and $V = H^{1/2}\hl{E}$ and  recall from \cref{coro:charac-Hs-0} that
\begin{equation}
 H^{1/2}\hl{E} = H^{1/2}_0\hl{E}, \quad\mbox{hence}\quad (H^{1/2}(0, +\infty))^\ast = H^{-1/2}(0, +\infty; E).
\end{equation}
\item
\emph{Second case: $0 < s < 1/2$.}
By \cref{coro:charac-Hs-0}, $H^s_0\hl{E}$ and $H^s\hl{E}$ coincide and
\begin{equation}
\label{eq:sp-int-Hs}
[H^{1 - s}(0, +\infty; E), H^{-s}(0, +\infty; E)]_{1 - s} = [H^{1-s}\hl{E}, (H^s\hl{E})^\ast]_{1 - s}.
\end{equation}
From now on we follow the proof of \cite[Theorem 12.5, Chapter 1]{LioMag68book}.
First, using for instance \cref{lem:int-V-V-ast}, we have $[H^1\hl{E}, H^1\hl{E}^\ast]_{1/2} = L^2\hl{E}$. Hence
\begin{equation}
\label{eq:int-H-pos}
\begin{aligned}
H^{1-s}\hl{E} &= [H^1\hl{E}, L^2\hl{E}]_s\\
&= [H^1\hl{E}, [H^1\hl{E}, (H^1\hl{E})^\ast]_{1/2}]_s \\
&= [H^1\hl{E}, (H^1\hl{E})^\ast]_{s/2},
\end{aligned}
\end{equation}
where we used the definition \cref{eq:def-inter} of fractional order Sobolev spaces and the reiteration property. Similarly, using also duality \cite[Theorem 3.7]{ChaHew15}, we obtain
\begin{equation}
\label{eq:int-H-neg}
(H^s\hl{E})^\ast = [H^1\hl{E}, (H^1\hl{E})^\ast]_{1 - s/2}.
\end{equation}
Plugging \cref{eq:int-H-pos,eq:int-H-neg} into \cref{eq:sp-int-Hs} and using the reiteration property lead to
\begin{equation}
[H^{1 - s}(0, +\infty; E), H^{-s}(0, +\infty; E)]_{1 - s} = [H^1\hl{E}, (H^1\hl{E})^\ast]_{1/2} = L^2\hl{E},
\end{equation}
as required. 
\item\emph{Third case: $1/2 < s < 1$.} 
In place of \cref{eq:sp-int-Hs}, we shall use the fact that 
\begin{equation}
\label{eq:sp-int-Hs-0}
[H^{1 - s}(0, +\infty; E), H^{-s}(0, +\infty; E)]_{1 - s} = [H^{1-s}_0\hl{E}, H^{-s}\hl{E}]_{1 - s},
\end{equation}
which is again a consequence of \cref{coro:charac-Hs-0}. Working this time in the pivot duality $H^1_0\hl{E} \hookrightarrow L^2\hl{E} \hookrightarrow H^{-1}\hl{E}$, \cref{lem:int-V-V-ast} gives
\begin{equation}
\label{eq:int-H10-H-1}
[H^1_0\hl{E}, H^{-1}\hl{E}]_{1/2} = L^2\hl{E}.
\end{equation}
Recall from \cref{lem:int-H10-L2} that
\begin{equation}
\label{eq:int-Hs0-bis}
H^{1 -s}_0\hl{E} = [H^1_0\hl{E}, L^2\hl{E}]_{s}.
\end{equation}
Starting from \cref{eq:sp-int-Hs-0}
and using \cref{eq:int-H10-H-1,eq:int-Hs0}, we readily complete the proofas in the previous case. \qedhere
\end{itemize}
\end{proof}

\section{Other technical results}
\label{sec:cutoff}
In our main  proofs, we make use of the following lemma.
\begin{lemma}
\label{lem:localize-shift}
Let $E$ be a Hilbert space. Let $T > 0$ and $0 \leq s \leq 2$. There exists $K > 0$ such that, for all $u \in H^{-s}\hl{E}$,
\begin{equation}
\label{eq:H-s-conc}
\|u\|_{H^{-s}\hl{E}}^2 \leq K \sum_{n=0}^{+\infty} \| u(nT + \cdot) \|^2_{H^{-s}(0, T; E)} + \|u((n+1/2)T + \cdot)\|^2_{H^{-s}(0, T; E)}.
\end{equation}
\end{lemma}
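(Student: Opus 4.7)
The approach I would take is a duality argument combined with a partition-of-unity decomposition of test functions. Since $H^{-s}\hl{E}$ is the antidual of $H^s_0\hl{E}$ and $\D\hl{E}$ is dense in $H^s_0\hl{E}$ by \cref{coro:charac-Hs-0}, I would compute the $H^{-s}\hl{E}$-norm as a supremum of pairings $|\langle u, \varphi\rangle_{L^2\hl{E}}|/\|\varphi\|_{H^s\hl{E}}$ over $\varphi \in \D\hl{E}$, following \cref{eq:dual-norm}. Relabel the intervals occurring in \cref{eq:H-s-conc} as $I_j \triangleq (jT/2, jT/2 + T)$, $j \in \N$: the resulting cover of $(0, +\infty)$ consists of intervals of length $T$ with step $T/2$, of bounded overlap, since at most two of them pass through any given point.

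The first step would be to construct a smooth partition of unity $\{\rho_j\}_{j \in \N}$ on $[0, +\infty)$ subordinate to $\{I_j\}$, the $\rho_j$ for $j \geq 1$ being translates of a fixed template compactly supported inside the corresponding interval, and $\rho_0$ a modified element supported in $[0, T]$ handling the left boundary, with derivatives uniformly bounded across $j$. For $\varphi \in \D\hl{E}$ decomposed as the locally finite sum $\varphi = \sum_j \rho_j \varphi$, I would then establish that $(\rho_j \varphi)|_{I_j} \in H^s_0(I_j; E)$ with
\begin{equation*}
\|\rho_j \varphi\|_{H^s_0(I_j; E)} \leq K \|\varphi\|_{H^s(I_j; E)}
\end{equation*}
uniformly in $j$. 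For $j \geq 1$ this follows at once from the compact support of $\rho_j$ strictly inside $I_j$ together with standard multiplier estimates in Sobolev spaces. For $j = 0$, I would invoke \cref{coro:charac-Hs-0}: any $\varphi \in \D\hl{E} \subset H^s_0\hl{E}$ satisfies $\varphi^{(k)}(0) = 0$ for every integer $k < s - 1/2$, so that $\rho_0 \varphi$ and its first derivatives (when $s > 3/2$) vanish at $t = 0$, while $\rho_0(T) = 0$ gives the corresponding vanishing at $t = T$.

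The second step is duality on each $I_j$. Using the restriction property recalled in \cref{sec:def} one has $u|_{I_j} \in H^{-s}(I_j; E)$, and
\begin{equation*}
|\langle u, \rho_j \varphi\rangle_{L^2\hl{E}}| = |\langle u|_{I_j}, \rho_j \varphi\rangle_{H^{-s}(I_j; E), H^s_0(I_j; E)}| \leq K \|u\|_{H^{-s}(I_j; E)} \|\varphi\|_{H^s(I_j; E)}.
\end{equation*}
Summing in $j$ and applying Cauchy--Schwarz,
\begin{equation*}
|\langle u, \varphi\rangle_{L^2\hl{E}}| \leq K \Bigl(\sum_j \|u\|^2_{H^{-s}(I_j; E)}\Bigr)^{1/2} \Bigl(\sum_j \|\varphi\|^2_{H^s(I_j; E)}\Bigr)^{1/2}.
\end{equation*}
The third step is the bounded-overlap inequality $\sum_j \|\varphi\|^2_{H^s(I_j; E)} \leq K' \|\varphi\|^2_{H^s\hl{E}}$. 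For integer $s \in \{0, 1, 2\}$ it follows directly from the $L^2$-representation of the norm together with the fact that each point of $(0,+\infty)$ lies in at most two $I_j$. For non-integer $s \in (0, 2)$ one uses the Sobolev--Slobodeckii representation: the pairs $(t, \tau) \in I_j \times I_j$ satisfy $|t - \tau| < T$ and the range of admissible $j$ for a given such pair has length at most $2$, so that the iterated integrals are controlled by the corresponding integral on $\hl{} \times \hl{}$. Taking the supremum over $\varphi$ then produces \cref{eq:H-s-conc}.

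\textbf{Main obstacle.} The main technical point will be the construction of the partition of unity with uniform multiplier constants at the left endpoint, where the fact that $\varphi \in \D\hl{E}$ vanishes at $0$ (with enough derivatives) must be exploited to compensate for the absence of compact support of $\rho_0$ inside $(0, T)$; this is also where the half-integer values $s \in \{1/2, 3/2\}$ require a little extra care given that $H^s_0$ is then defined by closure rather than by trace conditions.
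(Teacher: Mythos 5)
Your proposal is correct and follows essentially the same route as the paper's proof: duality against test functions in $\D\hl{E}$, a partition of unity subordinate to the same overlapping length-$T$ windows stepped by $T/2$, Cauchy--Schwarz over the windows, and control of the localized $H^s$-norms via the difference-quotient (Sobolev--Slobodeckii) characterisation of fractional norms. The only difference is organisational: the paper packages the cutoff-and-translate estimate as a single statement (\cref{lem:localize-cut}), whereas you split it into a uniform multiplier bound plus a bounded-overlap estimate for restriction norms — the same ingredients in a slightly different order.
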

\Cref{lem:localize-shift} allows us to estimate negative order Sobolev norms of a function $u$ by summing estimates on finite time windows. Note that, without the shifted term in the sum in 
\cref{eq:H-s-conc}, the inequality would be false; indeed, in the scalar case define $u \in H^{-1}(0, +\infty)$ by $\langle u, \varphi \rangle \triangleq \overline{\varphi(T)}$, then $\sum_{n=0}^{+\infty} \| u(nT + \cdot) \|^2_{H^{-1}(0, T)} = 0$ but of course $u \not = 0$. The result follows from another lemma, which we state next.
\begin{lemma}
\label{lem:localize-cut}
Let $E$ be a Hilbert space. Let $T > 0$, $0 \leq s \leq 2$  and $\chi \in \D(0, T)$. There exists $K > 0$ such that 
\begin{equation}
\label{eq:localize-cut}
\sum_{n=0}^{+\infty} \|\chi(\cdot) \varphi(nT + \cdot)\|_{H^s(0, T; E)}^2 \leq K \|\varphi\|_{H^s\hl{E}}^2, \quad \varphi \in H^{s}\hl{E}.
\end{equation}
\end{lemma}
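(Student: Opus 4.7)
The plan is to establish \cref{eq:localize-cut} directly at the integer endpoints $s = 0$ and $s = 2$ and then recover the general case by interpolation, viewing the mapping
\begin{equation*}
T_\chi \colon \varphi \longmapsto \bigl(\chi(\cdot)\,\varphi(nT+\cdot)\bigr)_{n \in \N}
\end{equation*}
as a linear operator from $H^s\hl{E}$ into $\ell^2(\N; H^s(0,T;E))$. In this formulation the conclusion \cref{eq:localize-cut} is exactly the boundedness of $T_\chi$ between these spaces.

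The case $s = 0$ is immediate: since $\chi$ is bounded on $(0,T)$, for every $\varphi \in L^2\hl{E}$ we have
\begin{equation*}
\|\chi\,\varphi(nT+\cdot)\|^2_{L^2(0,T;E)} \leq \|\chi\|_\infty^2 \int_{nT}^{(n+1)T} \|\varphi(t)\|^2_E \, \d t,
\end{equation*}
and summing over $n \in \N$ yields the desired estimate with constant $\|\chi\|_\infty^2$. For $s=2$ I would use the Leibniz rule: given $\varphi \in H^2\hl{E}$, we have
\begin{equation*}
\frac{\d^2}{\d t^2}\bigl(\chi(t)\,\varphi(nT+t)\bigr) = \chi''(t)\varphi(nT+t) + 2\chi'(t)\varphi'(nT+t) + \chi(t)\varphi''(nT+t),
\end{equation*}
and each of the three terms, together with the zeroth and first-order ones, can be controlled exactly as in the $s = 0$ case after invoking $\|\chi\|_\infty, \|\chi'\|_\infty, \|\chi''\|_\infty < + \infty$. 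Summing in $n \in \N$ then gives
\begin{equation*}
\sum_{n=0}^{+\infty} \|\chi\,\varphi(nT+\cdot)\|^2_{H^2(0,T;E)} \leq K \|\varphi\|^2_{H^2\hl{E}}.
\end{equation*}

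At this point $T_\chi$ is bounded from $H^0\hl{E}$ into $\ell^2(\N; H^0(0,T;E))$ and from $H^2\hl{E}$ into $\ell^2(\N; H^2(0,T;E))$. Quadratic interpolation of Hilbert-valued $\ell^2$-spaces is trivial, in the sense that $[\ell^2(\N; H^2(0,T;E)), \ell^2(\N; L^2(0,T;E))]_{1-s/2} = \ell^2(\N; [H^2(0,T;E), L^2(0,T;E)]_{1-s/2})$, and by the reiteration property together with the definition \cref{eq:def-inter} of fractional order Sobolev spaces both of the endpoint interpolation spaces coincide with $H^s(0,T;E)$ and $H^s\hl{E}$, respectively, for every $0 \leq s \leq 2$. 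Applying the interpolation inequality \cref{eq:geo-inter} to $T_\chi$ between the pairs $(H^2\hl{E}, L^2\hl{E})$ and $(\ell^2(\N; H^2(0,T;E)), \ell^2(\N; L^2(0,T;E)))$ therefore yields \cref{eq:localize-cut} for all $s \in [0,2]$.

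I do not expect a substantive obstacle here: the only slightly delicate point is verifying that the interpolation of the target $\ell^2$-sums indeed factors through the interpolation of $H^s(0,T;E)$, but this is a standard fact about $\ell^2$-valued interpolation spaces, applicable without any boundary restriction since we are working with the full $H^s$-spaces and not the $H^s_0$-subspaces.
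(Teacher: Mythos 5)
Your argument is correct, but it takes a genuinely different route from the paper's. The paper proves the fractional cases directly: for $0 < s < 1$ it works with the Besov-type translation characterisation of the $H^s$-norm applied to $\Chi\varphi$, where $\Chi = \sum_{n} \chi(\cdot - nT)$, exploiting the compact support of $\chi$ so that small translates of the localised pieces never leave the window $(nT,(n+1)T)$; for $1 < s < 2$ it then reduces to that case via the Leibniz rule and \cref{lem:diff-shift}. Your route — endpoint estimates at $s=0$ and $s=2$ followed by interpolation of the operator $T_\chi$ into $\ell^2(\N; H^s(0,T;E))$ — is more conceptual and treats all intermediate $s$ at once, and the componentwise identification $[\ell^2(\N;A_1),\ell^2(\N;A_0)]_\theta = \ell^2(\N;[A_1,A_0]_\theta)$ is indeed unproblematic for the quadratic method (the diagonal positive operator associated with the couple has diagonal fractional powers), so the target space interpolates as you claim. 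The one step you should justify more carefully is the identification $[H^2(I;E),L^2(I;E)]_{1-s/2} = H^s(I;E)$ for $I=(0,T)$ and $I=(0,+\infty)$: the paper defines $H^s(I;E)$ by interpolating only between \emph{consecutive} integer orders, and reiteration alone does not yield $[H^2(I;E),L^2(I;E)]_{1/2} = H^1(I;E)$ on an interval — that identity must first be transferred from the Fourier-side statement on $\R$ by means of a common extension operator (e.g.\ $\pi_2$ from \cref{lem:reflec-plus}) together with the interpolation-of-retracts principle, or by citing Lions--Magenes. Since you work with the full $H^s$-spaces rather than the $H^s_0$-subspaces, no exceptional half-integer exponents arise, so this is a standard repair and your proof then goes through; the paper's more hands-on argument has the advantage of staying entirely within the tools it has already developed.
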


\begin{proof} By density arguments, it suffices to prove \cref{eq:localize-cut} for $\varphi \in \D\hl{E}$.
We omit the straightforward proofs of the cases $s = 0, 1, 2$ and
suppose, to begin with, that $0 < s < 1$. 
Define
$
\Chi(t) = \sum_{n = 0}^{+\infty} \chi(t - nT)
$
for $t \geq 0$ (here $\chi$ is extended by zero outside of $(0, T)$).
It is clear that $ \Chi \in \C^\infty(\R^+) \cap W^{1, \infty}(0, +\infty)$ and, as a result, $\Chi \varphi \in H^s\hl{E}$.     In order to estimate the left-hand side of \cref{eq:localize-cut} in terms of the $H^{s}$-norm of $\Chi \varphi$, we use the Besov characterisation of $H^{s}$-spaces of one variable \cite{LioMag68book,Sim90} and start by writing
\begin{equation}
\label{eq:Hs-integral}
 \int_0^{+\infty} \frac{1}{\tau^{2s +1}} \int_0^{+\infty} \|\Chi(t + \tau) \varphi(t + \tau) - \Chi(t) \varphi(t) \|^2_E \, \d t \, \d \tau \leq K \|\Chi \varphi\|^2_{H^s\hl{E}}.
\end{equation}
In what follows, we use the notation $\varphi_n \triangleq \varphi(nT + \cdot)$.
 Since $\chi$ has compact support, there exist $\tau_0, \eps > 0$  such that if $\chi(t) \not = 0$,  then $t + \tau_0 \in (\eps, T- \eps)$. Hence we may write
\begin{multline}
\label{eq:Hs-integral-sum}
\int_0^{\tau_0} \frac{1}{\tau^{2s +1}} \int_0^{+\infty} \|\Chi(t + \tau) \varphi(t + \tau) - \Chi(t) \varphi(t) \|^2_E \, \d t \, \d \tau  
\\=  \sum_{n=0}^{+\infty} \int_0^{\tau_0} \frac{1}{\tau^{2s +1}} \int_0^T \|\chi(t + \tau)\varphi_n (t + \tau) - \chi(t)\varphi_n (t)\|_E^2 \, \d t \, \d \tau.
\end{multline}
Using again  the equivalent translation-type  Sobolev norm,  this time  on the finite interval $(0, T)$, we see that there exists $ K> 0$ such that, for all $n \in \N$,
 \begin{equation}
\label{eq:finite-chi}
\|\chi \varphi_n\|^2_{H^{-s}(0, T; E)} \leq K 
\int_0^{\tau_0} \frac{1}{\tau^{2s +1}} \int_0^T \|\chi(t + \tau)\varphi_n (t + \tau) - \chi(t)\varphi_n (t)\|_E^2 \, \d t \, \d \tau + K \|\chi \varphi_n\|^2_{L^2(0, T; E)}.
\end{equation}
Summing \cref{eq:finite-chi} over $\mathbb{N}$, using \cref{eq:Hs-integral-sum} and then \cref{eq:Hs-integral}, and absorbing the left-over $L^2$-term, we obtain
\begin{equation}
\sum_{n=0}^{+\infty} \|\chi \varphi_n\|^2_{H^{-s}(0, T; E)} \leq K \left(\|\Chi \varphi\|^2_{L^2\hl{E}} +   ~\mbox{Left-hand side of \cref{eq:Hs-integral-sum}}\right) \leq K' \|\Chi \varphi\|^2_{H^s\hl{E}}.
\end{equation}
We recall that  multiplying by $\Chi$ is continuous on $H^s\hl{E}$ and immediately deduce \cref{eq:H-s-conc}. It remains to deal with the case $1 < s < 2$. To do so, we simply write $(\d/\d t) \chi \varphi_n = \dot{\chi}\varphi_n + \chi\dot{\varphi}_n$ and use \cref{lem:diff-shift}, resulting in
\begin{equation}
\label{eq:d-chi-phi}
\|\chi \varphi_n\|^2_{H^{s}(0, T; E)} \leq K \|{\chi}\varphi_n\|^2_{L^2(0, T; E)} + K \|\dot{\chi}\varphi_n\|^2_{H^{1-s}(0, T; E)} + K \|{\chi}\dot{\varphi}_n\|^2_{H^{1-s}(0, T; E)}, \quad n \in \N.
\end{equation}
Since $\dot{\chi} \in \D(0, T; E)$, \cref{eq:d-chi-phi} shows that the result readily follows from the previous case $0 \leq s < 1$.
\end{proof}

\begin{proof}[Proof of \cref{lem:localize-shift}] The first step of the proof consists in some simple manipulation with smooth cut-off functions.
We start by picking some $\chi \in \D(0, T)$ taking values in $[0, 1]$ and such that $\chi(t) = 1$ for $t \in [T/3, 2T/3]$. 
Then, we define  $\rho : [T/2, 3T/2] \to [0, 1]$ as follows:
\begin{equation}
\rho(t) \triangleq 1 - \chi(t), \quad T/2 \leq t \leq T; \qquad \rho(t) =  1 - \chi(t - T), \quad T \leq t \leq 3T/2.
\end{equation} 
Note that $\rho \in \D(T/2, 3T/2)$. In order to deal with the boundary $t = 0$,
we also pick $\chi_0 \in \C^\infty([0, T])$ such that $\chi_0(t) = 1$ for $t \in [0, T/2]$ and $\chi_0(t) = \chi(t)$ for $t \in [T/2, T]$. Again, $\chi$ and $\rho$ are extended with zero outside of their domains of definition. Set
$
\Chi \triangleq \sum_{n=0}^{+\infty} \chi(\cdot - nT)
$
and
$\Rho \triangleq \sum_{n=0}^{+\infty} \rho(\cdot - nT)$, with
$\Chi, \Rho \in \C^\infty(\R^+)$,
and observe that 
\begin{equation}
\label{eq:sum-chi}
\Chi(t) + \Rho(t) 
 = 1, \quad t \geq T/2; \qquad \chi_0(t)+ \rho(t) = 1, \quad 0 \leq t \leq T/2.
\end{equation}
We now turn our attention to \cref{eq:H-s-conc}, which we will establish for arbitrary $u \in L^2\hl{E}$. Let $\varphi \in \D\hl{E}$.
To simplify the notation, we write $\psi \triangleq \rho(\cdot - (1/2)T) \in \D(0, T)$ and also, for $n \in \N$,
\begin{equation}
\varphi_n \triangleq \varphi(nT + \cdot), \quad u_n \triangleq u(nT + \cdot), \quad \varphi_{1/2 + n} \triangleq \varphi((n+1/2)T + \cdot), \quad u_{n+1/2} \triangleq u((n+1/2)T + \cdot).
\end{equation}
With \cref{eq:chain-H-spaces,eq:dual-norm} in mind, we will estimate the $H^{-s}$-norm of $u$ by pairing it with
 $\varphi$. In view of \cref{eq:sum-chi}, we may write $\varphi = \Chi \varphi + \Rho \varphi$ on $(T/2, +\infty)$, and $\varphi = \chi_0 \varphi + \rho \varphi$ on $(0, T/2)$, which in turn leads to
\begin{multline}
\label{eq:dev-u-bumb}
\int_0^{+\infty} \langle u(t), \varphi(t) \rangle_E \, \d t = \int_0^{T}  \langle u(t), \chi_0(t)  \varphi(t) \rangle_E \\
+ \sum_{n=1}^{+\infty} \int_0^T \langle u_n(t), \chi(t) \varphi_n(t)\rangle_E \, \d t + \sum_{n=0}^{+\infty} \int_0^T \langle u_{n+1/2}(t), \psi(t) \varphi_{n+1/2}(t)\rangle_E \,  \d t.
\end{multline}
Furthermore, we see that
 $\chi_0 \varphi$ belongs to $\D(0, T; E)$,
and so do
 $\chi \varphi$ and $\psi \varphi_{n+1/2}$, $n \in \N$. 
This allows us to deduce from \cref{eq:dev-u-bumb} that
\begin{multline}
\left | \int_0^{+\infty} \langle u(t), \varphi(t) \rangle_E \, \d t  \right | \leq \|u\|_{H^{-s}(0, T; E)} \|\chi_0\varphi\|_{H^s(0, T; E)} \\
+ \sum_{n=1}^{+\infty} \|u_n\|_{H^{-s}(0, T; E)} \|\chi \varphi_n\|_{H^s(0, T; E)} + \sum_{n=0}^{+\infty} \|u_{n+1/2}\|_{H^{-s}(0, T; E)} \|\psi \varphi_{n+1/2} \|_{H^s(0, T; E)}.
\end{multline}
The Cauchy--Schwartz inequality gives
\begin{multline}
\label{eq:u-varphi-cs}
\left | \int_0^{+\infty} \langle u(t), \varphi(t) \rangle_E \, \d t \right |^2 \leq  4 \left ( \sum_{n=1}^{+\infty} \|u_{n+1/2}\|_{H^{-s}(0, T; E)}^2 \right ) \left ( \sum_{n=0}^{+\infty} \|\psi \varphi_{n+1/2}\|_{H^s(0, T; E)}^2 \right ) \\
+ 4 \left ( \sum_{n=1}^{+\infty} \|u_n\|_{H^{-s}(0, T; E)}^2 \right ) \left ( \sum_{n=1}^{+\infty} \|\chi \varphi_n \|_{H^s(0, T; E)}^2 \right ) 
 + 4  \|u\|_{H^{-s}(0, T; E)}^2 \|\chi_0 \varphi \|_{H^s(0, T; E)}^2.
\end{multline}
By \cref{lem:localize-cut},
\begin{equation}
\label{eq:est-cut-sum}
\sum_{n=1}^{+\infty} \|\chi \varphi_n\|_{H^s(0, T; E)}^2 \leq K \|\varphi\|_{H^s\hl{E}}^2, \quad \sum_{n=0}^{+\infty} \|\psi \varphi_{n+1/2}\|_{H^s(0, T; E)}^2 \leq K \|\varphi(T/2 + \cdot)\|^2_{H^s\hl{E}}.
\end{equation}
Left translations are continuous on $L^2\hl{E}$, on $H^1\hl{E}$ and thus also on $H^s\hl{E}$ by interpolation. This gives 
$\|\varphi(T/2 + \cdot)\|^2_{H^s\hl{E}} \leq K' \|\varphi\|_{H^s\hl{E}}$ and, coming back to \cref{eq:u-varphi-cs,eq:est-cut-sum},
\begin{multline}
\label{eq:almost}
|\langle u, \varphi\rangle_{L^2\hl{E}}|^2 \leq K \|\varphi\|_{H^s\hl{E}}^2 \sum_{n=0}^{+\infty} \|u_n\|^2_{H^{-s}(0, T; E)} + \|u_{n+1/2}\|^2_{H^{-s}(0, T; E)} \\ + K \|u\|_{H^{-s}(0, T; E)}^2 \|\chi_0 \varphi \|_{H^s(0, T; E)}^2.
\end{multline}
After absorbtion of the last term in \cref{eq:almost}, the proof is  complete.
\end{proof}


Next, we give a slightly modified version of an \emph{a priori} estimate for the Helmhotz equation from \cite{Spe14,BasSpe15}, with explicit dependence on the wavenumber. \Cref{lem:elliptic-tang}, which we used above, is a byproduct of the proof. In what follows, $\vec{\nabla}_\tau$ denotes the tangential gradient, that is, the projection of the gradient onto the tangent hyperplane.\footnote{Because the domain $\Omega$ is Lipchitz, the outward unit normal vector $\vn$ is defined for a.e.\  point of the boundary.}
\begin{lemma}
\label{lem:helmhotz}
Let $\Omega$ be a Lipschitz domain in $\R^d$, $d \geq 2$, which satisfies one of the following conditions:
\begin{itemize*}
\item $\Omega$ has bounded boundary; \item $\Omega$ is the half-plane. 
\end{itemize*}
There exists $K > 0$ such that for all $k \geq 0$, $w \in H^1(\Omega)$ and $f \in L^2(\Omega)$, if $w$ solves $\Delta w + k^2 w = - f$ and $\partial_\vn w \in L^2(\partial \Omega)$ then $w|_{\partial \Omega} \in H^1(\partial \Omega)$ and
\begin{equation}
\label{eq:a-priori-estimate-interior}
\|\vnabla_\tau w\|_{L^2(\partial \Omega)^{d-1}} \leq K( \|\partial_\vn w\|_{L^2(\partial \Omega)} + k \|w|_{\partial \Omega}\|_{L^2(\partial \Omega)} + \|\nabla w \|_{L^2(\Omega)^d} + k \|w\|_{L^2(\Omega)} + \|f\|_{L^2(\Omega)}).
\end{equation}
In particular, for $k \geq 1$ (say) and up to a change of constant $K$, the $H^1(\partial \Omega)$-norm of $w|_{\partial \Omega}$ is bounded by the right-hand side of \cref{eq:a-priori-estimate-interior}.
\end{lemma}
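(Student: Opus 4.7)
The plan is to prove the inequality via a Rellich-type multiplier identity, which is the standard tool behind both Nečas' elliptic regularity up to the boundary and Spence's wavenumber-explicit version \cite{Spe14}. The key geometric object is a bounded Lipschitz vector field $Z : \overline{\Omega} \to \R^d$, with bounded weak derivatives, such that $Z \cdot \vn = 1$ almost everywhere on $\partial \Omega$. For the half-space $\{x_1 > 0\}$ the outward normal is $\vn = -e_1$, so we may simply take $Z = -e_1$, in which case $DZ \equiv 0$ and $\dive Z \equiv 0$ and the analysis becomes particularly transparent. For the bounded-boundary case such a $Z$ is constructed, as in \cite{Spe14} and \cite[Chapter~5]{Nec12book}, by covering $\partial \Omega$ by a finite collection of Lipschitz graphs, choosing a locally transverse direction in each chart, and patching via a partition of unity.

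First, I would reduce by a density and mollification argument to the case where $w$ is smooth up to the boundary (the hypotheses $w \in H^1(\Omega)$, $\Delta w \in L^2(\Omega)$, $\partial_\vn w \in L^2(\partial \Omega)$ are exactly what is needed to perform this reduction and to identify the limits of the boundary integrals). Multiplying the equation $\Delta w + k^2 w = -f$ by $Z \cdot \overline{\vnabla w}$, taking the real part and integrating by parts using Green's formula and the divergence theorem, one obtains the Rellich identity
\begin{equation*}
\int_{\partial \Omega} (Z \cdot \vn)\bigl(|\partial_\vn w|^2 - \|\vnabla_\tau w\|^2 + k^2 |w|^2\bigr) \, \d \sigma
= 2 \re \int_{\partial \Omega} (Z \cdot \vnabla_\tau \overline{w})\, \partial_\vn w \, \d \sigma + \mathcal{I}(w),
\end{equation*}
where the interior remainder $\mathcal{I}(w)$ contains terms of the form $\int_\Omega (DZ) \vnabla w \cdot \overline{\vnabla w}$, $\int_\Omega (\dive Z)(k^2 |w|^2 - \|\vnabla w\|^2)$ and $2 \re \int_\Omega f (Z \cdot \overline{\vnabla w})$, and is therefore controlled by $K \|Z\|_{W^{1,\infty}}\bigl(\|\vnabla w\|_{L^2(\Omega)}^2 + k^2 \|w\|_{L^2(\Omega)}^2 + \|f\|_{L^2(\Omega)} \|\vnabla w\|_{L^2(\Omega)}\bigr)$.

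Since $Z \cdot \vn = 1$ on $\partial \Omega$, the left-hand side isolates $\|\vnabla_\tau w\|^2_{L^2(\partial \Omega)}$. The mixed boundary term on the right is bounded, via Cauchy--Schwarz and Young's inequality, by $\tfrac{1}{2}\|\vnabla_\tau w\|_{L^2(\partial \Omega)}^2 + 2 \|Z\|_\infty^2 \|\partial_\vn w\|_{L^2(\partial \Omega)}^2$, the first half of which can be absorbed. After this absorption, another application of Young's inequality to the $f$-term and rearrangement yield
\begin{equation*}
\|\vnabla_\tau w\|^2_{L^2(\partial \Omega)} \leq K\bigl( \|\partial_\vn w\|^2_{L^2(\partial \Omega)} + k^2 \|w\|^2_{L^2(\partial \Omega)} + \|\vnabla w\|^2_{L^2(\Omega)} + k^2 \|w\|^2_{L^2(\Omega)} + \|f\|^2_{L^2(\Omega)}\bigr),
\end{equation*}
and taking square roots (using $\sqrt{a+b} \leq \sqrt{a}+\sqrt{b}$) delivers \cref{eq:a-priori-estimate-interior}. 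The main difficulty is twofold: constructing the multiplier $Z$ under the mere Lipschitz regularity of $\partial \Omega$ (immediate for the half-space, but requiring a partition-of-unity patching for the bounded-boundary case) and justifying the Rellich identity for $w$ of low regularity. Both are standard and already worked out in \cite[Section~2 and Lemma~3.5]{Spe14}, so the argument in the bounded case reduces to citing that result and checking that it produces exactly the constants displayed in \cref{eq:a-priori-estimate-interior}; the half-space case is handled by the same scheme with the simpler choice $Z = -e_1$.
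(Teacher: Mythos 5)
Your proposal is correct and follows essentially the same route as the paper's proof: a Rellich multiplier identity with a Lipschitz vector field transverse to the boundary (the paper quotes the identity from \cite[Lemma 3.7]{Spe14}, takes a field with $\vec{h}\cdot\vn\geq\delta>0$ via \cite[Lemma 1.5.1.9]{Gri85book} for bounded boundary, and uses the cutoff field $-\chi(x)e_1$ on the half-space), followed by Cauchy--Schwarz and Young, absorption of the tangential term, and a density argument in the space $W$ of $H^1$ functions with $\Delta w\in L^2(\Omega)$ and $\partial_\vn w\in L^2(\partial\Omega)$. The only caveat is that on a merely Lipschitz boundary the standard construction gives $Z\cdot\vn\geq\delta$ rather than $Z\cdot\vn=1$ exactly (the normal is only measurable, so normalising would destroy Lipschitz regularity), but your absorption argument works verbatim with the weaker property, and your constant choice $Z=-e_1$ on the half-space is equally admissible since the cutoff in the paper is inessential there.
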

\begin{proof}
We will use the space $W$ defined in \cref{coro:elliptic-W}, which we recall is comprised of all elements $w \in H^1(\Omega)$ such that $\Delta w \in L^2$ and $\partial_\vn w \in L^2(\partial \Omega)$. We claim that the space $\tilde{W} = \{\varphi|_{\Omega} : \varphi \in \D(\R^{d})\}$
is dense in $W$; the proof is ommited but is similar to that given in \cite[Appendix A]{MoiSpe14survey}.
The following Rellich-type identity holds
for all real-valued vector fields \smash{$\vec{h} = (h_1, \dots, h_d) \in \C^1(\overline{\Omega})^d$, $w \in \tilde{W}$} and $k \geq 0$:
\begin{multline}
\label{eq:rellich}
\int_\Omega 2\re \left( \overline{\vec{h}\cdot \vnabla w} (\Delta w + k^2 w)  \right) - (\vnabla \cdot \vec{h})(\|\vnabla{w}\|^2 - k^2 |w|^2) + 2 \re  \sum_{i = 1}^{d}\sum_{j = 1}^d \dl{h_j}{x_i} \dl{w}{x_i} \overline{\dl{w}{x_j}} \, \d x \\ 
= \int_{\partial \Omega} 2 \re \left( \overline{\vec{h}\cdot \vnabla w} \partial_\vn w \right) + (k^2 |w|^2 - \|\vnabla w\|^2) (\vec{h} \cdot \vn) \, \d \sigma;
\end{multline}
see, e.g., \cite[Lemma 3.7]{Spe14}. We shall deduce trace identities from \cref{eq:rellich} by choosing an appropriate vector field \smash{$\vec{h}$}. If $\Omega$ is bounded, \cite[Lemma 1.5.1.9]{Gri85book} states that there exist \smash{$\vec{h} \in \C^{\infty}(\overline{\Omega})^d$} and $\delta > 0$ such that $\smash{\vec{h}} \cdot \vn \geq \delta$ a.e.\  on $\partial \Omega$. In fact, by following the proof of this lemma, one may construct $\smash{\vec{h}} \in \D(\R^{d})^d$ satisfying the same property even for domains $\Omega$ that are unbounded but have bounded boundary.
On the other hand, if $\Omega$ is the half-space $\{(x, y) \in (0, +\infty) \times \R^{d-1}\}$, we simply let $\smash{\vec{h}(x, y)} \triangleq - \chi(x)(1, 0)$ for all $(x, y) \in \overline{\Omega}$, where $\chi : \R^+ \to \R$ is any smooth function equal to $1$ near $0$ and $0$ away from $0$; such a vector field also satisfies the same property. In any case, with a series of applications of the Cauchy--Schwarz and Young inequalities we may deduce from \cref{eq:rellich} that, for all $w \in \tilde{W}$ and $k \geq 0$,
\begin{equation}
\label{eq:rellich-a}
\delta \int_{\partial \Omega} \| \vnabla_\tau{w}\|^2 \, \d \sigma \leq K_{\vec{h}} 
\left(
\int_{\partial \Omega} |\partial_\vn w|^2 + k^2 |w|^2 \, \d \sigma + \int_\Omega |\Delta w + k^2 w|^2 + \|\vnabla w\|^2 + k^2 |w|^2 \, \d x
 \right),
\end{equation}
where we also used that $\|\vnabla w\|^2 = |\partial_\vn w|^2 + \|\vnabla_\tau w\|^2$ a.e.\  on $\partial \Omega$; here,
$K_{\vec{h}}$ is a positive constant that depends only on our construction of $\smash{\vec{h}}$. For fixed $k \geq 0$, the right-hand side of \cref{eq:rellich-a} is controlled by $\|w\|^2_W$. Furthermore, since $w$ is assumed to be sufficiently smooth,
\begin{equation}
\|w|_{\partial \Omega}\|^2_{H^1(\partial \Omega)} \leq K \int_{\partial \Omega} \| \vnabla_\tau{w}\|^2 + |w|^2 \, \d \sigma; 
\end{equation}
this is immediate if $\Omega$ is the half-space, and we refer the reader to \cite[Appendix A]{ChaGra12survey} in the case where $\Omega$ has bounded boundary. By density of $\tilde{W}$ in $W$, we see that the trace map extends to a continuous linear operator from $W$ into $H^1(\partial \Omega)$, as required (in particular \cref{lem:elliptic-tang} is now proved). This also shows that \cref{eq:rellich-a} is valid for arbitrary $w \in W$. As for the \emph{a priori} estimate \cref{eq:a-priori-estimate-interior} which holds uniformly in $k \geq 0$, it suffices to observe that any $w \in H^1(\Omega)$ solving $\Delta w + k^2 w = f$, $f \in L^2(\Omega)$, with $\partial_\vn w \in L^2(\partial \Omega)$ belongs to $W$, and then use \cref{eq:rellich-a} with $|\Delta w + k^2 w|^2$ replaced by $|f|^2$. The final statement of \cref{lem:helmhotz} readily follows.
\end{proof}

Finally, we prove the claim of \cref{rem:reg-wave}. The key ingredients are Lions and Magenes' intermediate derivative theorem \cite{LioMag68book} and Strauss' results on continuity of solutions to abstract second-order evolution equations \cite{Str66}.

\begin{proof}[Complement to \cref{theo:sharp-wave}]
Let $u \in L^2(\partial \Omega \times (0, T))$. We assume that the corresponding solution $w$ to \cref{eq:neumann-wave-IBVP} satisfies $w \in H^{1-\eta}(\Omega \times (0, T))$.  Recall that on the one hand
 $H^{1-\eta}(\Omega \times (0, T))= L^2(0, T; H^{1-\eta}(\Omega)) \cap H^{1-\eta}(0, T; L^2(\Omega))$ and on the other hand, denoting by $\{H_s\}_{s\in \R}$ the abstract Sobolev scale associated with $-\Delta_N$, $H_{1-\eta} = H^{1-\eta}(\Omega)$ and (here, $0 \leq \eta < 1/2$) $H_{-\eta} = H^{-\eta}(\Omega)$. Thus $w \in L^2(0, T; H_{1-\eta})$ and  $\ddot{w} \in L^2(0, T; H_{-1 - \eta}) + L^2(0, T; H_{-1}) = L^2(0, T; H_{-1-\eta})$. We introduce the lifted variable $v = (1 - \Delta_N)^{-\eta}w$. Then
\begin{equation}
\label{eq:IDT}
v \in L^2(0, T; H_1) = L^2(0, T; H^1(\Omega)), \quad \ddot{v} \in L^2(0, T; H_{-1}) = L^2(0, T; (H^1(\Omega))^\ast).
\end{equation}
By \cite[Proposition 2.2, Chapter 1]{LioMag68book}, which is a corollary of the intermediate derivative theorem \cite[Theorem 2.3, Chapter 1]{LioMag68book}, \cref{eq:IDT} implies that
\begin{equation}
\label{eq:IDT-bis}
\dot{v} \in L^2(0, T; L^2(\Omega)).
\end{equation}
Furthermore, because $w$ solves $\ddot{w} - \Delta_Nw = \gamma^\ast u$, $v$ must solve $\ddot{v} - \Delta_Nv = (1 - \Delta_N)^{-\eta}\gamma^\ast u$ with $(1 - \Delta_N)^{-\eta}\gamma^\ast u \in L^2(0, T; (H_1(\Omega)^\ast))$. Together with \cref{eq:IDT}, \cref{eq:IDT-bis} and \cite[Theorems 4.2 and 4.3]{Str66}, we deduce that $v \in \C([0, T], H^1(\Omega)) \cap \C^1([0, T], L^2(\Omega))$. Returning to the $w$-variable, this gives
\begin{equation}
w \in \C([0, T], H^{1-\eta}(\Omega)) \cap \C^1([0, T], H^{-\eta}(\Omega)),
\end{equation}
as required.
\end{proof}

\bibliographystyle{alpha}
\bibliography{regularity-mu.bib}

\end{document}